\title[Permutation groups on countable vector spaces over prime fields]{Permutation groups on countable vector spaces\\ over prime fields}
\author [B. Bodor]{Bertalan Bodor}
	\address{Institut f\"{u}r Algebra, TU Dresden, Germany}
	\email{bertalan.bodor@tu-dresden.de}
\author [M. Pinsker]{Michael Pinsker}
	\address{Institut f\"{u}r Diskrete Mathematik und Geometrie, FG Algebra, TU Wien, Austria, and Department of Algebra, Charles University, Czech Republic}    
	\email{marula@gmx.at}
    \urladdr{http://dmg.tuwien.ac.at/pinsker/}
\thanks{Michael Pinsker has received funding from the  Austrian Science Fund (FWF) through  project No P32337, and from the Czech Science Foundation (grant No 18-20123S)}
\author [L. Schiffer]{Lyra Schiffer}
	\address{Institut f\"{u}r Diskrete Mathematik und Geometrie, FG Algebra, TU Wien, Austria}
	\email{lyra\_schiffer@outlook.com}
\author [C. Szab\'{o}]{Csaba Szab\'{o}}
	\address{Algebra \'{e}s Sz\'{a}melm\'{e}let Tansz\'{e}k, 
E\"{o}tv\"{o}s Lor\'{a}nd Tudom\'{a}nyegyetem, Budapest, 
Hungary}
	\email{csaba@cs.elte.hu}
	\urladdr{http://web.cs.elte.hu/~csaba/index.html}
\thanks{Csaba Szab\'{o} has received funding from the NRDI Fund  under the FK 18 funding scheme (NKFI-128673)}
\newcommand\purple[1]{{#1}}
\DeclareMathOperator{\Aut}{{Aut}}
\DeclareMathOperator{\Sym}{{Sym}}
\DeclareMathOperator{\Aff}{{Aff}}
\DeclareMathOperator{\acl}{{acl}}
\DeclareMathOperator{\id}{{id}}
\DeclareMathOperator{\SemL}{\Gamma {L}}
\DeclareMathOperator{\0}{\mathbf{0}}
\DeclareMathOperator{\AGL}{AGL}
\newcommand{\ignore}[1]{}
\newcommand{\AutF}[1]{\Aut_B (#1)}
\newcommand{\oneD}[1]{\mathrm{S}_1(#1)}
\newcommand{\Sstar}{\Sym^\ast}
\newcommand{\SG}[2]{S(#1,#2)}
\newcommand{\linClos}[1]{\left\langle #1 \right\rangle}
\newcommand{\V}{\mathcal{V}}
\newcommand{\Vtimes}{V\setminus \{\0\}}
\newcommand{\F}[1]{\mathbb{F}_{#1}}
\newcommand{\Fi}{\mathcal{F}}
\newcommand{\A}{\mathcal{A}}
\newcommand{\B}{\mathcal{B}}
\newcommand{\Gr}{\mathcal{G}}
\newcommand{\W}{\mathcal{W}}
\newcommand{\N}{\mathbb{N}}
\newcommand{\Z}{\mathbb{Z}}
\newcommand{\M}{\mathcal{M}}
\newcommand{\ut}{\bar{u}}
\newcommand{\uh}{\hat{u}}
\newcommand{\gt}{\bar{\gamma}}
\newcommand{\gh}{\hat{\gamma}}
\newcommand{\gtt}{\bar{g}}
\newcommand{\ghh}{\hat{g}}
\newcommand{\htt}{\bar{h}}
\newcommand{\hhh}{\hat{h}}
\newcommand{\stt}{\bar{\sigma}}
\newcommand{\shh}{\hat{\sigma}}
\newcommand{\trs}[1]{\tau_{#1}}
\newcommand{\presprojlines}{preserves projective lines}
\newcommand{\tilg}{\tilde{g}}
\newcommand{\otr}[3]{
\left(
\begin{array}{c}
     #1  \\
     #2  \\
     #3
\end{array} \right)
}
\newcommand{\Gast}{\mathcal{G}^\ast}
\newcommand{\G}{\mathcal{G}}
\newcommand{\Hc}{\mathcal{H}}
\newcommand{\Nr}{\mathcal{N}}
\newcommand{\Sr}{\mathcal{S}}
\newcommand{\K}{\mathcal{K}}
\newcommand{\Mr}{\mathcal{M}}
\theoremstyle{plain}
\newtheorem{theorem}{Theorem}[section]
\newtheorem{lemma}[theorem]{Lemma}
\newtheorem{proposition}[theorem]{Proposition}
\newtheorem{corollary}[theorem]{Corollary}
\theoremstyle{definition}
\newtheorem{definition}[theorem]{Definition}
\newcommand{\ch}{\operatorname{char}}
\begin{document}

\begin{abstract}
We describe all closed permutation groups which act on the set of vectors of a countable vector space $\V$ over a prime field of odd order and which contain all automorphisms of $\V$. In particular, we  prove that their number is finite. These groups correspond, up to first-order interdefinability,  precisely to all structures with a first-order definition in $\V$. 
\end{abstract} \maketitle
\section{Introduction}\label{sect:intro}

When dealing with a structure $\A$, we are often interested in other structures which can be defined in $\A$, but do not retain the full expressiveness of the original structure $\A$. For example, when investigating the affine geometry of a vector space, we consider the affine structure of the vector space, which  is, of course, roughly  the vector space structure without the distinguished  role of the zero vector. We call structures with a first-order definition in $\A$ \emph{first-order reducts} of $\A$; in the case of the affine structure of a vector space, this structure  can be represented by a single 4-ary relation which relates four vectors if the first is an affine combination of the others. If we distinguish different structures by their automorphisms, then giving up some information on $\A$ amounts to more possible transformations; in the example of vector spaces and their affine structure, it means adding the translations to the group of automorphisms of the vector space. If $\A$ is a countable $\omega$-categorical structure, i.e., the up to isomorphism unique countable model of its first-order theory, then two first-order reducts of $\A$ are interdefinable (i.e., first-order reducts of one another) if and only if they cannot be distinguished by automorphisms. In fact, the first-order reducts of $\A$, up to interdefinability, correspond precisely to the permutation groups acting on $\A$ which contain all automorphisms of $\A$ and which are closed in the topology of pointwise convergence. Any countable vector space over a finite field is $\omega$-categorical, and an example of a closed permutation group containing its automorphisms is the general affine group acting on the vectors.

Vector spaces are homogeneous in the sense that any isomorphism between finitely generated subspaces extends to an automorphism of the entire space. Other examples of structures which are homogeneous in this sense are the countable atomless Boolean algebra, the order of the rationals, or the random graph. It has been conjectured that if $\A$ is a countable homogeneous structure in a finite relational signature, then there are only finitely many first-order reducts of $\A$ up to interdefinability~\cite{RandomReducts}. Since any such structure is $\omega$-categorical, this is equivalent to there being only a finite number of closed permutation groups containing the automorphism group of $\A$. This conjecture has been verified for numerous structures including the order of the rationals~\cite{Cameron5} and the random graph~\cite{RandomReducts}, but is still open~\cite{Thomas96,Bennett-thesis,JunkerZiegler,BP-reductsRamsey,Poset-Reducts,LinmanPinsker,42,BodJonsPham,agarwal,Pon11,AgarwalKompatscher,BBPP18,bodirsky2021permutation,bodor2020classification}. For countable homogeneous structures in a finite  non-relational signature, it turns out to be false: the countable vector space over the two-element field, with an additionally distinguished non-zero vector, has an infinite number of first-order reducts~\cite{BodorCameronSzabo}, although it has only finitely many (four) first-order reducts if it is not equipped with any additional structure beyond the vector space structure~\cite{BoKalSz}.

In this article we determine all closed supergroups containing the automorphism group of a countable vector space $\V$ over a prime field of odd order; and hence its first-order reducts up to interdefinability. As it turns out, the affine group and the full symmetric group are the only such groups which do not fix the zero vector of $\V$. The groups which do fix the zero vector fall into two classes: those which act on the projective space of $\V$, and those which do not; the former can be further divided into those which preserve projective lines and those who do not. In all cases, the groups are fully determined by a pair of subgroups of the symmetric group on the underlying field, one of which is normal. Since the field is finite, there are only finitely many possibilities for such pairs.

\begin{theorem}\label{thm:main}
Let $\V$ be a countable vector space over a prime field of odd order. Then there is a finite number of closed permutation groups containing the automorphism group of $\V$.
\end{theorem}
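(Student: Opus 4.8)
The plan is to prove the stronger, fully explicit classification announced in the introduction, from which the finiteness in Theorem~\ref{thm:main} follows by a finite count. Throughout, $\Aut(\V)$ is the group $\mathrm{GL}$ of invertible linear bijections of the countably infinite-dimensional space $V$; it fixes $\0$ and acts transitively on $\Vtimes$. I would organise the classification of closed $G$ with $\Aut(\V)\leq G\leq\Sym(V)$ around three successive dichotomies. The first is whether $G$ fixes $\0$. If some element of $G$ moves $\0$, then transitivity of $\Aut(\V)$ on $\Vtimes$ already forces $G$ to be transitive on $V$; I would then show, by a conjugation and closure argument, that $G$ contains every translation and hence the full affine group $\AGL$ (over a prime field $\AffL=\AGL$, as the Galois group is trivial), and finally that $\AGL$ is a maximal closed subgroup of $\Sym(V)$, so that $G\in\{\AGL,\Sym(V)\}$.

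The substantial case is when $G$ fixes $\0$ and thus acts on $\Vtimes$. Here the key object is the $\Aut(\V)$-congruence $\sim$ of projective equivalence ($u\sim v$ exactly when $u$ and $v$ span the same line), whose blocks are the $p-1$ nonzero vectors on each line. The second dichotomy is whether $G$ preserves $\sim$. If it does not, I expect a ``largeness'' argument — exploiting closure together with the richness of $\Aut(\V)$ — to show that $G$ must be one of finitely many groups close to the full stabilizer $\Sym(V)_{\0}$. If $G$ does preserve $\sim$, then $G$ acts imprimitively with the lines as blocks, inducing an action $\bar G$ on the projective space with a kernel acting within blocks. The third dichotomy is whether $\bar G$ preserves projective lines: by the Fundamental Theorem of Projective Geometry, and since there are no nontrivial field automorphisms, a line-preserving $\bar G$ must equal $\mathrm{PGL}$, whereas a non-line-preserving $\bar G$ should, by a reduct-type maximality result for $\mathrm{PGL}$ on the infinite-dimensional projective space, be the full symmetric group on projective points.

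With the projective action determined, I would reconstruct $G$ from local data on a single line $L$. Let $H\leq\Sym(L\setminus\{\0\})\cong\Sym(p-1)$ be the group of permutations of $L\setminus\{\0\}$ induced by the setwise stabilizer of $L$ in $G$, and let $N\trianglelefteq H$ consist of those induced while fixing every other line; the scalar maps guarantee that $H$ contains the regular action of $\F{p}^\ast$. The crucial point is to prove, using the topology of pointwise convergence and back-and-forth arguments that exploit the infinite dimension, that $G$ is exactly the closure of the group generated by the chosen projective action (lifted through $H$) together with independent per-line applications of $N$, and conversely that every admissible pair $(N,H)$ with $N\trianglelefteq H$ yields a genuinely distinct closed group. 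Since $\Sym(p-1)$ is finite there are only finitely many such pairs, and combined with the finitely many groups arising in the remaining cases this establishes Theorem~\ref{thm:main}.

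The main obstacle I anticipate is precisely this reconstruction step: showing that closure imposes no ``global'' interdependence between distinct lines beyond what $N$, $H$, and $\bar G$ already prescribe, so that the local invariants genuinely determine $G$, while at the same time verifying that all combinatorially admissible invariants are actually attained. A secondary difficulty is establishing the two maximality statements used above — that $\AGL$ is maximal closed in $\Sym(V)$, and that $\mathrm{PGL}$ admits only the full symmetric group as a proper closed overgroup on the projective space — each of which is itself a reduct classification.
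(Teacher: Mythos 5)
Your overall architecture does mirror the paper's (the dichotomy fix-$\0$/move-$\0$, then whether projective structure is preserved, then local data $\Nr\lhd\Hc$ on a block), but the proposal defers exactly the load-bearing steps. The most serious gap is the case of a $\0$-fixing $G$ that does not preserve projective equivalence, which you dispatch with an unspecified ``largeness argument'' yielding ``finitely many groups close to $\Sym(V)_{\0}$''. The correct statement here --- and the bulk of the paper --- is that the fundamental invariant is the subgroup $\Gamma\leq\F{p}^\times$ for which the $G$-congruence on $V\setminus\{\0\}$ is $v\sim\lambda v\iff\lambda\in\Gamma$ (Lemma~\ref{equiv_is_equiv}); when $\Gamma\lneq\F{p}^\times$, or when $\Gamma=\F{p}^\times$ but projective lines are not preserved, one must prove that $G$ induces the \emph{full} symmetric group on the $\sim_\Gamma$-classes (Proposition~\ref{prop:mainresult1}). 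The paper's proof of this occupies all of Section~\ref{remainCasFixZero} (the sets $A_k(S)$, showing they contain affine lines avoiding $\0$, and a counting contradiction), and it uses both the oddness of $p$ and the primeness of the field in essential ways. Your framework, which fixes the congruence to be full projective equivalence with blocks of size $p-1$, does not even state the right target: in these cases the local group lives in $\Sym(\Gamma)$, not $\Sym(p-1)$. Likewise the reconstruction step --- that $G$ is determined by the induced action on classes together with $\Nr\lhd\Hc$, with no cross-block interdependence --- is correctly flagged by you as the main obstacle, but no argument is offered; in the paper this is the second half of Section~\ref{sect:fix0}, and in the case $G=\Gast\Aut(\V)$ it requires a genuinely delicate commutator argument using Singer cycles and Euler's theorem (Lemma~\ref{sigsiginN}). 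Note also that for Theorem~\ref{thm:main} only the ``$G$ is determined by the data'' direction is needed; the realizability of every pair $(\Nr,\Hc)$, which you propose to verify, is not required and is not established in the paper.

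The move-$\0$ case as you set it up also has a gap. From an arbitrary $g$ with $g(\0)\neq\0$ together with $\Aut(\V)$ one cannot directly manufacture a translation by ``conjugation and closure''; and the maximality of $\AGL(\V)$ as a closed subgroup of $\Sym(V)$, which your route then needs, is itself a reduct classification that would have to be proved. The paper avoids both: it shows that for such $G$ the invariant $\Gamma$ of the stabilizer $G_{\0}$ is either $\{1\}$ (whence $G=\Sym(V)$, since $G_{\0}$ is then the full symmetric group on $V\setminus\{\0\}$) or all of $\F{p}^\times$ (Lemma~\ref{GammaIsOneOfTwo}), and in the latter case computes $\acl(v,w)=\Aff(v,w)$ for all $v,w$, so that $G$ preserves affine lines and the Fundamental Theorem of Affine Geometry yields $G\leq\AGL(\V)$; the reverse inclusion is then the easy direction. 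So the upper bound $G\leq\AGL(\V)$, not the lower bound, is where the content lies, and no maximality statement is ever needed.
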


We begin with a section in which we establish our notation and provide some fundamental facts on vector spaces and $\omega$-categoricity (Section~\ref{sect:prelims}). The largest part of our proof is given in Section~\ref{sect:fix0} dealing with groups which fix the zero vector. In Section~\ref{sect:move0} we finish the proof by considering those groups which move the zero vector. A summary of the results and the proof is provided in Figure~\ref{fig:summary} at the end of this article. \section{Preliminaries}\label{sect:prelims}

\subsection{Relations and functions}
{ Let $S$ be a set and let $n \geq 1$.} The cardinality of $S$ is denoted by $|S|$.  A subset $R$ of $S^n$ is  an  \emph{$n$-ary relation $R$} of $S$.  If $R$ is a binary relation on $S$ and $(a,b)\in S^2$, we write  $a \ R \ b$ to indicate  $(a,b)\in R$. Let $\sim$ be an equivalence relation on $S$. 
For an element $s\in S$ the {equivalence class of $s$} is denoted  by $[s]_\sim$, and any element in $[s]_\sim$ is called a \emph{representative of} $[s]_\sim$. { We define for any subset $S'$ of $S$ the set $[S']_\sim:=\bigcup_{s\in S'} [s]_\sim$.}  An $n$-ary \emph{operation} on $S$ is a function  from  $S^n$  into the set $S$. 

A \emph{(first-order) structure} consists  of a set, called the \emph{domain if the structure}, and relations and functions thereon. %, together with associated arities of the relations and functions which are natural numbers.  
We will, if not otherwise specified, denote structures by a letter in calligraphic font, such as $\A$, and the corresponding domain by the same letter in plain font, such as $A$. A structure without functions is called a \emph{relational structure}. 
A structure $\mathcal{B}$ on the same domain as $\mathcal A$ is a \emph{first-order reduct} of $\mathcal{A}$ iff all operations  and all relations of $\mathcal{B}$ are first-order definable over $\A$. Since in this article we are only interested in {first-order} reducts we shall often omit  ``first-order''  and  write that $\B$ is a {reduct} of $\A$.
Two structures are called \emph{(first-order) interdefinable} if they are first-order reducts of one another.

Let $f\colon A \to B$ and $g\colon B \to C$ be functions. We usually write $a^f$ for $f(a)$ ($a\in A$) and $fg$ for the composition $g\circ f\colon a\mapsto g(f(a))$.
%Then $f \circ g$ denotes the composition such that for all $a\in A:(f\circ g)(a):=g(f(a))$. Sometimes we write $a^{fg}$ for $(f\circ g)(a)$, in line with the notation $a^f:=f(a)$. 
For a set $S\subseteq A$  we define $S^f:=\{s^f:s\in S\}$. We denote the {restriction of $f$ to $S$} by $f|_S\colon S\to B$. In this case $f$ is an \emph{extension of $f|_S$}. The {identity function on $S$} is denoted by $\text{id}_S\colon S \to S$.

\subsection{Groups}
Let $\Gr$ be a group. We write $\Hc\leq \Gr$ if $\Hc$ is a \emph{subgroup of $\Gr$}. If a subgroup $\Nr$ is normal in $\Gr$ we denote this by $\Nr\lhd \Gr$. For $\Hc\leq \Gr$ and $\Nr\lhd \Gr$ the group $\Gr$ is the \emph{semi-direct product} $\Hc\ltimes \Nr$ iff $G=HN=\{hn:h\in H, n\in N\}$ and $H\cap N=\{0\}$.%, if  $+$ denotes the binary group operation of $\G$ and $0$ its neutral element.

The group $\Sym(A)$ of all  bijective functions on a set $A$ is  \emph{the full symmetric group on $A$}. A group $\Gr$ is called a \emph{permutation group if $\Gr\leq \Sym(A)$ for some set $A$.}
For a permutation group $\Gr\leq\Sym(A)$ and a subset $S$ of $A$ the subgroup of $\Gr$ consisting of all functions of $\G$ which fix $S$ element-wise is called the \emph{stabilizer of $S$} and is denoted by $\Gr_S$. If $S$ consists only of finitely many elements $x_1,\ldots,x_n\in A$ we write $\Gr_{x_1,\ldots,x_n}$ instead of $\Gr_{\{x_1,\ldots,x_n\}}$.  
For a function $f\in \Sym(A)$ and sets of functions $S, T\subseteq \Sym(A)$ we  write $fS:= \{fg: g\in S\}$, $Sf:= \{gf: g\in S\}$, and $ST:= \{gh: g\in S,h\in T\}$. 

A subset $S$ of $\Sym(A)$ is \emph{closed with respect to the pointwise convergence topology} iff for all $f\in \Sym(A)$ such that for every finite subset $F\subseteq A$ there exists a function $g\in S$ such that $g|_F=f|_F$ we have $f\in S$. A sequence $(f_n)_{n\geq 0}$ of functions in $\Sym(A)$ is \emph{convergent} if there exists a function  $f\in \Sym(A)$ such that for all $a\in A$   exists  $N\geq 0$ such that for all $n>N\colon f_n(a)=f(a)$. 
The function $f$ is the \emph{limit of $(f_n)_{n\geq 0}$}. 
Using this  we  have  an equivalent definition of closedness: a subset $S$ of $\Sym(A)$ is closed if and only if the limit of every convergent sequence of functions in $S$  lies in $S$. {A permutation group $\G\leq \Sym(A)$ is \emph{closed} if the set of its functions are closed in $\Sym(A)$; this is the case if and only if it is the automorphism group of a structure with domain $A$.}

Let $A$ be a set, let $\G$ be a permutation group on $A$ and let $s=(s_1,\ldots,s_n)\in A^n$ for some $n\geq 1$. Then the \emph{orbit of $s$} is  the set $G(s):=\{s^g:g\in G\}$, where $s^g:=(s_1^g,\ldots,s_n^g)$. The group $\G$ is \emph{oligomorphic} iff for every $n\geq 1$ the set of orbits of $n$-tuples in $S^n$ is finite. We call countable structures with an oligomorphic automorphism group \emph{$\omega$-categorical}; for countable structures this definition coincides with the standard model-theoretical definition (see for instance~\cite{Hodges}). Note that the countably infinite vector space $\V$ over any finite field $\Fi$ is $\omega$-categorical, by its homogeneity and since any $n$-tuple of vectors is contained in an $n$-dimensional subspace of $\V$, which has precisely $|F|^n$ elements. If $\A$ is a first-order reduct of $\V$, then $\Aut(\A)\supseteq\Aut(\V)$, and thus $\A$ is $\omega$-categorical as well. In general, for countable $\omega$-categorical structures $\A, \B$ on the same domain, $\Aut(\B)\supseteq \Aut(\A)$ if and only if $\B$ is a reduct of $\A$; in particular $\Aut(\A)=\Aut(\B)$ if and only if $\A$ and $\B$ are interdefinable. Hence, the reducts of $\V$, up to interdefinability, correspond precisely to the closed permutation groups containing $\Aut(\V)$.

\subsection{Fields}
The addition in a field $\Fi$ shall be denoted by $+$, and the multiplication by $\cdot$;  the neutral element of the addition will bear the symbol $0$, and  the neutral element of the multiplicative group with domain $F^\times:=F\setminus \{0\}$ the symbol $1$. For any two elements $a\in F$ and $b \in F^\times$ we write $\frac{a}{b}:=a\cdot b^{-1}$, where $b^{-1}$ denotes the multiplicative inverse of $b$. For any prime number $p$ and $n\geq 1$, we denote the unique 
field with $p^n$ elements by $\F{p^n}$.

\subsection{Vector Spaces}
Let $\V$ be a {vector space}  over a field $\Fi$. The symbol $\0$ denotes the  {zero vector} of $\V$, and for every $c\in F$ and every $v\in V$ we denote by $cv$ the product of $v$ with the scalar $c$. We write $-a$ for the additive inverse of any vector, and $a-b$ is short for $a+(-b)$. 
   The \emph{linear closure}  of a set $S\subseteq V$ is  denoted by $\linClos{S}$. { For finitely many vectors $v_1,\ldots,v_n \in V$ we write $\linClos{v_1,\ldots,v_n}$ instead of $\linClos{\{v_1,\ldots,v_n\}}$.} A subset of $V$  is \emph{linearly independent} if it is not contained in the linear closure of any proper subset, and a tuple of elements of $V$ is linearly independent if none of its component is contained in the linear closure of the other components. If $S=\linClos{S}$, i.e., if $S$ is a subspace of $\V$, then we write $S\leq \V$. We denote the set of all subspaces of $\V$ by $\text{S}(\V)$ and the set of all one-dimensional subspaces of $\V$ by $\oneD{\V}$.  The \emph{dimension} of a subspace $W\leq \V$ is denoted by $\dim W$. We define the dimension of a tuple $t$ as the dimension of the subspace spanned by its components. Furthermore, abusing notation we write $\linClos{t}$ for the linear closure of the components of the tuple $t$, and set $\dim t:= \dim \linClos{t}$. 
 For two subspaces $W,U\leq \V$ we denote the sum space by $W+U$, and similarly, we write $\sum_{i\in I}W_i$ for the sum space of an arbitrary family of subspaces $(W_i)_{i\in I}$ of $\V$.

An \emph{affine subspace} of a vector space $\V$ is any subset $A$ of $V$ such that for any $v\in A$ the set $A-v=\{a-v\in V:a\in A\}$ is a subspace of $\V$. { The \emph{dimension} of an affine subspace of $\V$ is the dimension of the corresponding subspace of $\V$. {For a set of vectors $S\subseteq V$ we write $\Aff(S)$ for the \emph{affine closure} of $S$ in $V$ which is the set of all  \emph{affine combinations}, i.e., all  $\sum_{s\in S} c_s s$ such that almost all $c_s\in F$ are zero and the sum $\sum_{s\in S} c_s$ equals $1\in F$. For finitely many $v_1,\ldots,v_n\in V$ we write $\Aff(v_1,\ldots,v_n)$ instead of $\Aff(\{v_1,\ldots,v_n\})$. } We call any one-dimensional affine space, i.e., a set of the form $\Aff(v,w)$ or $v+\linClos{w-v}$ for some distinct vectors $v,w\in V$, an \emph{affine line}.
}
An \emph{affine mapping} between affine subspaces is a mapping which preserves affine combinations. The set of all bijective affine mappings from $V$ to $V$ is denoted by $\AGL(\V)$. 

	We now fix the structure we consider in this article.\\
\begin{center}\fbox{
\begin{minipage}{.9\linewidth}
\purple{Unless we explicitly state otherwise $p$ denotes a fixed odd prime and $\V$ denotes a countably infinite dimensional vector space over the field $\F{p}$.}
\end{minipage}
}
\end{center}

\section{The closed supergroups of \texorpdfstring{$\Aut(\V)$}{Aut(V)}  fixing \texorpdfstring{$\0$}{0}}\label{FixingZero}\label{sect:fix0}
Throughout this section we denote by $\G$ a closed permutation group on $V$ such that 
\begin{align*}
    \Aut (\V) \leq \G \leq \Sym (V)_{\0}.
\end{align*}
\purple{Our goal in this section is to give a full description of the group $\G$.} We start by examining the action of $\G$ on the set of equivalence classes of a suitable equivalence relation.

\begin{definition}
Let $\Gamma$ be a multiplicative subgroup of $\mathbb{F}_{p}^\times$. We define an equivalence relation $\sim_\Gamma$ on $V$ by setting for all $v,w\in V$:
\begin{align*}
v \sim_\Gamma w     \text{ iff  }\text{ for some } \lambda \in  \Gamma\colon v=\lambda w.
\end{align*}
\end{definition}
We note that for all $\Gamma\leq \F{p}^\times$:
\begin{itemize}
	\item $\sim_\Gamma$ is an equivalence relation since $\Gamma$ is a group.
	\item For all $v\in V$ we have $[v]_{\sim_\Gamma}\subseteq \langle v \rangle$. 
	\item The equivalence class of $\0$ is $\{\0\}$.
\end{itemize} 
For $\G$ to act on the equivalence classes of $\sim_\Gamma$ it will be necessary  that $\sim_\Gamma$ be \emph{$\G$-invariant}, i.e., for every $g\in G$ and for all $v,w\in V$ we have that $v\sim_\Gamma w$ implies $v^g \sim_\Gamma w^g$.  The subgroup $\Gamma$ of $\F{p}^\times$ needs to be of a specific form for this to be the case. We now  define  another equivalence relation $\sim_{\G}$ which will obviously be $\G$-invariant and then show that there is a group $\Gamma\leq \F{p}^\times$ such that $\sim_{\G}=\sim_{\Gamma}$.

\begin{definition}\label{equiv_G}
%Let $\Hc$ be a subgroup of $\Sym (V)_{\0}$.
We define an equivalence relation $\sim_{\G}$ on $V$ by setting for all vectors $v,w \in V$:
\begin{align*}
v\sim_{\G} w \text{ iff  for all } g\in G\colon  \langle v^g \rangle=\langle w^g \rangle.
\end{align*}
\end{definition}
We note that: %for all $\Hc\leq \Sym (V)_{\0}$:
\begin{itemize}
    \item $\sim_{\G}$ is an equivalence relation, and it is  invariant under $\G$ since $\G$ is a group.
    \item For all $v\in V$ we have $[v]_{\sim_{\G}}\subseteq \langle v \rangle$.
\item The equivalence class of $\0$ is  $\{\0\}$.
\end{itemize}

\begin{lemma}\label{equiv_is_equiv}
%Let $\Hc$ be a subgroup of $\Sym (V)_{\0}$ containing $\Aut(\V)$. Then t
There exists a group $\Gamma \leq \mathbb{F}_{p}^\times$ such that ${\sim_{\G}}={\sim_\Gamma}$.
\end{lemma}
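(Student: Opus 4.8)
The plan is to extract the group $\Gamma$ directly from the scalar relationships encoded by $\sim_{\G}$. Since the excerpt already records that $[v]_{\sim_{\G}}\subseteq \langle v\rangle$ for every $v$, and that $[\0]_{\sim_{\G}}=\{\0\}$, the relation $\sim_{\G}$ is completely determined once we know, for each nonzero $v$, which scalar multiples of $v$ are $\sim_{\G}$-related to $v$. I would therefore fix a nonzero $v\in V$ and set
\[\Gamma_v:=\{\lambda\in\F{p}^\times : \lambda v\sim_{\G} v\},\]
so that $[v]_{\sim_{\G}}=\{\lambda v:\lambda\in\Gamma_v\}$. The whole lemma then reduces to two assertions: that $\Gamma_v$ does not depend on the choice of $v$, and that it is a subgroup of $\F{p}^\times$. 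Writing $\Gamma$ for this common value, the identity ${\sim_{\G}}={\sim_\Gamma}$ follows by unwinding both definitions on nonzero vectors and using the trivial zero class on both sides.

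The essential tool is that $\Aut(\V)$, which is contained in $\G$, consists of \emph{linear} bijections, even though a general element of $\G$ need not be linear. Two features of $\Aut(\V)$ will carry the argument. First, for each $\lambda\in\F{p}^\times$ the scalar multiplication $m_\lambda\colon x\mapsto\lambda x$ lies in $\Aut(\V)\leq\G$. Second, $\Aut(\V)$ acts transitively on $V\setminus\{\0\}$, since any nonzero vector extends to a basis and bases of $\V$ can be matched bijectively. Combining either of these with the $\G$-invariance of $\sim_{\G}$ (noted in the excerpt) lets me transport scalar relations. For independence of the base point, given nonzero $v,v'$ I pick $h\in\Aut(\V)$ with $v^h=v'$; applying $\G$-invariance to $\lambda v\sim_{\G} v$ and using $(\lambda v)^h=\lambda v^h=\lambda v'$ (here linearity of $h$ is crucial) yields $\lambda v'\sim_{\G} v'$, so $\Gamma_v\subseteq\Gamma_{v'}$, and symmetrically $\Gamma_v=\Gamma_{v'}$. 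For the subgroup axioms I use the maps $m_\lambda$: the membership $1\in\Gamma_v$ is immediate; applying $m_{\lambda^{-1}}$ to $\lambda v\sim_{\G} v$ gives $v\sim_{\G}\lambda^{-1}v$ and hence $\lambda^{-1}\in\Gamma_v$; and applying $m_\lambda$ to $\mu v\sim_{\G} v$ gives $\lambda\mu v\sim_{\G}\lambda v\sim_{\G} v$, so $\lambda\mu\in\Gamma_v$, using transitivity of $\sim_{\G}$.

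With $\Gamma:=\Gamma_v$ now a well-defined subgroup of $\F{p}^\times$, I would finish by checking the two inclusions between $\sim_{\G}$ and $\sim_\Gamma$. If $v\sim_{\G} w$ with $v,w$ nonzero, then $w\in\langle v\rangle$ forces $w=\lambda v$ for some $\lambda\in\F{p}^\times$, and $\lambda v\sim_{\G} v$ means exactly $\lambda\in\Gamma$, i.e.\ $v\sim_\Gamma w$; conversely $\lambda\in\Gamma$ gives $\lambda v\sim_{\G} v$ by definition. The zero vector forms a singleton class in both relations, so the equivalence of $\sim_{\G}$ and $\sim_\Gamma$ is complete. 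The one genuinely delicate point, and the place where the argument could go wrong if one is careless, is the non-linearity of generic elements of $\G$: every manipulation of the form $(\lambda v)^g=\lambda v^g$ is legitimate only when $g$ is taken inside $\Aut(\V)$. The proof is arranged precisely so that all such identities are applied to scalar multiplications and to the transitivity witness $h$, all of which are linear, while the non-linear elements of $\G$ enter only through the $\G$-invariance of $\sim_{\G}$, where linearity is not needed.
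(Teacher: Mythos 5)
Your proposal is correct and follows essentially the same route as the paper: fix a nonzero $v$, define the set of scalars $\lambda$ with $\lambda v\sim_{\G} v$, use the transitivity of $\Aut(\V)$ on nonzero vectors together with $\G$-invariance to show base-point independence, verify the subgroup axioms, and conclude by comparing the two relations classwise. The only cosmetic difference is that you verify closure under inverses and products by transporting along the scalar maps $m_\lambda\in\Aut(\V)$, whereas the paper derives the same facts directly from the symmetry and transitivity of $\sim_{\G}$; both are valid and equally short.
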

\begin{proof}
We fix an arbitrary vector $v\neq \0$ and consider the set
\begin{align*}
    M_v:=\{\lambda \in  \mathbb{F}_{p}^\times: v\sim_{\G} \lambda v\}.
\end{align*}
We claim that $M_v$ does not depend on $v$. This holds, as for any other vector $w\neq \0$ there exists an automorphism $\varphi$ of $\V$ which maps $v$ to $w$. Therefore, $\lambda v \sim_{\G} v$ iff $\lambda w \sim_{\G} w$ because of the $\G$-invariance of $\sim_{\G}$. 

Observe that $\Gamma:=M_v$ contains $1\in \F{p}$ because $\sim_{\G}$ is reflexive. For any $\lambda \in \Gamma$ its inverse $\lambda^{-1}$ is also an element of $\Gamma$ since $\sim_{\G}$ is symmetric. Furthermore, $\Gamma$ is closed under multiplication due to $\sim_{\G}$ being transitive. This shows that $\Gamma$ is a subgroup of $\mathbb{F}_{p}^\times$. For all elements $\lambda \in \mathbb{F}_{p}^\times$ and for all vectors $v\in V\setminus\{\0\}$ we have
\begin{align*}
    v \sim_\Gamma \lambda v \iff \lambda \in \Gamma \iff v \sim_{\G} \lambda v.
\end{align*}

{Any vector outside of $\langle v \rangle$ cannot be equivalent to $v$ under $\sim_{\G}$ or $\sim_\Gamma$, and we have already seen $[\0]_{\sim_{\G}}=[\0]_{\sim_\Gamma}=\{\0\}$.} {This shows that  ${\sim_{\G}}$ and ${\sim_\Gamma}$ coincide.}
\end{proof}

	For the rest of Chapter~\ref{FixingZero} we fix a group $\Gamma\leq \F{p}^\times$ as in the conclusion of Lemma~\ref{equiv_is_equiv}. We will denote the equivalence relation ${\sim_{\G}}={\sim_\Gamma}$ by $\sim$.

%We now apply Lemma~\ref{equiv_is_equiv} to our fixed group $\G$ and fix $\Gamma\leq \F{p}^\times$ for the rest of Chapter~\ref{FixingZero} such that ${\sim_{\G}}={\sim_\Gamma}$. Both equivalence relations will be denoted by $\sim$.

In the case $\Gamma=\F{p}^\times$ we will distinguish two cases, namely whether or not $\G$ maps two-dimensional subspaces of $\V$ to two-dimensional subspaces $\V$. The former will be discussed in Section~\ref{semlinfct}, the latter will be considered together with the remaining case where $\Gamma \lneq \F{p}^\times$ in Section~\ref{remainCasFixZero}.

\subsection{The case where \texorpdfstring{$\G$}{G} \presprojlines}\label{semlinfct}

{In this section we will assume that $\Gamma$ is equal to $\F{p}^\times$ and  that $\G$ \emph{\presprojlines}.} Since for all $v\in \Vtimes$ the equivalence class $[v]_\sim$ is equal to the linear closure of $v$ without $\0$ and since the group $\G$  fixes $\0$, { the action of $\G$ on the $\sim$-equivalence classes is isomorphic to its action on
the set $\oneD{\V}$ of  one-dimensional subspaces  of $\V$.  It will follow that in this case the action of $\G$ on  $\oneD{\V}$ is the same as the action  of $\Aut(\V)$  thereon.}

\begin{definition}\label{planes_preserv_D}
Let $\W$ be a vector space. A bijective function   $g\colon \oneD{\W}\to \oneD{\W}$ \emph{\presprojlines} iff for all  $L_0,L_1,L_2\in \oneD{\W}$ we have
\begin{align}\label{plains_preserv}
     L_0\subseteq L_1 + L_2 \iff  L_0^g\subseteq L_1^g+L_2^g.
\end{align}
A group $\Hc\leq \Sym (\oneD{\W})$ \emph{\presprojlines}  iff all of its elements do. 
\end{definition}

{Let $\W$ be a vector space and let $g\colon \oneD{\W} \to \oneD{\W}$ be a bijective function which {\presprojlines}. Given a two-dimensional subspace $P\leq \W$, there are $L_1,L_2\in \oneD{\W}$ such that $P=L_1+L_2$, thus by (\ref{plains_preserv}) every one-dimensional subspace $L_0\subseteq P$ of $\W$ is mapped into $L_1^g+L_2^g$ by $g$ which, since $g$ is bijective on $\oneD{\W}$,  is a two-dimensional subspace. For any $L\in \oneD{\V}$ such that $L\subseteq L_1^g+L_2^g$ we have $L^{g^{-1}}\subseteq L_1 + L_2=P$. Therefore  we obtain}
\begin{align*}
\bigcup\{L^g: L\leq P \wedge L\in \oneD{\W}\}=L_1^g+L_2^g.
\end{align*}
In particular, $g$ maps the set of  one-dimensional subspaces of a given two-dimensional subspace of $\W$ (a ``projective line'') onto a set of the same form.

When we say that $\G$ \emph{\presprojlines} we indicate that the action of $\G$ on $\oneD{\V}$ \presprojlines. The following consequence of the Fundamental Theorem of Projective Geometry is a slight generalisation of Section 2.10 of~\cite[p.87ff]{Artin} from finite-dimensional to countably infinite-dimensional vector spaces, and follows from the statement there by a straightforward  induction.

\begin{theorem}\label{fun_thm_geom}
{Let $g \in \Sym(\oneD{\V})$ preserve projective lines. Then there exists a bijective linear function $\varphi \colon V \to V$  
such that the  action of $\varphi$ on  $\oneD{\V}$  is  $g$.  } 
\end{theorem}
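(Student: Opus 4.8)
The plan is to reduce the statement to the finite-dimensional Fundamental Theorem of Projective Geometry (Artin's Section~2.10) by exhausting $\V$ with an increasing chain of finite-dimensional subspaces and gluing together the maps it provides. Fix a basis $(e_i)_{i\geq 1}$ of $V$ and set $V_n:=\linClos{e_1,\ldots,e_n}$, so that $V=\bigcup_{n\geq 3}V_n$. The first thing I would establish is that $g$, being a bijection preserving the collinearity relation $L_0\subseteq L_1+L_2$ in both directions, maps every finite-dimensional flat to a flat of the same dimension: for each finite-dimensional $U\leq\V$ there is a unique subspace $U'\leq\V$ with $\dim U'=\dim U$ and $\{L^g:L\in\oneD{U}\}=\oneD{U'}$. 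The case $\dim U=2$ is exactly the computation carried out just before the theorem; the inductive step writes an $(n+1)$-dimensional $U$ as the union of the projective lines joining a fixed point $L^\ast\subseteq U$ to the points of a hyperplane $U_0\subset U$, applies the induction hypothesis to $U_0$, and uses that $g$ preserves lines together with $g(L^\ast)\not\subseteq U_0'$ to conclude that $U':=U_0'+g(L^\ast)$ has dimension $n+1$.

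In particular, writing $W_n$ for the $n$-dimensional subspace with $\oneD{W_n}=\{L^g:L\in\oneD{V_n}\}$, for each $n\geq 3$ the map $g$ restricts to a bijection $\oneD{V_n}\to\oneD{W_n}$ which preserves projective lines in both directions, i.e. a collineation between two projective spaces of dimension $n-1\geq 2$. The finite-dimensional theorem then yields a semilinear bijection $V_n\to W_n$ inducing it. Here I would invoke that the base field is the \emph{prime} field $\F{p}$: its only automorphism is the identity, since every element is a sum of copies of $1$ and hence fixed, so every semilinear map is in fact linear. Thus for each $n\geq 3$ there is a linear bijection $V_n\to W_n$ inducing $g$ on $\oneD{V_n}$, and by the uniqueness clause any two such maps differ by a nonzero scalar of $\F{p}$.

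The heart of the argument, and the step I expect to require the most care, is gluing these maps into a single linear bijection on $V$ despite the scalar ambiguity. I would build a coherent chain $(\varphi_n)_{n\geq 3}$ inductively: let $\varphi_3$ be any linear bijection $V_3\to W_3$ inducing $g$; given $\varphi_n$, choose any linear bijection $\psi\colon V_{n+1}\to W_{n+1}$ inducing $g$, note that $\psi|_{V_n}$ is again a linear bijection $V_n\to W_n$ inducing $g$ (its image lies in $W_n$ because $\psi$ sends each $L\in\oneD{V_n}$ to $L^g\in\oneD{W_n}$, and it is onto by a dimension count), so $\psi|_{V_n}=c\,\varphi_n$ for some $c\in\F{p}^\times$, and set $\varphi_{n+1}:=c^{-1}\psi$. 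Then $\varphi_{n+1}$ still induces $g$ on $\oneD{V_{n+1}}$ and satisfies $\varphi_{n+1}|_{V_n}=\varphi_n$, so the maps agree on overlaps.

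Finally I would take the union: define $\varphi\colon V\to V$ by $\varphi(v):=\varphi_n(v)$ for any $n$ with $v\in V_n$, which is well defined by coherence. Linearity and injectivity are finitary conditions, each checked inside a single $V_n$, and $\varphi$ induces $g$ on every line, since each line lies in some $V_n$. Surjectivity follows from $g$ being onto $\oneD{\V}$: every nonzero $w\in V$ spans a line which equals $L^g$ for some $L\subseteq V_n$, whence $\linClos{w}\subseteq W_n=\varphi(V_n)$ and $w\in\operatorname{im}\varphi$; equivalently $\bigcup_n W_n=V$. Hence $\varphi$ is a linear bijection of $V$ whose action on $\oneD{\V}$ is $g$, as required.
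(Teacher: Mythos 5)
Your proposal is correct and is essentially the paper's own argument: the paper proves Theorem~\ref{fun_thm_geom} simply by citing the finite-dimensional Fundamental Theorem of Projective Geometry from Artin and asserting that the countably infinite-dimensional case ``follows from the statement there by a straightforward induction,'' which is exactly the exhaustion-by-finite-dimensional-subspaces and scalar-normalised gluing you carry out. Your writeup supplies the details (image flats $W_n$, triviality of $\Aut(\F{p})$ turning semilinear into linear, and the coherence of the chain $(\varphi_n)$) that the paper leaves implicit, and all of these steps check out.
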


\ignore{
We are going to require the following easy statement.
\begin{lemma}\label{col_wenn_Eb}
Let $\W$ be a vector space and let $g\colon S_1(\W) \to S_1(\W)$ be a bijective function that \presprojlines. Then for every set $S\subseteq \oneD{\W}$ and all $L_0\in \oneD{\W}$
\begin{align}\label{preserves_all_dim}
    L_0 \subseteq \sum_{L\in S} L \iff  L_0^g \subseteq  \sum_{L\in S} L^g.  \end{align}
\end{lemma}
\begin{proof}
We first show the statement for all finite sets $S\subseteq \oneD{\W}$. We show via induction over $n\geq 1$ that for all  one-dimensional subspaces $L_0,L_1,\ldots, L_n$ of $\W$ the function $g$ satisfies
\begin{align}\label{preserves_n_dim}
    L_0\subseteq \sum_{i=1}^n L_i \iff L_0^g \subseteq \sum_{i=1}^n L_i^g.
\end{align}

 The case $n=1$ is obvious and the case $n=2$ is exactly condition (\ref{plains_preserv}).  Let $n>2$ and  assume we have already shown  (\ref{preserves_n_dim}) for $n-1$. We show the implication from left to right.
 
Let $L_0,L_1,\ldots, L_n\in \oneD{\W}$ be given and assume that $L_0\subseteq \sum_{i=1}^n L_i$. There exist vectors $v,w\in W$ such that $v$ is an element of $L_1+\cdots+ L_{n-1}$, the vector $w$ is an element of $L_n$ and the vector $v+w$ spans $L_0$. We apply (\ref{plains_preserv}) to  $L_0 \subseteq \linClos{v} + L_{n}$ and our induction hypothesis (\ref{preserves_n_dim}) to    $\linClos{v}\subseteq L_1 + \cdots + L_{n-1}$ and obtain:
\begin{align*}
    L_0^g \ \overset{(\ref{plains_preserv})}{\subseteq} \ \linClos{v}^g + L_{n}^g \ \overset{(\ref{preserves_n_dim})}{\subseteq} \ L_1^g + \cdots + L_{n-1}^g + L_n^g.
\end{align*}

Condition (\ref{plains_preserv}) also holds for $g^{-1}$. Therefore, the implication from left to right of (\ref{preserves_n_dim})  holds for $g^{-1}$ by what we just showed. This shows that for $g$  both sides of  (\ref{preserves_n_dim}) are equivalent. 

Let $I$ be an arbitrary infinite set and let $\{L_i:i\in I\}$ be a set of one-dimensional subspaces of $\W$. If a one-dimensional subspace $L_0\in \oneD{\W}$ is contained in $\sum_{i\in I} L_i$ then there is a finite subset $I'$ of $I$ such that $L_0\subseteq \sum_{i\in I'}L_i$. By (\ref{preserves_n_dim}) we obtain that $L_0^g \subseteq \sum_{i\in I'}L_i^g \subseteq \sum_{i\in I}L_i^g$. The other implication follows in the same way as before.
\end{proof}
}

\ignore{
First we need a few easy properties of bijective semi-linear functions.

\begin{lemma}\label{semL_pres_planes}
Let $\W$ be a vector space over a field $\Fi$ and let $\varphi\colon V \to V$ be a bijective semi-linear function with respect to $\alpha\in \Aut (\Fi)$. Then for all $M,U\in \mathrm{S}(\W)$ the following holds:
\begin{enumerate}[label=$(\alph*)$]
    \item $(M+U)^\varphi=M^\varphi+U^\varphi$,
    \item $\dim U =\dim U^\varphi$, and
    \item The action of $\varphi$ on $\oneD{\V}$ \presprojlines. 
\end{enumerate}
\end{lemma}
\begin{proof}
All items (a)-(c) follow immediately from the definition of semi-linearity. 
\end{proof}
}

\ignore{
\begin{proof}

To prevent cluttered notation in this proof we will refer to ``subspaces of $\V$'' simply as ``subspaces''.

Let $\{v_i: i\geq 1\}$  be a basis of $\V$. For all $i\geq 1$ define $L_i:=\linClos{v_i}$. Every $L_i$ is mapped by $g$ to another one-dimensional subspace $\tilde{L}_i$. For every $i\geq 1$ there exists  a vector $\tilde{v}_i$ such that $\tilde{L}_i=\linClos{\tilde{v}_i}$. 

\textbf{Claim:} The set $\{\tilde{v}_i: i\geq 1\}$ is a basis of  $\V$.

We prove this claim by showing that $\{\tilde{v}_i: i\geq 1\}$ is a minimal generating subset of $\V$. Let $v\in V$ be  arbitrary. The function $g$  acts bijectively on the one-dimensional subspaces  of $\V$, thus there exists a one-dimensional subspace $L$  such that $L^g=\linClos{v}$. The subspace $L$ is contained in $\sum_{i\geq 1} L_i$, thus by Lemma~\ref{col_wenn_Eb}  $\linClos{v}=L^g\subseteq \sum_{i\geq 1}\tilde{L}_i$. Hence, $v$ is an element of $\linClos{\{\tilde{v}_i: i\geq 1\}}$.

It remains to show that $\{\tilde{v}_i: i\geq 1\}$ is minimal with the property of generating $V$. Without loss of generality we show that $\tilde{v}_1\not \in \linClos{\{\tilde{v}_i: i\geq 2\}}$. We strive for a contradiction. Suppose there exist coefficients  $(c_i)_{i\geq 2}$ in $\F{p^m}$,  almost all of them $0$,  such that  $\sum_{i\geq 2} c_i \tilde{v}_i=\tilde{v}_1$. Then the one-dimensional subspace $\tilde{L}_1=\linClos{\tilde{v}_1}$  is contained in the subspace $\sum_{i\geq 2}\tilde{L}_i$. By Lemma~\ref{col_wenn_Eb} the one-dimensional subspace $L_1$  is therefore contained in $\sum_{i\geq 2}L_i$, contradicting the fact that $\{v_i: i\geq 1\}$ is a basis of $\V$. This shows our claim.

Our next goal is to construct a suitable field automorphism $\alpha$. 
For every $i\geq 2$ the one-dimensional subspace $\linClos{v_1+v_i}$  is contained in $L_1+L_i$. Since  $g$  {\presprojlines} the subspace $\linClos{v_1+v_i}^g$ is contained in $\linClos{\tilde{v}_1}+\linClos{\tilde{v}_i}$. There exists a uniquely determined $c_i\in \F{p^m}$ such that $\linClos{v_1+v_i}^g$ is spanned by a vector of the form $\tilde{v}_1+c_i \tilde{v}_i$. This $c_i$ cannot be $0$ since if $c_i=0$, then both $\linClos{v_i}$ and $\linClos{v_1+v_i}$ would be mapped to $\linClos{\tilde{v_1}}$ which is a contradiction. The set $\{\tilde{v}_i: i\geq 1\}$ is a basis of $\V$ iff $\{\tilde{v}_1\} \cup \{c_i\tilde{v}_i: i\geq 2\}$ is.  Therefore, we may assume without loss of generality that for all $i\geq 2:c_i=1$. 

For any $c\in \F{p^m}$ and for every $i\geq 2$ the subspace $\linClos{v_1+cv_i}^g$ is  contained in $\linClos{\tilde{v}_1}+\linClos{\tilde{v}_i}$. There exists a unique $\tilde{c}\in \F{p^m}$ such that $\linClos{{v}_1+{cv}_i}^g$ is spanned by $\tilde{v}_1 + \tilde{c} \tilde{v_i}$. We define a function $\alpha_i\colon \F{p^m}\to \F{p^m}$ by $c \mapsto \tilde{c}$.  This function $\alpha_i$ is injective because $g$ is. Since $\linClos{v_1+1v_i}^g=\linClos{\tilde{v}_1+\tilde{v}_i}$ we obtain $\alpha_i(1)=1$. Clearly $\alpha_i(0)$ is equal to $0$. 
We now want to show that for all $i,j\geq 2$ the functions $\alpha_i$ and $\alpha_j$ coincide. 

Let $c \in \F{p^m}$ and $i,j\geq 2$  be arbitrary such that $i\neq j$. The one-dimensional subspace $\linClos{cv_i -c v_j}$ is contained in $\linClos{v_i}+\linClos{v_j}$ as well as in $\linClos{v_1+c v_i}+ \linClos{v_1+cv_j}$. Therefore, the one-dimensional subspace $\linClos{c v_i-cv_j}^g$ is spanned by a vector which lies in the intersection

\begin{align*}
    \Big(\linClos{\tilde{v}_i}+\linClos{\tilde{v}_j}\Big) \cap \Big( \linClos{{\tilde v_1+c^{\alpha_i} \tilde v_i}}+ \linClos{{\tilde v_1+c^{\alpha_j} \tilde v_j}}\Big).
\end{align*}

Since $\tilde{v}_1$ is not an element of the left-hand side, the subspace   $\linClos{c v_i-cv_j}^g$ is spanned by a vector of the form 
$c^{\alpha_i} \tilde v_i-c^{\alpha_j} \tilde v_j$.

For $c=1$  we end up with 
\begin{align*}
\linClos{v_i-v_j}^g=
\linClos{1^{\alpha_i}\tilde{v}_i-1^{\alpha_j}\tilde{v}_j}=\linClos{\tilde v_i- \tilde v_j}.
\end{align*}

For arbitrary $c\in \F{p^m}$ since $\linClos{cv_i-cv_j}=\linClos{v_i-v_j}$ we obtain
\begin{align*}
    \linClos{c^{\alpha_i} \tilde v_i-c^{\alpha_j}\tilde v_j}=\linClos{cv_i-cv_j}^g =\linClos{\tilde v_i- \tilde v_j},
\end{align*}
hence $c^{\alpha_i}=c^{\alpha_j}$. Since $c$ was arbitrary for all $i,j\geq 1$ we have $\alpha_i=\alpha_j=:\alpha$.

We  show that $\alpha$ is indeed a field automorphism. To this end we are going to show that for all coefficients $(c_i)_{i\geq 2}$ in $\F{p^m}$, almost all of them $0$, we have
\begin{align}\label{proof_fun_geom_1}
    \linClos{v_1+ \sum_{i\geq 2}c_iv_i}^g=\linClos{\tilde{v}_1 +\sum_{i\geq 2}c_i^\alpha\tilde{v}_i}.
\end{align}

We prove via induction over $n\geq 2$ that for all $c_2,\ldots,c_n\in \F{p^m}$
\begin{align}\label{ZZZZZ}
    \linClos{v_1+ \sum_{i=2}^n c_iv_i}^g =\linClos{\tilde{v}_1 +\sum_{i=2}^nc_i^\alpha\tilde{v}_i}.
\end{align}

 The base case $n=2$  holds by definition.  Assume we have already shown (\ref{ZZZZZ}) for all $c_2,\ldots,c_n\in \F{p^m}$.

Let $c_2,\ldots,c_n,c\in \F{p^m}$  be given. The one-dimensional subspace \\ $\linClos{v_1+ \sum_{i=2}^n c_iv_i+c v_{n+1}}$ is contained in 
\begin{align*}
    \linClos{v_1+ \sum_{i=2}^n c_iv_i} + \linClos{c v_{n+1}} \text{ and } \linClos{v_1+c v_{n+1}} + \linClos{ \sum_{i=2}^n c_iv_i}.
\end{align*}

Therefore $\linClos{v_1+ \sum_{i=2}^n c_iv_i+c v_{n+1}}^g$ is contained in
\begin{align*}
    \linClos{v_1+ \sum_{i=2}^n c_iv_i}^g + \linClos{ v_{n+1}}^g \text{ and } \linClos{v_1+c v_{n+1}}^g + \linClos{ \sum_{i=2}^n c_iv_i}^g.
\end{align*}

By our induction hypothesis and  the base case $n=2$  these sets equal
\begin{align*}
    \linClos{\tilde v_1+ \sum_{i=2}^n c_i^{\alpha}\tilde v_i} + \linClos{ \tilde v_{n+1}} \text{ and } \linClos{\tilde v_1+c^\alpha \tilde v_{n+1}}+ \linClos{ \sum_{i=2}^n c_iv_i}^g.
\end{align*}

Thus the linear closure 
\begin{align*}
    \linClos{v_1+ \sum_{i=2}^n c_iv_i+c v_{n+1}}^g
\end{align*}

is spanned by a vector of the form
\begin{align*}
  {\tilde{v}_1 + \sum_{i=2}^nc_i^\alpha \tilde{v}_i +c^\alpha \tilde{v}_{n+1}}.
\end{align*}

{Since linear combinations are finite, this shows (\ref{proof_fun_geom_1}).}

For any $(c_i)_{i\geq 2}$ in  $\F{p^m}$, almost all of them $0$, the  subspace $\linClos{\sum_{i\geq 2}c_iv_i}$ is contained in \linebreak $\linClos{v_1+\sum_{i\geq 2}c_iv_i}+ \linClos{v_1}$ as well as in $\sum_{i\geq 2}\linClos{v_i}$. By (\ref{proof_fun_geom_1}) we obtain that $\linClos{\sum_{i\geq 2}c_iv_i}^g$ is contained in $\linClos{\tilde{v}_1+\sum_{i\geq 2}c_i^\alpha\tilde{v}_i}+ \linClos{\tilde{v}_1}$ as well as in $\sum_{i\geq 2}\linClos{\tilde{v}_i}$, thus 
\begin{align}\label{proof_fun_geom_2}
    \linClos{\sum_{i\geq 2}c_iv_i}^g=\linClos{\sum_{i\geq 2}c_i^\alpha\tilde{v}_i}.
\end{align}

We now want to show that for all $x,y\in \F{p^m}: (x+y)^\alpha=x^\alpha+y^\alpha$.
For any $x,y\in \F{p^m}$ the subspace $\linClos{v_1 +(x+y)v_2+v_3}$ is contained in 
\begin{align*}
    \linClos{v_1 + x v_2} +\linClos{v_2}+\linClos{v_3} \text{ and } \linClos{v_1 +yv_2} +\linClos{v_2}+\linClos{v_3}.
\end{align*}

We obtain $ \linClos{v_1 +(x+y)v_2+v_3}^g=\linClos{ \tilde v_1 + (x^\alpha+y^\alpha)\tilde v_2+\tilde v_3}$ by our definition of $\alpha$.  By (\ref{proof_fun_geom_1}):
\begin{align*}
     \linClos{v_1 +(x+y)v_2+v_3}^g=\linClos{ \tilde v_1 +(x+y)^\alpha  \tilde v_2+\tilde v_3}.
\end{align*}
As a consequence $(x+y)^\alpha=x^\alpha + y ^\alpha$. To show $(xy)^\alpha=x^\alpha y^\alpha$ consider 
\begin{align*}
    \linClos{v_1+xyv_2+xv_3}\subseteq \linClos{v_1}+\linClos{yv_2+v_3}.
\end{align*}

There exists $c\in \F{p^m}$ such that $\linClos{v_1+xyv_2+xv_3}^g$ is spanned by $\tilde{v}_1+cy^\alpha \tilde{v}_2+c\tilde{v}_3$. On the other hand $\linClos{v_1+xyv_2+xv_3}^g$ is equal to $\linClos{\tilde v_1+(xy)^\alpha \tilde v_2+x^\alpha \tilde{v}_3}$, hence $c=x^\alpha$. Together, we obtain $(xy)^\alpha=x^\alpha y^\alpha$.

Since $\alpha$ is compatible with $0,+,1$ and $\cdot$, it is injective. By finiteness it follows that $\alpha$ is bijective and therefore $\alpha \in \Aut(\F{p^m})$.

{We have already shown for all coefficients $(c_i)_{i\geq 2} \in \F{p^m}$,} almost all but not all of them $0$, that  $\linClos{v_1+\sum_{i\geq 2}c_iv_i}^g$ is equal to  $\linClos{\tilde v_1+\sum_{i\geq 2}c^\alpha_i\tilde v_i}$. For any additional coefficient $c_1\in \F{p^m}^\times$, since  $\linClos{c_1v_1+\sum_{i\geq 2}c_iv_i}$ is the same as $\linClos{v_1+\sum_{i\geq 2}c_1^{-1}c_iv_i}$ and since $\alpha$ is a field automorphism, we obtain
\begin{align}\label{proof_fun_geom_3}
\linClos{\sum_{i\geq 1}c_i v_i}^g = \linClos{\sum_{i\geq 1}c_i^\alpha \tilde v_i}.
\end{align}

We can finally  define a semi-linear function $\varphi\colon V \to V$. Let $v\in V$ be a vector, which is represented over the basis $\{v_i: i\geq 1\}$ by the linear combination $\sum_{i\geq 1}c_i v_i$ with coefficients $(c_i)_{i\geq 1}$ in $ \F{p^m}$, we  define:
\begin{align*}
    v^\varphi=\left(\sum_{i\geq 1}c_i v_i\right)^\varphi:= \left(\sum_{i\geq 1}c^\alpha_i \tilde{v}_i\right).
\end{align*}

By (\ref{proof_fun_geom_3}) the function  $\varphi$ acts on $\oneD{\V}$  as $g$. It remains to show the uniqueness of $\varphi$ and $\alpha$ up to a factor $c\in \F{p^m}$.

Let  $\psi$ be an injective semi-linear function with respect to a field automorphism $\beta$ and assume  the action of $\psi$  on $\oneD{\V}$ is $g$. For any vector $v\in V\setminus \{\0\}$ the functions  $\varphi$ and $\psi$ coincide  on $\linClos{v}$, thus there exists  $c_v\in \F{p^m}$ such that $\varphi(v)=\psi(c_v v)$. For any other vector $w\in V\setminus \linClos{v}$
\begin{align*}
    \varphi(v+w)&=\psi(c_{v+w}v)+\psi(c_{v+w}w) \text{  equals } \\
    \varphi(v)+\varphi(w)&=\psi(c_v v)+ \psi(c_w w),
\end{align*} hence for all $v,w\in \F{p^m}$ we obtain $c_v=c_{v+w}=c_w=:c$. Therefore for all $x \in V$ we have $\varphi(x)=\psi(cx)$.
For all $k\in \F{p^m}$ and all $v\in V$
\begin{align*}
    k^\alpha \varphi(v)=\varphi(kv)=\psi(ckv)=\psi((ckc^{-1}c)  v)=(ckc^{-1})^\beta \psi(cv),
\end{align*}
thus $k^\alpha=(ckc^{-1})^\beta$.
 \end{proof} 
 }
\ignore{
Theorem~\ref{fun_thm_geom} is a variation of the Fundamental Theorem of Projective Geometry (\cite[p.88]{Artin}, Theorem 2.26). We remark two points about the proof.
\begin{itemize}
    \item {We used that there exist at least three linearly independent vectors in the vector space, i.e., the dimension of the corresponding projective space is at least $2$.} In fact  Theorem~\ref{fun_thm_geom} also holds for finite-dimensional vector spaces of dimension at least three over arbitrary fields, but does not hold in this form for vector spaces of lower dimension than three. See for example Remark 2~\cite[p.88]{Artin} and Section 11~\cite[p.89]{Artin}. 
    \item Furthermore we used the finiteness of the underlying field $\F{p^m}$. This is not necessary and the proof can be easily adjusted to accommodate an infinite field.
\end{itemize}
 
}
\ignore{
{ By Theorem~\ref{fun_thm_geom} if we assume that all functions of $\G$ when acting on $\oneD{\V}$  {preserve projective lines}, we  obtain that $\G$ acts as a subgroup of $\SemL({\V})$ on $\oneD{\V}$. With this in mind we want to give a more detailed description of  the bijective semi-linear functions on a vector space. }

Let $\W$ be a vector space over a field $\Fi$. We define an action  of  $\Aut(\Fi)$ on $W$ in the following way. Let $\{v_i:i\in I\}=:B$ be a basis of $\W$. For every  $\alpha \in \Aut (\Fi)$ we define a mapping $\alpha_B$ such that for all  elements  $(c_i)_{i\in I}$ of the field $\Fi$, almost all of them $0$, 
\begin{align*}
    \alpha_B\left(\sum_{i\in I}c_iv_i\right):=\sum_{i\in I}c_i^\alpha v_i.
\end{align*}

We denote $\AutF{\Fi}:=\{\alpha_B:\alpha\in \Aut(\Fi)\}$. Clearly $(\id_F)_B=\id_V$ and for any $\alpha, \beta\in \Aut (\Fi)$ we have $\alpha_B\beta_B=(\alpha\beta)_B$, hence $\AutF{\Fi}$ is a group.

\begin{lemma}\label{semi_is_semi_prod}
Let $\W$ be a vector space over a field $\Fi$ and let $B$ be a basis of $\W$. Then the  group $\SemL (\W)$ is the semi-direct product $\AutF{\Fi}\ltimes \Aut (\W) $. \end{lemma}

\begin{proof}
Let  $B=\{v_i:i\in I\}$ be for a set $I$. Every function in $\AutF{\Fi}$ is bijective and semi-linear, hence $\AutF{\Fi}\leq \SemL (\W)$. {Moreover, $\Aut (\W)$ is normal in $\SemL (\W)$ since for any $\psi  \in \SemL (\W)$, all $\varphi\in \Aut (\W)$, and all coefficients $(c_i)_{i\in I}$  in $F$, almost all of them $0$, we have}
\begin{align*}
    \left(\sum_{i \in I} c_i v_i \right)^{\psi  \varphi \psi^{-1}}=\sum_{i\in I} c_i v_i^{\psi \varphi  {\psi ^{-1}}}.
\end{align*}

This shows that ${\psi{\varphi{\psi^{-1}}}} $ is an automorphism of $\W$, thus  ${\psi{\Aut (\W)}\psi ^{-1}\subseteq \Aut (\W)}$ and  $\Aut (\W) \lhd \SemL (\W)$.

For all $(c_i)_{i\in I}$ in $F$, almost all of them $0$, every bijective semi-linear function $\psi$ with respect to some $\alpha \in \Aut (\Fi)$  maps any vector $\sum_{i\in I} c_i v_i$ to $\sum_{i\in I} c_i^\alpha v_i^\psi$. {The function which maps any linear combination $\sum_{i\in I} c_i v_i$ to $\sum_{i\in I} c_i v_i^\psi$ defines a function $\varphi\in \Aut (\W)$. We obtain $\psi=\alpha_B\varphi$, i.e., $\SemL (\W)=\AutF{\Fi}\Aut (\W)$.} Clearly the only function which simultaneously lies in $\Aut (\W)$ and $\AutF{\Fi}$ is the identity on $W$.

\end{proof}

{By Theorem~\ref{fun_thm_geom} the action of $\G$ on $\oneD{\V}$ is equal to the action of some subgroup of $\SemL{(\V)}$ thereon. Since  $\G$ contains $\Aut (\V)$ we are only interested in the subgroups of $\AutF{\F{p^m}}\ltimes \Aut (\V)$ containing $\Aut (\V)$. }

\begin{lemma}\label{semdir_UG}
Let $\M$ be a group,  $\Nr,\Hc\leq \M$ and $\M=\Hc \ltimes \Nr$. Then for all subgroups $\Sr\leq \M$ such  that  $\Nr\leq \Sr$, there exists $\K \leq \Hc$ such that $\Sr$ is the semi-direct product $\K \ltimes \Nr$.
\end{lemma}
\begin{proof}
{Let $\Sr$  be given and $\M$ be formalized as $\M=(M,0,+,-)$. We define $K:=H\cap S$ and show that this induces an appropriate subgroup of $\Hc$. To start with, $\Nr$ is still normal in $\Sr$ since $\Sr \leq\M$. Also ${N\cap K}={N\cap H \cap S}=\{0\}$ and  $K$ induces a subgroup $\K$ of $\Sr$. }

{ It remains to show $NK=S$. Clearly $NK\subseteq S$. For the other inclusion let $s\in S$ be given and let $h\in H$ and $n\in N$ such that $s=n+h$. Since $\Nr\leq \Sr$ also $(-n)+n+h=h\in S$, hence $h\in S \cap H$. This concludes the proof. }
\end{proof}
\ignore{
We remark that the dual statement also holds, i.e., if $\Sr$ a subgroup of $\Hc \ltimes \Nr$ and $\Hc\leq \Sr$, then there exists $\K\leq \Nr$ such that $\Sr=\Hc \ltimes \K$.
}
\ignore{
If  $\Sr$ is a subgroup of  $\Hc \ltimes \Nr$ and neither $\Hc \leq \Sr$ nor $\Nr \leq \Sr$, we cannot conclude that $\Sr$ is again a semi-direct product. Consider for example  any group $\Mr$ with $+$ as its group operation and $0$ its neutral element.{  Let $\Mr_1$ be the isomorphic group with domain $M_1=\{(m,0):m\in M\}$ and likewise $\Mr_2$ with domain $M_2=\{(0,m):m\in M\}$, both with component-wise addition. Then $\Mr_1\ltimes  \Mr_2$ is a semi direct product with domain $\{(m,k):m,k\in M\}$. The  diagonal $\{(m,m):m\in M\}$ induces a subgroup of $\Mr_1\ltimes \Mr_2$ not equal to a semi-direct product of the form $\Hc\ltimes \Nr$ for some $\Hc\leq \Mr_1$ and $\Nr\leq \Mr_2$.}
}

The  automorphisms of a finite field are generated by the Frobenius automorphism.

\begin{definition}{
Let $q$ be a prime number and  let $n\geq 1$. The function $\sigma\colon \F{{q}^n} \to \F{{q}^n}$ defined by $x \mapsto x^{{q}}$ is called the \emph{Frobenius automorphism of $\F{q^n}$}.}
\end{definition}

Let $\sigma\colon \F{p^m}\to \F{p^m}$ be the  Frobenius automorphism of $\F{p^m}$. By~\cite[p.246]{Artin} the automorphisms of $\F{p^m}$ are of the form
\begin{align*}
   \Aut (\F{p^m})=\{\id_{\F{p^m}},\sigma, \ldots, \sigma^{m-1}\}.
\end{align*}

This is a cyclic group, thus we know  the exact form of its subgroups. For a proof of Lemma~\ref{cyclic} see for example~\cite[p. 23f]{LangS}.
\begin{lemma}\label{cyclic}
Let $\M$ be a cyclic group. Then every subgroup of $\M$ is cyclic. For every divisor $k$ of the order of $\M$ there is exactly one subgroup $\Hc\leq \M$ such that the order of $\Hc$ is $k$.
\end{lemma}

If $\G$ {\presprojlines}, by Theorem~\ref{fun_thm_geom},  we obtain that there exist $k,n\geq 1$ such that $kn=m$ and the action of $\G$ on $\oneD{\V}$ is the same as the action of
\begin{align*}
\{\sigma^n,\sigma^{2n},\ldots,\sigma^{kn}=\id_{\F{p^m}}\}\ltimes  \Aut (\V)\leq \Sym ({ \oneD{\V}}).
\end{align*}

From the next section onwards we are going to restrict ourselves to a vector space $\V$ over a {prime field} $\F{p}$ of odd order.

Since the  field has no non-trivial automorphisms, we have $\SemL (\V)=\Aut (\V)$. We have shown the following theorem.
}
Consequently, we have the following. 

\begin{proposition}\label{prop:res_col}
Let $\G$ be a closed supergroup of $\Aut (\V)$ fixing $\0$ which acts on  $\oneD{\V}$ and preserves projective lines. {Then  $\G$ acts on $\oneD{\V}$ like $\Aut (\V)$, i.e., the permutations on $\oneD{\V}$ obtained from the former action are precisely those of the latter.}\end{proposition}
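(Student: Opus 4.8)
The plan is to derive the proposition as an essentially immediate corollary of the Fundamental Theorem of Projective Geometry in the form of Theorem~\ref{fun_thm_geom}. Let me write $\pi\colon\G\to\Sym(\oneD{\V})$ and $\pi_0\colon\Aut(\V)\to\Sym(\oneD{\V})$ for the two induced actions; the goal is precisely to show that their images in $\Sym(\oneD{\V})$ coincide. One inclusion requires no work at all: since $\Aut(\V)\leq\G$ by assumption, every permutation of $\oneD{\V}$ lying in the image of $\pi_0$ is in particular realised by an element of $\G$, hence lies in the image of $\pi$.

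For the reverse inclusion I would fix an arbitrary $g\in\G$ and consider the permutation $h\in\Sym(\oneD{\V})$ that it induces (this is well defined because $\G$ fixes $\0$ and acts on $\oneD{\V}$ by hypothesis). Since by assumption $\G$ \presprojlines, the permutation $h$ preserves projective lines in the sense of Definition~\ref{planes_preserv_D}, so Theorem~\ref{fun_thm_geom} applies and furnishes a bijective linear map $\varphi\colon V\to V$ whose action on $\oneD{\V}$ is exactly $h$. It then remains only to recognise $\varphi$ as an automorphism of $\V$: a bijective $\F{p}$-linear self-map of $V$ is by definition an element of $\Aut(\V)$. Hence $h$, the permutation of $\oneD{\V}$ induced by $g$, is also induced by $\varphi\in\Aut(\V)$, i.e. $h$ lies in the image of $\pi_0$. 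Combining the two inclusions shows that $\G$ acts on $\oneD{\V}$ exactly as $\Aut(\V)$ does.

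I expect no substantive obstacle here, since all of the genuine difficulty is concentrated in Theorem~\ref{fun_thm_geom}, which we are permitted to assume. The one point to be careful about is that the map produced by Theorem~\ref{fun_thm_geom} is a genuine vector-space automorphism rather than merely a semilinear one; this is exactly where the hypothesis that the base field is the prime field $\F{p}$ is used, as $\F{p}$ admits no nontrivial field automorphism and so there is no gap between the linear and the semilinear maps. Over a larger field the same argument would only show that $\G$ acts on $\oneD{\V}$ like the group of bijective semilinear maps, which in general is strictly larger than $\Aut(\V)$; it is precisely the triviality of $\Aut(\F{p})$ that lets us conclude agreement with $\Aut(\V)$ itself.
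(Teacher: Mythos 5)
Your proposal is correct and follows exactly the route the paper takes: the paper states Proposition~\ref{prop:res_col} as an immediate consequence of Theorem~\ref{fun_thm_geom}, with the trivial inclusion coming from $\Aut(\V)\leq\G$ and the substantive one from the bijective linear map furnished by the Fundamental Theorem of Projective Geometry. Your closing remark about the prime field being what collapses semilinear to linear is also precisely the point the paper makes (in its commented-out discussion of $\SemL(\V)$ versus $\Aut(\V)$ for general $\F{p^m}$).
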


\subsection{The remaining cases for \texorpdfstring{$\G\leq \Sym (V)_{\0}$}{G lowerequal Sym(V)}}\label{remainCasFixZero}
We  now consider the cases where we are not helped by the projective geometry being preserved, namely  
\begin{itemize}
    \item $\Gamma = \F{p}^\times$ and the action of $\G$ on $\oneD{\V}$ does not preserve projective lines, or
    \item $\Gamma \lneq \F{p}^\times$ (and hence $\G$ does not act on $\oneD{\V}$ at all).
\end{itemize}
{Our goal is to show that under these assumptions $\G$ acts on the set of those $\sim$-equivalence classes  which are not $\{\0\}$ as the full symmetric group.}

\begin{proposition}\label{prop:mainresult1} 
Assume that one of the following conditions holds.
\begin{itemize}
    \item $\Gamma = \F{p}^\times$ and the action of $\G$ on $\oneD{\V}$ does not preserve projective lines, or
    \item $\Gamma \lneq \F{p}^\times$.
\end{itemize}
Then $\G$ acts as the full symmetric group on $(V\setminus \{\0\})/_\sim$.
\end{proposition}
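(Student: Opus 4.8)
The plan is to pass to the induced action of $\G$ on $X:=(V\setminus\{\0\})/_\sim$ and to study the image $\bar\G:=\{g^X:g\in\G\}\leq\Sym(X)$, where $g^X$ denotes the permutation of $\sim$-classes induced by $g$ (well defined because $\sim$ is $\G$-invariant). Two structural reductions drive everything. First, $\bar\G$ is \emph{closed} in $\Sym(X)$: since each class has the finite size $|\Gamma|$, the preimage in $V$ of a finite subset of $X$ is finite, so given $f$ in the closure of $\bar\G$ and an enumeration $X=\{C_i\}_{i\geq1}$ one may, for each $n$, choose $g_n\in\G$ with $g_n^X$ agreeing with $f$ on $C_1,\dots,C_n$, observe that $g_n$ restricted to the finite set $C_1\cup\dots\cup C_n$ takes values in the finite set $f(C_1)\cup\dots\cup f(C_n)$, extract by a diagonal argument a coherent limit $h\in\Sym(V)_{\0}$, and use closedness of $\G$ to obtain $h\in\G$ with $h^X=f$. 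Second, by the classical theorem that a primitive permutation group on an infinite set containing a nonidentity finitary permutation already contains the finitary alternating group, together with the density of $\Alt_{\mathrm{fin}}(X)$ in $\Sym(X)$, it suffices to prove that $\bar\G$ is \emph{primitive} and that $\bar\G$ contains \emph{some} nonidentity permutation of finite support.

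Next I would prove primitivity, uniformly in both cases. Transitivity is immediate: $\Aut(\V)$, which equals the general linear group since the prime field has no nontrivial automorphisms, is transitive on $V\setminus\{\0\}$ and hence on $X$. Now let $\approx$ be a $\bar\G$-invariant equivalence relation on $X$ other than equality, and pick distinct classes $C\approx C'$ in one block. If $C=[v]$ and $C'=[\mu v]$ lie on a common line, then $\mu\notin\Gamma$, so since $\sim_\Gamma=\sim_\G$ (Lemma~\ref{equiv_is_equiv}) the defining property of $\sim_\G$ yields $g\in\G$ with $\langle v^g\rangle\neq\langle(\mu v)^g\rangle$; as $C^g\approx C'^g$, we obtain a block containing two classes on \emph{different} lines. (Under the first hypothesis this subcase is vacuous; it is here that the failure of line preservation first enters.) So assume $C=[v]$ and $C'=[w]$ with $v,w$ linearly independent. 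By homogeneity of $\V$, for any linearly independent $a,b$ there is $\varphi\in\Aut(\V)$ with $v^\varphi=a$ and $w^\varphi=b$, whence $[a]\approx[b]$; thus all classes lying on pairwise distinct lines are $\approx$-related. As $\V$ is infinite-dimensional, any two classes admit a common class independent from both, so transitivity of $\approx$ forces $\approx$ to be universal. Hence the only $\bar\G$-congruences are equality and the full relation, i.e.\ $\bar\G$ is primitive.

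It then remains to exhibit one nonidentity finitely supported permutation in the closed group $\bar\G$. My approach is to start from a \emph{structure-breaking} witness furnished by the hypothesis — an element $g\in\G$ mapping a collinear triple of lines to a non-collinear one (first case), or mapping two same-line classes $[v],[\mu v]$ to classes on distinct lines (second case) — and to \emph{localise its defect}. Concretely, I would combine $g$ with the abundant supply of linear maps restricting to the identity on a prescribed finite-dimensional, respectively finite-codimensional, subspace, so as to realise, for each finite $F\subseteq X$, an element of $\G$ whose induced permutation fixes $F$ pointwise while acting nontrivially on a set of classes of size bounded independently of $F$; letting $F$ exhaust $X$ and invoking closedness of $\bar\G$ then yields a nonidentity permutation of finite support. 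The breaking element also upgrades the transitivity of $\bar\G$ (merging the two $\Aut(\V)$-orbits of collinear versus non-collinear triples in the first case, respectively the same-line and different-line orbits of pairs in the second), which is what makes such local surgeries possible. I expect this localisation — producing, uniformly in $F$, an element of $\G$ that is nontrivial yet confined modulo $\sim$ to a bounded region while fixing $F$ pointwise — to be the main obstacle, and it is presumably here that oddness of $p$ enters, through the involution $v\mapsto -v$ and the even order of $\F{p}^\times$.

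Finally, combining the three ingredients closes the argument: $\bar\G$ is primitive and contains a nonidentity finitary permutation, so by the Jordan--Wielandt theorem $\bar\G\supseteq\Alt_{\mathrm{fin}}(X)$, and since $\bar\G$ is closed while $\Alt_{\mathrm{fin}}(X)$ is dense in $\Sym(X)$ we conclude $\bar\G=\Sym(X)$, which is exactly the assertion that $\G$ acts as the full symmetric group on $(V\setminus\{\0\})/_\sim$.
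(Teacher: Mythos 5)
Your architecture --- closedness of the induced action on $(V\setminus\{\0\})/_\sim$, primitivity, one nonidentity finitary permutation, then Jordan--Wielandt plus density of the finitary alternating group in $\Sym(X)$ --- is genuinely different from the paper's, which instead proves directly that $\G$ is $n$-transitive on the classes for every $n$ (Lemma~\ref{isTrans}) and combines this with closedness (Lemma~\ref{isClosed}). Your closedness argument matches the paper's, your primitivity argument is correct and correctly locates where each of the two hypotheses enters, and the final deduction is sound. But the third ingredient, producing a nonidentity finitary permutation in $\bar\G$, is a genuine gap, and not a routine one: it is exactly where the entire difficulty of the proposition lives. The paper spends Lemmas~\ref{nagyorbit}--\ref{step_i} on an elaborate induction --- the sets $A_k(S)$ of vectors compressible together with $S$ into a $k$-dimensional subspace, their trichotomy (Lemma~\ref{AkPoss}), the affine-geometric analysis of Lemmas~\ref{containsAffLine} and~\ref{isAffin} (which is where oddness of $p$ is actually used), and a final counting contradiction --- precisely in order to amplify the single structure-breaking witness $g$ into transitivity on arbitrarily large tuples of classes. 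You propose instead to \emph{localise} the defect of $g$, and you explicitly acknowledge this as the main obstacle without giving an argument; I see no easy route, since $g$ is an arbitrary permutation with no a priori finite-dimensional structure one could cut down to, and a single breaking element only buys $3$-transitivity (respectively transitivity on pairs), which is far from enough.

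There is also a concrete flaw in the localisation scheme as you state it. Producing, for each finite $F\subseteq X$, an element of $\G$ whose induced permutation fixes $F$ pointwise while acting nontrivially on a set of classes \emph{of size bounded independently of $F$} does not yield a nontrivial limit: the support, though of bounded size, may escape to infinity as $F$ grows, and the pointwise limit would then be the identity. What you actually need is one fixed nontrivial permutation $\pi$ of one fixed finite set $B$ of classes, realised by elements of $\G$ that simultaneously induce the identity on $F\setminus B$ for every larger finite $F$ --- a substantially stronger requirement, and essentially equivalent to what the paper's Lemmas~\ref{infiCycle} and~\ref{GfisSymf} establish \emph{after} the full symmetric action on classes is already known. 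As written, the proposal therefore does not constitute a proof.
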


The proof is split into the following three steps:
\begin{enumerate}[label=(\roman*)]
    \item every tuple of vectors of $V\setminus \{\0\}$ can be mapped into a ``sufficiently small'' subspace of $\V$ by an element of $\G$;
    \item $\G$ acts transitively on arbitrary large tuples of  non-equivalent elements of $V\setminus \{\0\}$;
    \item $\G$ acts on $(V\setminus \{\0\})/_\sim$ as a closed permutation group.
\end{enumerate}
From (ii) and (iii)  it immediately  follows that $\G$ acts on $(V\setminus\{\0\})_\sim$ as the full symmetric group. 

The first step (i) will carry most of the weight.  The second step (ii) will follow quite easily from (i). \purple{Item} (iii) does not depend on (i) and (ii) and it is a straightforward consequence of the fact that all $\sim$-equivalence classes are finite. We start with a few basic observations for  stabilizers of $\G$.

\begin{lemma}\label{nagyorbit}
Let  $S$ be a finite subset of $V$. Then for all $v\in V$ the following are equivalent:
\begin{enumerate}
\item $G_S(v)$ is infinite.
\item $G_S(v)$ contains some vector $u\notin \linClos{S}$.
\item $G_S(v)\supseteq V\setminus \linClos{S}$.
\end{enumerate}
\end{lemma}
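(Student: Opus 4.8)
The plan is to prove the cyclic chain of implications $(1)\Rightarrow(2)\Rightarrow(3)\Rightarrow(1)$, where two of the three steps are immediate and essentially the entire weight rests on $(2)\Rightarrow(3)$. Since $\linClos{S}$ is a finite-dimensional subspace over the finite field $\F{p}$ it is a finite set, while $V$ is infinite; hence $V\setminus\linClos{S}$ is infinite, which gives $(3)\Rightarrow(1)$ at once. For $(1)\Rightarrow(2)$, an orbit $G_S(v)$ contained in the finite set $\linClos{S}$ could not be infinite, so an infinite orbit must contain some $u\notin\linClos{S}$.

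The core is $(2)\Rightarrow(3)$, and I would reduce it to the following claim: the subgroup $\Aut(\V)_S$ of linear automorphisms fixing $S$ pointwise already acts transitively on $V\setminus\linClos{S}$. Granting this, the implication is formal. Suppose $u\in G_S(v)$ with $u\notin\linClos{S}$, say $u=v^g$ for some $g\in G_S$, and let $w\in V\setminus\linClos{S}$ be arbitrary. Because $\F{p}$ is a prime field there are no nontrivial semilinear maps, so $\Aut(\V)$ consists of all bijective linear maps; any such map fixing $S$ pointwise also fixes $\linClos{S}$ pointwise, and it lies in $G_S$ since $\Aut(\V)\leq G$. By the claim there is $\varphi\in\Aut(\V)_S$ with $u^\varphi=w$, whence $w=v^{g\varphi}$ with $g\varphi\in G_S$, so $w\in G_S(v)$. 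As $w$ was arbitrary, $G_S(v)\supseteq V\setminus\linClos{S}$.

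It remains to prove the claim, where the main (though routine) work lies. I would write $W:=\linClos{S}$ and fix a complement $U$ with $V=W\oplus U$; since $\dim W<\infty$ and $\dim V=\infty$, the space $U$ is infinite-dimensional. Relative to this decomposition a linear map fixing $W$ pointwise is given by two linear maps $B\colon U\to W$ and $D\colon U\to U$ via $w+u\mapsto w+B(u)+D(u)$ (for $w\in W$, $u\in U$), and it is bijective precisely when $D$ is bijective. Given $x,y\in V\setminus W$, decompose $x=w_x+u_x$ and $y=w_y+u_y$ with $u_x,u_y\in U$; both $u_x,u_y$ are nonzero because $x,y\notin W$. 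I would then pick a bijective linear $D\colon U\to U$ with $D(u_x)=u_y$ — possible since, $U$ being infinite-dimensional, any nonzero vector can be sent to any other by extending $u_x$ and $u_y$ to bases of $U$ of equal cardinality — together with a linear $B\colon U\to W$ prescribed by $B(u_x)=w_y-w_x$ and extended arbitrarily. The resulting map lies in $\Aut(\V)_S$ and sends $x$ to $y$, establishing transitivity.

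The only genuinely delicate point is this transitivity claim; once it is available the three equivalences fall out mechanically. I would also remark that the argument invokes only $\Aut(\V)\leq G$ and uses neither the closedness of $\G$ nor the standing case distinction of this subsection, so the statement is a general fact about stabilizers of finite sets in supergroups of $\Aut(\V)$.
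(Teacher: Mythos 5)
Your proposal is correct and follows essentially the same route as the paper: the cyclic chain $(1)\Rightarrow(2)\Rightarrow(3)\Rightarrow(1)$ with all the weight on the transitivity of $\Aut(\V)_S$ on $V\setminus\linClos{S}$, which the paper simply asserts and you verify explicitly via a direct-sum decomposition. Your closing remark that only $\Aut(\V)\leq\G$ is used is also consistent with how the paper treats this lemma.
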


\begin{proof} Let $v\in V$ be given.

$(1) \Rightarrow (2)$:  Assume $G_S(v)$ to be infinite. Since $\langle S\rangle$ is finite $G_S(v)$ has to contain some element  outside of $\linClos{S}$. 

$(2) \Rightarrow (3)$: Already $\Aut(\V)_S$ acts transitively on the elements of $V\setminus \linClos{S}$.  Since there exists a vector in $G_S(v)$ which lies in $V\setminus \linClos{S}$, any element outside of $\langle S\rangle$ is also an element of  $G_S(v)$.

$(3) \Rightarrow (1)$: Since $V\setminus \linClos{S}$ is infinite, so is $G_S(v)$. \\
\end{proof}

\begin{lemma} \label{GSInftyIff} Let $S$ be a subset of $\Vtimes$. Then for all $v\in V$ and all $g\in G$ the following holds: 
\begin{enumerate}
    \item  $g^{-1}G_Sg=G_{S^g}$.
    \item  $G_S(v)^g=G_{S^g}(v^g)$.
    \item  $|G_S(v)|=\infty \iff|G_{S^g}(v^g)|=\infty$.
\end{enumerate}
\end{lemma}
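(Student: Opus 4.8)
The plan is to prove the three items in the order given, since (2) is a direct consequence of (1) and (3) is a direct consequence of (2). Throughout I will rely on the composition convention fixed in the preliminaries, namely that the exponential notation is a right action, $a^{fg}=(a^f)^g$, so that $g^{-1}hg$ denotes the permutation $a\mapsto ((a^{g^{-1}})^h)^g$; keeping this convention straight is really the only thing requiring attention, as the statement is otherwise a formal consequence of the definitions of stabilizer, orbit, and conjugation.

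For item (1) I would establish the two inclusions of $g^{-1}G_Sg=G_{S^g}$ separately. For $\subseteq$, take $h\in G_S$, so that $s^h=s$ for all $s\in S$, and check that $g^{-1}hg$ fixes every element $s^g$ of $S^g$: using $(s^g)^{g^{-1}}=s^{gg^{-1}}=s$ one computes $(s^g)^{g^{-1}hg}=(s^h)^g=s^g$, so $g^{-1}hg\in G_{S^g}$. For $\supseteq$, take $h'\in G_{S^g}$ and verify by the symmetric computation $s^{gh'g^{-1}}=((s^g)^{h'})^{g^{-1}}=(s^g)^{g^{-1}}=s$ that $gh'g^{-1}\in G_S$, whence $h'=g^{-1}(gh'g^{-1})g\in g^{-1}G_Sg$. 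I would also remark that both sides genuinely lie in $G$: since $g\in G$ and $G_S\leq G$ the conjugate $g^{-1}G_Sg$ is a subgroup of $G$, while $G_{S^g}$ is by definition the stabilizer of $S^g$ inside $G$.

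For item (2) I would compute both orbits explicitly. On one side, $G_S(v)^g=\{(v^h)^g:h\in G_S\}=\{v^{hg}:h\in G_S\}$. On the other side, by item (1) every element of $G_{S^g}$ has the form $g^{-1}hg$ with $h\in G_S$, and $(v^g)^{g^{-1}hg}=v^{gg^{-1}hg}=v^{hg}$; hence $G_{S^g}(v^g)=\{v^{hg}:h\in G_S\}$, and the two sets coincide.

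For item (3), since $g$ is a bijection of $V$, applying $g$ to $G_S(v)$ preserves cardinality, so by item (2) we get $|G_{S^g}(v^g)|=|G_S(v)^g|=|G_S(v)|$, which yields the asserted equivalence (indeed an equality of cardinalities) immediately. I expect no substantial obstacle here; the whole lemma is bookkeeping around the right-action convention, and the one place to be careful is ensuring the direction of conjugation is consistent throughout parts (1) and (2).
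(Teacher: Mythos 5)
Your proposal is correct and follows essentially the same route as the paper: item (1) by verifying both directions of the conjugation, item (2) by directly rewriting the orbit using (1), and item (3) as an immediate consequence of (2) together with bijectivity of $g$. The only cosmetic difference is that you spell out the cardinality equality $|G_{S^g}(v^g)|=|G_S(v)|$ explicitly, whereas the paper simply notes that (3) follows from (2).
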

\begin{proof}
Let $v\in V$ and $g\in G$ be given. If $h\in G_S$ then $g^{-1}hg$ fixes every element $s^g\in S^g$ since $s^{g(g^{-1}hg)}=s^{hg}=s^g$. For the same reason if  $f\in G_{S^g}$, then $g fg^{-1}$ fixes $S$ element-wise. This proves (1). 

For (2) we consider $G_S(v)^g$ which is equal to:
\begin{align*}
    \{v^h\colon h\in G_S\}^g &=\{v^{gfg^{-1}}\colon f\in G_{S^g}\}^g \\
   & =\{(v^g)^f\colon f\in G_{S^g}\}^{g^{-1}g}=G_{S^g}(v^g).
\end{align*}

Finally (3) follows immediately from (2).
\end{proof}

\begin{definition}\label{AKS}
Let  $S\subseteq \Vtimes$, and let $k\geq 1$. Then $A_k(S)$ is defined as the set of vectors $v\in V$ for which the set $\{v\}\cup S $ can be mapped into a $k$-dimensional subspace of $\V$ by an element of $\G$. \end{definition}

Note that 
for any set $S\subseteq V\setminus \{\0\}$ and any $k\geq 1$, we have  $A_k(S)\neq \emptyset$ if and only if $S$ can be mapped into a $k$-dimensional subspace of $\V$ by an element of $\G$; in that case,  $A_k(S)$ contains $\0$.

\begin{lemma}\label{propAkS}
Let $S$ be a subset of $\Vtimes$, and let $k\geq 1$. Then we have:
\begin{enumerate}
    \item For all $g \in G$ we have $A_k(S)=A_k(S^g)^{g^{-1}}$.
    \item For all $M \supseteq S$ we have $\ A_k(S) \supseteq A_k(M)$ \emph{(antitonicity)}.
    \item For all $v\in A_k(S)$ we have $\ G_S(v)\subseteq A_k(S)$. 
\end{enumerate}
\end{lemma}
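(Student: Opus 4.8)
The plan is to unfold Definition~\ref{AKS} in each case and track how a witnessing element of $\G$ transforms, using the paper's convention that $a^{fg}=(a^f)^g$. For item (1), I would establish the equivalence $v\in A_k(S)$ iff $v^g\in A_k(S^g)$, which is exactly the asserted equality $A_k(S)=A_k(S^g)^{g^{-1}}$. If $v\in A_k(S)$, there is $h\in G$ mapping $\{v\}\cup S$ into a $k$-dimensional subspace $W$. Since $(\{v^g\}\cup S^g)^{g^{-1}h}=(\{v\}\cup S)^{gg^{-1}h}=(\{v\}\cup S)^h\subseteq W$, the element $g^{-1}h\in G$ witnesses $v^g\in A_k(S^g)$. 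The reverse implication follows by applying this forward direction with $S^g,g^{-1}$ in place of $S,g$ (using $(S^g)^{g^{-1}}=S$), so $h\mapsto g^{-1}h$ gives a bijection between witnesses.

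Item (2) is immediate from monotonicity of the containment condition. If $v\in A_k(M)$ is witnessed by some $h\in G$ mapping $\{v\}\cup M$ into a $k$-dimensional subspace $W$, then $\{v\}\cup S\subseteq\{v\}\cup M$ yields $(\{v\}\cup S)^h\subseteq W$, so the same $h$ witnesses $v\in A_k(S)$.

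For item (3) I would deduce it from (1). Any $f\in G_S$ fixes $S$ pointwise, so $S^f=S$, and (1) then gives $A_k(S)=A_k(S^f)^{f^{-1}}=A_k(S)^{f^{-1}}$; equivalently $A_k(S)^f=A_k(S)$, so $A_k(S)$ is invariant under every element of $G_S$. Since $v\in A_k(S)$, we obtain $v^f\in A_k(S)$ for all $f\in G_S$, that is $G_S(v)\subseteq A_k(S)$.

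I do not expect any genuine obstacle here; the whole lemma is a careful unwinding of the definition of $A_k(S)$. The only points requiring attention are the left-to-right exponent convention when composing witnesses, and the observation that $f\in G_S$ entails $S^f=S$, which is precisely what lets (1) specialize to the $G_S$-invariance of $A_k(S)$ used in (3).
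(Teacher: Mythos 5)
Your proposal is correct and follows essentially the same route as the paper: all three items are direct unwindings of Definition~\ref{AKS}, with witnesses composed using the left-to-right convention. The only cosmetic difference is in item (3), which you derive as the specialization of (1) to elements of $G_S$ (via $S^f=S$), whereas the paper argues it directly by composing the map sending $w$ back to $v$ with a witness for $v$; both arguments are equally valid and of the same length.
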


\begin{proof}

(1): %A vector $v$ is an element of $A_k(S)$ iff $\{v\} \cup S$ can be mapped into a $k$-dimensional subspace by an element of $\G$. 
Let $g\in G$, and $v\in V$. Then clearly $\{v\} \cup S$ can be mapped into a $k$-dimensional subspace by an element of $\G$ iff $\{v^g\} \cup S^g$ can be mapped into a $k$-dimensional subspace by an element of $\G$. The equality $A_k(S)=A_k(S^g)^{g^{-1}}$ follows. 

(2): Let $M\supseteq S$ be given. For any $v\in V$ if the set $\{v\}\cup M$ can be mapped into some other given set by a function, then so can any subset of $\{v\}\cup M$ by the same function. In particular this is the case for $\{v\} \cup S$.

(3): Let $v$ be an element of  $A_k(S)$. Any element $w$ in $G_S(v)$ can be mapped to $v$ by some function of $\G_S$. Therefore, since $\{v\} \cup S$ can be mapped into a $k$-dimensional subspace by an element of $\G$, so can $\{w\}\cup S$.
\end{proof}

We now show that the set $A_k(S)$  has always  one of three shapes. 

\begin{lemma}\label{AkPoss}
Let $S$ be a subset of $\Vtimes$, and let $k\geq 1$. Then exactly one of the following holds:
\begin{enumerate}
 \item $A_k(S)=\emptyset. $
 \item There exists a $k$-dimensional subspace $W$ of $\V$ and a $g\in G$ such that $A_k(S)=W^g$.
 \item $A_k(S)=V$.
\end{enumerate}
\end{lemma}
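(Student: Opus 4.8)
The plan is to first dispose of the trivial alternative and then reduce to a normalized position by exploiting the $\G$-action. If $A_k(S)=\emptyset$ we are in case (1). Otherwise, by the remark preceding the statement there is some $g_0\in G$ with $S^{g_0}$ contained in a $k$-dimensional subspace; write $S':=S^{g_0}$ and $U':=\linClos{S'}$, so that $\dim U'\leq k$ and, since $\V$ is over the finite field $\F{p}$, the set $U'$ is finite. By Lemma~\ref{propAkS}(1) we have $A_k(S)=A_k(S')^{g_0^{-1}}$. As $g_0^{-1}$ is a bijection of $V$ fixing $\0$, it sends $V$ to $V$ and sends any $k$-dimensional subspace to a set of the form $W^g$ with $g\in G$; hence it suffices to prove the trichotomy for $A_k(S')$, that is, to show that $A_k(S')$ is either $U'$ with $\dim U'=k$, or all of $V$.

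The reduced set $A_k(S')$ always contains $U'$: for $v\in U'$ the set $\{v\}\cup S'$ lies inside $U'$, whose dimension is at most $k$, so $v\in A_k(S')$. The heart of the argument is then the dichotomy that either $A_k(S')\subseteq U'$ or $A_k(S')=V$. To establish it, suppose there is some $v\in A_k(S')\setminus U'$. I claim $\Aut(\V)_{S'}$ acts transitively on $V\setminus U'$, which extends the transitivity used in the proof of Lemma~\ref{nagyorbit} from finite sets to sets with finite-dimensional span. Indeed, since automorphisms are linear, $\Aut(\V)_{S'}$ coincides with the pointwise stabilizer of $U'$; and given $v,w\notin U'$ one extends a basis of the finite-dimensional $U'$ by $v$, respectively by $w$, to two bases of $V$, then maps the basis of $U'$ identically, sends $v$ to $w$, and bijects the remaining (countably infinite) basis vectors, obtaining a linear automorphism fixing $U'$ pointwise and sending $v$ to $w$. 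Consequently every $w\in V\setminus U'$ lies in the orbit $G_{S'}(v)$, which is contained in $A_k(S')$ by Lemma~\ref{propAkS}(3); together with $U'\subseteq A_k(S')$ this yields $A_k(S')=V$.

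It remains to read off the conclusion. If $A_k(S')\subseteq U'$ then $A_k(S')=U'$, and moreover $\dim U'=k$: were $\dim U'<k$, any $w\in V\setminus U'$ would satisfy $\{w\}\cup S'\subseteq U'+\linClos{w}$ of dimension $\dim U'+1\leq k$, forcing $w\in A_k(S')\setminus U'$, a contradiction. Transferring back by $g_0^{-1}$ then puts us in case (2) with $W=U'$. Otherwise $A_k(S')=V$ and transferring gives case (3). The three outcomes are mutually exclusive, since $W^g$ has exactly $p^k$ elements and is thus neither empty nor all of the infinite set $V$. The main obstacle is the transitivity claim in the middle paragraph: the entire trichotomy hinges on being able to move one out-of-$U'$ vector to every other by an automorphism fixing $S'$, and this is precisely where the finite-dimensionality of $U'$ (guaranteed by the reduction via $g_0$) and the homogeneity of the countably-infinite-dimensional $\V$ are used.
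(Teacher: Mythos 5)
Correct, and essentially the paper's argument: you reduce by an element of $\G$ to the case where $S$ lies in a small subspace, note that this subspace is contained in $A_k(S)$, and use transitivity of the stabilizer on the complement (Lemma~\ref{nagyorbit} in the paper, reproved inline in your second paragraph) together with Lemma~\ref{propAkS}(3) to force $A_k(S)=V$ as soon as any vector lies outside. The only cosmetic difference is that you work with $\linClos{S'}$ rather than with an ambient $k$-dimensional subspace $W$, which costs you the short additional argument that $\dim\linClos{S'}=k$ in case (2).
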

\begin{proof}
 For $S=\emptyset$ the set $A_k(S)$ is equal to $V$, thus let $S\neq \emptyset$. If $S$ cannot be mapped into a $k$-dimensional subspace of $\V$ by any element of $\G$, then $A_k(S)=\emptyset$ which is the case (1). Otherwise  clearly $S\subseteq A_k(S)$.
 
 Let $g\in G$ be a function which maps $S$ into a $k$-dimensional subspace $W$ of $\V$. By Lemma~\ref{propAkS} (1) it suffices to prove the statement for  $A_k(S^g)$. The subspace $W$ is contained in $A_k(S^g)$. If $A_k(S^g)=W$, then case (2) follows. Otherwise there is an element $w \in A_k(S^g)\setminus W$. We  show that from this $A_k(S^g)=V$, i.e., case (3), follows.
 
 Since $\langle S^g \rangle \subseteq W$ the element $w$ cannot lie in $\langle S^g \rangle $, thus by Lemma~\ref{nagyorbit} the set $G_{ S^g}(w)$ contains $V\setminus W$. By applying Lemma~\ref{propAkS} (3) we obtain $V \setminus W \subseteq A_k(S^g)$ and (3) follows. 
\end{proof}

	Note that it is a fairly easy consequence of Proposition~\ref{prop:mainresult1} that case~(2) can hold for $S\subseteq \Vtimes$ and $k\geq 1$ only if the number of non-equivalent elements in $S$ is equal to the number of non-equivalent elements in a $k$-dimensional subspace of $\V$ minus one.
	
%{Let $S\subseteq \Vtimes$ and $k\geq 1$  be given . {As it will turn out, (2) occurs only if the number of non-equivalent elements in $S$ is equal to  the number of non-equivalent elements in a $k$-dimensional subspace of $\V$ minus one.}} \todo[inline]{Is the rest of this paragraph useful?} {Clearly, the number of non-equivalent elements in $S$ cannot be greater than the number of $\sim$-equivalence classes in a $k$-dimensional subspace of $\V$ minus one,} since otherwise $A_k(S)=\emptyset$. For the other inequality assume we already know  that Proposition~\ref{prop:mainresult1} holds. In particular every tuple containing non-equivalent elements  of $\Vtimes$  can be mapped  to any other tuple containing non-equivalent elements of $V\setminus \{\0\}$ by an element of $\G$. { As a consequence, the number of non-equivalent elements in $S$ cannot be strictly lower than the number of elements in a $k$-dimensional subspace of $\V$ minus one, since otherwise any other non-zero element in $V$ can be mapped alongside $S$ into a $k$-dimensional subspace of $\V$  by an element of $\G$, which would imply $A_k(S)=V$.}

	We call cases~(1) and (3) in Lemma~\ref{AkPoss}  \emph{trivial}.  Some of the following properties are also true for the trivial cases but are so obviously and are of no significance for us.

\begin{lemma}\label{trans} Let $S\subseteq \Vtimes$ and let $k\geq 1$. Assume that $A_k(S)$ is nontrivial. Then:
\begin{enumerate}
    \item $S \subseteq \bigcup_{s\in S} [s]_\sim \subseteq A_k(S) \subseteq \linClos{S}$.
    \item $A_k(A_k(S))=A_k(S)$.
    \item For all $g\in G\colon$ if $S^g\subseteq A_k(S)$, then $A_k(S)=A_k(S)^g \ (=A_k(S^g))$.
\end{enumerate}
\end{lemma}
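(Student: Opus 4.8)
The plan is to exploit throughout that, by the standing nontriviality hypothesis and Lemma~\ref{AkPoss}, we have $A_k(S)=W^g$ for some $k$-dimensional subspace $W\leq\V$ and some $g\in G$; in particular $A_k(S)$ is \emph{finite} (it has $p^k$ elements), it contains $\0$, and $S\subseteq A_k(S)$, so $S$ is finite as well. Since every element of $G$ fixes $\0$ and $\0$ lies in every subspace, adjoining $\0$ to the argument of $A_k$ changes nothing, so iterated expressions such as $A_k(A_k(S))$ are unambiguous.

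For (1), the inclusion $S\subseteq\bigcup_{s\in S}[s]_\sim$ is immediate from reflexivity. For $\bigcup_{s\in S}[s]_\sim\subseteq A_k(S)$ I would fix $s\in S$ and $w\in[s]_\sim$, and pick $g\in G$ witnessing $A_k(S)\neq\emptyset$, i.e.\ mapping $S$ into a $k$-dimensional subspace $W$; since $w\sim s$ and $\sim$ is $\G$-invariant we get $w^g\sim s^g$, hence $w^g\in[s^g]_\sim\subseteq\linClos{s^g}\subseteq W$, so $g$ maps $\{w\}\cup S$ into $W$ and $w\in A_k(S)$. The remaining inclusion $A_k(S)\subseteq\linClos{S}$ is the one point needing a genuine argument: assuming some $v\in A_k(S)\setminus\linClos{S}$, Lemma~\ref{propAkS}(3) gives $G_S(v)\subseteq A_k(S)$, while $v\in G_S(v)$ and $v\notin\linClos{S}$ verify condition~(2) of Lemma~\ref{nagyorbit} (here $S$ is finite), so $(2)\Rightarrow(3)$ forces $V\setminus\linClos{S}\subseteq G_S(v)\subseteq A_k(S)$; this makes $A_k(S)$ infinite, contradicting $A_k(S)=W^g$.

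For (2), write $T:=A_k(S)$. The inclusion $T\subseteq A_k(T)$ follows by applying $g^{-1}$, which maps $T=W^g$ onto the $k$-dimensional subspace $W$: for every $v\in T$ the set $\{v\}\cup T=T$ lands in $W$ under $g^{-1}$, whence $v\in A_k(T)$. Conversely $S\subseteq T$, so antitonicity (Lemma~\ref{propAkS}(2)) gives $A_k(T)\subseteq A_k(S)=T$. Hence $A_k(A_k(S))=A_k(S)$.

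For (3), fix $g\in G$ with $S^g\subseteq A_k(S)=:T$. The bracketed identity $A_k(S)^g=A_k(S^g)$ is just Lemma~\ref{propAkS}(1). From $S^g\subseteq T$ and antitonicity I obtain $A_k(T)\subseteq A_k(S^g)$, and by~(2) the left-hand side equals $A_k(A_k(S))=A_k(S)=T$; together with Lemma~\ref{propAkS}(1) this yields $T\subseteq A_k(S^g)=T^g$. Since $T=W^g$ is finite and $g$ is a bijection, $|T|=|T^g|$, so $T\subseteq T^g$ forces $T=T^g$, that is $A_k(S)=A_k(S)^g=A_k(S^g)$. I expect the only genuinely delicate step to be the containment $A_k(S)\subseteq\linClos{S}$ in~(1); everything else is bookkeeping resting on the single structural fact that nontriviality makes $A_k(S)$ a finite coset $W^g$, together with antitonicity and the cardinality count.
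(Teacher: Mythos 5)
Your proposal is correct and follows essentially the same route as the paper: the middle inclusion of (1) via $\G$-invariance of $\sim$, the last inclusion by contradiction through Lemma~\ref{nagyorbit} and Lemma~\ref{propAkS}(3), item (2) from the inclusion chain plus antitonicity, and item (3) from antitonicity, idempotence, and the finiteness/cardinality count via Lemma~\ref{propAkS}(1). Your explicit remarks that $S$ must be finite (needed to invoke Lemma~\ref{nagyorbit}) and that adjoining $\0$ to the argument of $A_k$ is harmless are points the paper leaves implicit, but they do not change the argument.
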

\begin{proof}
We prove (1). The first inclusion is trivial. Let $g\in G$ such that $g$ maps $S$ into a $k$-dimensional subspace $W$. Then $g\left(\bigcup_{s\in S}[s]_\sim \right)\subseteq W$.
{We show the last inclusion, $A_k(S) \subseteq \langle S\rangle$,  by  contradiction.} Suppose there exists an element $v \in A_k(S)$ which lies outside of $\langle S\rangle$. By Lemma~\ref{nagyorbit} we know that $|G_S(v)|=\infty$ and by Lemma~\ref{propAkS} and Lemma~\ref{AkPoss} we obtain that $A_k(S)$ equals $V$. This contradicts the non-triviality of $A_k(S)$, whence $A_k(S)\subseteq \linClos{S}$.

(2): The inclusion  $A_k(S)\subseteq A_k(A_k(S))$ follows from (1). The other inclusion follows because of antitonicity. 

(3): Let $g\in G$ be given. If $S^g\subseteq A_k(S)$ we obtain,   by antitonicity, that
\begin{align*}
A_k(S^g) \supseteq A_k(A_k(S))\overset{(2)}{=}A_k(S).
\end{align*}
	Lemma~\ref{propAkS} (1) implies that $A_k(S^g)=A_k(S)^g$ is finite and has the same size as $A_k(S)$. Therefore $A_k(S)=A_k(S^g)=A_k(S)^g$.
\end{proof}

The following corollary states that, if we assume that $S$ is a linearly independent subset of $V$, then the set $A_k(S)$ always contains specific linear combinations.

\begin{corollary}\label{trans2}
Let $S$ be a linearly independent subset of $V$, let $k,n\geq 1$, and let $(x_1,\ldots,x_n)$ and $(y_1,\ldots,y_n)$  be linearly independent tuples of  elements  of $A_k(S)$. Then for all $\lambda_1,\ldots,\lambda_n\in \mathbb{F}_{p}$ the following holds: 
\begin{align}
    \sum_{i=1}^n \lambda_i x_i \in A_k(S) \iff \sum_{i=1}^n \lambda_i y_i \in A_k(S).
\end{align}
\end{corollary}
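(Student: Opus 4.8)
The plan is to reduce the asserted equivalence to the transport behaviour of $A_k$ under automorphisms (Lemma~\ref{propAkS}~(1)) combined with the self-stabilisation criterion of Lemma~\ref{trans}~(3).

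First I would dispatch the trivial cases of Lemma~\ref{AkPoss}. If $A_k(S)=\emptyset$ there are no tuples of elements of $A_k(S)$ and the statement is vacuous; if $A_k(S)=V$ then both sides of the equivalence hold for every choice of the $\lambda_i$. So I may assume that $A_k(S)$ is nontrivial, and then Lemma~\ref{trans}~(1) yields $S\subseteq A_k(S)\subseteq\linClos S$; in particular each $x_i$ and each $y_i$ lies in $\linClos S$.

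The heart of the argument is to push the combination through a \emph{linear} map. As $(x_1,\ldots,x_n)$ and $(y_1,\ldots,y_n)$ are linearly independent tuples of equal length, the homogeneity of $\V$ provides an automorphism $\varphi\in\Aut(\V)$ with $x_i^\varphi=y_i$ for all $i$; linearity of $\varphi$ then gives $\bigl(\sum_{i=1}^n\lambda_i x_i\bigr)^\varphi=\sum_{i=1}^n\lambda_i y_i$. Since $\varphi\in\Aut(\V)\le\G$, Lemma~\ref{propAkS}~(1) shows $A_k(S)^\varphi=A_k(S^\varphi)$, whence
\begin{align*}
\sum_{i=1}^n\lambda_i x_i\in A_k(S)\iff\sum_{i=1}^n\lambda_i y_i\in A_k(S^\varphi).
\end{align*}
Thus the corollary follows once I establish that $A_k(S^\varphi)=A_k(S)$.

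To obtain this I would invoke Lemma~\ref{trans}~(3): if $\varphi$ can be chosen so that in addition $S^\varphi\subseteq A_k(S)$, then $A_k(S)^\varphi=A_k(S^\varphi)=A_k(S)$ and we are done. Here lies the main obstacle. The map $\varphi$ is constrained only on $\linClos{x_1,\ldots,x_n}$, and since $A_k(S)$ already contains the spanning set $S$ of $\linClos S$ one has considerable freedom to route the remaining basis vectors of $S$ into $A_k(S)$; however, the images of those $s\in S$ that happen to lie in $\linClos{x_1,\ldots,x_n}$ are forced by $x_i^\varphi=y_i$, and checking that these land back inside $A_k(S)$ is itself an instance of the statement being proved. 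I expect to break this circularity by induction on $n$, passing from $(x_i)$ to $(y_i)$ through modifications of a single coordinate at a time (all intermediate tuples still consisting of elements of $A_k(S)$) and, at each step, correcting the action of the connecting map on $S$ by an element of the stabiliser $\G_S$ --- which is legitimate since $A_k(S)$ is $\G_S$-invariant by Lemma~\ref{propAkS}~(3) --- so that the hypothesis $S^\varphi\subseteq A_k(S)$ of Lemma~\ref{trans}~(3) can be met without disturbing the prescribed action on the tuple. The bookkeeping needed to realise this correction while keeping $\varphi$ an automorphism fixing the tuple is the step I expect to be delicate.
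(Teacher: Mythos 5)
Your reduction is correct up to the decisive step, and you have honestly located the difficulty, but you do not resolve it: the whole content of the corollary is the claim $A_k(S)^\varphi=A_k(S)$ for a suitable linear $\varphi$ carrying $(x_1,\ldots,x_n)$ to $(y_1,\ldots,y_n)$, and your proposal leaves exactly that claim as ``delicate bookkeeping''. Moreover the repair you sketch is doubtful on its own terms. Correcting the connecting map by an element of the stabiliser $\G_S$ would in general destroy linearity, so the corrected map need no longer send $\sum_i\lambda_i x_i$ to $\sum_i\lambda_i y_i$; and changing one coordinate at a time does not dissolve the circularity, since already an automorphism with $x_1\mapsto y_1$ and $x_i\mapsto x_i$ for $i\geq 2$ sends an element $s=\sum_i c_i x_i$ of $S\cap\linClos{x_1,\ldots,x_n}$ to the mixed combination $c_1y_1+\sum_{i\geq 2}c_ix_i$, whose membership in $A_k(S)$ is again an instance of the statement being proved. (Intermediate tuples such as $(y_1,x_2,\ldots,x_n)$ also need not be linearly independent.)

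The paper escapes this by never asking a single automorphism to satisfy both $x_i^\varphi=y_i$ and $S^\varphi\subseteq A_k(S)$. Since $A_k(S)$ is nontrivial it is finite, hence $S$ is finite, say $S=\{a_1,\ldots,a_m\}$ with $m=\dim\linClos{S}\geq n$. By the exchange property one extends $\{x_1,\ldots,x_n\}$ and $\{y_1,\ldots,y_n\}$ by elements $x_{n+1},\ldots,x_m$ and $y_{n+1},\ldots,y_m$ of $S$ to linearly independent $m$-tuples, all of whose entries lie in $A_k(S)$. Choosing $\varphi_x,\varphi_y\in\Aut(\V)$ with $a_i^{\varphi_x}=x_i$ and $a_i^{\varphi_y}=y_i$, \emph{each} of these maps separately satisfies the hypothesis $S^{\varphi}\subseteq A_k(S)$ of Lemma~\ref{trans}~(3), so each preserves $A_k(S)$ setwise; their composite is a linear map sending $x_i$ to $y_i$ for $i\leq n$ and preserving $A_k(S)$, which finishes the proof. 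This factorisation through the basis $S$ is the missing idea in your argument.
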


\begin{proof}
In the cases of $A_k(S)=V$ and $A_k(S)=\emptyset$ this is trivially true. We assume otherwise. Since $\{x_1,\ldots,x_n\}\subseteq A_k(S) \subseteq \linClos{S}$ and $(x_1,\ldots,x_n)$ is linearly independent we have $\dim \linClos{S}\geq n$. Since $A_k(S)$ is nontrivial, it follows that $S\subseteq A_k(S)$ must be finite. Let $a_1,\dots,a_m$ be an enumeration of $S$. Then we can find $x_{n+1},\dots,x_m,y_{n+1},\dots,y_m\in S$ such that $\{x_1,\dots,x_m\}$ and $\{y_1,\dots,y_m\}$ are linearly independent. Then there exists $\varphi_x,\varphi_y \in \Aut(\V)$ such that $\varphi_x(a_i)=x_i$ and $\varphi_y(a_i)=y_i$ for all $i=1,\dots,m$. In particular $\varphi_x(S),\varphi_y(S)\subseteq A_k(S)$. Now applying Lemma~\ref{trans} (3) yields $A_k(S)=A_k(S)^{\varphi_x\varphi_y^{-1}}$. Since linear combinations are preserved by $\Aut (\V)$ the statement follows.
\end{proof}

{Corollary~\ref{trans2} will turn out to be an empty statement. If Proposition~\ref{prop:mainresult1} holds, as we already discussed, the only case in which $A_k(S)$ is non-trivial is if the number of equivalence classes of $\sim$ represented in $S$ is the same as the number of equivalence classes in $k$-dimensional subspaces of $\V$ minus one.  Any linear combination of vectors in $S$ with more than one coefficient unequal to zero would yield a vector non-equivalent to any element of  $S$. Therefore if $S$  is linearly independent  and $A_k(S)$ is non-trivial, it cannot contain any linear combination over $S$ with more than one coefficient unequal to zero. }

Clearly, for every element $v\in A_k(S)$ the entire equivalence class $[v]_\sim$ is contained in $A_k(S)$. Provided that $S$ is linearly independent we can show even more: unless $A_k(S)$ is already $V$, no other element of $\linClos{v}$ is contained in $A_k(S)$.

\begin{lemma}\label{nonEquivElemNot}
Let $S\subseteq V $ be linearly independent and let $k\geq 1$. Suppose that $A_k(S)$ is non-trivial. Then for all $v\in A_k(S)\setminus \{\0\}$:
\begin{align}\label{nurEuqivClass}
\forall \lambda \in \F{p}^\times: \lambda v \in A_k(S) \iff \lambda \in \Gamma.
\end{align}
\end{lemma}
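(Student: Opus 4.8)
The implication from right to left is immediate: if $\lambda\in\Gamma$ then $v\sim\lambda v$, so $\lambda v\in[v]_\sim\subseteq A_k(S)$ because $A_k(S)$ is a union of $\sim$-classes (as noted just before the lemma). For the converse I write $T:=\{\mu\in\F{p}^\times:\mu v\in A_k(S)\}$, so that the claim becomes $T=\Gamma$; the inclusion $\Gamma\subseteq T$ is exactly the previous sentence. The first step is to record that $T$ is a subgroup of $\F{p}^\times$. For $\mu\in T$ the one-element tuples $(v)$ and $(\mu v)$ are linearly independent (both vectors are nonzero) and lie in $A_k(S)$, so Corollary~\ref{trans2} with $n=1$ gives $cv\in A_k(S)\iff c\mu v\in A_k(S)$ for every $c\in\F{p}$, i.e. $T\mu=T$; together with $1\in T$ this yields closure under products and inverses. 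This is precisely where the linear independence of $S$ enters, through Corollary~\ref{trans2}.

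It remains to prove $T\subseteq\Gamma$, which is trivial if $\Gamma=\F{p}^\times$; so I assume $\Gamma\lneq\F{p}^\times$ and, for a contradiction, fix $\mu\in T\setminus\Gamma$. Two facts should drive the argument. First, Corollary~\ref{trans2} (with $n=1$ and $y_1$ ranging over the nonzero elements of $A_k(S)$) shows that $\mu x\in A_k(S)$ for every nonzero $x\in A_k(S)$, so $A_k(S)$ is closed under multiplication by $\mu$ along every line it meets; moreover, changing the reference pair in Corollary~\ref{trans2} from $(a,b)$ to $(a,a+b)$ and to $(a+b,b)$ shows that, as soon as $A_k(S)$ contains a genuine sum $a+b$ of linearly independent $a,b\in A_k(S)$, membership in $A_k(S)$ becomes invariant under the two transvections of $\F{p}^2$. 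Since these generate $\mathrm{SL}_2(\F{p})$, which acts transitively on $\F{p}^2\setminus\{\0\}$, the whole plane $\linClos{a,b}$ then enters $A_k(S)$. Second, since $\mu\notin\Gamma$ we have $v\not\sim\mu v$, hence $v\not\sim_{\G}\mu v$, so by Definition~\ref{equiv_G} there is $g_0\in G$ with $\linClos{v^{g_0}}\neq\linClos{(\mu v)^{g_0}}$; thus $v^{g_0}$ and $(\mu v)^{g_0}$ are linearly independent and, by Lemma~\ref{propAkS}(1), lie in the still nontrivial set $A_k(S^{g_0})=A_k(S)^{g_0}$. The plan is to use this separation to manufacture a genuine sum in a suitable $A_k(\cdot)$, trigger the plane-filling just described, and iterate until $A_k$ acquires a vector outside its span, contradicting Lemma~\ref{trans}(1) and so forcing the excluded case $A_k(S)=V$ of Lemma~\ref{AkPoss}.

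The main obstacle is exactly the interaction between the non-linear element $g_0$ and the linear spreading furnished by Corollary~\ref{trans2}. Applying $g_0$ simultaneously destroys the linear independence of $S$ and the collinearity of $v$ and $\mu v$, and Corollary~\ref{trans2} — the only device that propagates membership from one vector to others — requires the base set to be linearly independent, so it cannot be applied to $A_k(S^{g_0})$ as it stands. The delicate part is therefore to relocate the separation witnessed by $g_0$ into a frame in which the base set is linearly independent, by means of Lemma~\ref{trans}(3) and the transitivity of $\Aut(\V)$ on linearly independent tuples (the same mechanism that underlies the proof of Corollary~\ref{trans2}), so that the transvection argument can be run there and the blow-up to $A_k(S)=V$ carried through.
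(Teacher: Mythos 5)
Your right-to-left direction and the observation that $T=\{\mu\in\F{p}^\times:\mu v\in A_k(S)\}$ is a subgroup of $\F{p}^\times$ (via Corollary~\ref{trans2} with $n=1$) are both correct, but the heart of the lemma --- that $\lambda v\in A_k(S)$ forces $\lambda\in\Gamma$ --- is never actually proved. What you present for that direction is a plan, and you yourself flag its critical step as unresolved: ``the delicate part is therefore to relocate the separation witnessed by $g_0$ into a frame in which the base set is linearly independent, \ldots\ so that the transvection argument can be run there.'' That relocation is not a technicality one can defer; it is the entire content of the converse implication, and without it nothing in your argument ever uses the hypothesis $\lambda\notin\Gamma$. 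Moreover, even granting the relocation, your subsequent plan is doubtful: it requires ``manufacturing a genuine sum'' $a+b$ of linearly independent elements of some $A_k(\cdot)$ lying again in that set, and there is no mechanism in sight that produces such a sum from the mere fact that $v^{g_0}$ and $(\mu v)^{g_0}$ are linearly independent members of $A_k(S^{g_0})$. (Indeed, a nontrivial $A_k(S)$ typically contains no such sums, as the discussion following Corollary~\ref{trans2} in the paper indicates, so the ``plane-filling and iteration'' would have to run in the blow-up direction from the very first step.)

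The step you are missing can be done much more directly, and without transvections. Since $A_k(S)$ is nontrivial, $S$ is finite. Reduce first to the case $v\in S$ using Lemma~\ref{trans}~(3). Given $\lambda\notin\Gamma$, pick $h\in G$ with $\linClos{v^h}\neq\linClos{(\lambda v)^h}$, and --- this is the relocation --- use the infinite dimension of $\V$ to choose a set $M$ of size $|S|-1$ such that \emph{both} $M\cup\{v\}$ and $M^h\cup\{v^h,(\lambda v)^h\}$ are linearly independent; then take $\varphi\in\Aut(\V)$ with $S^\varphi=M\cup\{v\}$ and $v^\varphi=v$. Now $(S\cup\{\lambda v\})^{\varphi h}=M^h\cup\{v^h,(\lambda v)^h\}$ is a linearly independent set of size $|S|+1$, yet by Lemma~\ref{propAkS}~(1) and Lemma~\ref{trans}~(1) it is contained in $A_k(S^{\varphi h})\subseteq\linClos{S^{\varphi h}}$, a space of dimension at most $|S|$. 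This dimension count is the whole contradiction; no sums need to be manufactured and no plane needs to be filled.
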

\begin{proof} 
Let  $v\in A_k(S)\setminus \{\0\}$ be given. If $v\in \linClos{S}\setminus S$, then there exists  $\varphi\in \Aut (\V)$ and $w\in S$ such that $S^\varphi=S\setminus \{w\} \cup  \{v\}$. By Lemma~\ref{trans} (3) we obtain $A_k(S^\varphi)=A_k(S)$ . Therefore we can assume without loss of generality that $v\in S$.

The implication from right to left of (\ref{nurEuqivClass}) has already been shown in Lemma~\ref{trans} (1). For the other implication let us assume that $\lambda v \in A_k(S)$. Since $\lambda \not \in \Gamma$ there exists  $h\in G$ such that $\linClos{(\lambda v )^h}\neq \linClos{v^h}$. %It follows from an easy inductive argument that we can choose 
Since the dimension of $\V$ is infinite, there is a set $M\subseteq V$ of size $|S|-1$ such that both $M^h \cup \{v^h,(\lambda v)^h\}$ and ${M}\cup \{v\}$ are linearly independent.
There exists $\varphi\in \Aut(\V)$ such that $S^\varphi={M}\cup \{v\}$ and such that $v^\varphi=v$.  Then  $\linClos{(S\cup \{\lambda v\})^{\varphi h}}$ is a $|S|+1$-dimensional subspace of $\V$. Now $\lambda v\in A_k(S)$ implies $(S\cup \{\lambda v\})^{\varphi h}\subseteq A_k(S^{\varphi h})$ which is itself contained in $\linClos{S^{\varphi h}}$ by Lemma~\ref{trans} (1). The dimension of $\linClos{S^{\varphi h}}$ is at most $|S|$, thus this is a contradiction.

\end{proof}

We will need the following well-known fact which states that the affine subspaces of a vector space are precisely those subsets which are closed under the construction of  affine lines, if the underlying field is not $\F{2}$. 
%the characteristic of the underlying field is not equal to 2.

\begin{proposition}\label{affinalter} 
Let $\W$ be %an at least 2 dimensional 
a vector space over a field 
%$\Fi$ 
 different from $\F{2}$. Let  $A$ be a non-empty  subset of $W$ such that for all distinct $x,y \in A$ also $\Aff (x,y)\subseteq A$. Then $A$ is an affine subspace of $\V$.
\end{proposition}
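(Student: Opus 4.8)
The plan is to fix a point $v\in A$ (possible since $A\neq\emptyset$), write $F$ for the field of scalars, and show that the translate $U:=\{a-v:a\in A\}$ is a linear subspace of $\W$. By definition of an affine subspace this is exactly what must be proved, and it suffices to check it for this single $v$: if $U=A-v$ is a subspace, then for any other $w\in A$ we have $A-w=U-(w-v)=U$, because $w-v\in U$ and subspaces are invariant under translation by their own elements. Clearly $\0=v-v\in U$, so it remains to verify closure under scalar multiplication and under addition. Note also that the hypothesis ``field different from $\F{2}$'' simply says $|F|\geq 3$, since $\F{2}$ is the unique field with two elements.

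For scalar multiplication, let $u=a-v\in U$ with $a\in A$ and let $\lambda\in F$. If $a=v$ then $u=\0$ and $\lambda u=\0\in U$, so assume $a\neq v$. Then $\lambda u+v=\lambda a+(1-\lambda)v$ is an affine combination of $a$ and $v$, hence lies in $\Aff(a,v)\subseteq A$ by hypothesis, giving $\lambda u\in U$. This step uses only closure under affine lines and works over any field.

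The main work, and the only place where the assumption $|F|\geq 3$ enters, is closure under addition. Given $u_1,u_2\in U$, I would first dispose of the linearly dependent case: then $u_1+u_2$ is either $\0$ or a scalar multiple of $u_1$, so it lies in $U$ by the previous paragraph. Assume therefore that $u_1,u_2$ are linearly independent. Choose $t\in F$ with $t\neq 0$ and $t\neq 1$ (possible as $|F|\geq 3$) and set $\lambda:=t^{-1}$, $\mu:=(1-t)^{-1}$. By scalar closure the points $p:=v+\lambda u_1$ and $q:=v+\mu u_2$ lie in $A$, and they are distinct, since $p=q$ would force $\lambda u_1=\mu u_2$, contradicting independence of $u_1,u_2$ together with $\lambda,\mu\neq 0$. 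Hence $\Aff(p,q)\subseteq A$, and the affine combination of $p,q$ with coefficient $t$ is
\[
t\,p+(1-t)\,q=v+t\lambda\,u_1+(1-t)\mu\,u_2=v+u_1+u_2 .
\]
Thus $v+u_1+u_2\in A$, i.e.\ $u_1+u_2\in U$, which completes the verification that $U$ is a subspace.

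The crux is this last step: the whole point of passing to the rescaled points $p,q$ is to force the line through them to pass through $v+u_1+u_2$, and this requires an ``interior'' parameter $t\neq 0,1$, which exists precisely when $|F|\geq 3$. Over $\F{2}$ no such $t$ is available and the statement genuinely fails, so the hypothesis is used in an essential way; everything else is a routine translation of the subspace axioms into statements about affine combinations.
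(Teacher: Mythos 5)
Your proof is correct, but it takes a genuinely different route from the paper's. The paper proves that $A$ is closed under \emph{all} finite affine combinations by induction on the number of points: the inductive step splits into the case where some coefficient differs from $1$ (where one peels off that point and combines it with the affine combination of the rest) and the case where all coefficients equal $1$ (where the argument pairs off two points via the midpoint $\frac{1}{2}x_1+\frac{1}{2}x_2$, and hence really uses $\ch F\neq 2$ rather than just $F\neq \F{2}$). You instead translate by a fixed $v\in A$ and verify the two subspace axioms for $U=A-v$ directly, with no induction; the additivity step is handled by the rescaling trick $p=v+t^{-1}u_1$, $q=v+(1-t)^{-1}u_2$, $tp+(1-t)q=v+u_1+u_2$ for some $t\notin\{0,1\}$. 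Your version is shorter and, notably, works verbatim for every field with at least three elements --- including fields of characteristic $2$ such as $\F{4}$ --- so it matches the stated hypothesis ``different from $\F{2}$'' exactly, whereas the paper's induction yields the stronger-looking conclusion $\Aff(A)=A$ but leans on invertibility of $2$. (In the paper's application the field is $\F{p}$ with $p$ odd, so this distinction is immaterial there.) The only cosmetic blemish is in your linearly dependent case: if $u_1=\0$ then $u_1+u_2=u_2$ is not literally a scalar multiple of $u_1$, though it is of course in $U$; a one-line ``assume both nonzero, else trivial'' fixes the phrasing.
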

\ignore{
\begin{proof}
We show by induction over $n\geq 2$ that for all subsets  $\{x_1,\ldots,x_n\}\subseteq A$ of size $n$ every affine combination over said set is an element of $A$. 

The case $n=2$ holds by assumption. Assume we have shown that every affine combination of $n$ distinct elements of $A$ is contained in $A$. Let $x_1,\ldots, x_{n+1}$ be distinct elements in $A$ and let $c_1,\ldots,c_{n+1}\in F$ be arbitrary coefficients which sum up to $1$. 

We distinguish two cases. First we assume that there exists a coefficient  unequal to $1$. Without loss of generality let this coefficient be $c_1$. Then
\begin{align*}
    \sum_{i=1}^{n+1} c_i x_i = \left(1-\sum_{i=2}^{n+1}c_i\right)x_1 + \sum_{i=2}^{n+1} c_i x_i.
\end{align*}

We set $\lambda:=\sum_{i=2}^{n+1} c_i\neq 0$. The affine combination   $\sum_{i=2}^{n+1} \frac{c_i}{\lambda} x_i$
is an element of $A$ by our induction hypothesis. If $x_1=\sum_{i=2}^{n+1}\frac{c_i}{\lambda} x_i$, then $(1-\lambda)x_1 +\lambda \left(\sum_{i=2}^{n+1} \frac{c_i}{\lambda} x_i\right)=x_1\in A$ and we are done. Otherwise by the property of $A$ the element 
\begin{align*}
    (1-\lambda)x_1 +\lambda  \left(\sum_{i=2}^{n+1} \frac{c_i}{\lambda} x_i\right)=\sum_{i=1}^{n+1}c_i x_i,
\end{align*}
is an element of $A$.

Now assume that all coefficients are equal to $1$. In this case $\sum_{i=1}^{n+1}c_i=\sum_{i=1}^{n+1}1=1$ and we write \begin{align*}
    \sum_{i=1}^{n+1} x_i = x_1+x_2 + \sum_{i=3}^{n+1} x_i 
\end{align*}

Since $\ch \Fi\neq 2$ we have $1+1=2\neq0$, hence $\frac{1}{2}\in F$.
We have $\sum_{i=3}^{n+1}1=1-2=-1$.   By our induction hypothesis and the property of $A$ we obtain
 \begin{align*}
     \frac{1}{2} x_1 + \frac{1}{2} x_2 \in A \quad  \text{ and } \quad \sum_{i=3}^{n+1}\frac{1}{-1}x_i \in A.
 \end{align*}
 
{ Similar to before, if both coincide, then we are done. Otherwise, our element  $\sum_{i=1}^{n+1} x_i$ can be written as   affine combination }
\begin{align*}
     2\left(\frac{1}{2}x_1+\frac{1}{2}x_2\right) + (-1)\left( \sum_{i=3}^{n+1} \frac{1}{-1}x_i\right),
\end{align*}
which is an element of $A$.

Therefore $\Aff(A)=A$ and $A$ is an affine subspace of $\W$.
\end{proof}
}

If $S$ is a linearly independent subset of $V$ we use Proposition~\ref{affinalter} to show that if $A_k(S)$ contains an affine line not containing $\0$, then it contains certain affine spaces.

\begin{lemma}\label{isAffin}
Let $S\subseteq V$ be linearly independent. Let us assume that $A_k(S)$ contains an affine line not containing $\0$. Then for all $n\geq 1$ and any set $\{a_1,\ldots,a_n\}\subseteq A_k(S)$ of pairwise linearly independent elements, $A_k(S)$ contains the affine subspace $\Aff({a_1,\ldots,a_n})$ of $\V$.
\end{lemma}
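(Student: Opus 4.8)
The plan is to apply Proposition~\ref{affinalter} to the set $B:=\Aff(a_1,\ldots,a_n)\cap A_k(S)$: if I can show that $B$ is an affine subspace of $\V$, then since $a_1,\dots,a_n\in B$ it must contain $\Aff(a_1,\ldots,a_n)$, forcing $B=\Aff(a_1,\ldots,a_n)$, which is exactly the claim. First I would dispose of the trivial cases of Lemma~\ref{AkPoss}: if $A_k(S)=\emptyset$ there is nothing to prove, and if $A_k(S)=V$ the inclusion is immediate; so I assume $A_k(S)$ is non-trivial, in which case $\0\in A_k(S)$. The key preliminary step is the following: for any two linearly independent $x,y\in A_k(S)$ one has $\Aff(x,y)\subseteq A_k(S)$. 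Indeed, the hypothesised affine line $\Aff(v,w)\subseteq A_k(S)$ avoiding $\0$ forces $v,w$ to be linearly independent (otherwise two distinct linearly dependent vectors would span an affine line through $\0$), and then Corollary~\ref{trans2}, applied to the linearly independent pairs $(v,w)$ and $(x,y)$ of elements of $A_k(S)$, transports membership coefficient-wise, so $sx+(1-s)y\in A_k(S)$ for every $s\in\F{p}$.

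To invoke Proposition~\ref{affinalter} (legitimate since $\F{p}\neq\F{2}$, as $p$ is odd) I must verify that $B$ is closed under affine lines: for distinct $x,y\in B$ the line $\Aff(x,y)$ already lies in the affine subspace $\Aff(a_1,\ldots,a_n)$, so it suffices to prove $\Aff(x,y)\subseteq A_k(S)$. If $x,y$ are linearly independent this is precisely the key claim. The remaining, and main, difficulty is a pair of distinct but linearly \emph{dependent} $x,y\in B$; here $\Aff(x,y)=\linClos{x}$ is a line through $\0$, and I must show $\linClos{x}\subseteq A_k(S)$.

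For this dependent case I would proceed geometrically. Since $\linClos{x}\subseteq \Aff(a_1,\ldots,a_n)=:A$, the set $A$ contains $\0$ and is therefore a linear subspace; moreover $\dim A\geq 2$, since otherwise $A=\linClos{x}$ would force the pairwise linearly independent $a_i$ all onto a single line through $\0$, which is impossible once $A$ contains the two distinct points $x,y$. Hence some $u:=a_i$ lies outside $\linClos{x}$, and by the key claim both $\Aff(u,x)$ and $\Aff(u,y)$ are contained in $A_k(S)$. Now I fix a target $z\in\linClos{x}\setminus\{\0\}$ and work inside the plane $\linClos{x,u}\subseteq A$: choosing a line $m$ through $z$ that avoids $\0$ and is transversal to both $\Aff(u,x)$ and $\Aff(u,y)$, the two intersection points $p,q$ lie in $A_k(S)$ and, since $m$ avoids $\0$, are linearly independent, so the key claim gives $z\in m=\Aff(p,q)\subseteq A_k(S)$. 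Together with $\0\in A_k(S)$ this yields $\linClos{x}\subseteq A_k(S)$, completing the verification of the hypotheses of Proposition~\ref{affinalter}.

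The one point that requires care — and the step I expect to be the real obstacle — is the existence of this transversal line $m$: among the $p+1$ lines through $z$ in the plane $\linClos{x,u}$ one must avoid the line through $\0$, the line through $u$, and the two lines through $z$ parallel to $\Aff(u,x)$ and to $\Aff(u,y)$. This is a trivial count when $p\geq 5$, and for $p=3$ it needs a short direct check (or a second admissible choice of $u$, available whenever $n\geq 3$). Granting this, Proposition~\ref{affinalter} shows that $B$ is an affine subspace containing every $a_i$, whence $B=\Aff(a_1,\ldots,a_n)$ and the lemma follows.
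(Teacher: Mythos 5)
Your core mechanism is the same as the paper's: the zero-avoiding line $\Aff(v,w)$ consists of a linearly independent pair, Corollary~\ref{trans2} transports its membership in $A_k(S)$ to every linearly independent pair of elements of $A_k(S)$, and Proposition~\ref{affinalter} finishes. The paper's own proof is in fact shorter and less explicit than yours: it verifies line-closure only for the pairs $(a_i,a_j)$ and then invokes Proposition~\ref{affinalter} without specifying the set to which it is applied, so the issue you isolate --- that the generated set can contain distinct linearly \emph{dependent} points, which happens exactly when $\0\in\Aff(a_1,\ldots,a_n)$ --- is a genuine subtlety that the paper elides. Your transversal construction does handle a dependent pair $x,y$ with both entries nonzero (and your $p=3$ worry is vacuous there: for $p=3$ one has $\linClos{x}=\{\0,x,y\}$, so no target $z$ remains to be produced).

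The gap is the pair $(x,\0)$. Since $\0\in A_k(S)$ whenever $A_k(S)$ is nontrivial and nonempty, $\0$ belongs to $B$ precisely in the problematic case $\0\in\Aff(a_1,\ldots,a_n)$, and Proposition~\ref{affinalter} then also requires $\Aff(x,\0)=\linClos{x}\subseteq B$ for every nonzero $x\in B$. Your construction needs \emph{two} lines $\Aff(u,x)$ and $\Aff(u,y)$ known to lie in $A_k(S)$, with $x,y$ distinct nonzero points of $\linClos{x}$; taking $y=\0$ the second line is $\linClos{u}$, to which the key claim does not apply. If $\Gamma\neq\{1\}$ you can substitute $y:=\lambda x$ for some $\lambda\in\Gamma\setminus\{1\}$, which lies in $B$ by Lemma~\ref{nonEquivElemNot} and because $\Aff(a_1,\ldots,a_n)$ is then a linear subspace, and your argument goes through. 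But if $\Gamma=\{1\}$ the only points of $B$ on $\linClos{x}$ are $x$ and $\0$, so the construction cannot be launched --- and indeed Lemma~\ref{nonEquivElemNot} gives $2x\notin A_k(S)$ for nontrivial $A_k(S)$, so the inclusion $\linClos{x}\subseteq A_k(S)$ you are trying to verify is then outright false. In that configuration the lemma can only hold because the hypotheses are contradictory, and establishing that contradiction is a different kind of argument (for instance: take a minimal affine dependency $\sum_i c_i a_i=\0$ with $\sum_i c_i=1$ and use the key claim to manufacture two linearly dependent nonzero elements of $A_k(S)$ whose ratio is not $1$, contradicting Lemma~\ref{nonEquivElemNot}). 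This step is missing from your proof; to be fair, it is missing from the paper's as well.
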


\begin{proof}
Let a set of pairwise linearly independent elements  $\{a_1,\ldots,a_n\}$ $\subseteq A_k(S)$ be given. By  Proposition~\ref{affinalter} it suffices to show that for all distinct $1\leq i,j\leq n$ we have that the affine line $\Aff (a_i,a_j)$ is contained in $A_k(S)$.

{By assumption there exist distinct $v,w\in V$ such that the affine line $\Aff (v,w)$ is contained in $A_k(S)$ and $\0\not \in \Aff (v,w)$.}  Since $\Aff (v,w)$ does not contain $\0$ the set $\{v,w\}$ is linearly independent. Therefore, for all $\lambda \in \F{p}$ the  linear combination $\lambda v + (1-\lambda)w$ is an element of $A_k(S)$, and by applying Lemma~\ref{trans2} to the tuples $(v,w)$ and $(a_i,a_j)$  we obtain that every affine combination $\lambda a_i + (1-\lambda)a_j$ is also an element of $A_k(S)$. By Proposition~\ref{affinalter} we have Aff$(a_1,\ldots,a_n)\subseteq A_k(S)$.
\end{proof}

{The pairwise linear independence of the set $\{a_1,\ldots,a_n\}$ in Lemma~\ref{isAffin} is necessary. In particular, for a linearly independent set $S\subseteq V$ it does not hold in general that $A_k(S)$ is an affine subspace of $\V$. Since $A_k(S)$ contains $\0$, if $A_k(S)$ was an affine subspace of $\V$, it would also be a subspace of the vector space $\V$. For $\Gamma\neq \F{p}^\times$ this contradicts  Lemma~\ref{nonEquivElemNot}.}

	The following lemma gives a sufficient condition for $A_k(S)$ containing an affine line not containing $\0$.
	
%One possible way to prove that $A_k(S)$ contains an affine line not containing $\0$ is Lemma~\ref{containsAffLine}.

\begin{lemma}\label{containsAffLine}
Let $S\subseteq V$ be linearly independent and let $k \geq 1$. Assume that $(a_1,\ldots,a_n)$ is linearly independent and such that $A_k(S)$ contains $a_1,\ldots,a_n$ as well as a vector  $v=\lambda_1a_1+\cdots+\lambda_n a_n$ with:
\begin{itemize}
\item not all coefficients $\lambda_1,\ldots,\lambda_n$ are the same,
\item at least three coefficients  are not equal to zero.
\end{itemize} Then $A_k(S)$ contains an affine line not containing $\0$.  
\end{lemma}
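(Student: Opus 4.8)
The plan is to pass to ``coefficient space'' via the transport principle of Corollary~\ref{trans2}. Write $e_1,\dots,e_n$ for the standard basis of $\F{p}^n$ and set $B:=\{\mu\in\F{p}^n : \sum_{\ell=1}^n\mu_\ell a_\ell\in A_k(S)\}$. Since $(a_1,\dots,a_n)$ is linearly independent and lies in $A_k(S)$, Corollary~\ref{trans2} shows that for every linearly independent $n$-tuple $(z_1,\dots,z_n)$ of elements of $A_k(S)$ and every $\mu$ we have $\sum_\ell\mu_\ell z_\ell\in A_k(S)$ iff $\mu\in B$; that is, membership of a combination in $A_k(S)$ depends only on its coefficient vector. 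Consequently, if $T$ is invertible with all columns in $B$, then its columns form a linearly independent tuple in $A_k(S)$ and $\sum_\ell\mu_\ell(\text{columns})=\sum_\ell(T\mu)_\ell a_\ell$, so $\mu\in B\iff T\mu\in B$; thus $B$ is invariant under every such $T$, in particular under all coordinate permutations. We already know $e_1,\dots,e_n\in B$ and $\lambda\in B$. It therefore suffices to produce two linearly independent $\mu,\nu\in B$ whose whole affine line $\{(1-t)\mu+t\nu:t\in\F{p}\}$ lies in $B$: transporting it along $(a_1,\dots,a_n)$ gives an affine line in $A_k(S)$ with linearly independent endpoints $\sum\mu_\ell a_\ell$ and $\sum\nu_\ell a_\ell$, hence one not passing through $\0$.

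I would then reduce everything to one concrete seed line. Using that the $\lambda_\ell$ are not all equal and that at least three are nonzero, I can relabel so that $\lambda_1\neq\lambda_2$ and $\lambda_3\neq0$ (a short case check shows this is always arrangeable, taking $\lambda_2$ to be a zero coordinate if the nonzero entries happen to be equal). The mechanism is substitution into the reference tuple: if the affine line $\ell^\ast:=\{a_3+s(a_1-a_2):s\in\F{p}\}$ is contained in $A_k(S)$, then replacing the third entry of $(a_1,\dots,a_n)$ by $a_3+s(a_1-a_2)$ yields, for each $s$, a linearly independent tuple in $A_k(S)$; applying the good coefficient vector $\lambda$ gives $v+\lambda_3 s(a_1-a_2)\in A_k(S)$. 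As $\lambda_3\neq0$, letting $s$ range over $\F{p}$ sweeps out the whole affine line $v+\linClos{a_1-a_2}\subseteq A_k(S)$, which avoids $\0$ because $v\notin\linClos{a_1-a_2}$ (here the third nonzero coordinate outside $\{1,2\}$ is used). So the statement reduces to $\ell^\ast\subseteq A_k(S)$, equivalently that the coefficient line $\{e_3+s(e_1-e_2):s\in\F{p}\}$ lies in $B$.

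The hard part is precisely this seed line; $e_3\in B$ only supplies the point $s=0$, and obtaining all remaining points simultaneously is where I expect the genuine work and where $\lambda_1\neq\lambda_2$ is used. My plan is to realise the seed line as a single orbit: construct a \emph{unipotent} linear map $\rho$ all of whose columns lie in $B$ (so $\rho$ preserves $B$), fixing $e_1,e_2$ and the remaining $e_\ell$ and sending $e_3\mapsto e_3+c(e_1-e_2)$ for some $c\neq0$. Then $\rho^t(e_3)=e_3+tc(e_1-e_2)$ traces out $\ell^\ast$, and in fact $\rho^t(\lambda)=\lambda+tc\lambda_3(e_1-e_2)$ directly gives the final good line. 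Everything hinges on the single nontrivial column, i.e.\ on exhibiting one good vector of small support of the form $e_3+c(e_1-e_2)$: I would obtain it by applying Corollary~\ref{trans2} to linearly independent reference tuples assembled from $v$, its coordinate permutations, and the $a_\ell$, chosen so that all but three coordinates cancel. The main obstacle is that over a general prime field an affine line has $p$ points (unlike $\F{3}$, where a single extra point completes the line), so the cancellation must be organised to deliver the whole parametrised column family rather than one vector; I expect this to require splitting into the cases $\lambda_1\neq\pm\lambda_2$ and $\lambda_1=-\lambda_2$ (adjusting the choice of the third index, possibly to a zero coordinate). Once $\ell^\ast\subseteq A_k(S)$ is secured the reduction of the second paragraph finishes the proof; note that Proposition~\ref{affinalter} is not needed here, only afterwards in Lemma~\ref{isAffin}.
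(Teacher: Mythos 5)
Your first two paragraphs are sound and in the spirit of the paper: the coefficient-space reformulation of Corollary~\ref{trans2}, the invariance of $B$ under invertible matrices with columns in $B$, the reduction of the target line $v+\linClos{a_1-a_2}$ to the seed line $\{e_3+s(e_1-e_2)\}$, and the observation that one nontrivial unipotent column $e_3+c(e_1-e_2)\in B$ with $c\neq 0$ would sweep out the whole line by iteration (using that $p$ is prime) are all correct. The problem is that the crux --- producing that single seed vector, equivalently showing $a_3+c(a_1-a_2)\in A_k(S)$ for some $c\neq 0$ --- is never actually established. You describe it yourself as ``the hard part'' and offer only a plan (``reference tuples assembled from $v$, its coordinate permutations, and the $a_\ell$, chosen so that all but three coordinates cancel'', with an anticipated case split on $\lambda_1=\pm\lambda_2$) that is not carried out, and it is not clear it can be: the only elements of $B$ available at that stage are the $e_i$, the vector $\lambda$ and its coordinate permutations, and combining these via invertible transports does not visibly yield a vector supported on $\{1,2,3\}$ with third coordinate $1$. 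So as written the proof has a genuine gap at its decisive step.

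The gap is avoidable, and the paper's proof shows how: aim your orbit argument at the line through $v$ itself rather than through $a_3$. The second point on that line comes for free from the coordinate swap you already use: transporting $\lambda$ from $(a_1,a_2,a_3,\ldots,a_n)$ to $(a_2,a_1,a_3,\ldots,a_n)$ gives $v':=v+(\lambda_1-\lambda_2)(a_2-a_1)\in A_k(S)$. Now apply Corollary~\ref{trans2} not to $n$-tuples but to the linearly independent triples $(v,a_2,a_1)$ and $(v',a_2,a_1)$ (linear independence of $\{v,a_1,a_2\}$, hence of $\{v',a_1,a_2\}$, is exactly where the hypothesis of at least three nonzero coefficients enters): since the combination with coefficients $(1,\lambda_1-\lambda_2,-(\lambda_1-\lambda_2))$ of the first triple equals $v'\in A_k(S)$, the same combination of the second triple, namely $v+2(\lambda_1-\lambda_2)(a_2-a_1)$, lies in $A_k(S)$, and iterating gives $v+m(\lambda_1-\lambda_2)(a_2-a_1)\in A_k(S)$ for all $m$, which is the whole affine line because $\F{p}$ is prime. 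This is precisely your unipotent-orbit mechanism, just run with the reference triple $(v,a_2,a_1)$ instead of $(a_1,\ldots,a_n)$, and it removes the need for the seed line entirely.
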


\begin{proof}
Without loss of generality we assume that $\lambda_1\neq \lambda_2$, and define $\lambda:=\lambda_1-\lambda_2\neq 0$. We apply Corollary~\ref{trans2} to the tuples $(a_1,a_2,a_3, \ldots ,a_n)$  and $(a_2,a_1,a_3, \ldots ,a_n)$ and we obtain that $v'\in A_k(S)$ where
\begin{align*}
v'& =\lambda_1 a_2 +\lambda_2a_1+\sum_{k=3}^n {\lambda_ka_k} \\
& =\sum_{k=1}^n{\lambda_ka_k}+\lambda(a_2-a_1)=v+\lambda(a_2-a_1).
\end{align*}

{Since $v$ as a linear combination of $a_1,\ldots,a_n$ has at least three coefficients not equal to zero, the set $\{v,a_1,a_2\}$ is linearly independent.} Thus the set $\{v',a_1,a_2\}$ is linearly independent as well. By reapplying Corollary~\ref{trans2} to the tuples $(v, a_2,a_1)$ and $(v',a_2,a_1)$ we obtain
\begin{align*}
 v+ \lambda (a_2-a_1)=v' &\in A_k(S) \\
\Longrightarrow  v'+\lambda( a_2-a_1)=:v'' & \in A_k(S).
\end{align*} 

On the other hand $v''$ is equal to $v+2\lambda (a_2-a_1)$. We continue in this fashion and eventually obtain that $v+\mu\lambda(a_2-a_1)\in A_k(S)$ for all $\mu\in \mathbb{F}_{p}$. Since $\lambda\neq 0$  we obtain \begin{align*}
L:=\{v+\mu (a_2-a_1): \mu\in  \mathbb{F}_{p}\} \subseteq A_k(S).
\end{align*} 
The set $\{v,a_2,a_1\}$ is linearly independent, hence the affine line $L$ does not contain $\0$.
\end{proof}

We remark that the proof of Lemma~\ref{containsAffLine} relies heavily on $\F{p}$ being a prime field, since any non-prime field  is not generated by $1$ and the addition. 
The next lemma states that it is sufficient to consider linearly independent tuples in Step~(i) from the beginning of Section~\ref{remainCasFixZero}.

\begin{lemma}\label{linInEnough}
Let  $n,k\geq 1$.  If some linearly independent $n$-tuple can be mapped  into a $k$-dimensional subspace of $\V$ by an element of $\G$, then every $n$-tuple can be mapped into a $k$-dimensional subspace of $\V$ by an element of $\G$.
\end{lemma}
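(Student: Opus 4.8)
The plan is to reduce the statement to a claim about sets and then run an induction on linear redundancy, the main tool being the sets $A_k(S)$ together with the trichotomy of Lemma~\ref{AkPoss}. Since $\0$ is fixed by every element of $\G$ and lies in every subspace, and since repeated entries of a tuple impose no extra constraint, mapping an $n$-tuple into a $k$-dimensional subspace is equivalent to mapping the set $S\subseteq\Vtimes$ of its distinct nonzero entries into such a subspace; note $|S|\le n$. Thus it suffices to show that every $S\subseteq\Vtimes$ with $|S|\le n$ can be squeezed into a $k$-dimensional subspace, i.e. that $A_k(S)\neq\emptyset$. I would derive this from the stronger claim that $A_k(X)=V$ for \emph{every} $X\subseteq\Vtimes$ with $|X|\le n-1$: granting it, pick any $v\in S$ and set $X:=S\setminus\{v\}$; then $|X|\le n-1$, so $v\in A_k(X)=V$, which by definition says that $X\cup\{v\}=S$ can be mapped into a $k$-dimensional subspace.

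To launch the argument I first record that $A_k(T)=V$ for every linearly independent $T$ with $|T|\le n-1$. Let $(w_1,\dots,w_n)$ be the linearly independent $n$-tuple supplied by the hypothesis, which can be mapped into a $k$-dimensional subspace; then $w_n\in A_k(\{w_1,\dots,w_{n-1}\})$ while $w_n\notin\linClos{w_1,\dots,w_{n-1}}$. By Lemma~\ref{AkPoss} the set $A_k(\{w_1,\dots,w_{n-1}\})$ is nonempty and, by Lemma~\ref{trans}(1), cannot be of the nontrivial middle shape $W^g$ (which is contained in $\linClos{w_1,\dots,w_{n-1}}$); hence it equals $V$. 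Since $\Aut(\V)$ acts transitively on linearly independent $(n-1)$-tuples, Lemma~\ref{propAkS}(1) transports this to $A_k(T)=V$ for every linearly independent $T$ of size $n-1$, and antitonicity (Lemma~\ref{propAkS}(2), extending $T$ to an independent set of size $n-1$) yields it for all linearly independent $T$ with $|T|\le n-1$.

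The remaining, and genuinely delicate, case is a set $X$ of size $\le n-1$ that is \emph{linearly dependent}; this is where the main obstacle lies, since the maps in $\G$ need not be linear, so one cannot simply squeeze a maximal independent subset of $X$ and ``drag along'' the dependent vectors, as dependencies are not preserved. I would resolve this by induction on the number $j$ of redundant vectors. Write $X=T\cup U$ with $T$ a basis of $\linClos{X}$ and $U\subseteq\linClos{T}\cap\Vtimes$, $|U|=j$, and prove that $A_k(T\cup U)=V$ whenever $|T|+j\le n-1$; the base $j=0$ is the previous paragraph. For the inductive step it suffices to exhibit a single vector lying in $A_k(T\cup U)$ but outside $\linClos{T\cup U}$, for then Lemmas~\ref{AkPoss} and~\ref{trans}(1) force $A_k(T\cup U)=V$ exactly as before. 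Using $\dim\V=\infty$ together with $|T|+j\le n-1$, choose a fresh $z\notin\linClos{T}=\linClos{T\cup U}$, fix some $v\in U$, and apply the induction hypothesis to the pair $\bigl(T\cup\{z\},\,U\setminus\{v\}\bigr)$: this is legitimate because $T\cup\{z\}$ is linearly independent, $U\setminus\{v\}\subseteq\linClos{T}\subseteq\linClos{T\cup\{z\}}$, and $\bigl|T\cup\{z\}\bigr|+(j-1)=|T|+j\le n-1$. It gives $A_k\bigl((T\cup\{z\})\cup(U\setminus\{v\})\bigr)=V$, so in particular $v$ lies in this set, i.e. $(T\cup U)\cup\{z\}$ can be mapped into a $k$-dimensional subspace. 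Hence $z\in A_k(T\cup U)$ with $z\notin\linClos{T\cup U}$, completing the step.

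Combining the three pieces, every $X\subseteq\Vtimes$ with $|X|\le n-1$ satisfies $A_k(X)=V$, and the reduction in the first paragraph then shows that every $n$-tuple can be mapped into a $k$-dimensional subspace of $\V$ by an element of $\G$. I expect the only real difficulty to be the inductive step above, where the non-linearity of $\G$ is circumvented by ``trading'' a genuinely new dimension $z$ against one redundant vector $v$; the rest is bookkeeping with the properties of $A_k$ already established. Note that the argument uses only that $\dim\V$ is infinite and that $\Aut(\V)$ is transitive on linearly independent tuples, so the oddness of $p$ plays no role here.
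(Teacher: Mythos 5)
Your argument is correct; every step checks out, including the delicate inductive step in which you trade a redundant vector $v$ for a fresh direction $z$. The paper's proof is organized differently, though it runs on the same engine. It proceeds by induction on the tuple length $n$ and works directly with tuples: given a dependent $(n+1)$-tuple $t$ with, say, $t_{n+1}\in\linClos{t_1,\dots,t_n}$, the induction hypothesis maps $(t_1,\dots,t_n)$ into a $k$-dimensional subspace $W$ by some $g\in G$; either $t_{n+1}^g\in \linClos{t_1^g,\dots,t_n^g}\subseteq W$ and one is done, or Lemmas~\ref{nagyorbit} and~\ref{GSInftyIff} supply an element of the stabilizer $G_{\{t_1,\dots,t_n\}}$ moving $t_{n+1}$ out of the span, raising $\dim t$ by one; iterating this finitely often makes the tuple linearly independent, at which point transitivity of $\Aut(\V)$ finishes. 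Your version replaces that inner iteration by a reduction to sets and an induction on the number of redundant vectors, with the trichotomy of Lemma~\ref{AkPoss} together with Lemma~\ref{trans}~(1) standing in for the direct orbit argument; the fact being exploited is identical in both proofs, namely that a point of $A_k(S)$ outside $\linClos{S}$ forces $A_k(S)=V$ because the stabilizer of $S$ acts transitively off $\linClos{S}$. What your packaging buys is a cleaner, slightly stronger intermediate statement ($A_k(X)=V$ for every $X$ of size at most $n-1$) and no ``repeat this process'' bookkeeping; what the paper's buys is that it never introduces an auxiliary vector outside the span and works only with the components of the given tuple. Your closing remark is also accurate: only $\dim\V=\infty$ and transitivity of $\Aut(\V)$ on linearly independent tuples are used, so the oddness of $p$ is irrelevant here.
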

\begin{proof}
We fix an arbitrary $k\geq 1$ and prove this statement via induction over $n$. The base case $n=1$ is obvious. For the induction step we assume the statement holds for $n$ and show it also holds for $n+1$.

{Assume there exists a linearly independent tuple $x$ of size $n+1$ which can be mapped into a $k$-dimensional subspace of $\V$. Every linearly independent tuple of size $n+1$ can be mapped to $x$ via an automorphism of $\V$. Therefore we only have to consider linearly dependent tuples.} 

Let  a tuple $t=(t_1,\ldots,t_{n+1})$ of  dimension smaller than $n+1$ be given. The tuple $t$ has at least one component which is a linear combination of all other components of $t$. Without loss of generality let $t_{n+1}$ be such a component.  By our induction hypothesis the initial segment $\tilde t = (t_1,\ldots,t_n)$ of $t$ can be mapped into a $k$-dimensional subspace $W$ of $\V$ by some $g\in G$. Moreover, $\linClos{\tilde t^g}$ is a subset of $W$, thus if $t_{n+1}^g\in \linClos{\tilde t^g }$, then  $t_{n+1}^g\in W$ and we are done.  Assume otherwise, i.e., $t_{n+1}^g \not \in \linClos{\tilde t^g}$. In that case, by Lemma~\ref{nagyorbit} we obtain $|G_{\{t_1,\ldots, t_n\}^g}(t_{n+1}^g)|=\infty$, whence $|G_{\{t_1,\ldots,t_n\} }(t_{n+1})|=\infty$ by Lemma~\ref{GSInftyIff} (3). 
Therefore there exists  $h\in G_{ \{t_1,\ldots,t_n\} }$ such that $t_{n+1}^h \not \in 
\linClos{\{t_1,\ldots,t_n\}}$, hence $\dim t^h = 1+ \dim t$.

If $t^h$ is linearly independent, we are done. Otherwise the size of $t^h$ is still $n+1$ and $\dim t^h <n+1$, thus we can repeat this process. Eventually we end with an element $g\in G$ such that $t^g$ is linearly independent. By assumption $t^g$ can be mapped into a $k$-dimensional subspace of $\V$ by an element of $\G$, whence so can $t$. \end{proof}

Let $S\subseteq V\setminus \{\0\}$ be of size $n\geq 1$. We want to map $S$  into a sufficiently small subspace of $\V$ by an element of $\G$.  {Since $\G$ preserves $\sim$-equivalence classes and $S$ might contain non-equivalent elements, such a  subspace of $\V$ has to contain at least $n|\Gamma|$ elements.} As it turns out for the sake of induction it will be convenient to have the subspace contain one additional equivalence class. This means $|\Gamma|$ additional elements. Since every subspace of $\V$ also contains $\0$, this sums up to at least $n|\Gamma|+|\Gamma|+1$ elements. For any $k\geq 0$, a $k$-dimensional subspace of $\V$ contains $p^k$ elements. Therefore the following inequalities have to hold:
\begin{align}
  &  (n+1)|\Gamma| +1 \leq p^{k} \iff \notag \\ \label{kandn}
    &   n \leq \frac{p^{k}-1}{|\Gamma|}-1.
\end{align}

Since $|\Gamma|$ divides $p-1$ it also divides $p^k-1$, thus the right-hand side of (\ref{kandn}) is always a natural number.

\begin{lemma}\label{step_i}
Assume that one of the following conditions holds.
\begin{enumerate}
    \item $\Gamma = \F{p}^\times$, and the action of $\G$ on $\oneD{\V}$ does not preserve projective lines,  or
    \item $\Gamma \lneq  \F{p}^\times$.
\end{enumerate}
Then for all $n\geq 3$ and all $k\geq 2$ satisfying (\ref{kandn}), every $n$-tuple of elements in $V\setminus \{\0\}$ can be mapped into a $k$-dimensional subspace of $\V$ by an element of $\G$.
\end{lemma}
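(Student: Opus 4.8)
The plan is to reduce to linearly independent tuples and then run an induction that proves something slightly stronger than the statement. By Lemma~\ref{linInEnough} it suffices to treat linearly independent tuples. Writing $m:=\frac{p^{k}-1}{|\Gamma|}$ for the number of nonzero $\sim$-classes inside a $k$-dimensional subspace, condition (\ref{kandn}) reads $n\le m-1$. I would prove by induction on $|S|$ the following strengthening: \emph{for every linearly independent $S\subseteq\Vtimes$ with $|S|\le m-2$ one has $A_k(S)=V$.} The lemma then follows at once: given a linearly independent $n$-tuple with $3\le n\le m-1$, delete one vector to obtain a set $S'$ with $|S'|=n-1\le m-2$; the strengthened claim gives $A_k(S')=V$, so in particular the deleted vector lies in $A_k(S')$, which by Definition~\ref{AKS} means the whole tuple maps into a $k$-dimensional subspace. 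This is exactly the role of the ``one additional equivalence class'' of slack: the bound $m-2$ for $S'$ is what the weaker bound $m-1$ for $S$ leaves room for.

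Throughout the induction the only thing to rule out is case~(2) of Lemma~\ref{AkPoss}; cases~(1) and~(3) are precisely ``$A_k(S)=\emptyset$'' and ``$A_k(S)=V$'', and nonemptiness will always be available (for $|S|\ge 3$ it comes from the inductive hypothesis applied to $S$ minus one vector, and for the base case from the identity, since $\dim\linClos{S}\le k$). Whenever $k>\dim\linClos{S}$ a crude count finishes the job: by Lemma~\ref{trans}(1) a nontrivial $A_k(S)$ lies in $\linClos{S}$, yet in case~(2) it has $p^{k}$ elements, which is impossible once $p^{k}>|\linClos{S}|$. So the real work is confined to $k\le\dim\linClos{S}$, and there the two hypotheses enter. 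For the base case $|S|=2$ with $k=2$ I would argue as follows. If $\Gamma\lneq\F{p}^\times$, then a nontrivial $A_2(S)$ would be the full plane $\linClos{S}$, which contains $\lambda s$ for some $s\in S$ and some $\lambda\in\F{p}^\times\setminus\Gamma$; this contradicts Lemma~\ref{nonEquivElemNot}. If instead $\Gamma=\F{p}^\times$ and $\G$ does not preserve projective lines, then negating Definition~\ref{planes_preserv_D} yields $g\in\G$ and linearly independent $a,b,c$ whose images under $g^{-1}$ are coplanar; thus $c\in A_2(\{a,b\})\setminus\linClos{a,b}$, so $A_2(\{a,b\})=V$, and transitivity of $\Aut(\V)$ on linearly independent pairs together with Lemma~\ref{propAkS}(1) propagates this to all pairs.

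For the inductive step ($|S|=n\ge 3$) the goal is again to contradict a nontrivial $A_k(S)=W^{h}$. A counting argument shows there is room to start: since $n\le m-2$, the set $A_k(S)$ has $p^{k}$ elements while $\{\0\}\cup\bigcup_{s\in S}[s]_\sim$ has only $1+n|\Gamma|\le p^{k}-2|\Gamma|$ of them, so $A_k(S)$ contains a vector $u\in\linClos{S}$ that is a linear combination of $S$ with at least two nonzero coefficients (a single nonzero coefficient would violate Lemma~\ref{nonEquivElemNot}). Using Corollary~\ref{trans2} to transport and recombine such combinations across the linearly independent tuples available in $A_k(S)$, I would manufacture a combination with at least three nonzero, not all equal, coefficients and then invoke Lemma~\ref{containsAffLine} to place an affine line avoiding $\0$ inside $A_k(S)$. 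From here Lemma~\ref{isAffin} forces $A_k(S)$ to contain the affine closure of a representative of each of its $\sim$-classes; by Lemma~\ref{nonEquivElemNot} these representatives are pairwise linearly independent, and when $\Gamma\lneq\F{p}^\times$ (so $|\Gamma|\le\frac{p-1}{2}$) there are too many of them to lie projectively in a $k$-dimensional space, forcing this affine closure to exhaust all $p^{k}$ elements of $A_k(S)$. Then $A_k(S)$ is an affine subspace by Proposition~\ref{affinalter}, hence (containing $\0$) a linear subspace, and applying Lemma~\ref{nonEquivElemNot} once more gives $\Gamma=\F{p}^\times$, a contradiction. In the remaining branch $\Gamma=\F{p}^\times$, the contradiction comes instead from the collapsing element produced by the failure of preserving projective lines, which again yields a member of $A_k(S)$ outside $\linClos{S}$, incompatible with nontriviality via Lemma~\ref{trans}(1).

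The step I expect to be the main obstacle is the manufacture of a genuinely ``bad'' combination -- three nonzero coefficients, not all equal -- from the mere counting surplus, since Corollary~\ref{trans2} preserves the coefficient pattern and only swaps the underlying independent vectors, so activating a third coordinate requires exploiting the surplus of at least $2|\Gamma|$ extra vectors to guarantee combinations of varied support. Tightly coupled to this is the precise bookkeeping of the inequalities relating $n$, $k$ and $|\Gamma|$ coming from (\ref{kandn}): they must be sharp enough to guarantee both that the surplus is nonempty and that, in the $\Gamma\lneq\F{p}^\times$ case, the affine closure strictly overflows a $k$-dimensional subspace. Organising the endgame so that the projective case (handled by the collapsing element) and the proper-subgroup case (handled by the affine-to-linear contradiction) are dispatched uniformly is the last piece to get right.
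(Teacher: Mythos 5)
Your overall architecture---reduce to linearly independent tuples via Lemma~\ref{linInEnough}, induct, rule out case~(2) of Lemma~\ref{AkPoss} by exploiting the surplus of $\sim$-classes, Lemma~\ref{containsAffLine}, Lemma~\ref{isAffin}, and a final count---is essentially the paper's, and several of your ingredients are sound (your base case for $\Gamma\lneq\F{p}^\times$, deducing $A_2(S)=\linClos{S}$ and contradicting Lemma~\ref{nonEquivElemNot}, is even cleaner than the paper's, and your count showing that the affine closure of the class representatives must exhaust all $p^k$ elements of $A_k(S)$ does go through). But there are two genuine gaps. First, the step you yourself flag as the main obstacle---manufacturing a combination with at least three nonzero, not all equal, coefficients---is left unresolved, and a single surplus vector $u$ cannot suffice for it: $u$ could be $c(t_1+\cdots+t_n)$ with all coefficients equal, and Corollary~\ref{trans2} indeed never changes the coefficient pattern. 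The paper's resolution is to extract \emph{two} non-equivalent surplus vectors $v,w$ (your surplus of $2|\Gamma|$ elements provides them); by Lemma~\ref{nonEquivElemNot} each has at least two nonzero coefficients and, since $\linClos{v}\neq\linClos{w}$, at least one has not-all-equal coefficients; and if that one is $v=\lambda_1t_1+\lambda_2t_2$, one transports $(t_1,t_2)\mapsto(t_3,t_2)$ via Corollary~\ref{trans2} to obtain $u=\lambda_1t_3+\lambda_2t_2\in A_k(S)$ and rewrites $v=\lambda_1t_1+u-\lambda_1t_3$ over the new linearly independent set $\{t_1,u,t_3,\dots,t_n\}$, where it has three nonzero coefficients that are not all equal because $p$ is odd. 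Without some such change-of-basis trick your induction does not get off the ground.

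Second, your endgame in the branch $\Gamma=\F{p}^\times$ is wrong as stated. The collapsing element gives linearly independent $a,b,c$ whose images under $g^{-1}$ span a plane, which places an element outside $\linClos{a,b}$ into $A_2(\{a,b\})$; but in the inductive step you need an element of $A_k(S)$ outside $\linClos{S}$ for $|S|=n>k$, and collapsing three vectors into a plane says nothing about mapping the $n+1$ vectors of $\{c\}\cup S$ into a $k$-dimensional subspace when $k<n$---antitonicity (Lemma~\ref{propAkS}) runs in the wrong direction here, and $k<n$ is the generic situation (e.g.\ $p=5$, $\Gamma=\F{5}^\times$, $|S|=4$, $k=2$). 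The paper closes this case uniformly for every $\Gamma\neq\{1\}$: having $\Aff(t_1,\dots,t_n)\subseteq A_k(S)$, pick $\lambda\in\Gamma\setminus\{1\}$ and replace $t_1$ by $\lambda t_1$ among the pairwise linearly independent generators; the resulting affine closure lies in $A_k(S)$, contains both $t_1$ and $\lambda t_1$, hence $\0$, hence is a linear subspace containing $\linClos{t_1,\dots,t_n}$, which has $p^n>p^k$ elements---contradicting $|A_k(S)|=p^k$. (Your own affine-closure count, pushed one step further when $k<n$, would also do, since it exhibits $A_k(S)$ as a $k$-dimensional linear subspace containing the $n>k$ linearly independent vectors of $S$; but you did not draw that conclusion, and the argument you did propose fails.)
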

\begin{proof}
{We proof this by induction over $n$}. For the proof assume $k$ to be the smallest natural number which is at least $2$ such that $(\ref{kandn})$ holds.  For the base case, $n=3$, we have to distinguish whether the assumption (1) or (2) holds.

\textbf{Case 1:} We assume that (1) holds. The inequality (\ref{kandn}) holds for $n=3$ and $k=2$. 
Thus we have to show the statement for $k=2$. There exists a function $g\in G$ whose action on $\oneD{\V}$ does not preserve projective lines. {Thus there exist distinct  one-dimensional subspaces $L_0\subseteq L_1+L_2$ of $\V$ such that $L_0^g \not \subseteq L_1^g +L_2^g$.} There are vectors $v_0,v_1,v_2\in L_1+L_2$ such that $\linClos{v_i}=L_i$,  for $i=0,1,2$. The tuple $(v_0^g,v_1^g,v_2^g)$ is linearly independent and mapped by $g^{-1}$ into the two-dimensional subspace $L_1+L_2$ of $\V$. By Lemma~\ref{linInEnough} we are done.

\textbf{Case 2:} We assume that (2) holds. Again $k=2$ since the right-hand side of (\ref{kandn}) only increases with smaller $\Gamma$. Moreover, because $\Gamma \lneq\F{p}^\times$, there are at least two elements $v,w\in V$ which lie in the same one-dimensional subspace of $\V$ and are split by some $g\in G$, i.e., $\langle v \rangle=\langle w \rangle$ and $\dim \langle \{v^g, w^g\} \rangle =2$. We now choose a vector $u\not \in \langle \{v^g, w^g\} \rangle$. Then $(v^g,w^g,u)$ is a linearly independent triple, which is mapped by $g^{-1}$ into the at most two-dimensional subspace $\langle \{u^{g^{-1}}, v,w\} \rangle$ of $\V$. Again by Lemma~\ref{linInEnough} we are done.

This concludes the base case. The right-hand side of (\ref{kandn}) is strictly increasing in $k$.  Therefore if in the induction step $n$ increases to $n+1$ and (\ref{kandn}) does not hold for $n+1$ and $k$ anymore, then it has to hold again for $n+1$ and $k+1$. In the base case $n=3$ and $k=2$, thus $n>k$ in every step of the induction. 

We assume we have already shown that every $n$-tuple can be mapped into a $k$-dimensional subspace of $\V$ by an element of $\G$. 

If  \begin{align*}
    n+1 > \frac{p^k-1}{|\Gamma|}-1, 
\end{align*}then we are done. This is because given an $(n+1)$-tuple $t$, {the $n$-tuple obtained by removing one component of $t$ can be mapped into a $k$-dimensional subspace of $\V$ by some $g\in G$ by our induction hypothesis.} Therefore the $(n+1)$-tuple $t$ is mapped by $g$ into a subspace  of $\V$ of dimension at most  $k+1$. 

Thus for the rest of the proof we  assume
\begin{align}\label{twoMore}
    n+1 \leq \frac{p^{k}-1}{|\Gamma|}-1 \iff n|\Gamma| + 2|\Gamma| \leq p^{k}-1.
\end{align}

By Lemma~\ref{linInEnough} it suffices to find one linearly independent $(n+1)$-tuple which is  mapped into a $k$-dimensional subspace  of $\V$ by some element of $\G$. Let $t=(t_1,\ldots,t_{n+1}) \in V^{n+1}$ be a linearly independent tuple, and let $\tilde t:=(t_1,\ldots, t_n)$. If $A_k(\{t_1,\ldots,t_n\})=V$, then we are done. 

Striving for a contradiction, suppose $A_k(\{t_1,\ldots,t_n\})\neq V$.   By the induction hypothesis $\tilde t$ can be mapped into a $k$-dimensional subspace $M$ of $\V$ by some $h\in G$, whence $A_k(\{t_1,\ldots,t_n\})\neq \emptyset$. By Lemma~\ref{AkPoss} there exists $g\in G$ and  a $k$-dimensional subspace $W$ of $\V$ such that $A_k(\{t_1,\ldots,t_n\})=W^g$. We want to apply Lemma~\ref{containsAffLine}. The tuple $t$ is linearly independent, hence so is $\tilde t$. 

\textbf{Claim:} $A_k(\{t_1,\ldots,t_n\})$ contains at least one element $v$ and a linearly independent set \linebreak $\{a_1,\ldots,a_n\}$ of size $n$ such that for some  $\lambda_1,\ldots, \lambda_n\in F$ we have $v=\lambda_1 a_1 + \cdots + \lambda_n a_n$ and
\begin{itemize}
\item not all coefficients $\lambda_1, \ldots, \lambda_n$ are the same,
\item at least three of them are non-zero.
\end{itemize}

{From the inequality (\ref{twoMore}) we obtain that $M$ contains two ``extra'' equivalence classes, i.e., when $\tilde t$ is mapped into the $k$-di\-men\-sion\-al subspace $M\leq \V$ by $h$, there exist two non-equivalent vectors $v,w\neq \0$ such that $[v^h]_\sim,[w^h]_\sim$ are contained in $M$ but  $t_1^h,\ldots,t_n^h$ are neither elements of $[v^h]_\sim$ nor of $[w^h]_\sim$.} In particular, $v,w$ are in $A_k(\{t_1,\ldots,t_n\})\subseteq \langle \{t_1,\ldots,t_n\}\rangle$ and non-equivalent to any component of $\tilde t$.

Thus, there exist coefficients $\lambda_1,\ldots,\lambda_n\in \F{p}$ and $\mu_1, \ldots,\mu_n \in  \F{p}$ such that  $v= \lambda_1 t_1 + \cdots + \lambda_n t_n$ and  $w=\mu_1 t_1 + \cdots + \mu_n t_n$. By Lemma~\ref{nonEquivElemNot} at least two coefficients of each of these linear combinations have to be non-zero. By the same lemma $\linClos{v} \neq \linClos{w}$, thus at least one set of coefficients cannot be all the same. Without loss of generality  assume that $\lambda_1,\ldots,\lambda_n$ are not all equal. If   more than three of these  coefficients are not equal to zero we are done. Otherwise  without loss of generality $\lambda_3=\cdots=\lambda_n=0$, hence 
\begin{align*}
    v=\lambda_1 t_1 +\lambda_2 t_2, \ \text{with} \ \lambda_1,\lambda_2 \neq 0.
\end{align*}

We apply Corollary~\ref{trans2} to the tuples $(t_1,t_2)$ and $(t_3,t_2)$. We obtain that $u :=\lambda_1 t_3 + \lambda_2 t_2$ is an element of $A_k(\{t_1,\ldots,t_n\})$ too. By combining the two equations we obtain
\begin{align*}
    v=\lambda_1 t_1 +( u-\lambda_1 t_3).
\end{align*}

{The set $\{t_1,u,t_3,t_4,\ldots, t_n\}$ is linearly independent and $v$ as a linear combination of these vectors has three coefficients which are  not zero.} The coefficients are not all equal since $\ch \F{p}\neq 2$. This shows our claim.

We are finally able to apply Lemma~\ref{containsAffLine}, which tells us that  $A_k(\{t_1,\ldots,t_n\})$ contains an affine line not containing $\0$. Moreover, by applying Lemma~\ref{isAffin} to the linearly independent set $\{t_1,\dots,t_n\}$ we obtain that $\Aff(\{t_1,\ldots,t_n\})$ is contained in $A_k(\{t_1,\ldots,t_n\})$.

%to any set of pairwise linearly independent elements contained in $A_k(S)$. In particular $\Aff(\{t_1,\ldots,t_n\})$ is contained in $A_k(\{t_1,\ldots,t_n\})$.

We distinguish two cases: 

\textbf{Case 1: $\Gamma=\{1\}$:} In this case the $\sim$-equivalence class  of any $v\in V$ contains only $v$. Together with Lemma~\ref{nonEquivElemNot} this implies that for any $v$ in $A_k(\{t_1,\ldots,t_n\})$ the vector $\lambda v$ is an element of $A_k(\{t_1,\ldots,t_n\})$ iff $\lambda$ is either $1$ or $0$. Furthermore since $\linClos{\{t_1,\ldots,t_n\}} = \bigcup_{v\in \Aff ({\{t_1,\ldots,t_n\}})} \langle v \rangle$ and $A_k(\{t_1,\ldots,t_n\})\subseteq \linClos{\{t_1,\ldots,t_n\}}$, the set $A_k(\{t_1,\ldots,t_n\})$ is equal to $\Aff ({\{t_1,\ldots,t_n\}}) \cup \{\0\}$.  

{By Lemma~\ref{AkPoss} the set $A_k(\{t_1,\ldots,t_n\})$ contains $p^{k}$ elements. The set  $\Aff ({\{t_1,\ldots,t_n\}}) \cup \{\0\}$ contains $p^{(n-1)}+1$ elements.} This is a contradiction.

\textbf{Case 2: $\{1\}\lneq \Gamma $:} {Since Aff$(\{t_1,\ldots,t_n\})$ does not contain $\0$, it contains pairwise linearly independent elements.
By Lemma~\ref{nonEquivElemNot} for any element $v \in A_k(\{t_1,\ldots,t_n\})$ and any element $\lambda \in \Gamma$ also $\lambda v \in A_k(\{t_1,\ldots,t_n\})$. 
    We choose an arbitrary $\lambda\in \Gamma \setminus \{1\}$. Now $\Aff(\{t_1,\ldots,t_n\})\setminus\{t_1\}\cup \{\lambda t_1\}$  is pairwise linearly independent, thus its affine closure is contained in $A_k(\{t_1,\ldots,t_n\})$. We have
    \begin{align*}
        \Aff\left( \Aff(t_1,\ldots,t_n)\setminus \{t_1\}\right)=\Aff(t_1,\ldots,t_n).
    \end{align*} 
    Therefore $\Aff\left( \Aff(t_1,\ldots,t_n)\setminus \{t_1\}\cup \{\lambda t_1\}\right)$ is an affine subspace of $\V$ which contains $t_1$ and $\lambda t_1$ and in particular $\0$. Hence it is a subspace of $\V$ and we obtain $\langle \{t_1,\ldots,t_n\}\rangle \subseteq A_k(\{t_1,\ldots,t_n\}) \subseteq \linClos{\{t_1,\ldots,t_n\}}$.  
    
    The set $\linClos{\{t_1,\ldots,t_n\}}$ contains $p^{n}$ elements, while the cardinality of  $A_k(\{t_1,\ldots,t_n\})$ is $p^k$, which contradicts $n>k$.}
\end{proof}

This concludes  Step~(i) from the beginning of Section~\ref{remainCasFixZero}. The next step is to show that $\G$ acts transitively on arbitrary large tuples of  non-equivalent elements of $V\setminus \{\0\}$. To show this we restrict Lemma~\ref{step_i} to tuples containing   non-e\-quiv\-a\-lent elements such that the tuples are of specific but still strictly increasing sizes.

\begin{corollary}\label{tupOfSize} 
Under the assumptions of Lemma~\ref{step_i}, for all $k\geq 2$ every tuple $t$ which contains exactly $\frac{p^{k}-1}{|\Gamma|}-1$ non-equivalent non-zero elements of $\V$ can be mapped into a $k$-dimensional subspace $W$ of $\V$ by an element $g$ of $\G$. There is exactly one equivalence class $[v]_\sim$ in $W\setminus \{\0\}$ which is different from $[x^g]_{\sim}$ for all components $x$ of $t$.
\end{corollary}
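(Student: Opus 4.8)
The plan is to reduce the statement to Lemma~\ref{step_i} by replacing $t$ with a system of representatives of the equivalence classes it meets, and then to exploit that $\G$ preserves $\sim$ together with a dimension count.

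First I would abbreviate $N:=\frac{p^{k}-1}{|\Gamma|}-1$, which is precisely the right-hand bound of~(\ref{kandn}); hence the pair $(N,k)$ satisfies~(\ref{kandn}) with equality, and $N$ is an integer because $|\Gamma|$ divides $p^{k}-1$. I would also note that $N\geq 3$ for every $k\geq 2$: since $|\Gamma|$ divides $p-1$ we have $\frac{p^{k}-1}{|\Gamma|}\geq\frac{p^{k}-1}{p-1}=1+p+\cdots+p^{k-1}\geq p+1$, so $N\geq p\geq 3$. Thus Lemma~\ref{step_i} applies with $n=N$.

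Now let $t$ be a tuple of nonzero vectors representing exactly $N$ equivalence classes, and pick representatives $r_1,\ldots,r_N\in V\setminus\{\0\}$, one per class. Applying Lemma~\ref{step_i} to the $N$-tuple $(r_1,\ldots,r_N)$ produces some $g\in G$ mapping $r_1,\ldots,r_N$ into a $k$-dimensional subspace $W$ of $\V$. Because $\sim$ is $\G$-invariant and $g$ fixes $\0$, any component $x$ of $t$ satisfies $x\sim r_i$ for a suitable $i$, whence $x^g\sim r_i^g$ and therefore $x^g\in[r_i^g]_{\sim}\subseteq\langle r_i^g\rangle\subseteq W$, the last inclusion using that $W$ is a subspace. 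Hence $g$ maps all of $t$ into $W$, establishing the first assertion.

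For the ``exactly one extra class'' statement I would argue by counting. Every $\sim$-class of a nonzero vector of $W$ lies inside $W$, so $W\setminus\{\0\}$ is partitioned into exactly $\frac{p^{k}-1}{|\Gamma|}$ classes, each of size $|\Gamma|$. Since $g$ is a $\sim$-preserving bijection, the images $x^g$ of the components of $t$ meet precisely $N=\frac{p^{k}-1}{|\Gamma|}-1$ of these classes; hence exactly one class $[v]_{\sim}\subseteq W\setminus\{\0\}$ is distinct from all $[x^g]_{\sim}$. I do not expect a real obstacle: the substance is carried entirely by Lemma~\ref{step_i}, and what remains is the representative reduction (legitimate only because $g$ preserves $\sim$ and $W$ is a subspace) together with an elementary count; the sole point deserving care is the verification that $N\geq 3$, so that Lemma~\ref{step_i} is genuinely available.
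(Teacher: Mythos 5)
Your proposal is correct and matches what the paper intends: the corollary is stated without proof as a direct restriction of Lemma~\ref{step_i}, and your argument (verifying $N=\frac{p^{k}-1}{|\Gamma|}-1\geq 3$ so the lemma applies, reducing to a system of representatives via the $\G$-invariance of $\sim$ and the fact that $[r_i^g]_\sim\subseteq\linClos{r_i^g}\subseteq W$, then counting the $\frac{p^{k}-1}{|\Gamma|}$ classes partitioning $W\setminus\{\0\}$) is exactly the omitted routine verification. No gaps.
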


\begin{lemma}\label{isTrans}
Assume that one of the following conditions holds.
\begin{enumerate}
    \item $\Gamma = \F{p}^\times$ and the action of $\G$ on $\oneD{\V}$ does not preserve projective lines, or
    \item $\Gamma \lneq  \F{p}^\times$.
\end{enumerate}
Then for every $n\geq 1$ the group $\G$  acts  $n$-transitively  on $(V\setminus \{\0\})/_\sim$. 
\end{lemma}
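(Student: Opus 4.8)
The plan is to derive $n$-transitivity for every $n$ from one uniform device, namely the collapsing provided by Corollary~\ref{tupOfSize}. First I would reduce the statement. Since $(V\setminus\{\0\})/_\sim$ is infinite, a group acting $m$-transitively on it acts $n$-transitively for every $n\le m$; so it suffices to produce, for arbitrarily large $m$, some integer $k$ for which $\G$ is $m$-transitive, and then let $k$ grow. I fix $k\geq 2$, put $N:=\frac{p^{k}-1}{|\Gamma|}-1$, and fix a $k$-dimensional subspace $W\leq\V$, which by the count preceding Corollary~\ref{tupOfSize} carries exactly $N+1$ nonzero $\sim$-classes. Being $m$-transitive is the same as acting transitively on ordered $m$-tuples of pairwise distinct classes, so I work throughout with such tuples.

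Next I would push everything into $W$. Given an ordered tuple of $m\le N$ pairwise distinct classes, I choose representatives, extend to a tuple of exactly $N$ pairwise non-equivalent vectors (possible as $X$ is infinite), and apply Corollary~\ref{tupOfSize} to map it into some $k$-dimensional subspace; composing with an automorphism of $\V$ carrying that subspace to $W$ (automorphisms act transitively on $k$-dimensional subspaces), the image is an ordered tuple of distinct classes all lying in $W$. Hence every ordered $m$-tuple of distinct classes is $\G$-equivalent to one inside $W$, and the task reduces to connecting two such $W$-tuples by an element of the setwise stabilizer $\G_{\{W\}}$. Writing $P\leq\Sym_{N+1}$ for the permutation group that $\G_{\{W\}}$ induces on the $N+1$ classes of $W$, it therefore suffices to show that $P$ is highly transitive: already $P\supseteq\Alt_{N+1}$ yields $(N-1)$-transitivity of $\G$ on $(V\setminus\{\0\})/_\sim$, and since $N\to\infty$ as $k\to\infty$, this gives $n$-transitivity for every $n$.

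It remains to analyse $P$. The subgroup $\Aut(\V)_{\{W\}}$ induces on the classes of $W$ the image $Q$ of $\mathrm{GL}(W)$: for $\Gamma=\F{p}^\times$ this is $\mathrm{PGL}_k(\F{p})$ acting $2$-transitively, hence primitively, on the projective points of $W$, while for $\Gamma\lneq\F{p}^\times$ it is imprimitive, the lines of $W$ forming a nontrivial block system. In either case the hypotheses furnish a \emph{wild} element of $\G$ which, suitably composed with members of $Q$, induces a permutation of the classes of $W$ that destroys this projective incidence: in case~(1) it carries a collinear triple of classes to a non-collinear one, and in case~(2) it splits a line, separating two distinct classes lying on a common line of $W$. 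Thus $P$ is primitive and strictly larger than $Q$, and I would finish by invoking the classical theory of finite primitive groups --- for instance Jordan's theorem, that a primitive group containing a $3$-cycle (or a $p$-cycle fixing at least three points) contains $\Alt_{N+1}$ --- to conclude $P\supseteq\Alt_{N+1}$.

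The hard part is exactly this last step: ensuring that $P$ grows all the way to $\Alt_{N+1}$ rather than stalling at some intermediate primitive overgroup of $Q$ (the sharply $3$-transitive action $\mathrm{PGL}_2(\F{5})\cong S_5$ on six points, a proper subgroup of $S_6$, shows that such intermediate groups can be unexpectedly large), and in particular manufacturing a concrete small-support permutation to feed into Jordan's theorem. This is precisely where the full force of Corollary~\ref{tupOfSize} --- that \emph{every} tuple of $N$ non-equivalent vectors, of whatever linear type, compresses into $W$ --- is indispensable: it forces linearly independent and maximally degenerate configurations into a single $\G$-orbit, so $P$ can preserve no linear or affine invariant and must exhaust at least the alternating group.
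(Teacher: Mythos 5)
Your reduction to the finite permutation group $P$ induced by $\G_{\{W\}}$ on the $N+1$ classes of $W$ is coherent as far as it goes, but the proof stops exactly where the work begins. To invoke Jordan's theorem you must actually exhibit in $P$ a cycle of prime length fixing at least three points (or some other element of provably small support), and you must establish primitivity of $P$; neither is done. In case (2) your ``wild'' element only shows that the block system of projective lines is not preserved, which does not rule out other block systems, so primitivity is not established there. Even granting primitivity, ``primitive and strictly larger than $Q$'' does not imply containing $\Alt_{N+1}$ --- as you yourself observe, there can be large intermediate primitive groups --- so without a concrete small-support element the argument cannot close. You flag this final step as ``the hard part'' and leave it open; as written the proposal is therefore not a proof.

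The paper avoids all of this with a much more elementary observation that is already latent in your setup: Corollary~\ref{tupOfSize} says that a tuple of exactly $n_k=\frac{p^{k}-1}{|\Gamma|}-1$ pairwise non-equivalent vectors, once mapped into a $k$-dimensional subspace $W$, misses \emph{exactly one} of the nonzero classes of $W$. Map an arbitrary such tuple $t$ and a fixed linearly independent such tuple $l$ into the same $W$ (by $g$ and $h$ respectively), and compose with an automorphism of $\V$ stabilizing $W$ that carries the class missed by $t^{g}$ to the class missed by $l^{h}$. The image of $t$ then occupies exactly the same set of classes as $l^{h}$; applying $h^{-1}$, it occupies the same classes as $l$ itself, hence is linearly independent. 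Since $\Aut(\V)$ is transitive on ordered linearly independent tuples of each fixed size, $n_k$-transitivity on $(V\setminus \{\0\})/_\sim$ follows for every $k$, with no input from the theory of finite primitive groups. If you wanted to salvage your route, the missing small-support permutation would in any case have to be manufactured from this same ``exactly one missing class'' bookkeeping, at which point the paper's direct argument is both shorter and self-contained.
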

\begin{proof}
For every $k>2$ we define
\begin{align*}
   n_k:=\frac{p^{k}-1}{|\Gamma|}-1.
\end{align*} 

Since $(n_k)_{k>2}$ is a strictly increasing sequence it suffices to show that $\G$ acts $n_k$-transitively on $(V\setminus\{\0\})/_\sim$ for all $k>2$.

 Let $k>2$ and $t=(t_1,\ldots, t_{n_k})\in (V\setminus \{\0\})^{n_k}$ containing non-equivalent elements  be given. Since $\G$ acts transitively on linearly independent tuples of arbitrary size, it is enough to show that $t$ can be mapped to some linearly independent $n_k$-tuple by an element of $\G$.

We fix a linearly independent $n_k$-tuple $l=(l_1,\ldots,l_{n_k}) \in (V\setminus \{\0\})^{n_k}$. By Corollary~\ref{tupOfSize} both $t$ and $l$ can be mapped into a $k$-dimensional subspace of $\V$ by $g\in G$ and $h\in G$ respectively.  Since $\Aut (\V)\leq \G$ without loss of generality we may assume that both $t$ and $l$ are  mapped into the same $k$-dimensional subspace $W \leq \V$. 

Let $[v_t]_\sim \subseteq W\setminus \{\0\}$ be the unique equivalence class such that $t^g$ does not contain any element of $[v_t]_\sim$, and let $[v_l]_\sim \subseteq W\setminus \{\0\}$ be the unique equivalence class such that $l^h$ does not contain any element of $[v_l]_\sim$. {There exists an automorphism $\varphi$ of $\V$ which maps $v_t$ to $v_l$ and such that $\varphi(W)=W$.} The situation is depicted in Figure~\ref{fig:transitivity}.

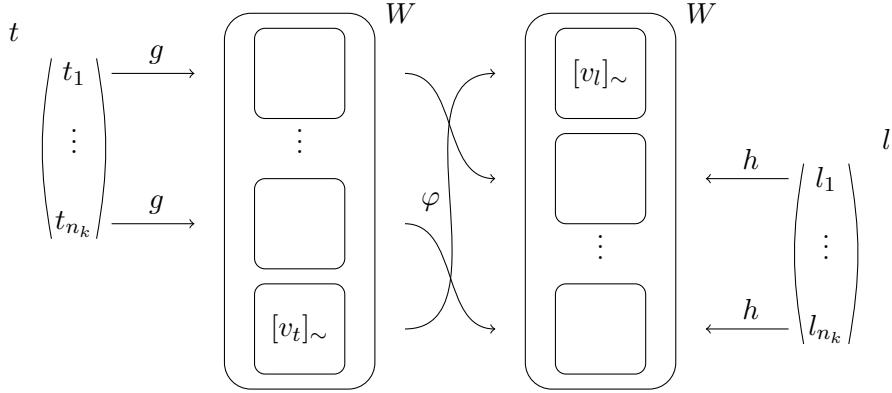
\begin{figure}
\begin{tikzpicture}
\node [below right] at (0,5) {$t$};
\node at (1,4.2) {$t_1$};
\node [above] at (1,3) {$\vdots$};
\node at (1,2.2) {$t_{n_k}$};
\draw (1.3,4.4) to [out=280,in=80] (1.3,2); 
\draw (0.7,4.4) to [out=260,in=100] (0.7,2);

\node [above] at (2.1,4.2) {$g$};
\node [above] at (2.1,2.2) {$g$};
\draw [->] (1.5,4.2) -- (2.6,4.2);
\draw [->] (1.5,2.2) -- (2.6,2.2);
\draw [rounded corners] (3.4,3.6) rectangle (4.6,4.8);
\draw [rounded corners] (3.4,1.6) rectangle (4.6,2.8);
\node [above] at (4,3) {$\vdots$};
\draw [rounded corners=4pt] (3.4,0.2) rectangle (4.6,1.4);
\node at (4,0.8) {$[v_t]_\sim$};
\draw [rounded corners=10pt] (3,0) rectangle (5,5);
\node [right] at (5,5) {$W$};

\node at (8,4.2) {$[v_l]_\sim$};
\draw [rounded corners] (7.4,3.6) rectangle (8.6,4.8);
\draw [rounded corners] (7.4,2.2) rectangle (8.6,3.4);
\node at (8,2) {$\vdots$};
\draw [rounded corners=4pt] (7.4,0.2) rectangle (8.6,1.4);
\draw [rounded corners=10pt] (7,0) rectangle (9,5);
\node [right] at (9,5) {$W$};

\node [below left] at (12,3.6) {$l$};
\node at (11,0.8) {$l_{n_k}$};
\node at (11,2) {$\vdots$};
\node at (11,2.8) {$l_1$};
\draw [->] (10.5,0.8) -- (9.4,0.8);
\draw [->] (10.5,2.8) -- (9.4,2.8);
\node [above] at (10,0.8) {$h$};
\node [above] at (10,2.8) {$h$};

\draw [->] (5.4,0.8) to [out=0,in=180] (6.6, 4.2);
\node [left] at (6,2.5) {$\varphi$};
\draw [->] (5.4,4.2) to [out=0,in=180] (6.6,2.8 );
\draw [->] (5.4,2.2 ) to [out=0,in=180] (6.6,0.8);

\draw (11.3,3) to [out=280,in=80] (11.3,0.6); 
\draw (10.7,3) to [out=260,in=100] (10.7,0.6);

\end{tikzpicture}
\caption{Transitivity of the action of $\G$ on the equivalence classes.}
\label{fig:transitivity}
\end{figure}

Thus, the tuples $t^{g\varphi h^{-1}}$ and $l$ are equal up to a permutation of their components. Since we only needed some linearly independent tuple to map $t$ to, this concludes the proof. 
\end{proof}

It remains to consider the  last step from beginning of Section~\ref{remainCasFixZero} which states that the action of $\G$  on $(V\setminus \{\0\})/_\sim$ is closed. A standard compactness argument using the finiteness of the equivalence classes of $\sim$  will show that the closedness of $\G$ when acting on $V$ is inherited by its action on $(V\setminus\{\0\})/_\sim$.

\begin{lemma}\label{isClosed}
Let $S$ be a countable set, let $\M\leq \Sym (S)$  be closed, and let $\simeq$ be an $\M$-invariant equivalence relation on $S$ such that all  its  equivalence classes are finite. Then  the action of $\M$ on the set $S/_\simeq$ of equivalence classes is closed.  
\end{lemma}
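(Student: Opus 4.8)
The plan is to show that the image of $\M$ under the homomorphism $\pi\colon\M\to\Sym(S/_\simeq)$ sending each $g$ to the permutation $[s]_\simeq\mapsto[s^g]_\simeq$ (which is well defined precisely because $\simeq$ is $\M$-invariant) is a closed subgroup of $\Sym(S/_\simeq)$. So I take an arbitrary $f\in\Sym(S/_\simeq)$ lying in the closure of $\pi(\M)$ -- that is, for every finite set $F$ of equivalence classes there is some $g\in\M$ with $\pi(g)|_F=f|_F$ -- and I aim to produce $h\in\M$ with $\pi(h)=f$. A preliminary observation I would record is that, since $\M$ is a group and $\simeq$ is $\M$-invariant, every $g\in\M$ not only sends each class $C$ into the class $\pi(g)(C)$ but in fact maps $C$ \emph{onto} $\pi(g)(C)$ bijectively: applying the same reasoning to $g^{-1}$ shows $C^g=\pi(g)(C)$.

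The heart of the argument is a König's lemma (compactness) construction exploiting the finiteness of the classes. Enumerate $S/_\simeq=\{C_0,C_1,\dots\}$ and set $\hat F_n:=C_0\cup\cdots\cup C_n$, which is a \emph{finite} subset of $S$ because it is a finite union of finite classes. For each $n$ let $P_n$ be the finite set of all restrictions $g|_{\hat F_n}$, where $g$ ranges over those elements of $\M$ with $\pi(g)(C_i)=f(C_i)$ for all $i\le n$; the hypothesis on $f$ guarantees $P_n\neq\emptyset$, and restricting witnesses shows $q|_{\hat F_{n-1}}\in P_{n-1}$ for every $q\in P_n$. These data form a finitely branching infinite tree, so by König's lemma there is a branch $p_0\subseteq p_1\subseteq\cdots$ with $p_n\in P_n$ and $p_n|_{\hat F_{n-1}}=p_{n-1}$; the union $h:=\bigcup_n p_n$ is then a well-defined function from $S$ to $S$.

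It remains to verify the three desired properties of $h$. For each finite $F\subseteq S$ we have $F\subseteq\hat F_n$ and $h|_{\hat F_n}=p_n=g|_{\hat F_n}$ for some $g\in\M$, so $h$ agrees on $F$ with an element of $\M$; once we know $h\in\Sym(S)$, closedness of $\M$ yields $h\in\M$. Injectivity of $h$ follows because each $p_n$ extends to a bijection lying in $\M$. Surjectivity -- the main point where finiteness of the classes is indispensable -- follows from the preliminary observation: for $i\le n$ the witness $g$ maps $C_i$ onto $f(C_i)$, hence $h(C_i)=f(C_i)$, and since $f$ permutes the classes we obtain $\bigcup_i h(C_i)=\bigcup_i f(C_i)=S$. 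Finally, $h(C_i)=f(C_i)$ for every $i$ says exactly that $\pi(h)=f$, which completes the proof. The only genuine subtlety is surjectivity: were the classes infinite, the sets $\hat F_n$ would no longer be finite and the tree would fail to be finitely branching, so the finiteness hypothesis is used twice -- to make the compactness argument go through, and to force each class to be mapped onto the whole of its image class.
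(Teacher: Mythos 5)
Your proof is correct and is essentially the same compactness argument as the paper's: where you build a finitely branching tree of restrictions to the finite sets $\hat F_n$ and apply K\H{o}nig's lemma, the paper takes a convergent sequence in the quotient action and repeatedly passes to subsequences using the finiteness of each class, then invokes closedness of $\M$ on the resulting limit. Your version is slightly more careful in one respect worth keeping: you explicitly verify that the limit function is surjective (via the observation that each $g\in\M$ maps every class \emph{onto} its image class), a point the paper's proof leaves implicit.
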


\begin{proof} 
Let $\{E_n\colon n\geq 0\}$ be an enumeration of the $\simeq$-equivalence classes  and let $(f_n)_{n\geq 0}$ be a sequence in $\M$ which converges in the action of $\M$ on $S/_\simeq$ to a function $f\in \Sym (S/_\simeq)$.

By restricting to a subsequence we may assume that for all $n\geq 0$ we have that for all $ k\geq n\colon f_k(E_n)=f(E_n)$. For every $n\geq 0$ the equivalence class $E_n$ is finite, thus there are infinitely many distinct  $i,j\geq n$ such that $f_i|_{E_n}=f_j|_{E_n}$. 
Again by restricting to subsequences we may assume that for all $n$ we have that $\forall i,j\geq n\colon  f_i|_{E_n}=f_j|_{E_n}$. Now the sequence $(f_n)_{n\geq 0}$ converges in the action of $\M$ on $S$ to a function $\tilde{f}\in \Sym (S)$. { Since $\M$ is closed we obtain $\tilde{f}\in M$. The action of $\tilde{f}$ on $S/_\simeq$ is equal to $f$, thus the action of $\M$ on $S/_\simeq$ is closed.}
\end{proof}

\begin{proof}[Proof of Proposition~\ref{prop:mainresult1}]
	Direct consequence of Lemmas~\ref{isTrans} and \ref{isClosed}.
\end{proof}

\subsection{Decomposition of $\G$ into two groups}\label{GisGtimesStuff}
%\subsection{Decomposition of \texorpdfstring{$\G${G} into} two groups}\label{GisGtimesStuff}
Our results so far are summarized in Figure~\ref{fig:actionsonclasses}: we have obtained a complete understanding of the action of $\G$ on the equivalence classes of $\sim$.
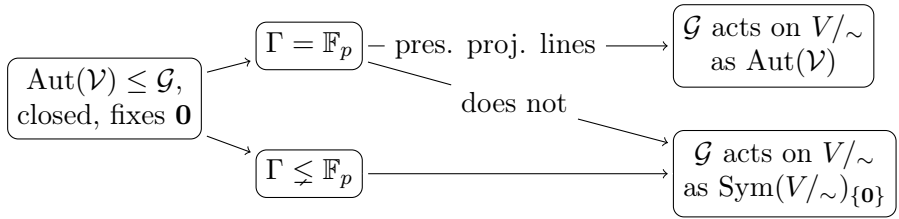
\begin{figure}
\begin{tikzpicture}[scale=1.07]
	\graph[grow right sep] {
	"$\Aut(\V)\leq \G$,\\closed, fixes $\0$" [rectangle, draw, rounded corners, align=center,xshift=-5mm]->[shorten >=2pt,shorten <=2pt]
	{
	n1/"$\Gamma=\F{p}$" [rectangle, draw, rounded corners, align=center,yshift=7mm] --[shorten <=2pt] "pres. proj. lines" [yshift=7mm] ->[shorten >=2pt]
	"$\G$ acts on $V/_\sim$\\ as $\Aut(\V)$" [rectangle, draw, rounded corners, align=center,yshift=7mm,xshift=7mm],
	n2/"$\Gamma\lneq\F{p}$" [rectangle, draw, rounded corners, align=center] --[color=white] /"\phantom{pres. proj. lines}" [color=white] ->[color=white] 
	"$\G$ acts on $V/_\sim$\\ as $\Sym(V/_\sim)_{\{\0\}}$" [rectangle, draw, rounded corners, align=center,xshift=7mm],
	n1 --[shorten <=2pt] does not[yshift=19.5mm,xshift=26mm] ->[shorten >=2pt]"$\G$ acts on $V/_\sim$\\ as $\Sym(V/_\sim)_{\{\0\}}$",
	n2 ->[shorten >=2pt,shorten <=2pt] "$\G$ acts on $V/_\sim$\\ as $\Sym(V/_\sim)_{\{\0\}}$",
	}
};
\end{tikzpicture}
\caption{Possible actions on the equivalence classes.}
\label{fig:actionsonclasses}
\end{figure}
We now  consider the elements of $\G$ as functions on $V$, and want to describe this action. In order to do this we decompose every element $g\in G$ into two parts, one that acts on $V/_\sim$ as the identity and one that may move the $\sim$-equivalence classes. We define 
\begin{align*}
    \Sstar (V):=\{g\in \Sym(V): g \text{ acts on } V/_\sim \text{ as the identity}\}.
    \end{align*}
Our goal of this section is to refine our results so far as indicated in Figure~\ref{fig:naturalactions}. 
\begin{figure}
\begin{center}
\begin{tikzpicture}[scale=1.07]
	\graph[grow right sep] {
		
		/"$\ldots$" [xshift=-3mm,yshift=7mm]->[shorten >=2pt]
		"$\G$ acts on $V/_\sim$\\ as $\Aut(\V)$" [rectangle, draw, rounded corners, align=center,yshift=7mm] ->[shorten <=2pt,shorten >=2pt]
		"$G=G^{\ast}\; \Aut(\V)$" [rectangle, draw, rounded corners, align=center,yshift=7mm,xshift=16mm], 
		/"$\ldots$" [xshift=-3mm]->[shorten >=2pt]
		"$\G$ acts on $V/_\sim$\\ as $\Sym(V/_\sim)_{\{\0\}}$" [rectangle, draw, rounded corners, align=center,]->[shorten <=2pt,shorten >=2pt]
		"$G=G^{\ast}\; \Sym((V\setminus\{\0\})/_\sim)^f$" [rectangle, draw, rounded corners, align=center,xshift=7mm], 
		/"$\ldots$" [xshift=-3mm,yshift=16mm]->[shorten >=2pt]
		"$\G$ acts on $V/_\sim$\\ as $\Sym(V/_\sim)_{\{\0\}}$" [rectangle, draw, rounded corners, align=center,],	
	};
\end{tikzpicture}
\end{center}\caption{Decomposing $\G$.}
\label{fig:naturalactions}
\end{figure}
Here, $G^\ast:=\Sstar(V)\cap G$, which clearly is  the domain of a closed subgroup $\Gast$ of $\G$. The set $\Sym((V\setminus\{\0\})/_\sim)^f$ will be a subgroup of $\Sym(V)$ which still acts like $\Sym(V/_\sim)_{\0}$ on $V/_\sim$ and fixes the $\sim$-equivalence classes setwise. We will define it later.

In the case when $\Gamma=\F{p}^\times$ and the action of $\G$ on $\oneD{\V}$ {\presprojlines} the decomposition turns out to be rather simple. We recall that in this case the equivalence classes of $\sim$ are exactly $\{\0\}$ and the one-dimensional subspaces of $\V$ without $\0$.

\begin{lemma}\label{GisGAut}
Assume that $\Gamma=\F{p}^\times$ and the action of $\G$ on $\oneD{\V}$ \presprojlines. Then
\begin{align*}
    G=G^\ast\Aut(\V).
\end{align*}
\end{lemma}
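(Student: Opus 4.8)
The inclusion $G^\ast\Aut(\V)\subseteq G$ is immediate: $G^\ast\leq G$ and $\Aut(\V)\leq G$, and $G$ is closed under composition. For the reverse inclusion the plan is to take an arbitrary $g\in G$ and exhibit a decomposition $g=h\varphi$ with $h\in G^\ast$ and $\varphi\in\Aut(\V)$. The whole argument is essentially a formal corollary of Proposition~\ref{prop:res_col}.

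Since $\Gamma=\F{p}^\times$, the $\sim$-equivalence classes other than $\{\0\}$ are exactly the punctured one-dimensional subspaces, so the action of $\G$ on $V/_\sim$ is canonically identified with its action on $\oneD{\V}$, as already observed. By hypothesis $\G$ preserves projective lines, so Proposition~\ref{prop:res_col} applies: the permutation of $\oneD{\V}$ induced by $g$ is precisely one induced by some automorphism of $\V$. Hence there exists $\varphi\in\Aut(\V)$ such that $g$ and $\varphi$ induce the same permutation on $V/_\sim$.

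Now I would set $h:=g\varphi^{-1}$. As $\varphi\in\Aut(\V)\leq G$ and $g\in G$, we get $h\in G$. For every $\sim$-class, using that $g$ and $\varphi$ act identically on $V/_\sim$, one computes $[v]_\sim^{g\varphi^{-1}}=\bigl([v]_\sim^{g}\bigr)^{\varphi^{-1}}=\bigl([v]_\sim^{\varphi}\bigr)^{\varphi^{-1}}=[v]_\sim$, and the distinguished class $[\0]_\sim=\{\0\}$ is fixed because both $g$ and $\varphi$ fix $\0$. Thus $h$ acts as the identity on $V/_\sim$, i.e.\ $h\in\Sstar(V)$, whence $h\in\Sstar(V)\cap G=G^\ast$. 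Finally $g=h\varphi\in G^\ast\Aut(\V)$, giving the desired inclusion.

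There is no genuine obstacle here; the result drops out of Proposition~\ref{prop:res_col}. The only points requiring a little care are the bookkeeping with the paper's right-action convention (so that $h\varphi=g\varphi^{-1}\varphi=g$, reading $h\varphi$ as ``apply $h$, then $\varphi$''), and confirming that $h$ also fixes the class $\{\0\}$, which is automatic since every element of $G$ fixes $\0$.
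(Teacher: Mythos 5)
Your proposal is correct and follows essentially the same route as the paper: invoke Proposition~\ref{prop:res_col} to find $\varphi\in\Aut(\V)$ inducing the same permutation of $\oneD{\V}$ as $g$, and observe that $g\varphi^{-1}$ then acts as the identity on $V/_\sim$ and hence lies in $G^\ast$. The extra bookkeeping you supply (the composition convention and the fixing of $\{\0\}$) is left implicit in the paper but is exactly right.
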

\begin{proof}
We only need to show the inclusion $G\subseteq G^\ast\Aut(\V) $. By Proposition~\ref{prop:res_col}    for all $g\in G$ there exists a function $\varphi \in \Aut(\V)\leq G$ such that the action $\varphi$ on $\oneD{\V}$ is the same as the one of $g$. Thus, $g\varphi^{-1}$ acts as the identity on $(V/\{\0\})/_\sim$ and is therefore an element of  $\Gast$.
\end{proof}

In order to talk about how an element $g\in G$ acts on elements within a given $\sim$-equivalence class we need to label the elements of the equivalence classes in a consistent way.

%\todo[inline]{Maybe this is a better wording? Please check this. BB}

\begin{definition}
For any $\Lambda \leq \F{p}^\times$ a function $f\colon V \to \Lambda$ is a \emph{$\Lambda$-labelling} iff for all $v\in V$ and all $\lambda\in \Lambda$ we have $f(\lambda v)=\lambda f(v)$. For every $v\in V$  we call $f(v)$ the \emph{label of $v$}. 
\end{definition}

We recall that $\Gamma$ is the subgroup of $\F{p}^\times$ such that for all $v\in V$ the equivalence class $[v]_\sim$ is equal to $\Gamma v=\{\lambda v:\lambda\in \Gamma\}$. We can define a $\Gamma$-labelling $f\colon V \to \Gamma$ by choosing for each $\sim$-equivalence class  an arbitrary representative $v$  and setting $f(v):=1\in \Gamma$, as depicted in Figure~\ref{fig:label}. Since the equivalence class $[v]_\sim$ is equal to $\Gamma v$ this already determines  $f$ fully. 

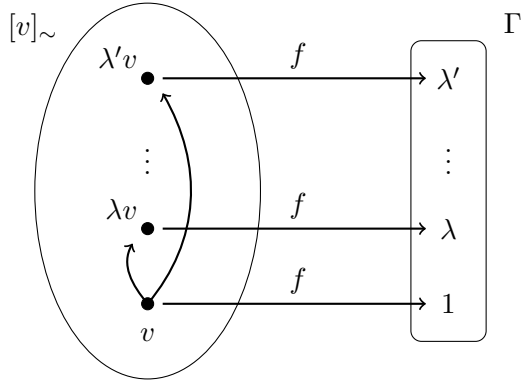
\begin{figure}\begin{center}
\begin{tikzpicture}
\node [below right, thick] at (0,5) {$[v]_\sim$};
\draw (2,2.5) ellipse [x radius=1.5, y radius=2.5];
\draw [fill] (2,1) circle  [radius=0.08];
\node [below] at (2,0.8) {$v$};
\draw [->, thick] (2,1) to [out=130,in=240] (1.8,1.8);
\draw [fill] (2,2) circle  [radius=0.08];
\node [above left] at (2,2) {$\lambda v$};
\draw [fill] (2,4) circle  [radius=0.08];
\node [above left] at (2,4) {$\lambda' v$};
\draw [->, thick] (2,1) to [out=50,in=300] (2.2,3.8);
\node at (2,3) {$\vdots$};

\draw [->, thick] (2.2,4) -- (5.7,4);
\draw [->, thick] (2.2,2) -- (5.7,2);
\draw [->, thick] (2.2,1) -- (5.7,1);
\node at (6,4) {$\lambda'$};
\node at (6,2) {$\lambda$};
\node at (6,1) {$1$};
\draw [rounded corners] (5.5,0.5) rectangle (6.5,4.5);
\node [below right] at (6.6,5) {$\Gamma$};
\node at (6,3) {$\vdots$};
\node [above] at (4,1) {$f$};
\node [above] at (4,2) {$f$};
\node [above] at (4,4) {$f$};
\end{tikzpicture}
\end{center}
\caption{A $\Gamma$-labelling.}
\label{fig:label}
\end{figure}

\begin{definition}
Let $g$ be an element of $\Sym ((V\setminus \{\0\})/_\sim)$ and let $f\colon V \to \Gamma$ be a $\Gamma$-labelling. Then the function $g^f\in \Sym (V)_{\0}$ is  defined by the following properties:
\begin{itemize}
    \item $g^f$ preserves $\sim$; 
    \item the action of $g^f$ on $(V\setminus \{\0\})/_\sim$ is equal to $g$; 
    \item for all $v\in V$ we have $f(v^{g^f})=f(v)$.
\end{itemize}
For a set $S\subseteq \Sym ((V\setminus \{\0\})/_\sim)$ we define $S^f:=\{g^f:g\in S\}$.
\end{definition}

Our goal is to show that, if $\Gamma\lneq \F{p}$ or $\Gamma=\F{p}$ and the action of $\G$ on $\oneD{\V}$ does not preserve projective lines, then for all $\Gamma$-labellings $f$ the set $\Sym ((V\setminus \{\0\})/_\sim)^f$ is contained in $G$. From this it will easily follow that $G=G^\ast\Sym ((V\setminus \{\0\})/_\sim)^f$.  { In order to do this we need to know how a given $g\in G$  acts on the ``inside of a $\sim$-equivalence class''.} We define the following.

\begin{definition}\label{sigma}
Let $f$ be a $\Gamma$-labelling, let $v\in V\setminus \{\0\}$ and let $g\in G$. A function $\sigma_f(g,v)\colon \Gamma \to \Gamma$ is defined by setting for any   $\lambda\in \Gamma$:
\begin{align}\label{sigmadef}
    \lambda \quad {\longmapsto} \ f\left( \left(\lambda\left( \frac{1}{f(v)}v\right)\right)^g\right).
\end{align}
\end{definition}

The function  performs the following steps.  
\begin{enumerate}
    \item The element $\frac{1}{f(v)} v$ is the element in $[v]_\sim$ which has label $1\in \Gamma$.
    \item The element $\lambda (\frac{1}{f(v)} v)=:x$ is the element in $[v]_\sim$ which has label  $\lambda$.
    \item $x^g$ is  an element in the $\sim$-equivalence class $g([v]_\sim)$ .
    \item Finally $f(x^g)$ is the label of $x^g$.
\end{enumerate}
In conclusion $\sigma_f(g,v)$ tells us how $g$ acts on the labels of the elements of $[v]_\sim$.

\begin{lemma}\label{sigma_prop}
Let $f$ be a $\Gamma$-labelling. Then for all $g\in G$ the following hold.
\begin{enumerate}
    \item For all $v\in V\setminus \{\0\}$ we have $\sigma_f(g,v)\in \Sym (\Gamma)$.
    \item For all $v,w\in V\setminus \{\0\}$ with $v\sim w$ we have $\sigma_f(g,v)=\sigma_f(g,w)$.
    \item The element $g$ is contained in $\Sym ((V\setminus \{\0\})/_\sim)^f$ iff for all $w\in V\colon \sigma_f(g,w)=\id_\Gamma$.
\end{enumerate}
\end{lemma}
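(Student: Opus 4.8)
The plan is to unwind the definition of $\sigma_f(g,v)$ in (\ref{sigmadef}) and recognise it as a composite of three bijections. Fix $g\in G$ and record two preliminary facts. First, since $f$ is a $\Gamma$-labelling, for every $v\in \Vtimes$ the vector $v_0:=\frac{1}{f(v)}v$ has label $f(v_0)=\frac{1}{f(v)}f(v)=1$ and is the unique element of $[v]_\sim$ with this label; hence $\lambda\mapsto\lambda v_0$ is a bijection from $\Gamma$ onto $[v]_\sim=\Gamma v_0$, and $f$ restricts on every $\sim$-class to the bijection $\lambda v_0\mapsto\lambda$ onto $\Gamma$. Second, because $\sim=\sim_{\G}$ is $\G$-invariant and $g\in G$, the map $g$ carries each $\sim$-class bijectively onto the $\sim$-class $[v^g]_\sim$.

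For part~(1) I would observe that, by (\ref{sigmadef}), $\sigma_f(g,v)$ is the composite $\Gamma\to[v]_\sim\to[v^g]_\sim\to\Gamma$ whose factors are $\lambda\mapsto\lambda v_0$, the restriction of $g$, and the restriction of $f$. All three are bijections by the two facts above, so $\sigma_f(g,v)\in\Sym(\Gamma)$. (If one prefers a direct argument, injectivity follows from the injectivity of $g$ together with $f$ being injective on a single class, and surjectivity is then automatic by finiteness of $\Gamma$.)

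For part~(2), if $v\sim w$ then $[v]_\sim=[w]_\sim$ share the same unique label-$1$ representative, i.e. $\frac{1}{f(v)}v=\frac{1}{f(w)}w$; substituting this common vector into (\ref{sigmadef}) gives $\sigma_f(g,v)=\sigma_f(g,w)$ at once. For part~(3) I would first translate the condition ``$\sigma_f(g,w)=\id_\Gamma$ for all $w$'' into the label-preservation property $f(x^g)=f(x)$ for all $x\in\Vtimes$. Indeed, if all $\sigma_f(g,w)$ are the identity, then for $x\neq\0$, writing $x_0=\frac{1}{f(x)}x$ so that $x=f(x)\,x_0$, we get $f(x^g)=f\bigl((f(x)\,x_0)^g\bigr)=\sigma_f(g,x)(f(x))=f(x)$; conversely, label-preservation yields $\sigma_f(g,w)(\lambda)=f((\lambda w_0)^g)=f(\lambda w_0)=\lambda$, so $\sigma_f(g,w)=\id_\Gamma$. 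It then remains to identify label-preservation with membership in $\Sym((\Vtimes)/_\sim)^f$: the element $g$ fixes $\0$ (as $G\leq\Sym(V)_{\0}$) and preserves $\sim$, so letting $h$ denote its action on $(\Vtimes)/_\sim$, a label-preserving $g$ satisfies all three defining clauses of $h^f$, and by the uniqueness built into that definition $g=h^f\in\Sym((\Vtimes)/_\sim)^f$; the reverse implication is immediate since every element of $\Sym((\Vtimes)/_\sim)^f$ is label-preserving by construction.

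The lemma is essentially bookkeeping, so I expect no serious obstacle; the one step requiring care is part~(3), where after rewriting $\sigma_f(g,w)=\id_\Gamma$ as label-preservation one must still invoke the uniqueness of $h^f$ to upgrade ``$\sim$-preserving and label-preserving'' into genuine membership in $\Sym((\Vtimes)/_\sim)^f$.
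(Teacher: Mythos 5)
Your proof is correct and follows the same route as the paper, which simply states that all three items follow directly from the definitions; your write-up is the careful unwinding of those definitions (the three-fold factorisation for (1), uniqueness of the label-$1$ representative for (2), and the translation of $\sigma_f(g,w)=\id_\Gamma$ into label-preservation for (3)). No gaps.
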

\begin{proof}
All items follow directly from the definitions. 
\end{proof}

\begin{lemma} \label{sigma_circ}
Let $g,h$ be elements of $\G$ and let $f$ be a $\Gamma$-labelling. Then for all $v\in V\setminus \{\0\}$
\begin{align}
    \sigma_f(g h, v)=\sigma_f(g,v)\sigma_f(h,v^g).
\end{align}
\end{lemma}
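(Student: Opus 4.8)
The plan is to unwind all the definitions and track a single scalar $\lambda\in\Gamma$ as it passes through the composite function, paying close attention to the two composition conventions in force: $x^{gh}=(x^g)^h$ for the action on $V$, and $\phi\psi=\psi\circ\phi$ for functions $\phi,\psi$, so that $\bigl(\sigma_f(g,v)\sigma_f(h,v^g)\bigr)(\lambda)=\sigma_f(h,v^g)\bigl(\sigma_f(g,v)(\lambda)\bigr)$. Throughout, for a nonzero $u\in V$ I would write $\hat u:=\tfrac{1}{f(u)}u$ for the unique element of $[u]_\sim$ with $f(\hat u)=1$; this is well defined since $[u]_\sim=\Gamma u$ and $f$ is a $\Gamma$-labelling.

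The one genuine observation I would record first is this: \emph{every} element $y$ of an equivalence class $[w]_\sim$ satisfies $y=f(y)\,\hat w$. Indeed $y=c\,\hat w$ for some $c\in\Gamma$ because $[w]_\sim=\Gamma\hat w$, and applying $f$ gives $f(y)=c\,f(\hat w)=c$. I would also note that, since $g\in G$ and $\sim$ is $\G$-invariant, $g$ maps $[v]_\sim$ bijectively onto $[v^g]_\sim$; in particular $(\lambda\hat v)^g$ lies in $[v^g]_\sim$.

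With these in hand the computation is direct. I would fix $\lambda\in\Gamma$ and set $\mu:=\sigma_f(g,v)(\lambda)=f\bigl((\lambda\hat v)^g\bigr)$. Applying the observation above to $w=v^g$ and $y=(\lambda\hat v)^g$ yields $(\lambda\hat v)^g=\mu\,\widehat{v^g}$, whence
\begin{align*}
\sigma_f(gh,v)(\lambda)
&=f\bigl((\lambda\hat v)^{gh}\bigr)
=f\Bigl(\bigl((\lambda\hat v)^g\bigr)^h\Bigr)
=f\bigl((\mu\,\widehat{v^g})^h\bigr)
=\sigma_f(h,v^g)(\mu),
\end{align*}
where the final step is nothing but the definition of $\sigma_f(h,v^g)$ evaluated at $\mu$, using that $\widehat{v^g}=\tfrac{1}{f(v^g)}v^g$ is the label-$1$ representative of $[v^g]_\sim$. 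Since $\mu=\sigma_f(g,v)(\lambda)$, the right-hand side is exactly $\bigl(\sigma_f(g,v)\sigma_f(h,v^g)\bigr)(\lambda)$, and as $\lambda\in\Gamma$ was arbitrary this proves the identity.

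I do not expect a real obstacle here: the content is entirely the bookkeeping of labels together with the two composition conventions. The only point that needs genuine care is verifying that $(\lambda\hat v)^g$ is \emph{precisely} the label-$\mu$ representative of $[v^g]_\sim$, so that feeding $\mu$ into $\sigma_f(h,v^g)$ reproduces it on the nose; this is exactly what the observation of the second paragraph guarantees, and it is the step I would state explicitly to avoid a silent slip.
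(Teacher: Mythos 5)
Your proof is correct and follows essentially the same route as the paper: the paper's argument is the same label-chasing computation, presented as a commutative diagram tracking $\lambda\mapsto\mu\mapsto\nu$ through $(\lambda v)^{gh}=(\mu u)^h=\nu w$ for label-$1$ representatives $u,w$. Your explicit verification that $(\lambda\hat v)^g$ equals $\mu\,\widehat{v^g}$ is exactly the point the paper's diagram leaves implicit, so your write-up is, if anything, slightly more careful.
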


\begin{proof} Let $v\in V$ be given.
By Lemma~\ref{sigma_prop} (2) we may assume without loss of generality that $f(v)=1$. There exist $u,w\in V$ such that  $[w]_\sim=[u^h]_\sim=[v^{gh}]_\sim$ and $f(u)=f(w)=1$. For all  $\lambda \in \Gamma$, there exist $\mu,\nu \in \Gamma$ such that $(\lambda v)^{gh}=(\mu u)^h=\nu w$. Then the situation is illustrated on the following diagram.

\begin{center}
\begin{tikzpicture}
\node at (0,3) {$\lambda v$};
\node at (4,3) {$(\lambda v)^g$};
\node at (8,3) {$(\lambda v)^{gh}$};
\draw [->,thick] (0.7,3) -- (3.3,3);
\node [above] at (2,3) {$g$};
\draw [->,thick] (4.7,3) -- (7.3,3);
\node [above] at (6,3) {$h$};
\draw [thick] (0.2,3.4) to [out=40,in=180] (4,4.3);
\draw [->,thick] (4,4.3) to [out=0,in=145] (7.8,3.4);
\node [above] at (4,4.3) {$gh$};

\draw [->,thick] (0,2.65)--(0,1.35);
\node [right] at (0,2) {$f$};
\draw [->,thick] (4,2.65)--(4,1.35);
\node [right] at (4,2) {$f$};
\draw [->,thick] (8,2.65)--(8,1.35);
\node [right] at (8,2) {$f$};

\node at (0,1) {$\lambda$};
\node at (4,1) {$\mu$};
\node at (8,1) {$\nu$};
\draw [->,thick] (0.7,1) -- (3.3,1);
\node [above] at (2,1) {$\sigma_f(g,v)$}; 
\draw [->,thick] (4.7,1) -- (7.3,1);
\node [above] at (6,1) {$\sigma_f(h,u)$}; 
\draw [thick] (0.2,0.6) to [out=-40,in=180] (4,-0.3);
\draw [->,thick] (4,-0.3) to [out=0,in=-145] (7.8,0.6);
\node [above] at (4,-0.3) {$\sigma_f(gh,v)$};
\end{tikzpicture}
\end{center}
Since $\sigma_f(h,v^g)=\sigma_f(h,u)$ this concludes the proof. 
\end{proof}
From  Lemma~\ref{sigma_circ} it follows immediately that for all $\Gamma$-labellings $f$, all $g\in G$ and all $v\in V$ the inverse of $\sigma_f(g,v)$ is $\sigma_f(g^{-1},v)$. Additionally Lemma~\ref{sigma_circ} shows that $\Sym ((V\setminus \{\0\})/_\sim)^f$ is a subgroup of $\Sym(V)$.

We are now able to find a function in $G\cap \Sym ((V\setminus \{\0\})/_\sim)^f$ which acts cyclically on an infinite linearly independent set of vectors.

\begin{lemma}\label{infiCycle}
Let $f$ be a $\Gamma$-labelling. Assume that $\G$ acts as the full symmetric group on the  equivalence classes $(V\setminus \{\0\})/_\sim$. { Then for all infinite and  linearly independent sets $\{v_i:i\in \Z\}\subseteq V$ such that $|V\setminus\linClos{\{v_i:i\in \Z\}}|=\infty$ } there exists $g\in G\cap \Sym ((V\setminus \{\0\})/_\sim)^f$ such that 
\begin{align*}
    \begin{array}{ccccccccc}
         \cdots & \overset{g}{\longrightarrow} & [v_{-1}]_\sim &\overset{g}{\longrightarrow} & [v_{0}]_\sim & \overset{g}{\longrightarrow} & [v_{1}]_\sim & \overset{g}{\longrightarrow} &\cdots
    \end{array}
\end{align*}
and $g$ acts as the identity on $(V/_\sim)\setminus\{[v_i]_\sim:i\in \Z\}$.
\end{lemma}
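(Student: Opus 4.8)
The plan is to realize the desired $g$ as the label-preserving lift $\pi^f$ of the bi-infinite shift $\pi$ of equivalence classes that sends each $[v_i]_\sim$ to $[v_{i+1}]_\sim$ and fixes every other class, and to prove $\pi^f\in G$ using that $G$ is closed. It then suffices to show that for every finite $F\subseteq V$ there is some $h\in G$ with $h|_F=\pi^f|_F$. Concretely, writing $u_i:=\tfrac{1}{f(v_i)}v_i$ for the label-$1$ representative of $[v_i]_\sim$, the map $\pi^f$ sends $\lambda u_i\mapsto \lambda u_{i+1}$ for $\lambda\in\Gamma$ and fixes every vector lying in a class different from all $[v_i]_\sim$.

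First I would build a single linear automorphism carrying most of the work. Using $|V\setminus\linClos{\{v_i:i\in\Z\}}|=\infty$, extend $\{v_i:i\in\Z\}$ to a basis $\{v_i:i\in\Z\}\cup\{w_j:j\in J\}$ of $\V$ and define the linear map $\psi$ by $\psi(v_i):=\tfrac{f(v_i)}{f(v_{i+1})}v_{i+1}$ and $\psi(w_j):=w_j$. Since $\psi$ permutes a basis up to nonzero scalars it lies in $\Aut(\V)\leq G$, and a direct computation gives $\psi(\lambda u_i)=\lambda u_{i+1}$, so $\psi$ realizes the label-preserving shift on the classes $[v_i]_\sim$ and fixes each class $[w_j]_\sim$ pointwise. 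Being linear, every $\sigma_f(\psi,\cdot)$ is multiplication by a scalar of $\Gamma$; in particular $\psi$ agrees with $\pi^f$ on all of $F$ except on the finitely many \emph{mixed} classes of $F$, i.e. those classes meeting $F$ that are neither some $[v_i]_\sim$ nor some $[w_j]_\sim$, which $\psi$ moves (shifting their $v$-support) rather than fixing.

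It therefore remains, for a given finite $F$, to repair these finitely many mixed classes: I would post-compose $\psi$ with an element $\rho\in G$ (i.e. form $\psi\rho$) which sends each moved class back to its original while fixing, on $F$, the images already produced on the $v_i$- and $w_j$-classes. The class-level repositioning is available because $\G$ acts as the full symmetric group on $(V\setminus\{\0\})/_\sim$. The labels are then corrected using scalar and diagonal automorphisms together with the infinitely many ``fresh'' classes not meeting $F$ (again furnished by the infinite-dimensional complement) as scratch space: to adjust the internal labelling of a single class $C$ without disturbing any bystander class, swap $C$ with a fresh basis class $D=[w_j]_\sim$ by some $t\in G$, rescale $D$ by a diagonal automorphism $s$, and swap back; the point is that in the conjugate $tst^{-1}$ every class fixed setwise by $t$ and pointwise by $s$ is fixed pointwise, so only $C$ is affected.

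The main obstacle is precisely this simultaneous control of labels: the elements supplied by the full-symmetric hypothesis have uncontrolled internal (within-class) action, while the automorphisms that do control labels act only multiplicatively and, for mixed classes, cannot be decoupled from neighbouring classes. The two saving features are that $\psi$ is linear, so the distortion to be undone on each mixed class is merely a multiplication, and that the conjugation device $tst^{-1}$ localizes a correction to one class at a time. Carrying this out requires a bookkeeping over the finitely many classes of $F$, correcting basis-classes first (by diagonal automorphisms) and then the mixed classes one at a time (by the scratch-space device), after which $\psi\rho$ agrees with $\pi^f$ on $F$; closedness of $G$ then yields $\pi^f\in G\cap\Sym((V\setminus\{\0\})/_\sim)^f$, which is the element sought.
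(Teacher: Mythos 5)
Your overall frame (approximate the label-preserving lift of the shift on every finite set, then invoke closedness) is sound, and your linear map $\psi$ does handle the classes $[v_i]_\sim$ and the pure $[w_j]_\sim$-classes correctly. The gap lies in the repair step for the mixed classes. The class-repositioning element $\rho_0$ supplied by the full-symmetric-group hypothesis comes with a completely \emph{unknown} within-class action: on a mixed class $C$ the composite $\psi\rho_0$ acts internally by some unknown permutation of $\Gamma$, and on the classes $[v_{i+1}]_\sim$ that $\rho_0$ must fix it likewise acts by an unknown permutation. To finish you would have to realize the inverses of these permutations as internal corrections localized to single classes. Your scratch-space device cannot do this: if $t\in G$ swaps $C$ with a fresh class $D$ and $s$ is a diagonal automorphism multiplying $D$ by $\mu$, then $\sigma_f(tst^{-1},c)=\tau\mu\tau^{-1}$ with $\tau=\sigma_f(t,c)$ again unknown, so the device only yields conjugates of scalar multiplications (permutations whose cycle type is that of an element of the regular representation of $\Gamma$), never an arbitrary prescribed element of $\Sym(\Gamma)$. (A secondary issue: a diagonal automorphism moves every class with nonzero $w_j$-coordinate, so $s$ is far from the identity off $D$; that is patchable by taking $w_j$ outside $\linClos{F}$, but the first objection is not.) The ability to prescribe arbitrary internal actions on a single class is precisely the assertion that the group $\Nr$ introduced in the next subsection equals $\Sym(\Gamma)$, which does not follow from the hypotheses and is false in general.

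The paper's proof avoids ever prescribing an internal action. It conjugates the class-level shift $s$ by powers of a linear shift $\varphi$ with $v_i\mapsto v_{i+1}$, so that in the product $s_0s_1\cdots s_{|\Gamma|!-1}$ all factors contribute the \emph{same} unknown permutation at a given $[v_i]_\sim$, whence the internal action is $\sigma_f(s,v_i)^{|\Gamma|!}=\id_\Gamma$; a further $|\Gamma|!$-th power kills the internal action on the setwise-fixed classes. The price is that the shift degrades to a shift by $(|\Gamma|!)^2$ steps, which is repaired at the end by conjugating with an automorphism, and the localization outside $\linClos{\{v_i:i\in\Z\}}$ is then obtained by your intended compactness argument. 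Some such power trick that annihilates unknown permutations of $\Gamma$ is the ingredient your argument is missing.
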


\begin{proof}
%Let $W$ be an infinite dimensional and infinite codimensional subspace of $V$, and let $\{w_i:i\in \Z\}$ be a basis of $W$.  
Let $\{v_i:i\in \Z\}\subseteq V$  be given. 
Without loss of generality  we may assume that for every $i\in \Z$ we have $f(v_i)=1$. Since $\G$ acts as the full symmetric group on $(V\setminus\{\0\})/_\sim$ there exists an element $s\in G$ such that for all $i\in \Z$ we have $s([v_i]_\sim)=[v_{i+1}]_\sim$ and such that $s$ acts as the identity on  $(V/_\sim) \setminus \{[v_i]_\sim:i\in \Z\}$. From this $s$ we now construct a function $g$ which acts like $s$ on the $\sim$-equivalence classes and which additionally satisfies  $\sigma_f(g,v)=\id_\Gamma$  for all $v\in V$.

Let $\varphi \in \Aut(\V)$ be such that $\varphi(v_i)=v_{i+1}$ for all $i\in \Z$.  Then  for all $i\in \Z$:  $\sigma_f(\varphi,v_i)=\id_\Gamma$. For all $n\geq 0$ we define $s_n:={\varphi^{-n}s\varphi^n}$. Since $s$ acts as the identity on $(V/_\sim)\setminus \{[v_i]:i\in \Z\}$ so does $s_n$. By Lemma~\ref{sigma_circ} we obtain for all $n\geq 0$ and all $i\in \Z$:
\begin{align*}
    \sigma_f(s_n,v_i)=\sigma_f(s,v_{i-n}).
\end{align*} 
Moreover, for all $n\geq 0$ an all $i \in \Z$ the function $s_n$ maps $[v_i]_\sim$ to $[v_{i+1}]_\sim$. 
We define $s':=s_0s_1\cdots s_{|\Gamma|!-1}$ and obtain for all $i\in \Z$:
\begin{align*}
    \sigma_f(s',v_i)&=\sigma_f(s_0,v_i)\sigma_f(s_1,v_{i+1})\cdots\sigma_f(s_{|\Gamma|!-1},v_{i+|\Gamma|!-1}) \\
    &= \sigma_f(s,v_i)^{|\Gamma|!}=\id_{\Gamma}.
\end{align*}
The last equality follows since $|\Sym(\Gamma)|=|\Gamma|!$. 

We found $s'$ such that for all $i\in \Z$ we have $\sigma_f(s',v_i)=\id_\Gamma$. {We now want $s''$ such that additionally $s''$ acts as the identity inside the equivalence classes $(V/_\sim)\setminus\{[v_i]:i\in \Z\}$}. For this we set $s'':=(s')^{|\Gamma|!}$. For all $i\in \Z$ we still have $\sigma_f(s'',v_i)=\id_{\Gamma}$. The function $s'$ acts as the identity on the $\sim$-equivalence classes which are not elements of  $\{[v_i]_\sim:i\in \Z\}$, therefore for every $u\in V$ which is not equivalent to an element in $\{v_i:i \in \Z\}$  we obtain
\begin{align*}
    \sigma_f(s'',u)=\sigma_f(s',u)^{|\Gamma|!}=\id_{\Gamma}.
\end{align*}
By Lemma~\ref{sigma_prop} (3) we obtain that $s''\in G\cap \Sym ((V\setminus \{0\})/_\sim)^f$. 

We set $k:=(|\Gamma|!)^2$. Then $s''$ acts on $\{v_i:i\in \Z\}$ in the following way:
\begin{align*}
    \begin{array}{ccccccccc}
         \cdots & \overset{s''}{\longrightarrow} & v_{-k} &\overset{s''}{\longrightarrow} & v_0 & \overset{s''}{\longrightarrow} & v_{k}& \overset{s''}{\longrightarrow} &\cdots, \\
         \cdots & \overset{s''}{\longrightarrow} & v_{1-k} &\overset{s''}{\longrightarrow} & v_{1} & \overset{s''}{\longrightarrow} & v_{1+k}& \overset{s''}{\longrightarrow} &\cdots, \\
         \ & \ & \ & \ & \vdots & \ & \  & \,     \\
         \cdots & \overset{s''}{\longrightarrow} & v_{(k-1)-k} &\overset{s''}{\longrightarrow} & v_{(k-1)} & \overset{s''}{\longrightarrow} & v_{(k-1)+k}& \overset{s''}{\longrightarrow} &\cdots
    \end{array}
\end{align*}
%and inside the equivalence classes $(V/_\sim) \setminus \{[v_i]_\sim:i\in \Z\}$ as the identity. 

Let a finite set $F\subseteq V$ be given. Since $V\setminus \linClos{\{v_{i}:i\in \Z\}}$ is infinite there exists an automorphism $\varphi _F \in \Aut(\V)$ such that $\varphi_F$ restricted to $\linClos{ \{v_{jk}:j\in \Z\}}$ is the identity and \begin{align*}
    \varphi_F\left(F\setminus \linClos{ \{v_{jk}:j\in \Z\}}\right) \subseteq V\setminus \linClos{\{v_{i}:i\in \Z\}}.
\end{align*}
Since $s''$ acts as the identity on $V\setminus \linClos{\{v_{i}:i\in \Z\}}$ the composition ${\varphi_F s''\varphi^{-1}_F}$ acts as the identity on
\begin{align*}
    F \setminus \linClos{\{v_{jk}:j\in \Z\}}.
\end{align*}
Note that ${\varphi_Fs''\varphi^{-1}_F}$ preserves the labels of all elements in $F$. Since $F$ was arbitrary and since ${G\cap \Sym ((V\setminus \{\0\})/_\sim)^f}$ is closed  there exists a function $h$ therein such that 
\begin{align*}
    \begin{array}{ccccccccc}
         \cdots & \overset{h}{\longrightarrow} & v_{-k}\sim &\overset{h}{\longrightarrow} & v_{0}& \overset{h}{\longrightarrow} & v_{k} & \overset{h}{\longrightarrow} &\cdots,
    \end{array}
\end{align*}
and $h$ acts as the identity on the complement of  $\bigcup \{[v_{jk}]_\sim:j\in \Z\}$.

There exists $\varphi\in \Aut(\V)$ such that for all $i\in \Z$ we have $v_i^\varphi =v_{ik}$. Then the function $g:=\varphi h\varphi^{-1}$ proves our statement.
\end{proof}

With the help of Lemma~\ref{infiCycle} we are now able to  show  that any transposition of two equivalence classes can be achieved by a function in ${G\cap \Sym((V\setminus \{\0\})/_\sim)^f}$. From this, we easily obtain the following.

\begin{lemma}\label{GfisSymf}
Let $f$ be a $\Gamma$-labelling. Assume that $\G$ acts on $(V\setminus \{\0\})/_\sim$ as the full symmetric group. Then 
\begin{align*}
   \Sym((V\setminus \{\0\})/_\sim)^f\leq \G.
\end{align*}
\end{lemma}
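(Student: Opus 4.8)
The plan is to reduce the statement to the single fact that, for any two distinct $\sim$-classes $E,E'\neq\{\0\}$, the label-preserving lift $\tau^f$ of the transposition $\tau=(E\ E')\in\Sym((V\setminus\{\0\})/_\sim)$ already lies in $G$. Granting this, I would finish as follows. Recall from the discussion after Lemma~\ref{sigma_circ} that $\Sym((V\setminus\{\0\})/_\sim)^f$ is a subgroup of $\Sym(V)$, and that $g\mapsto g^f$ is a group isomorphism onto it. Hence the lifts of all transpositions generate the lifts $\Sym_{\mathrm{fin}}^f$ of all \emph{finitary} permutations of the classes, so $\Sym_{\mathrm{fin}}^f\subseteq G$. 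Now $\Sym_{\mathrm{fin}}^f$ is dense in $\Sym((V\setminus\{\0\})/_\sim)^f$: given $h^f$ and a finite $F\subseteq V$, the classes meeting $F$ are finitely many, and any finitary permutation of classes agreeing with $h$ on them has a lift agreeing with $h^f$ on $F$ (both are label-preserving, so land on the same element of the same target class). Since $G$ is closed and contains $\Sym_{\mathrm{fin}}^f$, it contains its whole closure $\Sym((V\setminus\{\0\})/_\sim)^f$, which is the claim.

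For the transposition fact I would realize $\tau^f$ as a product of two infinite cycles furnished by Lemma~\ref{infiCycle} (applicable since by hypothesis $\G$ acts as the full symmetric group on the classes). The crucial idea is to run the two cycles through \emph{different} sets of classes, one an enlargement of the other by the class $E'$. Concretely, pick representatives $a_0\in E$ and $w\in E'$ and, using that $V$ is countably infinite-dimensional, extend $a_0$ to a linearly independent family $\{a_i:i\in\Z\}$ such that $\{a_i:i\in\Z\}\cup\{w\}$ is still linearly independent with infinite-dimensional complement (linear independence automatically makes all the classes $[a_i]_\sim$ and $[w]_\sim$ pairwise distinct). Lemma~\ref{infiCycle} then yields $c_1\in G\cap\Sym((V\setminus\{\0\})/_\sim)^f$ cycling $[a_i]_\sim\mapsto[a_{i+1}]_\sim$ and fixing every other class, and $c_2\in G\cap\Sym((V\setminus\{\0\})/_\sim)^f$ realizing the infinite cycle
\[
\cdots \to [a_2]_\sim \to [a_1]_\sim \to [a_0]_\sim \to [w]_\sim \to [a_{-1}]_\sim \to [a_{-2}]_\sim \to \cdots
\]
and fixing every other class.

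A direct check on classes shows that the product $c_2c_1$ interchanges $E=[a_0]_\sim$ and $E'=[w]_\sim$ and fixes all remaining classes: indeed $[a_0]_\sim\overset{c_2}{\longmapsto}[w]_\sim\overset{c_1}{\longmapsto}[w]_\sim$ and $[w]_\sim\overset{c_2}{\longmapsto}[a_{-1}]_\sim\overset{c_1}{\longmapsto}[a_0]_\sim$, while each $[a_i]_\sim$ with $i\neq 0$ is sent by $c_2$ to $[a_{i-1}]_\sim$ and back by $c_1$. As $c_1,c_2$ lie in the group $G\cap\Sym((V\setminus\{\0\})/_\sim)^f$, so does $c_2c_1$, and since it acts on the classes as $\tau$ it must coincide with the unique label-preserving lift $\tau^f$; thus $\tau^f\in G$. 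I expect this construction of a single transposition to be the one genuinely delicate step: the naive attempt to write it as a product of two infinite cycles running through the \emph{same} classes cannot succeed (such a product is never a single transposition), and the remedy is exactly the asymmetric choice above, together with verifying that the two chosen families meet the linear-independence and infinite-complement hypotheses of Lemma~\ref{infiCycle}. The concluding density-and-closedness argument is then routine.
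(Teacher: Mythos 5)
Your construction of the transposition and the concluding generate-and-close argument follow essentially the same route as the paper (the paper also produces the transposition as a quotient of two infinite cycles from Lemma~\ref{infiCycle} whose supports differ by exactly one class -- it removes a class from the second cycle where you insert one -- and then observes that the closed group generated by these transpositions is all of $\Sym((V\setminus \{\0\})/_\sim)^f$). However, there is a genuine gap in the step you yourself flag as the delicate one: you claim that for \emph{any} two distinct classes $E,E'\neq\{\0\}$ you can pick representatives $a_0\in E$ and $w\in E'$ with $\{a_i:i\in\Z\}\cup\{w\}$ linearly independent. This is false when $E$ and $E'$ are distinct $\sim$-classes contained in the \emph{same} one-dimensional subspace, i.e.\ $E'=\lambda E$ for some $\lambda\in\F{p}^\times\setminus\Gamma$; then every choice of $a_0\in E$ and $w\in E'$ is linearly dependent, so no such family exists and Lemma~\ref{infiCycle} cannot be invoked for your second cycle. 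This case is not vacuous: by Proposition~\ref{prop:mainresult1} the hypothesis of the lemma holds in particular whenever $\Gamma\lneq\F{p}^\times$, which is exactly when such pairs of classes occur. Your parenthetical remark that linear independence of representatives makes the classes distinct is true but conflates distinctness with the (false) converse.

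The gap is repairable along the lines the paper uses: for $v\not\sim w$ with $\linClos{v}=\linClos{w}$, choose $u\in V\setminus\linClos{v}$ and write the desired transposition as the conjugate $\pi_{[v]_\sim,[u]_\sim}\,\pi_{[w]_\sim,[u]_\sim}\,\pi_{[v]_\sim,[u]_\sim}$, where both factors are transpositions between classes with linearly independent representatives and hence are covered by your construction. (Equivalently, you could observe that the transpositions you \emph{can} build already generate the full finitary symmetric group on the classes, so your density argument goes through with a smaller generating set.) As written, though, the missing case means the proof does not cover all instances of the lemma.
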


\begin{proof}
First, let  two linearly independent vectors $v,w\in V$ such that $f(v)=f(w)$ be given. Let $\{v_i:i\in \Z\}$ be an infinite linearly independent subset of $V$ as in Lemma~\ref{infiCycle} and  such that $v_{-1}=v$ and $v_0=w$. By Lemma~\ref{infiCycle} there exists $g\in {G\cap \Sym((V\setminus \{\0\})/_\sim)^f}$ such that $g$ is the identity on $V\setminus \{[v_i]_\sim:i\in \Z\}$ and such that 
\begin{align*}
    \begin{array}{ccccccccc}
         \cdots & \overset{g}{\longrightarrow} & v_{-1} &\overset{g}{\longrightarrow} & v_0 & \overset{g}{\longrightarrow} & v_{1}& \overset{g}{\longrightarrow} &\cdots.
    \end{array}
\end{align*}

The set $\{v_i:i\in \Z\setminus\{0\}\}$ still satisfies the assumptions of Lemma~\ref{infiCycle}, thus there exists $h\in {G\cap \Sym((V\setminus \{\0\})/_\sim)^f}$ such that $h$ is the identity on $V\setminus \{[v_i]_\sim:i\in \Z\setminus\{0\}\}$ and such that 
\begin{align*}
    \begin{array}{ccccccccc}
         \cdots & \overset{h}{\longrightarrow} & v_{-1} &\overset{h}{\longrightarrow} & v_1 & \overset{h}{\longrightarrow} & v_{2}& \overset{h}{\longrightarrow} &\cdots.
    \end{array}
\end{align*}

For all $i\neq -1,0$ we have $v_i^{g h^{-1}}=v_{i+1}^{h^{-1}}=v_i$ and
\begin{align*}
    v_{-1}^{gh^{-1}}=v_{0}^{h^{-1}}=v_0=w \quad \text{ and } \quad v_{0}^{gh^{-1}}=v_{1}^{h^{-1}}=v_0=v.
\end{align*}

Therefore, since $g$ and $h$ act as the identity on $V\setminus \{[v_i]_\sim:i\in \Z\}$, we obtain that $gh^{-1}\in {G\cap \Sym((V\setminus \{\0\})/_\sim)^f}$ is the function which swaps exactly $[v]_\sim$ and $[w]_\sim$ while fixing everything else. We denote this function by $\pi_{[v]_\sim,[w]_\sim}$.

For two vectors $v\not\sim w$ such that $\linClos{v}=\linClos{w}$ we choose $u\in V\setminus \linClos{v}$. {By Lemma~\ref{infiCycle} and by what we showed $\pi_{[v]_\sim,[u]_\sim}$ and $\pi_{[w]_\sim,[u]_\sim}$ are both in  ${G\cap \Sym((V\setminus \{\0\})/_\sim)^f}$.} Since 
\begin{align*}
    \pi_{[v]_\sim,[u]_\sim}\pi_{[w]_\sim,[u]_\sim}\pi_{[v]_\sim,[u]_\sim} =\pi_{[v]_\sim,[w]_\sim}
\end{align*}
we obtain that every function which swaps exactly two $\sim$-equivalence classes and preserves labels is an element of ${G\cap \Sym((V\setminus \{\0\})/_\sim)^f}$. Considering that the smallest closed group containing all these elements is $\Sym((V\setminus \{\0\})/_\sim)^f$ we obtain $\Sym((V\setminus \{\0\})/_\sim)^f\subseteq G$.
%Those functions locally generate  $\Sym((V\setminus \{\0\})/_\sim)^f$ and therefore  ${G\cap \Sym((V\setminus \{\0\})/_\sim)^f}$ equals $\Sym((V\setminus \{\0\})/_\sim)^f$.
\end{proof}

From here we show as in Lemma~\ref{GisGAut} that $\G$ decomposes into two groups.

\begin{corollary}\label{GisGSym}
Let $f$ be a $\Gamma$-labelling. Assume that $\G$ acts on $(V\setminus \{\0\})/_\sim$ as the full symmetric group. Then
\begin{align*}
    G=G^\ast\; \Sym((V\setminus \{\0\})/_\sim)^f.
\end{align*}
\end{corollary}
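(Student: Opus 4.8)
The plan is to follow the same strategy as in the proof of Lemma~\ref{GisGAut}, with Lemma~\ref{GfisSymf} now playing the role that Proposition~\ref{prop:res_col} played there. First I would dispose of the easy inclusion $G^\ast\,\Sym((V\setminus\{\0\})/_\sim)^f\subseteq G$: by definition $G^\ast=\Sstar(V)\cap G$ is a subset of $G$, and Lemma~\ref{GfisSymf} gives $\Sym((V\setminus\{\0\})/_\sim)^f\leq \G$; since $\G$ is a group, the product of these two subsets stays inside $G$.

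For the reverse inclusion $G\subseteq G^\ast\,\Sym((V\setminus\{\0\})/_\sim)^f$, I would take an arbitrary $g\in G$ and consider the permutation $\bar g$ that $g$ induces on $(V\setminus\{\0\})/_\sim$; this is well defined because $\G$ fixes $\0$ and preserves $\sim$. Under the hypothesis that $\G$ acts as the full symmetric group on $(V\setminus\{\0\})/_\sim$, the element $\bar g$ lies in $\Sym((V\setminus\{\0\})/_\sim)$, so its label-preserving lift $(\bar g)^f$ is defined and, by Lemma~\ref{GfisSymf}, belongs to $\Sym((V\setminus\{\0\})/_\sim)^f\leq \G$. The key computation is then that $g\big((\bar g)^f\big)^{-1}$ acts as the identity on $(V\setminus\{\0\})/_\sim$: both $g$ and $(\bar g)^f$ induce $\bar g$ there, so their ``difference'' induces the identity. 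Hence $g\big((\bar g)^f\big)^{-1}\in\Sstar(V)\cap G=G^\ast$, and writing
\begin{align*}
g=\Big(g\big((\bar g)^f\big)^{-1}\Big)\,(\bar g)^f
\end{align*}
exhibits $g$ as a product of an element of $G^\ast$ and an element of $\Sym((V\setminus\{\0\})/_\sim)^f$, as required. Here I use the paper's left-to-right composition convention, so that the factor inducing the identity on classes is written on the left.

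I do not expect any genuine obstacle at this stage: all the real work has already been carried out in Lemma~\ref{GfisSymf}, which guarantees that every label-preserving permutation of the equivalence classes is realised inside $\G$. The only points requiring a line of care are checking that $\bar g$ is a legitimate permutation of $(V\setminus\{\0\})/_\sim$, so that $(\bar g)^f$ makes sense, and keeping the composition order consistent, so that $g\big((\bar g)^f\big)^{-1}$ really lands in $G^\ast$ rather than its mirror image.
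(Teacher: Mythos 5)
Your proposal is correct and takes essentially the same route as the paper: both use Lemma~\ref{GfisSymf} for the easy inclusion and then, for an arbitrary $g\in G$, pick the element of $\Sym((V\setminus\{\0\})/_\sim)^f$ inducing the same permutation of the classes and observe that multiplying by its inverse lands in $G^\ast$. Your explicit identification of that element as the label-preserving lift $(\bar g)^f$, and your care with the composition order, only make explicit what the paper leaves implicit.
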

\begin{proof}
The inclusion $G\supseteq G^\ast\Sym((V\setminus \{\0\})/_\sim)^f$ holds by Lemma~\ref{GfisSymf}. For the other inclusion let an arbitrary $g\in G$ be given. Again by Lemma~\ref{GfisSymf} there exists  $g'\in \Sym((V\setminus \{\0\})/_\sim)^f$ which acts like $g$ on the $\sim$-equivalence classes, hence $gg'^{-1}\in G^\ast$.
\end{proof}

	It is clear from the definition that $G^\ast\cap\Sym((V\setminus \{\0\})/_\sim)^f=\{\id_V\}$ and that $G^\ast$ is a normal subgroup of $G$. Therefore, Corollary~\ref{GisGSym} tells us in fact that $G$ can be written as an (inner) semi-direct product $G^\ast\;  \Sym((V\setminus \{\0\})/_\sim)^f$.

\subsection{All groups fixing the zero vector.}

The goal of this section is to show that there are only finitely many reducts of $\V$ which define $\0$, i.e., there are only finitely many possibilities for the group $\G\leq \Sym(V)_{\0}$ being closed and containing $\Aut(\V)$.  In Section~\ref{GisGtimesStuff} we have seen in  Corollary~\ref{GisGSym} and Lemma~\ref{GisGAut} that $\G$ can always be decomposed into $\Gast$ and  $\Aut(\V)$ or $\Sym((V\setminus \{\0\})/_\sim)^f$, respectively. In this section we will show that $\Gast$ is already fully determined  by two subgroups $\Hc$ and $\Nr$ of $\Sym(\Gamma)$. Since $\Sym(\Gamma)$ is finite this will yield the desired result. 

{ For the rest of this section  we fix a $\Gamma$-labelling}    $f\colon V\to \Gamma$. We first define the sets $N$ and $H$ which will later turn out to be the domains of two permutation groups on $\Gamma$.
\begin{definition}\label{NandH}
The set $H$ is defined as the set of permutations $\sigma \in \Sym (\Gamma)$  such that  there exists a function $ g\in G^\ast $ and  a vector $v\in V$ such that $\sigma_f(g,v)=\sigma$. 
The set $N$ is defined as the set of permutations $\sigma \in \Sym (\Gamma)$ such that there exists such a function $g$ which additionally is the identity on $V\setminus [v]_\sim$. 
\end{definition}

 For any $\lambda\in \Gamma$ we also  write $\lambda$ for the permutation on $\Gamma$ which is induced by the multiplication with  $\lambda$.

\begin{lemma}\label{autgaut} 
{Let $\gamma$ be an  automorphism of $\V$, let $v\in V\setminus \{\0\}$  and let $g\in \Sstar (V)$.} Then
\begin{align*}
    \sigma_f(\gamma g \gamma^{-1},v)=\left(\frac{f(v^\gamma)}{f(v)}\right)\sigma_f(g,v^\gamma)\left(\frac{f(v)}{f(v^\gamma)}\right).
\end{align*}
\end{lemma}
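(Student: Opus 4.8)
The plan is to evaluate both sides at an arbitrary $\lambda\in\Gamma$ and to compute the left-hand side directly from Definition~\ref{sigma}, tracking the label of the relevant vector as it is pushed through $\gamma$, then $g$, then $\gamma^{-1}$. The only facts I will use are that $\gamma$ and $\gamma^{-1}$ are linear, so they commute with scalar multiplication, and that $g\in\Sstar(V)$ fixes every $\sim$-class setwise, so applying $g$ keeps us inside $[v^\gamma]_\sim$ and its image is therefore recoverable from its label alone (the label map is a bijection on each class).

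Abbreviate $a:=f(v)$ and $b:=f(v^\gamma)$, both in $\Gamma$. By Definition~\ref{sigma} I have $\sigma_f(\gamma g\gamma^{-1},v)(\lambda)=f\big((\tfrac{\lambda}{a}v)^{\gamma g\gamma^{-1}}\big)$. First I would apply $\gamma$: linearity gives $(\tfrac{\lambda}{a}v)^\gamma=\tfrac{\lambda}{a}v^\gamma=\tfrac{\lambda b}{a}\cdot\tfrac{1}{b}v^\gamma$, which is exactly the element of $[v^\gamma]_\sim$ carrying the label $\tfrac{\lambda b}{a}$. By the definition of $\sigma_f(g,v^\gamma)$, applying $g$ sends it to the element of $[v^\gamma]_\sim$ with label $\nu:=\sigma_f(g,v^\gamma)(\tfrac{\lambda b}{a})$; since $g\in\Sstar(V)$ fixes $[v^\gamma]_\sim$ setwise, that element is $\tfrac{\nu}{b}v^\gamma$. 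Applying $\gamma^{-1}$ and using linearity once more yields $\tfrac{\nu}{b}v$, whose label is $f(\tfrac{\nu}{b}v)=\tfrac{a\nu}{b}$. Thus I obtain
$$\sigma_f(\gamma g\gamma^{-1},v)(\lambda)=\frac{a}{b}\,\sigma_f(g,v^\gamma)\!\left(\frac{b}{a}\,\lambda\right).$$

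Finally I would match this against the right-hand side under the paper's convention $fg=g\circ f$: the product $\big(\tfrac{f(v^\gamma)}{f(v)}\big)\,\sigma_f(g,v^\gamma)\,\big(\tfrac{f(v)}{f(v^\gamma)}\big)$ first multiplies by $\tfrac{b}{a}$, then applies $\sigma_f(g,v^\gamma)$, then multiplies by $\tfrac{a}{b}$, so its value at $\lambda$ is exactly $\tfrac{a}{b}\,\sigma_f(g,v^\gamma)(\tfrac{b}{a}\lambda)$, which agrees with the display. I do not anticipate a genuine obstacle here: this is a bookkeeping identity proved by a short chase. The two points that need care are getting the composition order right — the left and right multiplicative factors in the statement correspond to the last and first maps applied, precisely because of the $fg=g\circ f$ convention — and invoking $g\in\Sstar(V)$ at the middle step so that the $g$-image remains in $[v^\gamma]_\sim$ and can be reconstructed from its label. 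One could instead try to read the identity off a composition rule in the spirit of Lemma~\ref{sigma_circ}, combined with Lemma~\ref{sigma_prop}(2) to replace $v^{\gamma g}$ by $v^\gamma$; but since Lemma~\ref{sigma_circ} is stated only for elements of $\G$ whereas here $g\in\Sstar(V)$, the direct computation is the cleaner route.
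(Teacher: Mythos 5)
Your computation is correct, and it establishes the identity; the only substantive difference from the paper is organisational. The paper's proof does exactly what you flagged as the alternative route: it applies Lemma~\ref{sigma_circ} to write $\sigma_f(\gamma g\gamma^{-1},v)=\sigma_f(\gamma,v)\,\sigma_f(g,v^\gamma)\,\sigma_f(\gamma^{-1},v^{\gamma g})$, uses $\id_\Gamma=\sigma_f(\gamma^{-1},v^{\gamma g})\,\sigma_f(\gamma,v^{\gamma g\gamma^{-1}})$ together with $[v]_\sim=[v^{\gamma g\gamma^{-1}}]_\sim$ (which holds because $g\in\Sstar(V)$) to replace the last factor by $\sigma_f(\gamma,v)^{-1}$, and then computes, exactly as in your first step, that $\sigma_f(\gamma,v)$ is multiplication by $f(v^\gamma)/f(v)$. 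So the mathematical content is the same; you have simply unrolled the composition lemma into a single element chase. Your parenthetical concern is well taken: Lemma~\ref{sigma_circ} is stated for elements of $\G$, whereas here $g$ (and hence $\gamma g\gamma^{-1}$) need only lie in $\Sstar(V)$, so the paper is strictly speaking invoking that lemma outside its stated hypotheses (harmlessly, since its proof only uses preservation of $\sim$-classes). Your direct computation sidesteps that issue at the cost of a slightly longer argument; the paper's version buys brevity and reuses machinery already in place. Both the bookkeeping of the label bijection on each finite class and the left-to-right reading of the composition on the right-hand side are handled correctly in your write-up.
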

\begin{proof}
By Lemma~\ref{sigma_circ} we obtain 
\begin{align*}
    \sigma_f(\gamma g\gamma^{-1},v)={\sigma_f(\gamma,v)}{\sigma_f(g,v^\gamma)}{\sigma_f(\gamma^{-1},v^{\gamma g})},
\end{align*}
and 
\begin{align*}
\id_\Gamma={\sigma_f(\gamma^{-1}\gamma,v^{\gamma g})}={\sigma_f(\gamma^{-1},v^{\gamma g})}{\sigma_f(\gamma, v^{\gamma g \gamma^{-1}})}.
\end{align*}

The function $g$ fixes the $\sim$-equivalence classes, hence $[v]_\sim=[v^{\gamma g \gamma^{-1}}]_\sim$. Combining those two equations we obtain
\begin{align*}
    \sigma_f(\gamma g\gamma^{-1},v)={\sigma_f(\gamma,v)}{\sigma_f(g,v^\gamma)}{\sigma_f(\gamma,v)^{-1}}.
\end{align*}

{Finally for all $\lambda\in \Gamma$ the function $\sigma_f(\gamma,v)$ maps $\lambda$ to
\begin{align*}
    f\left(\left(\lambda \frac{v}{f(v)}\right)^\gamma\right)=f\left(\lambda \frac{v^\gamma}{f(v)}\right)=\lambda \frac{f(v^{\gamma})}{f(v)}.
\end{align*}}
This concludes the proof.
\end{proof}

A special case of Lemma~\ref{autgaut} which we will use repeatedly  is the following: if $g\in \Gast$, and $\gamma\in \Aut(\V)$ maps a vector $v\in V\setminus\{\0\}$ to a vector of the same $f$-label, then 
\begin{align}\label{samelabelsigma}
    \sigma_f(\gamma g \gamma^{-1},v)=\sigma_f(g,v^\gamma).
\end{align}

	Next we show that any nonzero vector $v$ can be chosen as a witness for an element being in the group $H$ or $N$ as in Definition~\ref{NandH}.
	
%In the definition of $H$ and $N$ we only required the existence of {some} vector. We show that we can in fact always choose a specific vector to our liking.

\begin{lemma}\label{specificv}
Let $v$ be an element of  $V\setminus \{\0\}$. The following holds. 
\begin{itemize}
    \item For all $h\in H$ there exists  $g_h\in G^\ast$ such that $\sigma_f(g_h,v)=h$.
    \item For all $n\in N$ there exists $g_n\in G^\ast$ such that $\sigma_f(g_n,v)=n$ and such that $g_n$ is the identity on $V\setminus [v]_\sim$.
\end{itemize}
\end{lemma}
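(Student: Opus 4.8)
The plan is to reduce both statements to the single special case already guaranteed by the definitions of $H$ and $N$, by conjugating the witnessing function with a carefully chosen automorphism of $\V$ and invoking the special case~(\ref{samelabelsigma}) of Lemma~\ref{autgaut}. The content of the lemma is precisely that the witnessing vector appearing in Definition~\ref{NandH} may be taken to be arbitrary, so everything hinges on transporting a witness from its original class to $[v]_\sim$ while keeping track of labels. Throughout I fix the given $v\in V\setminus\{\0\}$.

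For the first item, let $h\in H$, and take by Definition~\ref{NandH} a function $g\in G^\ast$ together with a vector $w\in V\setminus\{\0\}$ such that $\sigma_f(g,w)=h$. First I would pass to the scaled representative $w':=\frac{f(v)}{f(w)}w$ of the class $[w]_\sim$; since $\frac{f(v)}{f(w)}\in\Gamma$, this $w'$ satisfies $w'\sim w$ and, by the defining property of a $\Gamma$-labelling, $f(w')=f(v)$. Because $\Aut(\V)$ acts transitively on $V\setminus\{\0\}$, I then choose $\gamma\in\Aut(\V)$ with $v^\gamma=w'$. As $\gamma$ maps $v$ to a vector of the same $f$-label, formula~(\ref{samelabelsigma}) applies and yields $\sigma_f(\gamma g\gamma^{-1},v)=\sigma_f(g,v^\gamma)=\sigma_f(g,w')$, and Lemma~\ref{sigma_prop}~(2) identifies this with $\sigma_f(g,w)=h$ since $w'\sim w$. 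Setting $g_h:=\gamma g\gamma^{-1}$ finishes the case, once I note that $g_h\in G^\ast$ because $G^\ast$ is normal in $G$ and $\gamma\in\Aut(\V)\leq G$.

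For the second item I would run exactly the same construction, now starting from a witness $g\in G^\ast$ for $n\in N$ that is in addition the identity on $V\setminus[w]_\sim$, and set $g_n:=\gamma g\gamma^{-1}$. The computation $\sigma_f(g_n,v)=n$ is verbatim as above. The only additional point to check is that $g_n$ is the identity on $V\setminus[v]_\sim$: for any $u\notin[v]_\sim$, the $\Aut(\V)$-invariance of $\sim$ gives $u^\gamma\notin([v]_\sim)^\gamma=[v^\gamma]_\sim=[w']_\sim=[w]_\sim$, so $g$ fixes $u^\gamma$ and therefore $u^{g_n}=u^{\gamma g\gamma^{-1}}=u$.

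I do not anticipate a substantial obstacle; this is essentially a bookkeeping lemma asserting that the witnessing vector in Definition~\ref{NandH} is irrelevant. The single place that needs genuine care is the label arithmetic: one must map $v$ to the correctly scaled representative $w'$ with $f(w')=f(v)$, rather than naively to $w$, so that the hypothesis of~(\ref{samelabelsigma}) that $\gamma$ preserves the $f$-label of $v$ is actually satisfied. Once that normalization is in place, both the evaluation of $\sigma_f$ and the support condition in the second item follow immediately from Lemma~\ref{sigma_prop}~(2), the normality of $G^\ast$ in $G$, and the $\G$-invariance of $\sim$.
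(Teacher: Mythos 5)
Your proof is correct and follows essentially the same route as the paper: both reduce to the witness from Definition~\ref{NandH} by conjugating with an automorphism carrying $v$ into the witness's $\sim$-class, with the paper normalizing both labels to $1$ where you instead rescale the witness to match $f(v)$ and invoke~(\ref{samelabelsigma}). The label bookkeeping, the membership $\gamma g\gamma^{-1}\in G^\ast$, and the support argument for the second item all check out.
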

\begin{proof}
We  prove only the second item; the proof of the first item is identical.   Let $n\in N$ be given. By Definition~\ref{NandH} there exists $g\in G^\ast$ and $u\in V$ such that $\sigma_{f}(g,u)=n$ and for all non-zero vectors $w\in V\setminus[u]_\sim$ we have $\sigma_{f}(g,w)=\id_\Gamma$. Without loss of generality we assume $f(v)=f(u)=1$. Let  $\gamma$ be an automorphism  of $\V$ such that $v^\gamma=u$. We define $g':={\gamma g\gamma^{-1}}$. The function $g$ acts as the identity on $V/\sim$, hence $g'\in G^\ast$. { Moreover, since $g|_{V\setminus[u]_\sim}=\id_{V\setminus[u]_\sim}$ we obtain $g'|_{V\setminus[v]_\sim}=\id_{V\setminus[v]_\sim}$.
By Lemma~\ref{sigma_circ} and because $f(v)=f(v^\gamma)=1$ we have }
\[
    \sigma_{f}(g',v)=\sigma_{f}(\gamma, v)\sigma_{f}(g,u)\sigma_{f}(\gamma^{-1},v)=\id_\Gamma{n}\id_\Gamma=n. \qedhere
\]
\end{proof}

We can now show that $N$ and $H$ are the domains of two subgroups $\Nr$ and $\Hc$  of $\Sym (\Gamma)$ and that $\Nr$ is normal in $\Hc$.
\begin{lemma}\label{NHgroupsNormal}
$N$ and $H$ induce permutation groups $\Nr$ and $\Hc$ in $\Gamma$. \end{lemma}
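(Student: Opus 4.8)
The plan is to verify that $N$ and $H$ are closed under composition and inverses, using the composition law from Lemma~\ref{sigma_circ} together with the fact (Lemma~\ref{specificv}) that any fixed nonzero vector may serve as a witness. First I would fix an arbitrary vector $v\in V\setminus\{\0\}$ once and for all; by Lemma~\ref{specificv} every element of $H$ (respectively $N$) is realized as $\sigma_f(g,v)$ for some $g\in G^\ast$ (respectively some $g\in G^\ast$ which is the identity on $V\setminus[v]_\sim$). This lets me translate group-theoretic closure of $N$ and $H$ into statements about composing such witnessing functions $g$.

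For closure under composition, let $\sigma,\tau\in H$ with witnesses $g,g'\in G^\ast$ so that $\sigma_f(g,v)=\sigma$ and $\sigma_f(g',v)=\tau$. The naive attempt would be to apply Lemma~\ref{sigma_circ} to the product $gg'$, but that lemma gives $\sigma_f(gg',v)=\sigma_f(g,v)\,\sigma_f(g',v^g)$, and $v^g$ need not equal $v$, so the second factor is $\sigma_f(g',v^g)$ rather than $\sigma_f(g',v)=\tau$. The key observation is that since $g\in G^\ast$ it fixes the $\sim$-equivalence class of $v$ setwise, so $v^g\in[v]_\sim$ and hence $v\sim v^g$; by Lemma~\ref{sigma_prop}~(2) we then have $\sigma_f(g',v^g)=\sigma_f(g',v)=\tau$. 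Therefore $\sigma_f(gg',v)=\sigma\tau$, and since $G^\ast$ is a group we have $gg'\in G^\ast$, witnessing $\sigma\tau\in H$. Closure under inverses follows from the remark after Lemma~\ref{sigma_circ}, namely $\sigma_f(g,v)^{-1}=\sigma_f(g^{-1},v)$, together with $g^{-1}\in G^\ast$; and the identity $\id_\Gamma=\sigma_f(\id_V,v)$ lies in $H$. The same argument works verbatim for $N$, observing additionally that if both $g$ and $g'$ are the identity on $V\setminus[v]_\sim$ then so are $gg'$ and $g^{-1}$, so the extra condition defining $N$ is preserved.

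For normality of $\Nr$ in $\Hc$, I would take $n\in N$ and $h\in H$ and show $h^{-1}nh\in N$. Using Lemma~\ref{specificv} I choose witnesses with the same base vector $v$: an element $g_n\in G^\ast$ that is the identity on $V\setminus[v]_\sim$ with $\sigma_f(g_n,v)=n$, and an element $g_h\in G^\ast$ with $\sigma_f(g_h,v)=h$. The candidate conjugating function in $G^\ast$ is $g_h^{-1}g_n g_h$. Applying the composition law (using $v^{g_h^{-1}}\sim v$ and $v^{g_h^{-1}g_n}\sim v$, again since these functions fix $\sim$-classes setwise, so that every $\sigma_f$ factor can be evaluated at a vector equivalent to $v$ and hence rewritten via Lemma~\ref{sigma_prop}~(2)) gives $\sigma_f(g_h^{-1}g_n g_h,v)=h^{-1}nh$. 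The main obstacle, and the point requiring care, is to check that $g_h^{-1}g_ng_h$ is the identity on $V\setminus[v]_\sim$, which is what is needed to conclude its $\sigma_f$-value lies in $N$ rather than merely in $H$. For this I would argue that $g_n$ acts as the identity outside $[v]_\sim$, while $g_h$ maps $[v]_\sim$ to itself setwise (as $g_h\in G^\ast$), so the conjugate $g_h^{-1}g_ng_h$ is nontrivial only on $[v]_\sim=[v]_\sim^{g_h}$ and is the identity on its complement; hence $h^{-1}nh=\sigma_f(g_h^{-1}g_ng_h,v)\in N$. This confirms $\Nr\lhd\Hc$ and completes the lemma.
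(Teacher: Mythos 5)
Your proof is correct and follows essentially the same route as the paper: use Lemma~\ref{specificv} to realize both elements at a common base vector, then apply Lemma~\ref{sigma_circ} together with the fact that elements of $G^\ast$ fix $\sim$-classes setwise (so Lemma~\ref{sigma_prop}~(2) lets you evaluate at $v$ rather than $v^g$). Your normality argument is likewise the one the paper gives, though in the paper it appears as a separate lemma immediately following this one.
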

\begin{proof}
Let $h_1,h_2$ be functions of $H$. Then by Lemma~\ref{specificv} there exist  a vector  $v\in V$  and  $g_1,g_2 \in G^\ast$ such that $\sigma_f(g_1,v)=h_1$ and $\sigma_{f}(g_2,v)=h_2$.  By Lemma~\ref{sigma_circ} since $[v^{g_1}]_\sim=[v]_\sim$ we obtain $\sigma_f(g_1g_2,v)=h_1h_2$, hence $h_1h_2 \in H$. Clearly the same holds for $N$,  {and so both $H$ and $N$ are domains of permutation groups $\Hc$ and $\Nr$ in $\Gamma$.}
\end{proof}

\begin{lemma}
$\Nr$ is normal in $\Hc$.
\end{lemma}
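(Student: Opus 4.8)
The plan is to fix once and for all an arbitrary vector $v\in V\setminus\{\0\}$ and to realise every element of $H$ and of $N$ as a value $\sigma_f(\cdot,v)$ at this single vector, which is exactly what Lemma~\ref{specificv} provides. So, given $h\in H$ and $n\in N$, I would choose $g_h\in G^\ast$ with $\sigma_f(g_h,v)=h$, and $g_n\in G^\ast$ with $\sigma_f(g_n,v)=n$ that additionally acts as the identity on $V\setminus[v]_\sim$. The aim is then to show that the conjugate $g:=g_h^{-1}g_n g_h$, which lies in $G^\ast$ because $G^\ast$ is a group, witnesses $h^{-1}nh\in N$; since $h\in H$ and $n\in N$ are arbitrary this will give $\Nr\lhd\Hc$.

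First I would verify that $g$ acts as the identity on $V\setminus[v]_\sim$. For $w\notin[v]_\sim$, the function $g_h^{-1}$ preserves $\sim$-equivalence classes, so $w^{g_h^{-1}}\notin[v]_\sim$; hence $g_n$ fixes $w^{g_h^{-1}}$, and applying $g_h$ returns $w$. Thus $g$ is the identity outside $[v]_\sim$, so by Definition~\ref{NandH} it suffices to compute $\sigma_f(g,v)$ and check that it equals $h^{-1}nh$.

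The computation is a twofold application of Lemma~\ref{sigma_circ}, yielding
\begin{align*}
\sigma_f(g,v)=\sigma_f(g_h^{-1},v)\,\sigma_f(g_n,v^{g_h^{-1}})\,\sigma_f(g_h,v^{g_h^{-1}g_n}).
\end{align*}
Here $\sigma_f(g_h^{-1},v)=\sigma_f(g_h,v)^{-1}=h^{-1}$ by the remark following Lemma~\ref{sigma_circ}. Since $g_h,g_n\in G^\ast$ preserve $\sim$-classes, both $v^{g_h^{-1}}$ and $v^{g_h^{-1}g_n}$ lie in $[v]_\sim$, so Lemma~\ref{sigma_prop}~(2) lets me replace the evaluation vector by $v$, giving $\sigma_f(g_n,v^{g_h^{-1}})=n$ and $\sigma_f(g_h,v^{g_h^{-1}g_n})=h$. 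Hence $\sigma_f(g,v)=h^{-1}nh$, and since $g$ is the identity outside $[v]_\sim$ we obtain $h^{-1}nh\in N$.

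I do not expect a genuine obstacle here: all the structural work has been front-loaded into Lemma~\ref{specificv} (which lets $h$ and $n$ be witnessed at a common vector $v$), into Lemma~\ref{sigma_circ} (the multiplicativity of $\sigma_f$ along an orbit), and into Lemma~\ref{sigma_prop}~(2) (invariance of $\sigma_f$ on representatives of a class). The only point demanding care is bookkeeping of which vector each $\sigma_f$ is evaluated at during the conjugation, together with the observation that each such vector stays inside $[v]_\sim$ \emph{precisely} because the witnesses are taken from $G^\ast$, whose elements fix every $\sim$-class setwise.
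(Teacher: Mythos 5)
Your proposal is correct and follows essentially the same route as the paper: conjugate a witness $g_n$ for $n$ (supported on $[v]_\sim$) by a witness $g_h$ for $h$ at the same vector $v$ via Lemma~\ref{specificv}, and evaluate $\sigma_f$ of the conjugate using Lemma~\ref{sigma_circ}. The only difference is that you spell out explicitly why the intermediate evaluation points stay in $[v]_\sim$ (via Lemma~\ref{sigma_prop}~(2)), which the paper leaves implicit.
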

\begin{proof}
 Let $n\in N$ and $h\in H$ be given. Then by Lemma~\ref{specificv} there exists  a vector  $v\in V$  and $g_h,g_n\in G^\ast$  such that $\sigma_f(g_n,v)=n$ and $\sigma_f(g_h,v)=h$. Moreover, $g_h^{-1}g_ng_h\in G^\ast$ and we obtain
\begin{align*}
    \sigma_f(g_h^{-1}g_n g_h,v)= \sigma_f(g_h^{-1},v)\sigma_f(g_n,v)\sigma_f(g_h,v)= h^{-1}nh.
\end{align*}
Since $g_n$ fixes every element in $V\setminus[v]_\sim$ so does $g_h^{-1}g_ng_h$, hence $h^{-1}nh\in N$.
\end{proof}

We now want  to  reconstruct $\Gast$ from $\Nr$ and $\Hc$. If we manage to do so, then since $G=G^\ast\Aut(\V)$ or $G=G^\ast\Sym((V\setminus\{\0\})/\sim)^f$ we can conclude that there are only finitely many different closed groups fixing $\0$ between $\Aut(\V)$ and $\Sym(V)$.

\begin{definition}\label{defSG}
Let $\Nr'$ and $\Hc'$ be two permutation groups on $\Gamma$. We define $\SG{\Nr'}{\Hc'}$ as the set of all $g \in \Sstar(V)$ such that for all $v,w\in V\setminus\{\0\}$:
\begin{enumerate}
    \item $\sigma_f(g,v)\in H'$, and
    \item $\sigma_f(g,v)\sigma_f(g,w)^{-1}\in N'$.
\end{enumerate}
\end{definition}

Our goal is to show that $\SG{\Nr}{\Hc}$ is equal to $\Gast$.  By Lemma~\ref{GisGAut} and Corollary~\ref{GisGSym} there are two cases,  $G=G^\ast\Aut (\V)$ and  $G=G^\ast \Sym((V\setminus\{\0\})/_\sim)^f$. The second case turns out to be quite straightforward since $\G$ can map any $\sim$-equivalence class to any other $\sim$-equivalence class while still preserving the insides of the $\sim$-equivalence classes. We shall achieve the desired result for this case in Corollary~\ref{cor_SGH=GAST1}. After that, we shall treat the first case.

\begin{lemma}\label{ntonnnn}
Let  $S$ be a subset of $V\setminus \{\0\}$, and let $n\in N$. Then there exists $g\in G^\ast$ such that for all $v\in S$ we have $\sigma_f(g,v)=n$ and $g|_{V\setminus[S]_\sim}=\id_{V\setminus[S]_\sim}$.
\end{lemma}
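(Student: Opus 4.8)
The plan is to construct the desired $g$ explicitly as a function on $V$ and then to establish membership in $G^\ast$ by a compactness argument exploiting that $G^\ast$ is closed. First I would define $g\in\Sstar(V)$ directly: set $g(x):=x$ for every $x\in V\setminus[S]_\sim$ (in particular $g(\0)=\0$), and for $x\in[S]_\sim\setminus\{\0\}$ let $g(x)$ be the unique vector in $[x]_\sim$ whose $f$-label equals $n(f(x))$. Since $n\in\Sym(\Gamma)$, the map $g$ restricts to a bijection of each $\sim$-class and hence is a bijection of $V$; as it fixes every $\sim$-class setwise it lies in $\Sstar(V)$. By construction $g$ is the identity on $V\setminus[S]_\sim$, and unwinding Definition~\ref{sigma} (using $f(\lambda\tfrac{1}{f(v)}v)=\lambda$) gives $\sigma_f(g,v)=n$ for every $v\in S$. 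Thus the only thing left to establish is $g\in G$, which together with $g\in\Sstar(V)$ yields $g\in G^\ast$.

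To see $g\in G^\ast$ I would invoke closedness of $G^\ast$: it suffices to produce, for every finite $F\subseteq V$, a function $g_F\in G^\ast$ with $g_F|_F=g|_F$. Given such an $F$, only finitely many $\sim$-classes of vectors of $S$ meet $F$; choose representatives $v_1,\ldots,v_m\in S$, one from each of these classes. Applying Lemma~\ref{specificv} (second item) to each $v_i$ yields $g_i\in G^\ast$ with $\sigma_f(g_i,v_i)=n$ and $g_i|_{V\setminus[v_i]_\sim}=\id_{V\setminus[v_i]_\sim}$. Since the classes $[v_1]_\sim,\ldots,[v_m]_\sim$ are pairwise disjoint and each $g_i$ is supported on $[v_i]_\sim$, these functions have disjoint supports; hence their composition $g_F:=g_1g_2\cdots g_m$ lies in $G^\ast$ (a group) and acts on each $[v_i]_\sim$ exactly as $g_i$ and as the identity everywhere else.

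It then remains to check $g_F|_F=g|_F$. For $x\in F\setminus[S]_\sim$ both functions fix $x$, since $x$ lies in none of the $[v_i]_\sim$. For $x\in F\cap[S]_\sim$ we have $x\in[v_i]_\sim$ for a unique $i$, so $g_F(x)=g_i(x)$, whose $f$-label is $\sigma_f(g_i,x)(f(x))=n(f(x))$ (using $\sigma_f(g_i,x)=\sigma_f(g_i,v_i)=n$ by Lemma~\ref{sigma_prop} (2)); this is precisely the label of $g(x)$, and as both images lie in $[x]_\sim$ and are determined by their label, $g_F(x)=g(x)$. Closedness of $G^\ast$ then gives $g\in G^\ast$. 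I expect the only real subtlety to be the passage from the single-class statement of Lemma~\ref{specificv} to an arbitrary, possibly infinite, $S$: the disjoint-support observation is what makes the finite approximations $g_F$ land in $G^\ast$, and closedness absorbs the infinite glueing, so no genuinely hard estimate is needed.
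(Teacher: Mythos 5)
Your proposal is correct and follows essentially the same route as the paper: reduce to one representative per $\sim$-class, apply Lemma~\ref{specificv} to each, compose the resulting disjointly supported functions for the finite case, and invoke closedness of $G^\ast$ for infinite $S$. You merely spell out the closedness step more explicitly (by defining the limit function and the finite approximations $g_F$), which the paper leaves implicit.
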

\begin{proof}
Let $M\subseteq [S]_\sim$ such that $[M]_\sim=S$ and no two elements of $M$ are $\sim$-equivalent. By Lemma~\ref{specificv} for every $v \in M$ there exists $g_v \in G^\ast$ such that $\sigma_f(g_v,v)=n$ and $g_v|_{V\setminus[v]\sim}=\id_{V\setminus[v]\sim}$.
If $S$ is finite then the composition of all $g_v$ such that $v\in M$ in any order  proves the statement. 
Since $\Gast$ is closed, this holds for all infinite  $S\subseteq  V$ as well.
\end{proof}

%We show that in the case when $G=G^\ast\Sym((V\setminus\{\0\})/_\sim)^f$ if an element $g$ of $\G$ acts on an infinite subset of $V$ as the identity, then the action of $g$ on the labels of any $\sim$-equivalence class is in $N$.

\begin{lemma}\label{infidthenN}
Assume that $G=G^\ast\Sym((V\setminus\{\0\})/_\sim)^f$. Let  $S$ be an infinite subset of $ V$, and let $g\in G^\ast$.  If $g|_{[S]_\sim}=\id_{[S]_\sim}$, then for all $v\in V\setminus \{\0\}$ we have $\sigma_f(g,v)\in N$.
\end{lemma}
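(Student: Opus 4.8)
The plan is to fix a vector $v \in V\setminus\{\0\}$, set $\sigma := \sigma_f(g,v)$ and $C_0 := [v]_\sim$, and to produce a single element of $G^\ast$ that acts as $\sigma$ on $C_0$ and as the identity on every other $\sim$-class; by Definition~\ref{NandH} such an element witnesses $\sigma \in N$. Since each $\sim$-class is finite while $S$ is infinite, the set $[S]_\sim$ meets infinitely many classes, all of which $g$ fixes pointwise by hypothesis. I fix countably many of them, say $[d_1]_\sim, [d_2]_\sim, \dots$, pairwise distinct and distinct from $C_0$; on each such ``clean'' class $\sigma_f(g, d_i) = \id_\Gamma$.

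The engine of the proof is a family of commutators that isolate $\sigma$ up to a nuisance living on a single clean class. For each $i$ let $\pi_i$ be the label-preserving permutation in $\Sym((V\setminus\{\0\})/_\sim)^f$ that interchanges the classes $C_0$ and $[d_i]_\sim$ and fixes every other class setwise; by the standing assumption $G = G^\ast\,\Sym((V\setminus\{\0\})/_\sim)^f$ we have $\pi_i \in G$, and since $G^\ast \lhd G$ the commutator $k_i := g\pi_i g^{-1}\pi_i^{-1}$ lies in $G^\ast$. Because $\pi_i$ and $\pi_i^{-1}$ preserve all labels, Lemma~\ref{sigma_circ} gives $\sigma_f(\pi_i g\pi_i^{-1}, w) = \sigma_f(g, w^{\pi_i})$ and hence $\sigma_f(k_i, w) = \sigma_f(g,w)\,\sigma_f(g, w^{\pi_i})^{-1}$ for every $w$. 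Evaluating this on the three relevant kinds of class — $C_0$, where $w^{\pi_i}\in[d_i]_\sim$; the class $[d_i]_\sim$, where $w^{\pi_i}\in C_0$; and any other class $E$, which $\pi_i$ fixes — yields $\sigma_f(k_i, \cdot) = \sigma$ on $C_0$, $\sigma^{-1}$ on $[d_i]_\sim$, and $\id_\Gamma$ elsewhere, the contributions of $g$ on all remaining classes cancelling. Since $k_i \in G^\ast$ fixes every class setwise and a vector inside a class is determined by its label, $k_i$ is in fact the identity off $C_0 \cup [d_i]_\sim$.

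Finally I pass to the pointwise limit $i \to \infty$. For $x \in C_0$ the value $k_i(x)$ is the element of $C_0$ with label $\sigma(f(x))$, the same for every $i$; for $x \in [d_j]_\sim$ we have $k_i(x) = x$ for all $i \neq j$; and for $x$ in any other class $k_i(x) = x$ for all $i$. Thus $(k_i)$ converges pointwise to the map $\kappa$ that acts as $\sigma$ on $C_0$ and as the identity elsewhere, and closedness of $G^\ast$ gives $\kappa \in G^\ast$; this $\kappa$ is exactly the required witness, so $\sigma = \sigma_f(\kappa, v) \in N$. The one genuine obstacle is that $g$ may act nontrivially on infinitely many classes outside $[S]_\sim$, so no finite product of conjugates can strip it down to a single class — and, worse, each commutator $k_i$ is ``balanced'' ($\sigma$ against $\sigma^{-1}$) and so cannot by itself be a single-class element. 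Both difficulties dissolve in the limit: the infinite support of $g$ cancels term-by-term inside each $k_i$, while the infinitude of clean classes lets the balancing nuisance $\sigma^{-1}$ sit on the class $[d_i]_\sim$, which is pushed off to infinity and vanishes under pointwise convergence.
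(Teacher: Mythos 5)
Your proof is correct, but it reaches the witness by a different mechanism than the paper's. The paper fixes a finite set $F\subseteq V$ and conjugates $g$ by a single label-preserving $h_F\in\Sym((V\setminus\{\0\})/_\sim)^f\leq G$ which fixes $v$ and maps $F\setminus[v]_\sim$ into $[S]_\sim$; because $g$ is literally the identity on $[S]_\sim$, the conjugate $h_Fgh_F^{-1}\in G^\ast$ is the identity on $F\setminus[v]_\sim$ while still inducing $\sigma_f(g,v)$ on $[v]_\sim$, and closedness of $G^\ast$ then yields the required single-class element. You instead form the commutators $k_i=g\pi_ig^{-1}\pi_i^{-1}$ with label-preserving transpositions of classes: the commutator cancels the action of $g$ on every class other than $[v]_\sim$ and $[d_i]_\sim$ automatically, the cleanness of $[d_i]_\sim$ ensures the residual action is exactly $\sigma_f(g,v)$ on $[v]_\sim$ (balanced by $\sigma_f(g,v)^{-1}$ on $[d_i]_\sim$), and the balancing term is disposed of by letting $i\to\infty$, using that only finitely many of the disjoint classes $[d_i]_\sim$ meet any given finite set. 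Both arguments rest on the same two pillars, namely $\Sym((V\setminus\{\0\})/_\sim)^f\leq G$ and the closedness of $G^\ast$; the paper's single conjugation is shorter, whereas your commutator argument uses the hypothesis only through $\sigma_f(g,d_i)=\id_\Gamma$ on infinitely many classes, which is slightly weaker than $g|_{[S]_\sim}=\id_{[S]_\sim}$. All the individual steps you invoke (the computation of $\sigma_f(k_i,\cdot)$ via Lemma~\ref{sigma_circ}, the fact that an element of $G^\ast$ with $\sigma_f(k_i,w)=\id_\Gamma$ fixes $[w]_\sim$ pointwise, and the passage to the pointwise limit) are sound.
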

\begin{proof}
Let $v\in V\setminus \{\0\}$ be given. We need to  show the existence of  $g'\in G^\ast$ such that $\sigma_f(g',v)=\sigma_f(g,v)$ and $g'|_{V\setminus [v]_\sim}=\id_{V\setminus [v]_\sim}$. 
 Let $F$ be a finite subset of $V$. Since $\Sym((V\setminus\{\0\})/_\sim)^f\leq \G$ there exists $h_F\in G$ preserving labels such that $v^{h_F}=v$ and $h_F(F\setminus [v]_\sim) \subseteq [S]_\sim$. Clearly, the function $g_F':=h_Fgh_F^{-1}$ is an element of $\Gast$.
We obtain  $\sigma_f(g_F',v)=\sigma_f(g,v)$ and moreover $g_F'$ is the identity on $F\setminus [v]_\sim$.   Considering that $\Gast$ is closed we obtain that there exists an element $g'\in G^\ast$ with the desired properties.
\end{proof}

\begin{lemma}\label{infiniteThenEverythinginG}
Assume that $G=G^\ast\Sym((V\setminus\{\0\})/_\sim)^f$. Let $S$ be an infinite subset of $V\setminus \{\0\}$, and let $g\in G^\ast$.  If for all $v,w\in S$ we have $\sigma_f(g,v)=\sigma_f(g,w)=:h$, then there exists $g'\in G^\ast$ such that for all $u\in V\setminus \{\0\}$ we have $\sigma_f(g',u)=h$. 
\end{lemma}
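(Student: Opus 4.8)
The plan is to identify the target function explicitly and then obtain it as a pointwise limit of conjugates of $g$, using the closedness of $G^\ast$. First I would observe that an element of $\Sstar(V)$ is completely determined on each class $[u]_\sim$ by the value $\sigma_f(\cdot,u)$, so (since $h=\sigma_f(g,v)\in\Sym(\Gamma)$ by Lemma~\ref{sigma_prop}(1)) there is a unique $g^{(h)}\in\Sstar(V)$ with $\sigma_f(g^{(h)},u)=h$ for every $u\in V\setminus\{\0\}$, namely the bijection relabelling each $\sim$-class according to $h$ and fixing $\0$. It therefore suffices to show $g^{(h)}\in G^\ast$, and since $G^\ast$ is closed this reduces to producing, for each finite $F\subseteq V$, some $g_F\in G^\ast$ with $g_F|_F=g^{(h)}|_F$.

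Next, since $S$ is infinite and every $\sim$-class is finite, $[S]_\sim$ consists of infinitely many classes, whereas a finite $F$ meets only finitely many classes. Hence there is a label-preserving permutation $\varphi_F\in\Sym((V\setminus\{\0\})/_\sim)^f$ whose underlying permutation of classes sends every class met by $F$ into $[S]_\sim$; concretely one extends the finite partial injection sending each class of $F$ to a distinct class of $[S]_\sim$ to a permutation of the countable set of classes and realises it label-preservingly. By hypothesis $\Sym((V\setminus\{\0\})/_\sim)^f\leq G$, so $\varphi_F\in G$, and since $G^\ast\lhd G$ the conjugate $g_F:=\varphi_F g\varphi_F^{-1}$ again lies in $G^\ast$.

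Then I would compute $\sigma_f(g_F,u)$ for $u\in F\setminus\{\0\}$ by applying Lemma~\ref{sigma_circ} twice. Since $\varphi_F$ and $\varphi_F^{-1}$ preserve labels, $\sigma_f(\varphi_F,\cdot)=\sigma_f(\varphi_F^{-1},\cdot)=\id_\Gamma$, so the three-factor product collapses to $\sigma_f(g_F,u)=\sigma_f(g,u^{\varphi_F})$. By the choice of $\varphi_F$ we have $u^{\varphi_F}\in[S]_\sim$, and because $\sigma_f(g,\cdot)$ depends only on the $\sim$-class (Lemma~\ref{sigma_prop}(2)) and equals $h$ on $S$, it equals $h$ on all of $[S]_\sim$; thus $\sigma_f(g_F,u)=h$. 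As $g_F\in\Sstar(V)$, this forces $g_F$ to agree with $g^{(h)}$ on the whole class $[u]_\sim$, and in particular $g_F|_F=g^{(h)}|_F$, the value at $\0$ being fixed by both functions.

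Finally, closedness of $G^\ast$ yields $g^{(h)}\in G^\ast$, and $g':=g^{(h)}$ satisfies $\sigma_f(g',u)=h$ for all $u\in V\setminus\{\0\}$, as required. The main obstacle is the bookkeeping in the collapse of the conjugation formula — making sure both outer $\sigma_f$-factors are trivial because $\varphi_F$ preserves labels — together with the observation that $\sigma_f(g_F,u)=h$ and $g_F\in\Sstar(V)$ really determine $g_F$ on the class $[u]_\sim$; the rest is routine and parallels the proof of Lemma~\ref{infidthenN}.
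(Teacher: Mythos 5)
Your proposal is correct and follows essentially the same route as the paper: conjugate $g$ by a label-preserving element of $\Sym((V\setminus\{\0\})/_\sim)^f\leq G$ that pushes any given finite set into $[S]_\sim$, observe via Lemma~\ref{sigma_circ} that the conjugate has $\sigma_f$-value $h$ there, and conclude by closedness of $\Gast$. The extra details you supply (the explicit limit function $g^{(h)}$ and the normality of $\Gast$ in $\G$) are accurate and merely make explicit what the paper leaves implicit.
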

\begin{proof}
For any finite $F\subseteq V\setminus \{\0\}$ there exists $h_F\in G$ preserving labels  such that  $h_F(F)\subseteq [S]_\sim$. Thus for all $v\in [F]_\sim$ we have $\sigma_f(h_Fgh^{-1}_F,v)=h$. Since $\G$ is closed   this proves the statement.
\end{proof}

\begin{lemma}\label{sigmasigmaSEC}
Assume that  $G=G^\ast\Sym((V\setminus\{\0\})/_\sim)^f$.  Let $g$ be an element of  $\Gast$. Then for all $v,w\in V$ the function $\sigma_f(g,v)\sigma_f(g,w)^{-1}$ is an element of $\Nr$.
\end{lemma}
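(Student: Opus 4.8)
The plan is to exploit the fact, implicit in Lemma~\ref{sigma_circ}, that for fixed $v$ the assignment $\sigma_f(\cdot,v)$ restricted to $G^\ast$ is a group homomorphism into $\Sym(\Gamma)$. Indeed, for $g_1,g_2\in G^\ast$ we have $\sigma_f(g_1g_2,v)=\sigma_f(g_1,v)\sigma_f(g_2,v^{g_1})$ by Lemma~\ref{sigma_circ}, and since $g_1$ fixes the $\sim$-classes setwise we have $v^{g_1}\in[v]_\sim$, so $\sigma_f(g_2,v^{g_1})=\sigma_f(g_2,v)$ by Lemma~\ref{sigma_prop}~(2); thus $\sigma_f(g_1g_2,v)=\sigma_f(g_1,v)\sigma_f(g_2,v)$. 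We may assume $v,w\in V\setminus\{\0\}$ and $[v]_\sim\neq[w]_\sim$, since if $v\sim w$ then $\sigma_f(g,v)=\sigma_f(g,w)$ by Lemma~\ref{sigma_prop}~(2) and the product in question is $\id_\Gamma\in\Nr$.

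The central idea is to produce a single element of $\Gast$ whose $\sigma_f$-value at $v$ equals the target $n:=\sigma_f(g,v)\sigma_f(g,w)^{-1}$ but which is supported on only the two classes $[v]_\sim$ and $[w]_\sim$. To this end I would take $\pi:=\pi_{[v]_\sim,[w]_\sim}$, the label-preserving permutation swapping $[v]_\sim$ and $[w]_\sim$ and fixing everything else; this lies in $\Sym((V\setminus\{\0\})/_\sim)^f\leq\G$ by the assumption $G=G^\ast\Sym((V\setminus\{\0\})/_\sim)^f$ (cf.\ Lemma~\ref{GfisSymf}), and since $\pi$ preserves labels, $\sigma_f(\pi,u)=\sigma_f(\pi^{-1},u)=\id_\Gamma$ for every $u$. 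Then I set
\begin{align*}
    g':=g\,\pi^{-1}g^{-1}\pi .
\end{align*}
Because $g\in G^\ast$ acts trivially on $V/_\sim$, so does the conjugate $\pi^{-1}g^{-1}\pi$, whence $g'\in\Gast$. Applying Lemma~\ref{sigma_circ} together with the vanishing of $\sigma_f(\pi,\cdot)$ gives $\sigma_f(\pi^{-1}g^{-1}\pi,u)=\sigma_f(g^{-1},u^{\pi^{-1}})$. For $u=v$ we have $v^{\pi^{-1}}\in[w]_\sim$, so this equals $\sigma_f(g^{-1},w)=\sigma_f(g,w)^{-1}$ (using the inverse formula noted after Lemma~\ref{sigma_circ}); combined with the homomorphism property this yields $\sigma_f(g',v)=\sigma_f(g,v)\sigma_f(g,w)^{-1}=n$. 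For any $u$ with $[u]_\sim\notin\{[v]_\sim,[w]_\sim\}$, however, $\pi$ fixes $[u]_\sim$, so $u^{\pi^{-1}}\in[u]_\sim$ and $\sigma_f(\pi^{-1}g^{-1}\pi,u)=\sigma_f(g,u)^{-1}$, giving $\sigma_f(g',u)=\id_\Gamma$. Since $g'$ fixes every $\sim$-class setwise and preserves all labels on $[u]_\sim$, and since the label map is a bijection from $[u]_\sim$ to $\Gamma$, the function $g'$ acts as the identity on each such class, i.e.\ $g'$ is the identity on $V\setminus([v]_\sim\cup[w]_\sim)$.

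Finally I would choose any infinite set $S\subseteq V$ with $[S]_\sim\cap([v]_\sim\cup[w]_\sim)=\emptyset$ (e.g.\ an infinite linearly independent set avoiding $\linClos{v,w}$), so that $g'|_{[S]_\sim}=\id_{[S]_\sim}$. Lemma~\ref{infidthenN} then applies to $g'\in\Gast$ and yields $\sigma_f(g',u)\in N$ for all $u\in V\setminus\{\0\}$; in particular $n=\sigma_f(g',v)\in N$, i.e.\ $\sigma_f(g,v)\sigma_f(g,w)^{-1}\in\Nr$, as required. The main obstacle, and the only genuinely delicate point, is the construction of $g'$: one must arrange that the intended discrepancy $n$ is carried while \emph{simultaneously} confining the support to the two classes $[v]_\sim,[w]_\sim$, so that Lemma~\ref{infidthenN} becomes applicable. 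The commutator $g\,\pi^{-1}g^{-1}\pi$ achieves exactly this cancellation off the two classes, and verifying that the residual action inside every other class is trivial (not merely label-preserving) is the step that requires the bijectivity of the label map together with $g'\in\Gast$.
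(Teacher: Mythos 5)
Your proof is correct, and it reaches the conclusion by a construction that differs from the paper's, even though both arguments ultimately funnel through Lemma~\ref{infidthenN}. The paper first uses the finiteness of $\Sym(\Gamma)$ to extract an infinite set $S$ on which $\sigma_f(g,\cdot)$ is constant, say equal to $h$, then invokes Lemma~\ref{infiniteThenEverythinginG} to produce $g'\in\Gast$ with $\sigma_f(g',\cdot)\equiv h$ everywhere; the element $gg'^{-1}$ is then the identity on $[S]_\sim$, so Lemma~\ref{infidthenN} gives $\sigma_f(g,u)h^{-1}\in N$ for \emph{every} $u$, and the desired quotient is recovered as the product of two elements of the group $\Nr$. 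You instead form the commutator $g\,\pi^{-1}g^{-1}\pi$ with the label-preserving transposition $\pi=\pi_{[v]_\sim,[w]_\sim}\in\Sym((V\setminus\{\0\})/_\sim)^f\leq\G$, which carries the target permutation $\sigma_f(g,v)\sigma_f(g,w)^{-1}$ at $v$ while being the identity off $[v]_\sim\cup[w]_\sim$, and then apply Lemma~\ref{infidthenN} once. This bypasses both the pigeonhole step and Lemma~\ref{infiniteThenEverythinginG}, and is closer in spirit to the commutator manipulations the paper reserves for the harder case $G=G^\ast\Aut(\V)$ (Lemmas~\ref{commutatorsinN} and~\ref{sigsiginN}), where no transposition of two arbitrary classes is available and conjugation by automorphisms must be used instead; in the present case the transposition makes the cancellation off the two distinguished classes exact. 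Your explicit verification that $\sigma_f(g',u)=\id_\Gamma$ forces $g'$ to be the identity on $[u]_\sim$ (via the bijectivity of $f$ restricted to a $\sim$-class onto $\Gamma$) is precisely the point the paper leaves implicit when it asserts that $gg'^{-1}$ ``is the identity on $S$''. All hypotheses of the cited lemmas are met in your argument, so I see no gap.
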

\begin{proof}
{The group $\Sym (\Gamma)$ is finite,} therefore there is an infinite set $S\subseteq V$ such that for all $v,w\in S$ we have $\sigma_f(g,v)=\sigma_f(g,w)=:h\in \Sym (\Gamma)$. By Lemma~\ref{infiniteThenEverythinginG} there exists $g'\in G^\ast$ such that for every $u \in V$ the function  $\sigma_f(g',u)$ is equal to $h$. The composition $gg'^{-1}$ is the identity on $S$, thus by Lemma~\ref{infidthenN} for any given $v,w\in V$ both $\sigma_f(g,v)h^{-1}$ and $\sigma_f(g,w)h^{-1}$ are elements of $\Nr$.
By Lemma~\ref{NHgroupsNormal} we obtain that  $\sigma_f(g,v)\sigma_f(g,w)^{-1}=\left(\sigma_f(g,v)h\right)\left( \sigma_f(g,w)h\right)^{-1}$ is an element of $\Nr$.
\end{proof}

We thus have that assuming $G=G^\ast\Sym((V\setminus\{\0\})/_\sim)^f$, the group $\Gast$ can indeed be reconstructed from $\Nr$ and $\Hc$. 

\begin{corollary}\label{cor_SGH=GAST1}
Assume that $G=G^\ast\Sym((V\setminus\{\0\})/_\sim)^f$. Then $\SG{\Nr}{\Hc}=\Gast$.
\end{corollary}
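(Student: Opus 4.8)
The plan is to prove the two inclusions $\Gast\subseteq\SG{\Nr}{\Hc}$ and $\SG{\Nr}{\Hc}\subseteq\Gast$ separately, with essentially all of the difficulty concentrated in the second. The first inclusion is immediate: if $g\in G^\ast$, then $\sigma_f(g,v)\in H$ for every $v$ directly from the definition of $H$ (Definition~\ref{NandH}), which is condition~(1) of Definition~\ref{defSG}, while condition~(2), that $\sigma_f(g,v)\sigma_f(g,w)^{-1}\in N$, is precisely the conclusion of Lemma~\ref{sigmasigmaSEC}. Hence $\Gast\subseteq\SG{\Nr}{\Hc}$.

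For the reverse inclusion I would take $g\in\SG{\Nr}{\Hc}$ and exploit that $\Gast=\Sstar(V)\cap G$ is closed: since $g\in\Sstar(V)$ already, it suffices to approximate $g$ on every finite $F\subseteq V\setminus\{\0\}$ by an element of $\Gast$. Every map in $\Sstar(V)$ fixes $\0$ and each $\sim$-class setwise, and its action inside a class $[v]_\sim$ is completely determined by $\sigma_f(\cdot,v)$, which by Lemma~\ref{sigma_prop} depends only on the class; thus agreeing with $g$ on $F$ reduces to matching the finitely many permutations $h_i:=\sigma_f(g,v_i)$, where $v_1,\dots,v_m$ are representatives of the classes meeting $F$. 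By condition~(1) each $h_i$ lies in $H$, and by condition~(2) the elements $n_i:=h_i h_1^{-1}$ lie in $N$. I would then construct the approximant as a product $g_F=g_N g_H\in\Gast$, where $g_N\in\Gast$ is obtained from Lemma~\ref{ntonnnn} (applied to each class $[v_i]_\sim$ with the respective $n_i$ and composed), so that it is the identity off $\bigcup_i[v_i]_\sim$ and satisfies $\sigma_f(g_N,v_i)=n_i$, and $g_H\in\Gast$ is a single element with $\sigma_f(g_H,v)=h_1$ for all $v$. A short computation with Lemma~\ref{sigma_circ} then gives $\sigma_f(g_F,v_i)=n_i h_1=h_i$ for each $i$, so $g_F$ agrees with $g$ on $F$, and closedness of $\Gast$ completes the argument.

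The main obstacle is the existence of $g_H$, i.e. showing that an arbitrary $h\in H$ can be realised \emph{globally} (with $\sigma_f(g_H,\cdot)$ constantly equal to $h$), whereas Lemma~\ref{specificv} a priori only realises an element of $H$ at a single vector. I would resolve this by letting $H_\infty$ denote the set of those $h$ admitting such a global realisation. Using Lemma~\ref{sigma_circ} (and the remark following it on inverses), $H_\infty$ is a subgroup of $\Sym(\Gamma)$, and it contains $N$ by Lemma~\ref{ntonnnn} applied with $S=V\setminus\{\0\}$. Now fix $h\in H$ and choose $\tilde g\in G^\ast$ with $\sigma_f(\tilde g,u)=h$ for some $u$. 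By Lemma~\ref{sigmasigmaSEC} all values $\sigma_f(\tilde g,v)$ lie in the single coset $Nh$; since $\Gamma$ is finite while there are infinitely many $\sim$-classes, some value $h^\ast\in Nh$ is attained on an infinite set of pairwise non-equivalent vectors, so Lemma~\ref{infiniteThenEverythinginG} places $h^\ast$ in $H_\infty$. Writing $h^\ast=nh$ with $n\in N\subseteq H_\infty$ and using that $H_\infty$ is a group yields $h=n^{-1}h^\ast\in H_\infty$, which is exactly the global realisability needed to produce $g_H$. This is the step where the standing hypothesis $G=G^\ast\Sym((V\setminus\{\0\})/_\sim)^f$ is indispensable, since it is what makes Lemma~\ref{sigmasigmaSEC} and Lemma~\ref{infiniteThenEverythinginG} available.
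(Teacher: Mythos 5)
Your proof is correct, and its skeleton coincides with the paper's: both handle the easy inclusion $\Gast\subseteq\SG{\Nr}{\Hc}$ via Definition~\ref{NandH} and Lemma~\ref{sigmasigmaSEC}, and the hard one by approximating $g$ on a finite set by a product of one ``base'' element of $\Gast$ and finitely many elements of $\Gast$ supported on single $\sim$-classes (Lemma~\ref{ntonnnn}), then invoking closedness of $\Gast$. The one place you diverge is the base element: you insist on a $g_H\in\Gast$ with $\sigma_f(g_H,\cdot)$ \emph{constantly} equal to $h_1$, which forces you to prove the global-realisability statement $H=H_\infty$ via the subgroup argument, a pigeonhole over the finitely many values in the coset $Nh$, and Lemma~\ref{infiniteThenEverythinginG}. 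The paper avoids this step entirely: it takes any $\tilde g\in\Gast$ with $\sigma_f(\tilde g,v_1)=h_1$ (Definition~\ref{NandH} alone provides this) and observes that, by Lemma~\ref{sigmasigmaSEC} applied to $\tilde g$ combined with condition~(2) for $g$, the discrepancy $\sigma_f(g,v_i)\sigma_f(\tilde g,v_i)^{-1}$ already lies in $N$ for every $i$, so the same single-class corrections suffice. Your detour is sound --- the hypothesis $G=G^\ast\Sym((V\setminus\{\0\})/_\sim)^f$ does make Lemma~\ref{infiniteThenEverythinginG} available, and your $H_\infty$ is indeed a subgroup containing $N$ --- but it spends an extra lemma and a pigeonhole argument on something the paper extracts directly from Lemma~\ref{sigmasigmaSEC}; on the other hand, it isolates a slightly stronger and reusable fact, namely that every element of $H$ is realised by some $g\in\Gast$ uniformly on all of $V\setminus\{\0\}$.
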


{\begin{proof}
The inclusion $\SG{\Nr}{\Hc}\leq \Gast$ follows from Lemma~\ref{sigmasigmaSEC} and the definition of $\SG{\Nr}{\Hc}$. For the other inclusion let $g$ be an element of $\SG{\Nr}{\Hc}$. We show that for any finite subset $F$ of $V$ there exists $g'\in \Gast$ such that $g|_F=g'|_F$. Since $\Gast$ is closed this will be sufficient. 

Let $F$ be a finite subset of $V$. For some $n\geq1$ we fix a subset $M=\{v_1,\ldots,v_n\}$ of $F\setminus \{\0\}$ such that $[M]_\sim\supseteq F\setminus \{\0\}$ and for all distinct $i,j\leq n$ we have $[v_i]_\sim\neq[v_j]_\sim$. { By the Definition~\ref{defSG} we have $\sigma_f(g,v_1)\in H$,  thus there exists $\tilg\in G^\ast$ such that $\sigma_f(\tilg,v_1)=\sigma_f(g,v_1)$.} By Lemma~\ref{sigmasigmaSEC} we have for all $i\leq n$ that $\sigma_f(\tilg,v_1)\sigma_f(\tilg,v_i)^{-1}\in N$. Since $g\in \SG{\Nr}{\Hc}$ for all $i\leq n$ we have $\sigma_f(g,v_i)\sigma_f(g,v_1)^{-1}$. By $\sigma_f(g,v_1)=\sigma_f(\tilg,v_1)$ we obtain for all $i\leq n$ that $\sigma_f(g,v_i)\sigma_f(\tilg,v_i)^{-1}\in N$. {This implies for every $i\leq n$ the existence of $g_i\in G^\ast$ such that}
\begin{align*}
    \sigma_f(g_i,v_i)=\sigma_f(g,v_i)\sigma_f(\tilg,v_i)^{-1} \text{ and } g_i|_{V\setminus [v_i]_\sim}=\id_{V\setminus [v_i]_\sim}.
\end{align*}
We set $g':=(g_2g_3\cdots g_n )\tilg\in G^\ast$. Then for all $i\leq n$ we have
\begin{align*}
    \sigma_f(g',v_i)=\sigma_f(g,v_i).
\end{align*}
Thus $g|_F=g'|_F$ as desired. 
\end{proof}}

 We now consider the case  $G=G^\ast\Aut (\V)$, with the same goal of showing  $\SG{\Nr}{\Hc}=\Gast$. %For this we need a finite version of Lemma~\ref{infidthenN}.

\begin{lemma}\label{ifallbuttwothenall}
There exists $N_G\geq 0$ such that for any  subspace $U\leq \V$ with $\dim U \geq N_G$, for any  $g\in G^\ast$ and for any $v\in U\setminus\{\0\}$ the following holds:
\begin{align*}
    \left(\forall w \in U\setminus \left([v]_\sim \cup \{\0\}\right): \sigma_f(g,w)\in   N\right) \implies \sigma_f(g,v)\in N.
\end{align*}
\end{lemma}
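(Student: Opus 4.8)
The plan is to stay in the present case $G=G^\ast\Aut(\V)$, so that $\Gamma=\F{p}^\times$ and every $g\in G^\ast$ fixes each line $\linClos{w}$ setwise; consequently $w\mapsto\sigma_f(g,w)$ descends to a colouring of the lines of $\V$ by elements of $\Sym(\Gamma)$ (it depends only on $[w]_\sim$ by Lemma~\ref{sigma_prop}(2)), and the hypothesis says this colouring takes values in $N$ on every line of $U$ except $[v]_\sim$. Writing $\sigma:=\sigma_f(g,v)\in H$, the goal is to exhibit a single witness for $\sigma\in N$, that is, an element of $G^\ast$ realizing $\sigma$ on one class and acting as the identity off it. Since $\Gast$ is closed, it suffices to find, for every finite $F\subseteq V$, some $g_F\in G^\ast$ that agrees with $g$ on $F\cap[v]_\sim$ and is the identity on $F\setminus[v]_\sim$.

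For a fixed finite $F$, I would first choose $\gamma_F\in\Aut(\V)$ fixing $v$ (hence $\linClos{v}$ pointwise) and carrying the finitely many lines of $F$ distinct from $\linClos{v}$ into $U$. As $\gamma_F$ fixes $v$, equation~(\ref{samelabelsigma}) gives $\sigma_f(\gamma_F g\gamma_F^{-1},v)=\sigma_f(g,v)=\sigma$, while for $w\in F\setminus[v]_\sim$ the image $w^{\gamma_F}$ lies in $U\setminus[v]_\sim$, so by Lemma~\ref{autgaut} the permutation $\sigma_f(\gamma_F g\gamma_F^{-1},w)$ is a conjugate of an element of $N$ by a multiplication permutation; since the latter lies in $\Hc$ and $\Nr$ is normal in $\Hc$, it again lies in $N$. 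I would then cancel these $N$-values one class at a time: for each line $L\subseteq F$ with $L\neq\linClos{v}$, Lemma~\ref{specificv} supplies $c_L\in G^\ast$ realizing $\sigma_f(\gamma_F g\gamma_F^{-1},\cdot)$ on $L$ and acting as the identity off $L$, whence $g_F:=(\gamma_F g\gamma_F^{-1})\prod_L c_L^{-1}$ preserves labels on, and therefore fixes, every $w\in F\setminus[v]_\sim$ (by Lemma~\ref{sigma_circ} and the fact that a class-fixing, label-preserving map is the identity on that class), while still acting as $\sigma$ on $F\cap[v]_\sim$. Passing to the limit over $F$ yields $g^\dagger\in G^\ast$ supported on $[v]_\sim$ with $\sigma_f(g^\dagger,v)=\sigma$, i.e.\ $\sigma\in N$.

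The single gap in this scheme is the choice of $\gamma_F$: fixing $\linClos{v}$ while pushing all of $F$ into $U$ is possible outright only when $\dim U\ge\dim\linClos{F\cup\{v\}}$, which fails for a fixed finite-dimensional $U$ once $F$ is large. This is precisely where the threshold $N_G$ and the size of $\dim U$ must enter, and it is the main obstacle. The intended remedy is to abandon moving all of $F$ at once and argue by pigeonhole on the colouring $w\mapsto\sigma_f(g,w)$ of the many lines of $U$. Conjugating $g$ by a transvection $\gamma$ that fixes a hyperplane of $\V$ meeting $U$ in a hyperplane and sends $\linClos{v}$ to another line of $U$ of the same label, one obtains $h:=g\,(\gamma g\gamma^{-1})^{-1}\in G^\ast$ which is the identity on every line of that hyperplane and whose colouring on $U$ lies in $N$ off at most \emph{two} transverse lines, one of them carrying the value $\sigma$ modulo $N$. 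This is exactly the configuration named in the statement.

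The plan from there is to exploit the abundance of monochromatic lines guaranteed by a large $\dim U$ to iterate such transvection corrections, each time clearing the extra exceptional line that falls inside $U$ — where the hypothesis applies — by a single-class element from Lemma~\ref{ntonnnn}, until the support has been localized to $[v]_\sim$ and the closedness argument of the previous paragraph applies unobstructed. I expect the delicate point, and what ultimately fixes the value of $N_G$, to be making this iteration terminate: bounding the number of exceptional lines uniformly and ensuring that every newly created exceptional line lands inside $U$ rather than in the uncontrolled complement of $U$, so that the hypothesis can always be invoked to clear it.
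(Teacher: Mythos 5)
Your first two paragraphs capture the right mechanism---cancel the $N$-values class by class via Lemma~\ref{specificv}/Lemma~\ref{ntonnnn} and then invoke closedness of $\Gast$---and you correctly diagnose the obstruction: a fixed finite-dimensional $U$ cannot absorb arbitrarily large finite sets $F$, so the hypothesis gives no control off $U$. But your proposed remedy does not close this gap, and one of its steps fails outright. After forming $h=g(\gamma g\gamma^{-1})^{-1}$, the new exceptional line $[v^{\gamma^{-1}}]_\sim$ carries the value $\sigma_f(h,v^{\gamma^{-1}})=\sigma_f\bigl(g,v^{\gamma^{-1}}\bigr)\,c\,\sigma^{-1}c^{-1}$ for a multiplication $c\in H$; since $\sigma_f(g,v^{\gamma^{-1}})\in N$ and $\Nr\lhd\Hc$, this lies in $N$ if and only if $\sigma\in N$---which is exactly what you are trying to prove. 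So this line cannot be ``cleared by a single-class element from Lemma~\ref{ntonnnn}'', which only realizes elements of $N$, and the iteration never localizes the support to $[v]_\sim$. You flag the termination issue yourself; as written the argument is incomplete.

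The missing idea is to exploit the existential quantifier on $N_G$ instead of working with a fixed $U$. Negate the statement: one obtains counterexamples $(U_i,v_i,g_i)$ with $\dim U_i$ strictly increasing. By transitivity of $\Aut(\V)$ on subspaces of a fixed dimension one may normalize so that all $v_i$ equal a single $v$ with $f(v)=1$ and the $U_i$ form a chain exhausting $V$. Now your own cancellation step applies inside each $U_i$ with no conjugating $\gamma_F$ needed, since the hypothesis already controls $g_i$ on all of $U_i$: repeated application of Lemma~\ref{ntonnnn} yields $g_i'\in G^\ast$ with $\sigma_f(g_i',v)=\sigma_f(g_i,v)$ and $g_i'$ the identity on $U_i\setminus[v]_\sim$. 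As $\Sym(\Gamma)$ is finite, pass to a subsequence on which $\sigma_f(g_i',v)$ is a constant $\sigma$; the $g_i'$ then converge (they agree on $[v]_\sim$ and are eventually the identity on any finite set outside it), and closedness of $\Gast$ produces a limit witnessing $\sigma\in N$, contradicting $\sigma_f(g_i,v)\notin N$. This compactness-plus-pigeonhole step is precisely what your proposal lacks.
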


\begin{proof}
{We aim for a contradiction. Assume that the statement does not hold. Then there exist subspaces $(U_i)_{i\geq 0}$ of finite dimension such that $(\dim U_i)_{i\geq0}$ is strictly increasing, non-zero vectors $(v_i)_{i\geq 0}$ such that $v_i\in U_i$ and  functions $(g_i)_{i\geq 0}$ in $\Gast$ such that for all $i\geq 0$ we have
\begin{align*}
    \forall w \in U_i\setminus \left([v_i]_\sim\cup \{\0\}\right): \sigma_f(g_i,w)\in   N \text{ and } \sigma_f(g_i,v_i)\not \in N.
\end{align*}
We can assume that for all $i\geq 0$ we have $f(v_i)=1$. By the fact that $\Aut(\V)$ acts transitively on subspaces of a fixed dimension we may further assume that for all $i,j\geq 0$ we have $v_i=v_j=:v$ as well as 
\begin{align*}
    U_0\subsetneq U_1 \subsetneq \cdots  \text{ and } \bigcup_{i\geq 0} U_i = V.
\end{align*}

Repeated application of Lemma~\ref{ntonnnn} shows that for every $i\geq 0$ there exists $g_i'\in G^\ast$ such that
$\sigma_f(g_i',v)=\sigma_f(g_i,v)$ and $g_i'|_{U_i\setminus[v]_\sim}=\id|_{U_i\setminus[v]_\sim}$.
Since $\Sym(\Gamma)$ is finite, we are able to restrict ourselves to a subsequence  of $(g_i')_{i\geq 0}$ and $(U_i)_{i\geq 0}$ such that for all $i,j\geq 0$ we have $\sigma_f(g_i',v)=\sigma_f(g_j',v)=:\sigma$. Since $\Gast$ is closed,  there exists $g'\in G^\ast$ such that $\sigma_f(g',v)=\sigma$ and $g'|_{V\setminus [v]_\sim}=\id_{V\setminus [v]_\sim}$. This implies $\sigma\in N$ which is a contradiction and concludes our proof. }
\end{proof}

	For the rest of this section we fix a number $N_G\geq 2$ as in the conclusion of Lemma~\ref{ifallbuttwothenall}. The following observation is a direct consequence of the definition of $N_G$.
	
\begin{corollary}\label{ifallbuttwothenall_two}
For any subspace $U\leq \V$ with $\dim U>N_G$, for any  $g\in G^\ast$ and for any linearly independent vectors $v,w\in U$ the following holds:
\begin{align*}
    \left(\forall u\in U\setminus \left([v]_\sim \cup [w]_\sim \cup\{\0\} \right): \sigma_f(g,u)\in   N\right) \implies \sigma_f(g,v)\in N.
\end{align*}
\end{corollary}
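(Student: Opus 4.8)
The plan is to reduce the exclusion of the two classes $[v]_\sim$ and $[w]_\sim$ in the Corollary to the exclusion of a single class in Lemma~\ref{ifallbuttwothenall}. The idea is to pass to a subspace $U'\leq U$ which still has dimension at least $N_G$ and still contains $v$, but which no longer contains $w$; once $w\notin U'$, no element of $[w]_\sim$ lies in $U'$ either, since $[w]_\sim\subseteq \linClos{w}\setminus\{\0\}$ and a subspace contains $\lambda w$ with $\lambda\neq 0$ if and only if it contains $w$. Thus on $U'$ the hypothesis ``$\sigma_f(g,u)\in N$ for all relevant $u$'' upgrades automatically from a two-class exclusion to the single-class exclusion required by the lemma.

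Concretely, I would first use $\dim U>N_G\geq 2$ to extend the linearly independent pair $\{v,w\}$ to a linearly independent subset $\{v,w,b_3,\ldots,b_{N_G+1}\}\subseteq U$ of size $N_G+1$, which exists because $\dim U\geq N_G+1$. Setting $U':=\linClos{v,b_3,\ldots,b_{N_G+1}}$, I obtain a subspace of $U$ with $\dim U'=N_G$, with $v\in U'$, and with $w\notin U'$ (as $w$ is linearly independent from the chosen generators of $U'$). By the remark above, $[w]_\sim\cap U'=\emptyset$.

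Next I would verify the hypothesis of Lemma~\ref{ifallbuttwothenall} on $U'$: for any $u\in U'\setminus([v]_\sim\cup\{\0\})$ we have $u\in U$, $u\notin[v]_\sim$, and $u\notin[w]_\sim$ (the latter because $[w]_\sim\cap U'=\emptyset$), so $u\in U\setminus([v]_\sim\cup[w]_\sim\cup\{\0\})$; the hypothesis of the Corollary then yields $\sigma_f(g,u)\in N$. Hence $\sigma_f(g,u)\in N$ for every $u\in U'\setminus([v]_\sim\cup\{\0\})$.

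Finally, applying Lemma~\ref{ifallbuttwothenall} to the subspace $U'$ (of dimension $N_G$), the function $g\in G^\ast$, and the vector $v\in U'\setminus\{\0\}$ gives $\sigma_f(g,v)\in N$, which is the desired conclusion. I do not expect any genuinely hard step here; the only point requiring care is the dimension bookkeeping, since $\dim U>N_G$ is used precisely to guarantee the $(N_G+1)$-st independent vector that allows $U'$ to reach dimension $N_G$ while still keeping $w$ outside it.
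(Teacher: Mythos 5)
Your proof is correct and matches the paper's argument: the paper likewise proves the corollary by applying Lemma~\ref{ifallbuttwothenall} to an $N_G$-dimensional subspace of $U$ containing $v$ but not $w$ (so that $[w]_\sim$ is disjoint from it and the single-class hypothesis of the lemma is met). The paper's one-line proof mentions a second such subspace $U_2$ with the roles of $v$ and $w$ swapped, but for the stated conclusion about $\sigma_f(g,v)$ only your $U'$ is needed, so your dimension bookkeeping and reduction are exactly what is intended.
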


\begin{proof}
	Apply the conclusion of Lemma~\ref{ifallbuttwothenall} to some $N_G$-dimensional subspaces $U_1,U_2$ of $U$ with $v\in U_1\setminus U_2$ and $w\in U_2\setminus U_1$.
\end{proof}

	We first show that if $G=G^\ast\Aut(\V)$, then  for all $g\in G^\ast$ and  all $v,w\in V\setminus\{\0\}$ the group $\Nr$ contains ${\sigma_f(g,v)\sigma_f(g,w)^{-1}}$. Striving for a contradiction we will assume that there exists $g\in G^\ast$ and $v,w\in V\setminus\{\0\}$ such that this is not the case. First we show that there exists some $g'\in G^\ast$ with the same property such that a certain set of commutators are contained in $N$. We will then use those commutators to derive a contradiction. 

\begin{lemma}\label{commutatorsinN}
Assume that $G={G^\ast\Aut(\V)}$. Let $U$ be a subspace of $\V$ of finite dimension $\dim U>N_G$.
 If there exist   a function $g\in G^\ast$ and $v,w \in U\setminus \{\0\}$ such that
\begin{align*}
    \sigma_f(g,v)\sigma_f(g,w)^{-1}\not \in N,
\end{align*}
then there exist $g'\in G^\ast$ and $v',w'\in U$ such that 
${{\sigma_f(g',v')}{\sigma_f(g',w')^{-1}}}\not \in N$ and for all $x,y\in U\setminus\{\0\}$ we have
\begin{align}\label{comminN}
    \sigma_f(g',x)\sigma_f(g',y)\sigma_f(g',x)^{-1}\sigma_f(g',y)^{-1} \in N.
\end{align}
\end{lemma}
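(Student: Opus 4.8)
The plan is to translate the statement into finite group theory inside $Q:=H/N$, which is legitimate because $\Nr\lhd\Hc$ (Lemma~\ref{NHgroupsNormal}). Fix representatives $C_1,\dots,C_m$ of the $\sim$-classes contained in $U\setminus\{\0\}$; since $\dim U>N_G\geq 2$ there are at least three of them, $m\geq 3$. The first point I would record is that the map $g\mapsto(\sigma_f(g,C_1),\dots,\sigma_f(g,C_m))$ is a homomorphism from $G^\ast$ into $\Sym(\Gamma)^m$: for $g_1,g_2\in G^\ast$ Lemma~\ref{sigma_circ} gives $\sigma_f(g_1g_2,v)=\sigma_f(g_1,v)\sigma_f(g_2,v^{g_1})$, and $v^{g_1}\sim v$ because $g_1$ fixes the $\sim$-classes, so $\sigma_f(g_2,v^{g_1})=\sigma_f(g_2,v)$ by Lemma~\ref{sigma_prop}(2). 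Reducing each coordinate modulo $N$ gives a homomorphism $\bar\Phi\colon G^\ast\to Q^m$, $g\mapsto(\bar a_1(g),\dots,\bar a_m(g))$ with $\bar a_i(g)=\sigma_f(g,C_i)N$. In this language the hypothesis says that $\bar\Phi(g)$ lies off the diagonal of $Q^m$, while the conclusion asks for some $g'$ with $\bar\Phi(g')$ off the diagonal and with pairwise commuting coordinates, i.e.\ generating an abelian subgroup $D(g'):=\langle\bar a_1(g'),\dots,\bar a_m(g')\rangle\leq Q$; indeed condition~(\ref{comminN}) is exactly abelianity of $D(g')$.

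The engine for modifying candidates is conjugation by automorphisms. By~(\ref{samelabelsigma}) (and in general by Lemma~\ref{autgaut}, which introduces at worst an inner twist by the scalar subgroup of $Q$), conjugating $g'\in G^\ast$ by $\gamma\in\Aut(\V)$ permutes the coordinates of $\bar\Phi$ according to the permutation $\pi$ that $\gamma$ induces on the classes; since $\Aut(\V)$ acts richly on one-dimensional subspaces, I can choose $\gamma$ extending a transvection of $U$, so that $\pi$ fixes every class inside a hyperplane while moving a prescribed class $C_j$ to $C_i$. Then $[g',\gamma g'\gamma^{-1}]\in G^\ast$ has $k$-th coordinate $[\bar a_k(g'),\bar a_{\pi(k)}(g')]$, so all its coordinates lie in the derived subgroup $[D(g'),D(g')]$. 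I would then argue extremally: among all $g'\in G^\ast$ with $\bar\Phi(g')$ off the diagonal, pick one minimizing $|D(g')|$ (possible, as $Q$ is finite and the hypothesis guarantees this set is non-empty). If $D(g')$ were non-abelian, choose classes with $[\bar a_i,\bar a_j]\neq e$ and a transvection routing $C_j$ to $C_i$; the resulting commutator $g''$ has a non-trivial coordinate and, because the fixed hyperplane still contains classes (this is where $m\geq 3$ enters), a trivial coordinate as well, so $\bar\Phi(g'')$ is again off the diagonal while $D(g'')\leq[D(g'),D(g')]$. When $D(g')$ is solvable this forces $|D(g'')|<|D(g')|$, contradicting minimality; hence $D(g')$ is abelian and furnishes the desired $g'$, $v'$, $w'$.

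The genuine difficulty, which I expect to be the crux, is that $Q=H/N$ need not be solvable: although $\Gamma$ is cyclic, $H$ is merely a subgroup of $\Sym(\Gamma)$ and may carry non-abelian composition factors, so $D(g')$ could be perfect, in which case $[D(g'),D(g')]=D(g')$ and the order does not drop. This is precisely where the geometric hypothesis must be used. The idea is to deploy Corollary~\ref{ifallbuttwothenall_two}: if one can arrange the transvection so that the only classes on which the commutator $g''$ is non-trivial modulo $N$ are the classes of two \emph{linearly independent} vectors, then the corollary forces $g''$ to be trivial modulo $N$ altogether, which reads off as commuting relations $[\bar a_i,\bar a_{\pi(i)}]=e$ among the coordinate values. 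Letting the pair of independent classes vary over a spanning family of transvections (and tracking the scalar twists coming from Lemma~\ref{autgaut}) should yield $[\bar a_i,\bar a_j]=e$ for all $i,j$, contradicting the perfectness of $D(g')$. The delicate bookkeeping I anticipate is exactly this: confining the support of $g''$ to two independent classes so that the corollary applies, while retaining a non-trivial commutator so that the relation it produces is meaningful.
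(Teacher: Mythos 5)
Your reduction to the finite quotient $Q=H/N$ and the observation that $g\mapsto(\sigma_f(g,C_1),\dots,\sigma_f(g,C_m))$ is a homomorphism (via Lemma~\ref{sigma_circ} and Lemma~\ref{sigma_prop}(2)) are correct, and the commutator-with-a-conjugate device is indeed the right engine. But your extremal quantity is the wrong one, and you have correctly diagnosed the resulting gap yourself: minimizing $|D(g')|$ only terminates because $D(g'')\leq[D(g'),D(g')]$ forces a strict drop, which fails precisely when $D(g')$ is perfect. Since $\Gamma$ can have order up to $p-1$ and $\Hc$ is an arbitrary subgroup of $\Sym(\Gamma)$, the quotient $Q$ can certainly contain non-solvable (even perfect) subgroups, so this is not a corner case that can be waved away. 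Your proposed repair --- arranging a commutator whose support modulo $N$ is confined to exactly two linearly independent classes so that Corollary~\ref{ifallbuttwothenall_two} kills it --- is not carried out, and it is not clear it can be: the corollary's hypothesis demands $\sigma_f(\cdot,u)\in N$ for \emph{all} classes of $U$ outside two of them, which is a much stronger confinement than a single transvection delivers. As written, the proof is incomplete at its crux.

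The paper avoids solvability entirely by maximizing a different quantity: for $h\in G^\ast$ satisfying the hypothesis, it considers $W(h):=\{v\in U:\sigma_f(h,v)\in N\}$ and picks $h$ with $|W(h)|$ maximal. Normality of $\Nr$ in $\Hc$ gives the two facts that drive the induction: (a) if either argument of the commutator in (\ref{comminN}) already lies in $N$, the commutator lies in $N$, so any failure of (\ref{comminN}) at $x,y$ forces $x,y\notin W(h)$; and (b) Corollary~\ref{ifallbuttwothenall_two} is used only contrapositively, to extract a \emph{third} bad class $z\notin W(h)$ outside $[x]_\sim\cup[y]_\sim$. One then conjugates $h$ by an automorphism sending $y\mapsto x$ and $z\mapsto w$ for some $w\in W(h)$ (nonemptiness of $W(h)$ is arranged by a preliminary commutator) and forms $h':=hg'h^{-1}g'^{-1}$. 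Normality makes every old element of $W(h)$ survive into $W(h')$ and puts $z$ into $W(h')$ as well, while the failing pair $x,y$ witnesses that $h'$ still satisfies the hypothesis; so $W(h)\subsetneq W(h')$, contradicting maximality. No derived series appears anywhere. If you replace your minimization of $|D(g')|$ by this maximization of $|W(h)|$, the rest of your setup goes through.
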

\begin{proof}
For any $h\in G^\ast$ we define a  subset $W(h)$ of $U$ by
\begin{align}
    W(h):=\{v\in U:\sigma_f(h,v)\in N\}.
\end{align}
Clearly $[W(h)]_\sim=W(h)$. %Since $G=G^\ast\Aut(\V)$ the variation of Lemma~\ref{ifallbuttwothenall} we mentioned holds, thus whenever for any two vectors  $v,w\in U$ we have $U\setminus [\{v,w\}]_\sim \subseteq W(h)$ then already $W(h)=U$.
Let $h\in G^\ast$ be such that 
\begin{align}\label{propert1}
   \exists v,w\in U\text{ such that } \sigma_f(h,v)\sigma_f(h,w)^{-1}\not\in N
\end{align} and   such that $|W(h)|$ is maximal under all functions in $G^\ast$ satisfying (\ref{propert1}).

We first show that $W(h)$ contains at least one element. Let, as in the statement of the lemma, a function $g\in G^\ast$ and $v,w\in U$ be given such that ${\sigma_f(g,v)\sigma_f(g,w)^{-1}}\not \in N$. Since $\dim U\geq N_G+1\geq 3$ we can choose a vector $u\in U\setminus \linClos{v,w}$. Then there exists an automorphism $\varphi\in\Aut(\V)$ such that $v^\varphi=w$ and $\varphi$ fixes $u$. The function $\varphi g\varphi^{-1}g^{-1}$ is an element of $\Gast$ and \begin{align*}
    \sigma_f(\varphi g \varphi^{-1} g^{-1},v)=\sigma_f(g,v)\sigma_f(g,w)^{-1}\not \in N.
\end{align*}
therefore $\varphi g\varphi^{-1}g^{-1}$ satisfies (\ref{propert1}). On the other hand
\begin{align*}
    \sigma_f(\varphi g \varphi^{-1} g^{-1},u)=\sigma_f(g,u)\sigma_f(g,u)^{-1}=\id_\Gamma \in N,
\end{align*}
thus $|W(\varphi g \varphi^{-1} g^{-1})|\geq 1$. Because of maximality $|W(h)|\geq 1$.

If for any $x\in U$ we have $\sigma_f(h,x)\in N$, then by $\Nr\lhd \Hc$ we obtain for all $y\in V\setminus \{\0\}$
\begin{align*}
    {\underbrace{\sigma_f(h,x)}_{\in N}}{ \underbrace{\sigma_f(h,y){\overbrace{\sigma_f(h,x)^{-1}}^{\in N}}\sigma_f(h,y)^{-1}}_{\in N}}\in N.
\end{align*}
%The same conclusion holds if $\sigma_f(h,y)\in N$. 
This shows that  for all $x,y\in U$, if 
\begin{align*}
    \sigma_f(h,x)\sigma_f(h,y)\sigma_f(h,x)^{-1}\sigma_f(h,y)^{-1} \not \in N,
\end{align*}
then neither $\sigma_f(g,x)$ nor $\sigma_f(g,y)$ is an element of $N$.

We now show that the chosen $h$ satisfies  (\ref{comminN}) for all $x,y \in U\setminus\{\0\}$. 
Striving for a contradiction we assume that there are vectors $x,y$ such that (\ref{comminN}) does not hold. Then $x,y\not\in W(h)$ by the argument above; moreover, clearly $x,y$ belong to distinct $\sim$-classes, which means that they are linearly independent since $G={G^\ast\Aut(\V)}$.  By Corollary~\ref{ifallbuttwothenall_two} it follows that we can find $z\in U\setminus ([x]_{\sim}\cup [y]_{\sim})$ such that $\sigma_f(g,z)\not\in N$, that is $z\not\in W(h)$. Without loss of generality we can assume that $f(x)=f(y)=f(z)$. We construct a function $h'$ satisfying (\ref{propert1}) such that $W(h)\subsetneq W(h')$ which contradicts the maximality of $W(h)$. 

There exists $\gamma \in \Aut (\V)$ such that $y^\gamma=x$ and $z^\gamma =w$ for some arbitrary fixed $w\in W(h)\neq \emptyset$. We define $g':=\gamma h\gamma^{-1}\in G^\ast$, and obtain
\begin{align*}
    \sigma_f(g',y)=\sigma_f(\gamma h \gamma^{-1},y)=\sigma_f(h,x) \text{ and } \sigma_f(g',z)=\sigma_f(\gamma h \gamma^{-1},z)=\sigma_f(h,w) \in N.
\end{align*}

Furthermore, we define $h':=hg'h^{-1}g'^{-1}\in G^\ast$ and  show  that $h'$  satisfies (\ref{propert1}). Since $\Nr\lhd \Hc$ we obtain for $z$ 
\begin{align} \label{forZ}
    \sigma_f(h',z)={\underbrace{\sigma_f(h,z){\sigma_f(h,w)}\sigma_f(h,z)^{-1}}_{\in N}}{\underbrace{\sigma_f(h,w)^{-1}}_{\in N}} \in N.
\end{align}

	Hence $z\in W(h')\setminus W(h)$. The elements $x,y$ are such that 
\begin{align*}
    \sigma_f(h',y)=\sigma_f(h,y)\sigma_f(h,x)\sigma_f(h,y)^{-1}\sigma_f(h,x)^{-1} \not \in N. 
\end{align*}
Since $\sigma_f(h',z)$ is an element of $\Nr$ we have $\sigma_f(h',y)\sigma_f(h',z)^{-1}\not \in N$, hence $h'$ satisfies (\ref{propert1}). 

For all $v\in W(h)$ by $\Nr\lhd \Hc$ we have
\begin{align*}
    \sigma(h',v)={\underbrace{\sigma_f(h,v)}_{\in N}}\underbrace{\sigma_f(g',v){\sigma_f(h,v)^{-1}}\sigma_f(g',v)^{-1}}_{\in N} \in N.
\end{align*}
We have seen already that $z\in W(h')\setminus W(h)$, hence $W(h)\subsetneq W(h')$. This contradicts the maximality of $W(h)$. Therefore for $h$  all $x,y\in U$ satisfy (\ref{comminN}).
\end{proof}

Since $\Nr\lhd \Hc$ we obtain under the assumptions of Lemma~\ref{commutatorsinN} that for any $k\geq 2$ and for all $x_1,\ldots,x_k,y_1,y_1\ldots,y_k\in U$ with $\{x_1,\dots,x_k\}=\{y_1,\dots,y_k\}$ there exists $n\in N$ such that
%\begin{align*}
%    \sigma_f(g',x_1)\sigma_f(g',x_k)\sigma_f(g',x_1)^{-1}\sigma_f(g',x_k)^{-1} \in N.
%\end{align*}
%Consequently, for any $x_1,\ldots, x_k\in U$ and any $\{i_1,\ldots,i_k\}=\{1,\ldots,k\}$ there exists $n\in N$ such that
\begin{align}\label{ALLcomm}
   n\; \sigma_f(g',x_1)\cdots \sigma_f(g',x_k)  = \sigma_f(g',y_1)\cdots\sigma_f(g',y_k).
\end{align}

In the proof of Lemma~\ref{sigsiginN} below we will use a $\Gamma$-labelling which might differ from our fixed $\Gamma$-labelling. The following lemma shows how to switch between different labellings.

\begin{lemma}\label{labelsswap}
Let $f_1$ and $f_2$ be $\Gamma$-labellings. Then for all $g\in \Sstar (V)$, all $v\in V\setminus \{\0\}$ and $\lambda:=\frac{f_2(v)}{f_1(v)}$ we have
\begin{align}\label{labelswapform}
    \sigma_{f_2}(g,v)=\lambda^{-1}\sigma_{f_1}(g,v)\lambda.
\end{align}
\end{lemma}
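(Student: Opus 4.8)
The plan is to prove the identity by a direct pointwise computation, evaluating both permutations of $\Gamma$ at an arbitrary $\mu\in\Gamma$ and using the defining formula (\ref{sigmadef}) for $\sigma_f(g,v)$. The one structural fact I will isolate first is the following: since both $f_1$ and $f_2$ are $\Gamma$-labellings and every element of $[v]_\sim$ has the form $\kappa v$ with $\kappa\in\Gamma$, the ratio $f_2(w)/f_1(w)$ is constant on the class $[v]_\sim$; indeed $f_i(\kappa v)=\kappa f_i(v)$ gives $f_2(\kappa v)/f_1(\kappa v)=f_2(v)/f_1(v)=\lambda$ for every $\kappa$. Equivalently, $f_2(w)=\lambda f_1(w)$ for all $w\in[v]_\sim$. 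Note also that $\lambda\in\Gamma$, so multiplication by $\lambda$ is a genuine permutation of $\Gamma$, which is the object denoted by $\lambda$ in the statement.

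For the main computation, fix $\mu\in\Gamma$ and set $x:=\mu\,\tfrac{v}{f_2(v)}\in[v]_\sim$, so that $\sigma_{f_2}(g,v)(\mu)=f_2(x^g)$ by definition. Because $g\in\Sstar(V)$ fixes every $\sim$-class setwise, $x^g$ again lies in $[v]_\sim$, and hence $f_2(x^g)=\lambda f_1(x^g)$ by the preliminary observation. It remains to recognise $f_1(x^g)$ as a value of $\sigma_{f_1}(g,v)$. Writing $x=\nu\,\tfrac{v}{f_1(v)}$ forces $\nu=\mu\,f_1(v)/f_2(v)=\lambda^{-1}\mu\in\Gamma$, so $f_1(x^g)=\sigma_{f_1}(g,v)(\lambda^{-1}\mu)$ directly from (\ref{sigmadef}). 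Combining these, I obtain the pointwise identity $\sigma_{f_2}(g,v)(\mu)=\lambda\cdot\sigma_{f_1}(g,v)(\lambda^{-1}\mu)$.

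Finally I will translate this pointwise equation into the claimed composition identity. Using the paper's left-to-right composition convention $fg=g\circ f$ and the identification of $\lambda$ with the multiplication permutation, the map $\mu\mapsto\lambda\cdot\sigma_{f_1}(g,v)(\lambda^{-1}\mu)$ is exactly the composite that first multiplies by $\lambda^{-1}$, then applies $\sigma_{f_1}(g,v)$, then multiplies by $\lambda$, i.e.\ $\lambda^{-1}\sigma_{f_1}(g,v)\lambda$. I expect the only real point requiring care to be this last bookkeeping step: keeping the composition order consistent with the convention and confirming that the inner factor is $\lambda^{-1}$ rather than $\lambda$, which is precisely why the argument of $\sigma_{f_1}(g,v)$ had to be rewritten as $\lambda^{-1}\mu$. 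Everything else is a routine unwinding of the definitions, and no property of $G$ or of $\Sstar(V)$ beyond class-preservation of $g$ is needed.
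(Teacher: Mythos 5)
Your proof is correct and takes essentially the same approach as the paper: both first establish that $f_2=\lambda f_1$ holds on all of $[v]_\sim$, use that $g\in\Sstar(V)$ keeps the relevant image vector inside $[v]_\sim$, and then unwind the definition of $\sigma_f(g,v)$ pointwise, taking care with the left-to-right composition convention. The paper merely runs the computation in the opposite direction (deriving $\sigma_{f_1}(g,v)=\lambda\sigma_{f_2}(g,v)\lambda^{-1}$), which is an immaterial difference.
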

\begin{proof}
We claim that for all $w\in [v]_\sim$ we have $f_2(w)=\lambda f_1(w)$.
This is true since there exists $\mu\in \Gamma^\times$ such that $\mu v=w$ and
\begin{align*}
    f_2(w)=\frac{f_2(w)}{f_1(w)} f_1(w)=\frac{f_2(\mu v)}{f_1(\mu v)} f_1(w) =\frac{\mu f_2(v)}{\mu f_1(v)} f_1(w)=\lambda f_1(w).
\end{align*}

Let $w\in [v]_\sim$ be such that $f_2(w)=1$, then $f_1(\lambda w)=1$. Now for any $\mu\in \Gamma$ we have $\sigma_{f_2}(g,v)(\mu)=f_2((\mu v)^g)$ and 
\begin{align*}
    \sigma_{f_1}(g,v)(\mu)&=f_1((\mu(\lambda w))^g)=\lambda^{-1}f_2(((\lambda \mu)w)^g)\\
    &=\lambda^{-1}\sigma_{f_2}(g,v)(\lambda \mu) = (\lambda\sigma_{f_2}(g,v)\lambda^{-1})(\mu). \qedhere
\end{align*}
\end{proof}

For the proof of the next lemma we need for every $n$-dimensional subspace $U$ of $\V$ an element in $\Aut(U)$ of order $p^n-1$, a so-called \emph{Singer cycle}. {Such an element always exists since we may identify $U$  with  $\F{p^n}$, viewed as a vector space; since the multiplicative group  $\F{p^n}^\times$ of the field $\F{p^n}$
 is cyclic, it is generated by a single element, which then has to have order $p^n-1$.} Since the multiplication by this element is an   automorphism of the vector space $\F{p^n}$, this shows that there exists an automorphism of $U$ of order $p^n-1$. 

We will also refer to Euler's Phi function $\varphi\colon \N \to \N$ which assigns to every $n\in \N$ the number of positive integers $k\leq n $ which are coprime to $n$.%, i.e., do not share any common divisor with $n$ except $1$.

%\begin{theorem}[Euler's Theorem]
%Let $\varphi\colon \N \to \N$ be Euler's Phi function. Then for all $n,q\geq 1$ which are coprime $n$ divides  $q^{\varphi(n)}-1$. 
%\end{theorem}
%For a  proof of Euler's Theorem see Theorem 11 in~\cite{Euler}.

\begin{lemma}\label{sigsiginN}
Assume that $G=G^\ast\Aut(\V)$. Let  $g$ be an element of $\Gast$. Then for all $v,w\in V\setminus\{\0\}$ the function $\sigma_f(g,v)\sigma_f(g,w)^{-1}$ is an element of $\Nr$.
\end{lemma}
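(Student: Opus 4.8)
The plan is to argue by contradiction, reducing to a single finite-dimensional subspace and then exploiting a Singer cycle to move between all projective lines. So suppose there are $g\in G^\ast$ and $v,w\in V\setminus\{\0\}$ with $\sigma_f(g,v)\sigma_f(g,w)^{-1}\notin N$. I would fix a finite-dimensional subspace $U$ with $v,w\in U$ and $\dim U =: n$ large (to be specified) and in particular $\dim U > N_G$. Since here $\Gamma=\F{p}^\times$, the $\sim$-classes are exactly the punctured lines, so by Lemma~\ref{sigma_prop}(2) the permutation $\sigma_f(g,x)$ depends only on the line $\langle x\rangle$; write $\Psi_g(L):=\sigma_f(g,x)$ for any $\0\neq x\in L$. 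Applying Lemma~\ref{commutatorsinN} to $U$ I may replace $g$ by some $g'\in G^\ast$ for which all commutators $\sigma_f(g',x)\sigma_f(g',y)\sigma_f(g',x)^{-1}\sigma_f(g',y)^{-1}$ with $x,y\in U$ lie in $N$ --- so that the cosets $\Psi_{g'}(L)N$ pairwise commute in $H/N$ --- while still $\Psi_{g'}$ is non-constant modulo $N$ on the lines of $U$.

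I would then bring in a Singer cycle $s\in\Aut(U)$, i.e.\ multiplication by a generator $\zeta$ of $\F{p^n}^\times$ (such $\zeta$ exists, there being $\varphi(p^n-1)\geq 1$ of them), extended to an automorphism of $\V$ fixing a complement of $U$, so that $s\in\Aut(\V)\leq G$. The element $s$ permutes the $m:=(p^n-1)/(p-1)$ lines of $U$ in a single $m$-cycle, and $s^m$ equals multiplication by a generator $\rho$ of $\F{p}^\times=\Gamma$. The key technical tool is that multiplication maps are harmless: for $\lambda\in\Gamma$ the scalar $m_\lambda:=\lambda\cdot\id_V$ lies in $\Aut(\V)\cap\Sstar(V)=G^\ast$ and satisfies $\sigma_f(m_\lambda,x)=(\text{multiplication by }\lambda)$, so every multiplication map lies in $H$; since $\Nr\lhd\Hc$, conjugation by a multiplication map preserves $N$, which both legitimises switching labellings via Lemma~\ref{labelsswap} and lets me control the scalar factors produced by Lemma~\ref{autgaut}. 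Choosing the $\Gamma$-labelling adapted to $s$ (giving the line representatives $v,v^s,\dots,v^{s^{m-1}}$ the label $1$), Lemma~\ref{autgaut} yields $\sigma_f(sg's^{-1},v^{s^j})=\Psi_{g'}(\langle v^{s^{j+1}}\rangle)$ for $j<m-1$, with the only surviving scalar correction, a conjugation by $\rho$, occurring on the wrap-around.

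The engine of the proof is a norm element $P:=\prod_{t=0}^{p^n-2}s^{t}g's^{-t}\in G^\ast$. Because each factor lies in $G^\ast$ and hence fixes every line, Lemma~\ref{sigma_circ} and Lemma~\ref{sigma_prop}(2) collapse the resulting cocycle, and together with Lemma~\ref{autgaut} and the adapted labelling this evaluates $\sigma_f(P,v^{s^j})$ as an explicit product of the $\Psi_{g'}(L)$ and their $\rho$-conjugates. The identity $s^{p^n-1}=\id_V$ gives the conjugacy relation $g'^{-1}Pg'=sPs^{-1}$, which translates, again via Lemmas~\ref{sigma_circ} and~\ref{autgaut}, into $\Pi_{j+1}=\Psi_{g'}(L_j)^{-1}\Pi_j\Psi_{g'}(L_j)$ for the values $\Pi_j:=\sigma_f(P,v^{s^j})$, where $L_j=\langle v^{s^j}\rangle$. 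I would combine this conjugacy chain with the explicit abelian expression for $\Pi_0$ and with the residual scalar $\rho$ to force, after choosing $n$ so that the relevant orders are coprime, that $\Psi_{g'}(L_j)\equiv\Psi_{g'}(L_{j+1})\pmod N$ for every $j$. Since $s$ acts transitively on the lines of $U$, this makes $\Psi_{g'}$ constant modulo $N$ on $U$, contradicting the choice of $g'$ and proving the lemma.

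The main obstacle is exactly the scalar corrections forced by Lemma~\ref{autgaut}: conjugating $g'$ by the Singer cycle shifts the lines cyclically but simultaneously introduces conjugation by multiplication maps, and these need not commute with the values $\Psi_{g'}(L)$ modulo $N$. Two devices are meant to tame this: the labelling adapted to $s$ reduces all corrections to a single conjugation by $\rho$ per full turn, and the freedom in the dimension $n$ --- exploited through Euler's $\varphi$ function to pick $n$ with $m=(p^n-1)/(p-1)$, and hence the cyclic shift, coprime to the order of $\rho$ and to $|H/N|$ --- is what should let the cyclic shift and the leftover scalar conjugation be disentangled. Turning this arithmetic bookkeeping into the \emph{equality} $\Psi_{g'}(L_j)\equiv\Psi_{g'}(L_{j+1})$ rather than a mere conjugacy is the crux of the argument, and is where I expect the real work to lie.
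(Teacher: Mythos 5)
Your outline matches the paper's proof in spirit---contradiction, reduction to a finite-dimensional subspace, Lemma~\ref{commutatorsinN} to make everything commute modulo $N$, and a norm element built from Singer-cycle conjugates of $g'$---but the decisive step is missing, and the mechanism you propose for it does not obviously work. You want to extract $\Psi_{g'}(L_j)\equiv\Psi_{g'}(L_{j+1})\pmod N$ from the conjugacy chain $\Pi_{j+1}=\Psi_{g'}(L_j)^{-1}\Pi_j\Psi_{g'}(L_j)$. But after the reduction via Lemma~\ref{commutatorsinN} the elements $\sigma_f(g',x)$, $x\in U$, pairwise commute modulo $N$, and the $\Pi_j$ are, modulo $N$, products of exactly these elements together with their conjugates by powers of the scalar $\rho$; conjugating by $\Psi_{g'}(L_j)$ therefore acts trivially mod $N$ on all the unconjugated factors, and what survives of the relation concerns only mixed commutators with the $\rho$-conjugates, which Lemma~\ref{commutatorsinN} does not control and which have no evident bearing on the differences $\Psi_{g'}(L_j)\Psi_{g'}(L_{j+1})^{-1}$. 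No amount of coprimality of $m$ with $|H/N|$ or with the order of $\rho$ is shown to bridge this, and you concede the point yourself. You also never use Corollary~\ref{ifallbuttwothenall_two} beyond requiring $\dim U>N_G$, whereas that corollary is precisely what the paper needs to close the argument.

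The paper's construction sidesteps both of your obstacles simultaneously. Instead of a Singer cycle on the lines of $U$, it fixes an $n$-dimensional affine subspace $A\ni\ut,\uh$ with $\0\notin A$ inside an $(n+1)$-dimensional $U$, runs the Singer cycle on $U'=A-\ut$, and extends it to $\gt\in\Aut(\V)$ fixing $\ut$; then $\gt$ preserves $A$, fixes $\ut$, and cycles $A\setminus\{\ut\}$. Since $A$ is pairwise linearly independent, the labelling can be chosen constant on $A$, so by~(\ref{samelabelsigma}) no scalar corrections ever arise---your ``residual $\rho$'' problem disappears entirely. The fixed point $\ut$ creates the asymmetry your single $m$-cycle on lines lacks: the norm element $\htt$ satisfies $\sigma_f(\htt,\ut)=\sigma_f(g,\ut)^{p^n-1}=\id_\Gamma$ (this is where $\varphi((p-1)!)\mid n$ enters), while for every other $a\in A$ one gets $\sigma_f(\htt,a)\equiv\stt$ mod $N$, with $\stt$ the total product over $A$ times $\sigma_f(g,\ut)^{-1}$; comparing with the analogous $\shh$ gives $\stt^{-1}\shh=\sigma_f(g,\uh)\sigma_f(g,\ut)^{-1}\notin N$, so say $\stt\notin N$. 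The contradiction then comes from $h=\htt\psi\htt^{-1}\psi^{-1}$ with $\psi$ fixing $U'$ pointwise and moving $\ut$ into $A\setminus\{\ut\}$: one computes $\sigma_f(h,v)\in N$ for all $v\in U$ outside two $\sim$-classes, so Corollary~\ref{ifallbuttwothenall_two} forces $\sigma_f(h,\ut)\in N$, while direct computation gives $\sigma_f(h,\ut)=(n_{\ut^{\psi}}^{-1}\stt)^{-1}\notin N$. To salvage your version you would need substitutes for both the fixed point of the Singer action and the final appeal to Corollary~\ref{ifallbuttwothenall_two}.
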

\begin{proof}
We strive for a contradiction. Assume that there exists $g\in G^\ast$ and  $\ut,\uh \in V\setminus\{\0\}$ such that $\sigma_f(g,\ut)\sigma_f(g,\uh)^{-1}\not \in N$. Without loss of generality let  $\ut$ and $\uh$ be such that their $f$-labels are $1$. Let $n$ be some natural number greater that $N_G$ and which is divisible by $\varphi((p-1)!)$.
%from Lemma~\ref{ifallbuttwothenall} which we will specify later. 

{ Let $U$ be an $n+1$-dimensional subspace of $\V$ which contains $\{\ut,\uh\}$, and let $A$ be an $n$-dimensional affine subspace of $U$ which  also contains $\{\ut,\uh\}$ but does not contain $\{\0\}$. We set $U':=A-\ut=A-\uh$ which is a subspace of   $U$.} Since $A$ does not contain $\0$, it follows that the set $A$ is linearly independent. Thus, there exists a $\Gamma$-labelling which is constant on $A$. By Lemma~\ref{labelsswap} since $\lambda\in H$ and $\Nr\lhd \Hc$ for any $\lambda\in \F{p}$ we may assume that $f$ already coincides with this $\Gamma$-labelling. Moreover, by Lemma~\ref{commutatorsinN} we may also assume that %$\sigma_f(g,\ut)\sigma_f(g,\uh)^{-1}\not \in N$ and 
for all $x,y\in U$ we have
\begin{align}\label{commagain}
    \sigma_f(g,x)\sigma_f(g,y)\sigma_f(g,x)^{-1}\sigma_f(g,y)^{-1} \in N. 
\end{align}

As we argued above we know that there exists $\gamma\in \Aut(U')$ of order $p^n-1$, that is for any $u\in U'\setminus \{\0\}$ we have
\begin{align}\label{Uprimeisgammaa}
    \{u^{\gamma^i}: 0\leq i\leq p^n-2\}=U'\setminus \{\0\}.
\end{align}

Let $\gt$ and $\gh$ in $\Aut(\V)$ be extensions of $\gamma$ such that $\ut^{\gt}=\ut$ and $\uh^{\gh}=\uh$. The set $U'=\gt(U')$ is equal to ${\gt}(A-\ut)={\gt}(A)-\ut$, thus ${\gt}(A)=A$. By similar reasoning  ${\gh}(A)=A$. We define for all $i\leq p^n-2$ the functions
\begin{align*}
    \gtt_i:=\gt^ig\gt^{-i} \text{ and }  \ghh_i:=\gh^ig\gh^{-i}.
\end{align*}

For any $a\in A$ and all $i\leq p^n-2$ we obtain $\sigma_f(\gtt_i,a)=\sigma_f(g,a^{\gt^i})$ and $\sigma_f(\ghh_i,a)=\sigma_f(g,a^{\gh^i})$. We further define
\begin{align*}
    \htt:=\gtt\gtt_1\gtt_2\cdots\gtt_{p^n-2} \text{ and }
    \hhh:=\ghh\ghh_1\ghh_2\cdots\ghh_{p^n-2}.
\end{align*}
 Because of (\ref{Uprimeisgammaa}) we obtain for all $a\in A\setminus \{\ut\}$: 
\begin{align*}
    \{a^{\gt^i}: 0\leq i\leq p^n-2\}=A\setminus \{\ut\},
\end{align*}
and likewise for all $a\in A\setminus \{\uh\}$ we have $\{a^{\gh^i}: 0\leq i\leq p^n-2\}=A\setminus \{\uh\}$. 

We enumerate $A$ such that $A=\{u_i: 0\leq i \leq p^n-1\}$ and $\ut=u_{0}$ as well as $\uh=u_{1}$.  %We now fix $n>N'_G$ such that $n$ is a multiple of $\varphi((p-1)!)$.
Recall that $n$ was chosen to be a multiple of $\varphi((p-1)!)$. By Euler's Theorem we know that $(p-1)!$ divides $p^{\varphi((p-1)!)}-1$, \purple{and thus it also} divides $p^{n}-1$. The function $\sigma_f(\htt,\ut)$ is equal to $\sigma_f(g,\ut)^{p^n-1}=\id_{\F{p}^\times}$.

For all $a\in A\setminus \{\ut\}$ we arrive at
\begin{align*}
    \sigma_f(\htt,a)=\sigma_f(g,u_{k_1})\cdots\sigma_f(g,u_{k_{p^n-1}})
\end{align*}
such that $\{u_{k_i}\colon 1\leq i \leq p^n-1\}=A\setminus\{\ut\}$. {By Lemma~\ref{commutatorsinN} and by what we remarked in (\ref{ALLcomm}) we obtain that for every $a\in A\setminus \{\ut\}$ there exists $n_a\in N$ such that}
\begin{align*}
    n_a\sigma_f(\htt,a) =\left(\sigma_f(g,u_{0})\cdots\sigma_f(g,u_{{p^n-1}})\right)\sigma_f(g,\ut)^{-1}=:\stt.
\end{align*}

Likewise we define $\shh:={\left(\sigma_f(g,u_{0})\cdots\sigma_f(g,u_{{p^n-1}})\right)}{\sigma_f(g,\uh)^{-1}}$. With this $\stt^{-1}\shh=\sigma_f(g,\uh)\sigma_f(g,\ut)^{-1}$ which is by assumption not an element of $\Nr$.
This implies that either  $\stt$ or $\shh$ is not an element of $\Nr$. Without loss of generality we can assume that $\stt\not \in N$. Let $\psi\in \Aut(\V)$ be a function which fixes every element in $U'$ and maps $\ut$ into $A\setminus \{\ut\}$. We define $h:=\htt\psi\htt^{-1}\psi^{-1}\in G^\ast$. For all $v\in U'\setminus[\ut]_\sim$ we obtain
\begin{align*}
    \sigma_f(h,v)&=\sigma_f(\htt,v)\left(\sigma_f(\psi \htt \psi^{-1},v)\right)^{-1}\\
    &=\sigma_f(\htt,v)\sigma_f(\htt,v)^{-1}=\id_{\Gamma}\in N.
\end{align*}

{For any $v\in U\setminus (U'\cup [\{{\ut},\ut^{\psi^{-1}}\}]_\sim)$ there exists $a\in A\setminus \{\ut,\ut^{\psi^{-1}}\}$ such that $\linClos{a}=\linClos{v}$.} We obtain
\begin{align*}
    \sigma_f(h,v)&=\sigma_f(\htt,a)\left(\sigma_f(\psi \htt \psi^{-1},a)\right)^{-1}\\
    &=\sigma_f(\htt,a)\sigma_f(\htt,a^\psi)^{-1}=(n_a^{-1}\stt)(n_{a^\psi}^{-1}\stt)^{-1} \\
    &=n_a^{-1}n_{a^\psi}\in N.
\end{align*}

Therefore for all elements $v\in U\setminus [\{\ut,\ut^{\psi^{-1}}\}]_\sim$ we have $\sigma_f(h, v)\in N$, hence by Corollary~\ref{ifallbuttwothenall_two} also $\sigma_f(h,\ut)\in N$. On the other hand
\begin{align*}
    \sigma_f(h,\ut)&=\sigma_f(\htt,\ut)\left(\sigma_f(\psi \htt \psi^{-1},\ut)\right)^{-1}\\
    & = \sigma_f(\htt,\ut^\psi)^{-1}=(n_{\ut^{\psi}}^{-1}\stt)^{-1}.%=\stt^{-1 n_{\ut^{\psi^{-1}}}.}
\end{align*}
The second-to-last equality follows since $\sigma_f(\htt,\ut)=\id_\Gamma$. This contradicts $\stt\not \in N$ and therefore concludes the proof.
\end{proof}

{
\begin{corollary}
Assume that $G=G^\ast\Aut(\V)$. Then $\SG{\Nr}{\Hc}=\Gast$.
\end{corollary}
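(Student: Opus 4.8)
The plan is to mirror the proof of Corollary~\ref{cor_SGH=GAST1}, replacing every use of Lemma~\ref{sigmasigmaSEC} by Lemma~\ref{sigsiginN}, which supplies exactly the same conclusion under the present hypothesis $G=G^\ast\Aut(\V)$; all the other ingredients (Lemmas~\ref{sigma_prop}, \ref{sigma_circ} and~\ref{specificv}) hold irrespective of which case we are in. First I would dispose of the inclusion $\Gast\subseteq\SG{\Nr}{\Hc}$. For any $g\in\Gast$ the value $\sigma_f(g,v)$ lies in $H$ for every $v\in V\setminus\{\0\}$ directly by Definition~\ref{NandH}, so condition~(1) of Definition~\ref{defSG} holds; and $\sigma_f(g,v)\sigma_f(g,w)^{-1}\in N$ for all $v,w$ is precisely Lemma~\ref{sigsiginN}, giving condition~(2). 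Hence $g\in\SG{\Nr}{\Hc}$.

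For the reverse inclusion $\SG{\Nr}{\Hc}\subseteq\Gast$, I would fix $g\in\SG{\Nr}{\Hc}$ and, since $\Gast$ is closed, reduce to approximating $g$ on an arbitrary finite set $F\subseteq V$. I would choose representatives $v_1,\dots,v_n$ of the finitely many $\sim$-classes meeting $F\setminus\{\0\}$. Using $\sigma_f(g,v_1)\in H$ together with Lemma~\ref{specificv}, I would produce $\tilg\in G^\ast$ with $\sigma_f(\tilg,v_1)=\sigma_f(g,v_1)$. The key point is then that $\sigma_f(g,v_i)\sigma_f(\tilg,v_i)^{-1}\in N$ for each $i$: this follows by writing it as the product of $\sigma_f(g,v_i)\sigma_f(g,v_1)^{-1}\in N$ (condition~(2) for $g$) and $\sigma_f(\tilg,v_1)\sigma_f(\tilg,v_i)^{-1}\in N$ (Lemma~\ref{sigsiginN} applied to $\tilg$), after substituting $\sigma_f(g,v_1)=\sigma_f(\tilg,v_1)$ and using that $N$ is a group.

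Applying the second item of Lemma~\ref{specificv} to each of these elements of $N$ yields $g_i\in G^\ast$ with $\sigma_f(g_i,v_i)=\sigma_f(g,v_i)\sigma_f(\tilg,v_i)^{-1}$ that is moreover the identity off $[v_i]_\sim$. Setting $g':=(g_2\cdots g_n)\tilg\in G^\ast$, I would verify via the cocycle identity of Lemma~\ref{sigma_circ} and the class-invariance of Lemma~\ref{sigma_prop}~(2) that $\sigma_f(g',v_i)=\sigma_f(g,v_i)$ for every $i$. Indeed, for $i\geq2$ every $g_j$ with $j\neq i$ is the identity on $[v_i]_\sim$, so only the factor $g_i$ contributes to the label action there and $\tilg$ contributes $\sigma_f(\tilg,v_i^{g_i})=\sigma_f(\tilg,v_i)$; composing $\sigma_f(g_i,v_i)=\sigma_f(g,v_i)\sigma_f(\tilg,v_i)^{-1}$ with $\sigma_f(\tilg,v_i)$ returns $\sigma_f(g,v_i)$. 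For $i=1$ every $g_j$ with $j\geq2$ is trivial on $[v_1]_\sim$, leaving only $\sigma_f(\tilg,v_1)=\sigma_f(g,v_1)$. Since $g$ and $g'$ both lie in $\Sstar(V)$ and fix each $\sim$-class setwise, equality of the $\sigma_f$-values on the representatives forces $g|_{[v_i]_\sim}=g'|_{[v_i]_\sim}$, whence $g|_F=g'|_F$.

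I do not expect a substantial obstacle here, since Lemma~\ref{sigsiginN} already carries the analytic weight of the $G=G^\ast\Aut(\V)$ case. The only care needed is the bookkeeping with the cocycle relation when showing $\sigma_f(g',v_i)=\sigma_f(g,v_i)$: one must track that the factors $g_j$ with $j\neq i$ act trivially on the labels of $[v_i]_\sim$, and that replacing $v_i$ by the $\sim$-equivalent $v_i^{g_i}$ leaves $\sigma_f(\tilg,\,\cdot\,)$ unchanged.
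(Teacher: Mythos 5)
Your proposal is correct and is exactly the paper's argument: the paper's own proof of this corollary is a one-line remark that it is identical to the proof of Corollary~\ref{cor_SGH=GAST1} with Lemma~\ref{sigmasigmaSEC} replaced by Lemma~\ref{sigsiginN}, and your write-up faithfully expands that earlier proof with precisely this substitution. The bookkeeping with $\tilg$, the elements $g_i$, and the cocycle identity matches the paper's computation step for step.
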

\begin{proof}
The proof is identical to the one of Corollary~\ref{cor_SGH=GAST1} except that it refers to Lemma~\ref{sigsiginN} instead of Lemma~\ref{sigmasigmaSEC}.
\end{proof}}

{We have shown  that  for every closed group $\G$ such that $\Aut(\V)\leq \G \leq \Sym(V)_{\0}$ 
the group $\Gast$ is equal to $\SG{\Nr}{\Hc}$ from Definition~\ref{defSG},} thus $\Gast$ is uniquely determined by its corresponding groups $\Nr \lhd \Hc\leq \Sym (\Gamma)$ from Definition~\ref{NandH}. Since there are only finitely many possibilities for $\Nr$ and $\Hc$ as subgroups of $\Sym(\Gamma)$ we have shown the following. 

\begin{proposition}\label{fin_red_fix_0}
Assume that a closed permutation group $\G$ on $V$ containing  $\Aut(\V)$  fixes the zero vector $\0$. Let $f$ be an arbitrary $\Gamma$-labelling.  Then $G=G^\ast\Sym((V\setminus\{\0\})/_\sim)^f$ or $G=G^\ast\Aut(\V)$, and in either case $G^\ast=\SG{\Nr}{\Hc}$ for some $\Nr \lhd \Hc\leq \Sym (\Gamma)$. In particular, the number of such groups is finite.
\end{proposition}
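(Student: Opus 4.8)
The plan is simply to assemble the results of the preceding subsections: by this stage essentially all of the substantive work is done, and what remains is to fit the pieces together and to read off finiteness from the fact that $\Sym(\Gamma)$ is finite. First I would recall the dichotomy summarized in Figure~\ref{fig:actionsonclasses}. By Lemma~\ref{equiv_is_equiv} there is a group $\Gamma \leq \F{p}^\times$ such that $\sim$ coincides with $\sim_\Gamma$, and exactly one of two situations occurs: either $\Gamma = \F{p}^\times$ and $\G$ preserves projective lines, or one of the hypotheses of Proposition~\ref{prop:mainresult1} holds. In the former case Lemma~\ref{GisGAut} gives $G = G^\ast\Aut(\V)$; in the latter, Proposition~\ref{prop:mainresult1} shows that $\G$ acts as the full symmetric group on $(V\setminus\{\0\})/_\sim$, so Corollary~\ref{GisGSym} yields $G = G^\ast\Sym((V\setminus\{\0\})/_\sim)^f$. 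This establishes the first assertion.

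Next I would identify $\Gast$ with $\SG{\Nr}{\Hc}$. The sets $\Nr$ and $\Hc$ from Definition~\ref{NandH} are subgroups of $\Sym(\Gamma)$ with $\Nr\lhd\Hc$ by Lemma~\ref{NHgroupsNormal} and the subsequent lemma. In the case $G = G^\ast\Sym((V\setminus\{\0\})/_\sim)^f$, Corollary~\ref{cor_SGH=GAST1} gives $\SG{\Nr}{\Hc}=\Gast$, and in the case $G = G^\ast\Aut(\V)$ the corollary immediately following Lemma~\ref{sigsiginN} gives the same conclusion. Hence in both cases $\Gast = \SG{\Nr}{\Hc}$ is completely determined by the pair $(\Nr,\Hc)$, together with $\Gamma$ and the fixed labelling $f$.

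Finally, for finiteness: since $\Gamma$ is a subgroup of the finite group $\F{p}^\times$, the group $\Sym(\Gamma)$ is finite, so there are only finitely many pairs $(\Nr,\Hc)$ of subgroups of $\Sym(\Gamma)$ with $\Nr\lhd\Hc$. Each such pair determines $\Gast = \SG{\Nr}{\Hc}$, and $G$ is recovered from $\Gast$ by composing with one of the two groups $\Aut(\V)$ or $\Sym((V\setminus\{\0\})/_\sim)^f$, both of which are fixed once $\Gamma$ and $f$ are. As $\Gamma$ itself ranges over the finitely many subgroups of $\F{p}^\times$, the total number of closed groups $\G$ with $\Aut(\V)\leq\G\leq\Sym(V)_{\0}$ is finite. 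I expect no genuine obstacle here, since the two hard identifications $\Gast=\SG{\Nr}{\Hc}$ have already been established; the only point requiring a moment's care is to confirm that, once $\Gamma$, the case distinction, and the pair $(\Nr,\Hc)$ are fixed, the group $G$ is indeed uniquely determined.
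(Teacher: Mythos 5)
Your proposal is correct and matches the paper's own argument: the paper likewise proves this proposition purely by assembling Lemma~\ref{GisGAut}, Corollary~\ref{GisGSym}, Corollary~\ref{cor_SGH=GAST1} and the corollary following Lemma~\ref{sigsiginN}, and then reads off finiteness from the finitely many pairs $\Nr\lhd\Hc\leq\Sym(\Gamma)$ over the finitely many choices of $\Gamma\leq\F{p}^\times$. No gaps.
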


\section{The closed supergroups of \texorpdfstring{$\Aut (\V)$}{Aut(V)} moving  \texorpdfstring{$\0$}{0}}\label{not fix 0}\label{sect:move0}

%Proposition~\ref{fin_red_fix_0} stated 
	In Section~\ref{sect:fix0} we have shown that there exist only finitely many reducts of $\V$ which do first-order define the zero vector $\0$. The goal for this section is to show that there are only two reducts of $\V$ which do not first-order define $\0$, \purple{namely the infinite dimensional affine space over $\F{p}$ and the pure set}.	For the rest of the section we fix a closed permutation group $G$ acting on $V$ such that 
\begin{align*}
    \Aut (\V) \leq \G\leq \Sym (V) \text{ and } \exists g\in G: g(\0)\neq \0.
\end{align*}
The group $\Aut (\V)$ acts transitively  on $V\setminus \{\0\}$. Since  $\0$ is not fixed by $\G$ we immediately obtain that $\G$ acts transitively  on $V$.

If we consider the subgroup $\Gr_{\0}$ of $\G$ we may apply all results of the previous sections. With this in mind we consider the  equivalence relation $\sim$ given by $\Gr_{\0}$ as before together with $\Gamma \leq \F{p}^\times$ as defined before.  We will show that in our setting $\Gamma$ can only be one of the two trivial subgroups of $\F{p^\times}$.

\begin{lemma}\label{lem:mainres_G_is_sym}
Assume that  $\Gamma=\{1\}$. Then $\G=\Sym (V)$.
\end{lemma}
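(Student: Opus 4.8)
The plan is to show that $\G$ is $n$-transitive on $V$ for every $n\geq 1$ and then to conclude from closedness. First I would observe that the stabilizer $\Gr_{\0}$ is itself a closed permutation group with $\Aut(\V)\leq \Gr_{\0}\leq \Sym(V)_{\0}$, so that the entire development of Section~\ref{sect:fix0} applies to it; by construction the relation $\sim$ and the group $\Gamma$ are precisely those attached to $\Gr_{\0}$. Since $p$ is odd we have $\{1\}=\Gamma\lneq \F{p}^\times$, so the second hypothesis of Proposition~\ref{prop:mainresult1} holds for $\Gr_{\0}$, and hence $\Gr_{\0}$ acts as the full symmetric group on $(V\setminus\{\0\})/_\sim$. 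The assumption $\Gamma=\{1\}$ forces every $\sim$-class to be a singleton, so the quotient map identifies $(V\setminus\{\0\})/_\sim$ with $V\setminus\{\0\}$; consequently the restriction homomorphism $\Gr_{\0}\to\Sym(V\setminus\{\0\})$, $g\mapsto g|_{V\setminus\{\0\}}$, is surjective.

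Next I would transport this full-symmetry to every point of $V$. As already noted, $\G$ acts transitively on $V$. Fix an arbitrary $v\in V$ and choose $g\in G$ with $\0^g=v$; then $\G_v=g\,\Gr_{\0}\,g^{-1}$ and $g$ restricts to a bijection of $V\setminus\{\0\}$ onto $V\setminus\{v\}$. Conjugating the surjection $\Gr_{\0}\twoheadrightarrow\Sym(V\setminus\{\0\})$ by $g$ then shows that $\G_v$ induces all of $\Sym(V\setminus\{v\})$, so in particular $\G_v$ acts $m$-transitively on $V\setminus\{v\}$ for every $m\geq 1$.

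I would then obtain $n$-transitivity of $\G$ from the standard criterion that a permutation group is $n$-transitive if it is transitive and some point stabilizer is $(n-1)$-transitive on the complement of that point. Since $\G$ is transitive on $V$ and $\Gr_{\0}$ is highly transitive on $V\setminus\{\0\}$, it follows that $\G$ is $n$-transitive for every $n\geq 1$. Finally, given any $f\in\Sym(V)$ and any finite $F=\{a_1,\ldots,a_n\}\subseteq V$, the tuples $(a_1,\ldots,a_n)$ and $(f(a_1),\ldots,f(a_n))$ both consist of pairwise distinct entries, so by $n$-transitivity there is $g\in G$ with $g|_F=f|_F$; closedness of $\G$ then yields $f\in G$, whence $\G=\Sym(V)$.

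I do not expect a genuine obstacle in this argument: the only points requiring care are the correct application of Proposition~\ref{prop:mainresult1} to the stabilizer $\Gr_{\0}$ rather than to $\G$ itself, the identification of $(V\setminus\{\0\})/_\sim$ with $V\setminus\{\0\}$ that the hypothesis $\Gamma=\{1\}$ supplies, and the routine conjugation that spreads full-symmetry from the stabilizer of $\0$ to the stabilizer of an arbitrary point.
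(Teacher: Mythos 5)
Your argument is correct and follows essentially the same route as the paper: apply Proposition~\ref{prop:mainresult1} to the closed group $\Gr_{\0}$, observe that $\Gamma=\{1\}$ makes the $\sim$-classes singletons so that $\Gr_{\0}$ induces all of $\Sym(V\setminus\{\0\})$, and combine this with the transitivity of $\G$ on $V$. The only difference is that your concluding step goes through $n$-transitivity and closedness, whereas one can finish in one line by writing any $f\in\Sym(V)$ as $f=(fg^{-1})g$ with $g\in G$ satisfying $\0^g=\0^f$, so that $fg^{-1}\in\Sym(V)_{\0}=\Gr_{\0}\leq G$; but this is a matter of presentation, not substance.
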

\begin{proof}
By Proposition~\ref{prop:mainresult1} the group $\Gr_{\0}$ acts as $\Sym ({(V/_\sim)})_{\0}$ on the $\sim$-equivalence classes of non-zero vectors. Since these classes contain only one element and there exists $g\in G$ such that $g(\0)\neq \0$, the claim follows. \end{proof}

We henceforth assume $\Gamma\neq \{1\}$.

%	In our proof we will need the following notion of algebraic closure.

\begin{definition}
Let $S$ be a set and  let $\Hc$ be a permutation group acting on a set $S$. For all $X \subseteq S$ the \emph{algebraic closure $\acl(X)$ of $X$} is defined to be the union of all finite orbits of $H_{X}$. 
\end{definition}

As for the linear closure, for finitely many elements  $v_1,\ldots,v_n\in V$ we are going to  write $\acl (v_1,\ldots,v_n)$ instead of $\acl(\{v_1,\ldots,v_n\})$. 
%
%If $S$ is a set, $\Hc\leq \Sym ({S})$, and $\simeq$ is an equivalence relation on $S$ which is $\Hc$-invariant, then for all finite equivalence classes $[a]_\simeq\subseteq S$ and all $b\in [a]_\simeq$ we have  $b\in \acl (a)$. 
It is clear from the definition that $\acl$ is a \emph{closure operator, that is for all subsets $X,Y \subseteq S$ we have \begin{itemize}
\item $X\subseteq \acl(X)$
\item if $X\subseteq Y$ then $\acl(X)\subseteq \acl(Y)$
\item $\acl(\acl(X))=\acl(X)$.
\end{itemize}
}

In our case the set $S$ is the set of vectors $V$ and the group $\Hc$ is $\G$. Since $\Aut (\V)\leq \G$ it follows that for all $u,v,w\in V$ such that $u\in V\setminus \linClos{v,w}$ the orbit $\G_{v,w}(u)$ already contains $V\setminus \linClos{v,w}$, in particular it is infinite. %The orbit $G_{v,w}(v)$ contains only $v$ and $G_{v,w}(w)=\{w\}$, 
Thus, we have 
\begin{align}\label{agcl_linclos}
    \{v,w\}\subseteq\acl (v,w)\subseteq\linClos{v,w}.
\end{align}
Our goal is show that in fact $\acl(v,w)=\Aff(v,w)$ for any two vectors $v,w\in V$. %We first show that $|\acl(v,w)|=p$ for all $v,w\in V$.

\begin{lemma}\label{agcl_size}
Let $v, w$ be distinct vectors in $V$ and let $g\in G$.  Then 
\begin{enumerate}
    \item $\acl (v,w)^g=\acl(v^g,w^g)$, and
    \item $|\acl (v,w)|=|\Gamma|+1$.
\end{enumerate}
\end{lemma}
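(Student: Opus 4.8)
The plan is to treat the two assertions separately, proving (1) as a formal consequence of conjugation and then deriving (2) from (1) together with transitivity, reducing everything to the computation of $\acl(\0,u)$ for a single nonzero vector $u$.

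For (1), I would first note the characterization that $u\in\acl(v,w)$ if and only if the orbit $G_{v,w}(u)$ is finite, which is immediate from the definition of $\acl$ as the union of the finite orbits of $G_{\{v,w\}}$. The identity $\acl(v,w)^g=\acl(v^g,w^g)$ is then purely group-theoretic, exactly as in Lemma~\ref{GSInftyIff}: conjugation gives $g^{-1}G_{\{v,w\}}g=G_{\{v,w\}^g}$ and hence $G_{v,w}(u)^g=G_{v^g,w^g}(u^g)$, so $G_{v,w}(u)$ is finite precisely when $G_{v^g,w^g}(u^g)$ is. Thus $u\in\acl(v,w)$ iff $u^g\in\acl(v^g,w^g)$, which is the claim.

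For (2), the first step is to use transitivity to move one of the two vectors to $\0$. As noted at the start of the section, $\G$ is transitive on $V$, so there is $g\in G$ with $v^g=\0$, and then $w^g\neq\0$ because $w\neq v$. By (1), $\abs{\acl(v,w)}=\abs{\acl(\0,w^g)}$, so it suffices to show $\abs{\acl(\0,u)}=\abs{\Gamma}+1$ for every $u\neq\0$. I would establish this by proving $\acl(\0,u)=[u]_\sim\cup\{\0\}$; since $[u]_\sim=\Gamma u$ has $\abs{\Gamma}$ elements and $\0\notin[u]_\sim$, this set has exactly $\abs{\Gamma}+1$ elements. For $\supseteq$, the subgroup $G_{\0,u}\leq G_\0$ preserves $\sim$ and fixes $u$, hence fixes the finite class $[u]_\sim$ setwise; consequently every element of $[u]_\sim$ has finite $G_{\0,u}$-orbit and lies in $\acl(\0,u)$, and of course $\0$ is fixed as well. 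For $\subseteq$, recall from (\ref{agcl_linclos}) that $\acl(\0,u)\subseteq\linClos{\0,u}=\linClos{u}$, so it remains only to exclude every $w\in\linClos{u}\setminus([u]_\sim\cup\{\0\})$.

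To exclude such a $w$, I would argue that its $G_{\0,u}$-orbit is infinite. Since $w\not\sim u$, the definition of $\sim$ (Definition~\ref{equiv_G}, applied to $G_\0$) provides $h\in G_\0$ with $\linClos{w^h}\neq\linClos{u^h}$, and as these are distinct one-dimensional subspaces this means $w^h\notin\linClos{u^h}$. Using that $\Aut(\V)$ is transitive on $V\setminus\{\0\}$, choose $\psi\in\Aut(\V)$ with $(u^h)^\psi=u$; then $h\psi\in G_{\0,u}$ while $w^{h\psi}=(w^h)^\psi\notin\linClos{(u^h)^\psi}=\linClos{u}$. Hence $w^{h\psi}$ lies in $V\setminus\linClos{\0,u}$, whose elements have infinite $G_{\0,u}$-orbit, so $G_{\0,u}(w)$ is infinite and $w\notin\acl(\0,u)$. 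The main obstacle is precisely this last inclusion: one must separate a non-$\sim$-equivalent vector from $u$ while staying inside the stabilizer $G_{\0,u}$, and the trick of re-fixing $u$ by an automorphism $\psi$ after an arbitrary separating $h\in G_\0$ is what makes the argument go through while still forcing the orbit out of $\linClos{u}$.
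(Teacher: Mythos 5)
Your proof is correct and follows essentially the same route as the paper: item (1) via the conjugation identity $G_{v,w}(u)^g=G_{v^g,w^g}(u^g)$, and item (2) by reducing with transitivity to the computation $\acl(\0,u)=[u]_\sim\cup\{\0\}$. The paper dismisses that computation as following "easily from the definition of $\sim$", whereas you supply the details (the setwise stabilization of the finite class $[u]_\sim$ for one inclusion, and the separating element $h\in G_{\0}$ recomposed with an automorphism fixing $u$ for the other), all of which are sound.
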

\begin{proof}

Clearly, for all $u\in V$ we have
\begin{align*}
 |G_{v,w}(u)|=\infty \iff |G_{v^g,w^g}(u^g)|=\infty.
\end{align*}
This shows (1).

	It follows easily from the definition of $\sim$ that $\acl(\0,u)=[u]_{\sim}\cup \{\0\}$ for any $u\in V\setminus \{\0\}$. Since $\G$ is 2-transitive the general case follows from item (1).
%Item (2) follows from (1) and from the fact that $|\acl(\0,v)|=|\linClos{v|=p$ for any $v\in V\setminus \{\0\}$.} %Lemma~\ref{agcl_0_is_equiv_class}.
\end{proof}

\begin{lemma}\label{GammaIsOneOfTwo}
	$\Gamma=\F{p}^\times$.
\end{lemma}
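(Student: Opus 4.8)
The plan is to argue by contradiction: having dealt with $\Gamma=\{1\}$ in Lemma~\ref{lem:mainres_G_is_sym}, we assume $\Gamma\neq\{1\}$ and show that $\Gamma\lneq\F{p}^\times$ is impossible, so that $\Gamma=\F{p}^\times$. The engine is the dichotomy from Section~\ref{sect:fix0}: applying Proposition~\ref{prop:mainresult1} to $\Gr_{\0}$, the assumption $\Gamma\lneq\F{p}^\times$ forces $\Gr_{\0}$ to act on $(V\setminus\{\0\})/_\sim$ as the \emph{full} symmetric group (the alternative action, Proposition~\ref{prop:res_col}, only occurs when $\Gamma=\F{p}^\times$). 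The strategy is then to contradict the defining $\Gr_{\0}$-invariance of $\sim$ by exhibiting an element of $\Gr_{\0}$ that sends some $\sim$-equivalent pair to a $\sim$-inequivalent pair.

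The first step is to extract rigid structural information from $\acl$. Recall from the proof of Lemma~\ref{agcl_size} that $\acl(\0,u)=[u]_\sim\cup\{\0\}$, that $\acl$ is $\G$-equivariant, and that $|\acl(v,w)|=|\Gamma|+1$ for all distinct $v,w$. Since $\acl$ is a closure operator, any two distinct points of a ``block'' $\acl(v,w)$ regenerate it, and by equality of cardinalities one gets $\acl(c,d)=\acl(v,w)$ for all distinct $c,d\in\acl(v,w)$. I would use this to analyse an arbitrary $g\in G$ with $z:=\0^{g}\neq\0$; writing $q:=\0^{g^{-1}}$, equivariance applied to the origin-block $\acl(\0,q)=[q]_\sim\cup\{\0\}$ yields $g([q]_\sim)=\bigl(\{\0\}\cup[z]_\sim\bigr)\setminus\{z\}$. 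Thus $g$ necessarily maps the whole class $[q]_\sim$ \emph{into the single line} $\linClos{z}$, sending $q\mapsto\0$; more generally $g$ carries each class $[a]_\sim$ onto $\acl(z,a^{g})\setminus\{z\}$. These computations pin down exactly how a zero-moving element of $G$ can redistribute $\sim$-classes.

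The contradiction is then produced as follows. Fix such a $g$ and an element $\lambda\in\Gamma\setminus\{1\}$, and set $\hat w:=(\lambda z)^{g^{-1}}\in[q]_\sim$. Tracking the class $[q]_\sim$ through $g$, one finds that the only vector $w\sim\hat w$ with $w^{g}\not\sim(\hat w)^{g}$ is $w=q$ itself (because $[q]_\sim\setminus g^{-1}([z]_\sim)=\{q\}$). Here the full symmetric action enters: it provides an auxiliary $h\in\Gr_{\0}$ that reroutes the relevant classes off the line $\linClos{z}$ while respecting the points that must stay fixed, so that a suitable conjugate $k$ of $h$ by $g$ lands back in $\Gr_{\0}$ and maps a $\sim$-pair to two vectors lying in \emph{distinct} $\sim$-classes — one inside a prescribed line, the other outside — hence $\sim$-inequivalent. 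This contradicts $\sim=\sim_{\Gr_{\0}}$ being $\Gr_{\0}$-invariant, ruling out $\Gamma\lneq\F{p}^\times$ and leaving $\Gamma=\F{p}^\times$.

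The hard part, and the reason the full symmetric action (rather than the $\Aut(\V)$-action of Proposition~\ref{prop:res_col}) is indispensable, is precisely the degenerate configuration just described, in which the natural witness is forced to map to $\0$ under $g$; there the naive conjugation collapses ($q$ and $\0$ would have the same image), so one must either pre-compose with a further zero-moving element to push the witness off $\0$, or use the full symmetry to separate the classes of a second, independent $\sim$-pair and transport the obstruction. Getting this bookkeeping right — ensuring the constructed $k$ genuinely fixes $\0$, genuinely lies in $G$, and provably violates $\sim$ — is where essentially all the work of the proof is concentrated.
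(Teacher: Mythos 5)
Your setup coincides with the paper's: assume $\Gamma\lneq\F{p}^\times$ for contradiction, invoke Proposition~\ref{prop:mainresult1} to conclude that $\G_{\0}$ acts on $(V\setminus\{\0\})/_\sim$ as the full symmetric group, and exploit $\acl(\0,u)=[u]_\sim\cup\{\0\}$, $|\acl(v,w)|=|\Gamma|+1$ and the $\G$-equivariance of $\acl$. Your computations of how a zero-moving $g$ redistributes the $\sim$-classes are correct. But the decisive step --- actually exhibiting an element of $\G_{\0}$ that separates a $\sim$-pair --- is never carried out: the ``auxiliary $h$'' and the ``suitable conjugate $k$'' are described only qualitatively, and you yourself concede that ``essentially all the work of the proof is concentrated'' there. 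That is a genuine gap, not bookkeeping. Note also that the most natural instance of your plan collapses: for $k=ghg^{-1}$ to fix $\0$ you need $z^h=z$, but any such $h\in \G_{\0}$ preserves $[z]_\sim$ setwise, so $k$ permutes $[q]_\sim$ and cannot separate any pair inside it; you are pushed onto other classes $[c]_\sim$, whose images $\acl(z,c^g)\setminus\{z\}$ have unknown internal structure at this stage (the identification of $\acl$ with affine lines only comes later). The case analysis you are deferring is therefore genuinely delicate --- it is essentially the long three-case conjugation argument that the authors wrote and then commented out of the source in favour of a cleaner route.

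The paper closes the argument differently and more cheaply, and you already have all the ingredients. Choose $w\neq\hat w$ with $w\sim\hat w$ and $[w]_\sim$ disjoint from $([u]_\sim\cup\{\0\})^{g^{-1}}$, where $u:=\0^g\neq\0$. From $\acl(w,\hat w)=[w]_\sim\cup\{\0\}$ and equivariance one gets $u\in\acl(w^g,\hat w^g)$, with $u$, $w^g$, $\hat w^g$ nonzero; moreover $w^g\not\sim\hat w^g$ (otherwise $\acl(w^g,\hat w^g)=[w^g]_\sim\cup\{\0\}$ would force $u\in[w^g]_\sim$, contradicting the choice of $w$), so the three vectors lie in pairwise distinct $\sim$-classes. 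The full symmetric action of $\G_{\0}$ on classes then maps them to three linearly independent vectors, which is impossible since $u\in\acl(w^g,\hat w^g)\subseteq\linClos{w^g,\hat w^g}$ is preserved by every element of $\G_{\0}$. Replacing your third and fourth paragraphs by this argument is the shortest way to repair the proof.
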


\begin{proof}
	Let $v$ be an arbitrary nonzero vector in $V$. Since $\Gamma\neq \{1\}$ we can choose $\hat{v}\in [v]_\sim$ different from $v$. Clearly, any orbit of $G_{v,\hat{v}}$ outside $[v]_\sim\cup \{\0\}$ is infinite. Therefore $\acl(v,\hat{v})\subseteq [v]_\sim \cup \{0\}$. On the other hand by Lemma~\ref{agcl_size} we know that $\acl(v,\hat{v})=|\Gamma|+1$. Therefore $\acl(v,\hat{v})=[v]_\sim \cup \{\0\}$.
	
	Let us assume for contradiction that $\Gamma\neq \F{p}^\times$. Then by 
	%Propositions~\ref{prop:res_col} and \ref{prop:mainresult1} 
	Proposition~\ref{prop:mainresult1} we know that the group $\G_{\0}$ acts on the set of $\sim$-equivalence classes as the full symmetric group. Let $g\in G$ such that $u:=\0^g\neq \0$. Let $w\in V$ such that $[w]_\sim$ is disjoint from $([u]_\sim\cup \{\0\})^{g^{-1}}$, and let $\hat{w}\in [w]_\sim \setminus \{w\}$. Then we have $u,w^g,\hat{w}^g\neq \0$ and $[u]_\sim \neq [w^g]_{\sim},[\hat{w}^g]_{\sim}$. By our argument in the previous paragraph we know that $\0\in \acl(w,\hat{w})$, and thus by Lemma~\ref{agcl_size} (1) we have $u=\0^g\in \acl(w^g,\hat{w}^g)$. We claim that $w^g$ and $\hat{w}^g$ are also not $\sim$-equivalent. Indeed, if $w^g\sim \hat{w}^g$ then by applying the result of the previous paragraph with $v=w^g$ and $\hat{v}=\hat{w}^g$ we would obtain $\acl(w^g,\hat{w}^g)=[w^g]_{\sim}\cup \{\0\}$. This would imply in particular that $u\in [w^g]_{\sim}$, a contradiction. We have obtained that $u,w^g,\hat{w}^g$ are in pairwise different $\sim$-classes. Then these vectors can be mapped to three linearly independent vectors by some element of $G_{\0}$. This is however impossible since $u\in \acl(w^g,\hat{w}^g)$.
\end{proof}

	The following is an immediate consequence of Lemma~\ref{GammaIsOneOfTwo} and item (2) of Lemma~\ref{agcl_size}.

\begin{corollary}\label{agcl_size_p}
	Let $v, w$ be distinct vectors in $V$. Then $|\acl (v,w)|=p$.
\end{corollary}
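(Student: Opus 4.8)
The plan is to simply chain together the two results cited in the statement, since no new argument is required. First I would invoke Lemma~\ref{GammaIsOneOfTwo}, which under our standing assumption $\Gamma \neq \{1\}$ establishes that $\Gamma = \F{p}^\times$. Since $\F{p}^\times = \F{p}\setminus\{0\}$ has exactly $p-1$ elements, this pins down $|\Gamma| = p-1$.

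Next I would apply item (2) of Lemma~\ref{agcl_size}, which tells us that for any two distinct vectors $v,w \in V$ the algebraic closure satisfies $|\acl(v,w)| = |\Gamma| + 1$. Substituting the value of $|\Gamma|$ just obtained yields
\begin{align*}
	|\acl(v,w)| = |\Gamma| + 1 = (p-1) + 1 = p,
\end{align*}
which is exactly the claim.

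There is essentially no obstacle here: the corollary is a one-line consequence and all the content lives in the two preceding results. The only thing worth double-checking is that the hypotheses of both cited statements are genuinely in force, namely that we are in the case $\Gamma \neq \{1\}$ (so that Lemma~\ref{GammaIsOneOfTwo} applies and forces $\Gamma = \F{p}^\times$) and that $v,w$ are distinct (so that item (2) of Lemma~\ref{agcl_size} applies); both are guaranteed by the standing assumptions of the section and the hypothesis of the corollary, so the proof reduces to the arithmetic above.
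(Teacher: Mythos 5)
Your proof is correct and is exactly the paper's argument: the corollary is stated there as an immediate consequence of Lemma~\ref{GammaIsOneOfTwo} (giving $|\Gamma|=p-1$ under the standing assumption $\Gamma\neq\{1\}$) combined with item (2) of Lemma~\ref{agcl_size}. Nothing further is needed.
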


%It is clear from the definition of the equivalence relation $\sim$ that for any $v\in V\setminus \{\0\$ the equivalence class $[v]_\sim$ is contained in $\acl(\0,v)$. Considering that $\Gamma=\F{p}^\times$ this implies that $\linClos{v}\subseteq \acl(\0,v)$. 

	It also follows immediately from Lemma~\ref{GammaIsOneOfTwo} that for all $v\in V\setminus \{\0\}$ we have $\acl(\0,v)=[v]_\sim \cup \{\0\}=\linClos{v}$.
%By what we argued about finite equivalence classes of an equivalence relation we have for all $w\in V$ that 
%\begin{align*}
%    w\sim v \implies w\in \acl(\0,v).
%\end{align*}
%{Together with (\ref{agcl_linclos}) and since  $\Gamma=\F{p}^\times$ we obtain the following.
%\begin{lemma}\label{agcl_0_is_equiv_class}
%Let $v$ be a vector in $V$. Then
%\begin{align}
%    \acl (\0,v)=\linClos{v}.
%\end{align}
%\end{lemma}
%}
For arbitrary vectors $v,w\in V$ the structure of $\acl (v,w)$ is a little more complex to determine. For all $u\in V$, if there is a function in $\G_{v,w}$ which maps $u$ to a vector outside of $\linClos{v,w}$ then $u$ cannot be in $\acl (v,w)$. On the other hand, if the orbit $G_{v,w}(u)$ is fully contained in $\linClos{v,w}$, then it is finite and $u$ is an element of the algebraic closure. Therefore 
\begin{align}\label{orbits_infinite_iff}
    \acl (v,w)=\{u\in V\colon G_{v,w}(u)\subseteq \linClos{v,w}\}.
\end{align}
%Since $\G$ acts transitively on $V$ it can map any pair of vectors to a pair which contains $\0$. From this we obtain the following. 

{
\begin{lemma}\label{agcl_is_agcl}
Let $v,w$ be distinct vectors in $V$. For all  distinct elements $x,y$ of $  \acl (v,w)$, we have
\begin{align*}
    \acl (x,y)=\acl (v,w).
\end{align*}
\end{lemma}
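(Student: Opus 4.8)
The plan is to combine the abstract closure-operator properties of $\acl$, which are stated explicitly in the excerpt, with the exact cardinality computed in Corollary~\ref{agcl_size_p}. The whole content of the lemma is carried by that cardinality bound, so no genuine geometric argument is needed here.

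First I would establish the inclusion $\acl(x,y) \subseteq \acl(v,w)$. Since $x$ and $y$ are elements of $\acl(v,w)$, we have $\{x,y\} \subseteq \acl(v,w)$. Applying monotonicity of the closure operator to this inclusion yields $\acl(x,y) \subseteq \acl(\acl(v,w))$, and idempotence of $\acl$ gives $\acl(\acl(v,w)) = \acl(v,w)$. Combining the two, I obtain $\acl(x,y) \subseteq \acl(v,w)$.

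For the reverse inclusion I would argue purely by cardinality. As $x \neq y$ are distinct vectors in $V$, Corollary~\ref{agcl_size_p} gives $|\acl(x,y)| = p$; likewise $|\acl(v,w)| = p$ since $v \neq w$. A subset of a $p$-element set that itself has exactly $p$ elements must be the whole set, so the inclusion $\acl(x,y)\subseteq \acl(v,w)$ together with these equal cardinalities forces $\acl(x,y) = \acl(v,w)$.

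The only point requiring attention is that both appeals to Corollary~\ref{agcl_size_p} need the respective pairs of vectors to be distinct, which holds by hypothesis ($v\neq w$ is assumed, and $x,y$ are taken distinct). There is thus no real obstacle: the substantive work is hidden in Corollary~\ref{agcl_size_p}, which in turn rests on $\Gamma=\F{p}^\times$ from Lemma~\ref{GammaIsOneOfTwo}, while the argument above is a formal manipulation of the closure-operator axioms followed by a counting step.
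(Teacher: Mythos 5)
Your proof is correct and is essentially identical to the paper's own argument: the paper likewise obtains $\acl(x,y)\subseteq\acl(v,w)$ from the closure-operator properties and then concludes equality by comparing cardinalities via Corollary~\ref{agcl_size_p}. No issues.
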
}
\begin{proof}
	Since $x,y\in \acl(v,w)$ we have $\acl(x,y)\subseteq \acl(v,w)$. By Lemma~\ref{agcl_size_p} we know that $|\acl(x,y)|=|\acl(v,w)|=p$. Therefore $\acl (x,y)=\acl (v,w)$.
%We claim that it suffices to show this in the  case that $v=\0$. Let $u$ be a non-zero vector and $g\in G$ such that $(v,w)^g=(\0,u)$. Assume we know that for all distinct $a,b\in \acl (\0,u)$ we have
%\begin{align*}
%    \acl (\0,u)=\acl (a,b).
%\end{align*}
% For all distinct elements $x, y\in \acl(v,w)$ the elements $x^g,y^g$ are in $\acl(\0,u)$, thus by our assumption and (1) of Lemma~\ref{agcl_size}:
%\begin{align*}
%    \acl(x,y)^g=\acl(x^g,y^g)=\acl(\0,u)=\acl(v,w)^g. 
%\end{align*}
%Therefore without loss of generality  $v=\0$.
%{Now for any  $x,y\in \acl (\0,w)=\linClos{w}$, we have $\acl(x,y)\subseteq \linClos{x,y}=\linClos{w}$. By (2) of Lemma~\ref{agcl_size} we obtain equality. }
\end{proof}

	Together with item~(1) of Lemma~\ref{agcl_size} we obtain the following. 

\begin{corollary}\label{invariant_unter_g_falls_drinnen}
Let $v,w\in V$ distinct and let $g\in G$ be a function such that $v^g, w^g\in \acl (v,w)$. Then $\acl (v,w)^g=\acl (v,w)$.
\end{corollary}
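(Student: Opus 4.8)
The plan is to obtain this directly by chaining the two results that immediately precede it, so there is essentially no new work to do. First I would record the elementary observation that the images $v^g$ and $w^g$ are again distinct: since $g$ lies in $G \leq \Sym(V)$ it is a bijection of $V$, and $v \neq w$ forces $v^g \neq w^g$. Combined with the hypothesis $v^g, w^g \in \acl(v,w)$, this means that $v^g$ and $w^g$ form a pair of \emph{distinct} elements of $\acl(v,w)$, which is exactly the situation to which Lemma~\ref{agcl_is_agcl} applies.

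Next I would invoke Lemma~\ref{agcl_is_agcl} with $x := v^g$ and $y := w^g$ to conclude
\[
    \acl(v^g, w^g) = \acl(v,w).
\]
Separately, item~(1) of Lemma~\ref{agcl_size} tells us that for the distinct pair $v,w$ and the function $g \in G$ we have $\acl(v,w)^g = \acl(v^g, w^g)$. Splicing these two equalities together yields
\[
    \acl(v,w)^g = \acl(v^g, w^g) = \acl(v,w),
\]
which is the assertion. Since both Lemma~\ref{agcl_size}~(1) and Lemma~\ref{agcl_is_agcl} are already available, there is no genuine obstacle here; the only point needing a word of justification is the distinctness of $v^g$ and $w^g$, which is needed so that Lemma~\ref{agcl_is_agcl} is applicable, and this is immediate from $g$ being a permutation. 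The corollary is thus purely a bookkeeping combination of the setwise-image identity and the fact that the algebraic closure of any two distinct points inside $\acl(v,w)$ recovers the whole set.
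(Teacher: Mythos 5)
Your proposal is correct and is exactly the argument the paper intends: the corollary is stated as a direct combination of Lemma~\ref{agcl_is_agcl} (applied to the distinct pair $v^g,w^g\in\acl(v,w)$) with item~(1) of Lemma~\ref{agcl_size}. Your additional remark that $v^g\neq w^g$ because $g$ is a permutation is a sensible explicit justification of the applicability of Lemma~\ref{agcl_is_agcl}.
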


\begin{lemma}\label{acl_nicht_equiv}
{Let  $v,w$ be linearly independent vectors in $V$. Then every pair $(x,y)$ of distinct  elements of  $\acl (v,w)$ is linearly independent. }
\end{lemma}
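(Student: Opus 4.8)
The plan is to argue by contradiction, using the three ingredients already assembled: the identity $\acl(x,y)=\acl(v,w)$ for distinct $x,y\in\acl(v,w)$ (Lemma~\ref{agcl_is_agcl}), the fact that $\Gamma=\F{p}^\times$ (Lemma~\ref{GammaIsOneOfTwo}), and the explicit computations of $\acl$ on pairs of $\sim$-equivalent vectors and on pairs involving $\0$. Concretely, I would suppose that there exist distinct $x,y\in\acl(v,w)$ which are linearly dependent, and then show that this forces $\acl(v,w)$ to collapse to a single one-dimensional subspace of $\V$ — which is impossible, since $\acl(v,w)$ already contains the linearly independent pair $v,w$.

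First I would apply Lemma~\ref{agcl_is_agcl} to reduce the problem to computing $\acl(x,y)$, since that lemma gives $\acl(x,y)=\acl(v,w)$; thus it suffices to prove that $\acl(x,y)$ is one-dimensional. As $x$ and $y$ are distinct and linearly dependent, I would split into two cases. In the degenerate case where one of them, say $y$, equals $\0$, the remark following Corollary~\ref{agcl_size_p} gives $\acl(x,\0)=\linClos{x}$ immediately. Otherwise both are nonzero and $x=\lambda y$ for some $\lambda\in\F{p}^\times$; because $\Gamma=\F{p}^\times$ this means precisely $x\sim y$, and the computation carried out in the proof of Lemma~\ref{GammaIsOneOfTwo} then yields $\acl(x,y)=[x]_\sim\cup\{\0\}=\linClos{x}$. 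In both cases $\acl(v,w)=\acl(x,y)=\linClos{x}$ is a one-dimensional subspace of $\V$, yet $v,w\in\acl(v,w)$, so $v$ and $w$ both lie in $\linClos{x}$, contradicting their linear independence.

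I do not expect a genuine obstacle here: the entire content is packaged into Lemma~\ref{agcl_is_agcl} together with the size count of Corollary~\ref{agcl_size_p}. The only points requiring care are the separate handling of the degenerate subcase in which one of the two dependent vectors is $\0$, and the explicit use of $\Gamma=\F{p}^\times$ to identify the equivalence class $[x]_\sim$ together with $\0$ as the full one-dimensional subspace $\linClos{x}$; once these are spelled out, the contradiction is immediate.
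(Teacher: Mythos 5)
Your proposal is correct and follows essentially the same route as the paper: assume distinct linearly dependent $x,y\in\acl(v,w)$, invoke Lemma~\ref{agcl_is_agcl} to get $\acl(v,w)=\acl(x,y)$, observe that the latter is the one-dimensional subspace $\linClos{x,y}$, and contradict the linear independence of $v,w$. The paper justifies the one-dimensionality in one stroke from the containment $\acl(x,y)\subseteq\linClos{x,y}$ of~(\ref{agcl_linclos}) together with the size count of Corollary~\ref{agcl_size_p}, whereas you derive it via the explicit computations $\acl(\0,u)=\linClos{u}$ and $\acl(x,\lambda x)=[x]_\sim\cup\{\0\}$; both are valid and the extra case split is harmless.
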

\begin{proof}
If $x,y$ were distinct linearly dependent elements in $\acl(v,w)$, then by Lemma~\ref{agcl_is_agcl} we would obtain $v,w\in\acl(v,w)=\acl(x,y)=\linClos{x}$ which is impossible.
\end{proof}

\begin{lemma}\label{acl_linkomb}
Let $v,w\in V$ be two vectors which are linearly independent. For all distinct $x, y\in \acl (v,w)$ and all $\lambda, \mu\in \F{p}$ we have: if $\lambda v + \mu w\in \acl (v,w)$, then $\lambda x + \mu y \in \acl (v,w)$.
\end{lemma}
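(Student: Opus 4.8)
The plan is to reduce the statement to the invariance of $\acl(v,w)$ under a well-chosen vector space automorphism, exploiting the fact that $\acl(v,w)$ is closed under the action of any element of $G$ that maps the pair $(v,w)$ into $\acl(v,w)$. The central observation is that, since $v,w$ are linearly independent, Lemma~\ref{acl_nicht_equiv} guarantees that any two distinct elements $x,y\in\acl(v,w)$ are themselves linearly independent; this is precisely what we need to produce an automorphism carrying $(v,w)$ to $(x,y)$.

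First I would fix distinct $x,y\in\acl(v,w)$ and, using Lemma~\ref{acl_nicht_equiv}, record that $\{x,y\}$ is linearly independent. By homogeneity of $\V$ (any bijection between bases of two-dimensional subspaces extends to an automorphism of the whole space), there exists $\varphi\in\Aut(\V)\leq G$ with $v^{\varphi}=x$ and $w^{\varphi}=y$. Since $v^\varphi=x\in\acl(v,w)$ and $w^\varphi=y\in\acl(v,w)$, the hypotheses of Corollary~\ref{invariant_unter_g_falls_drinnen} are met for $g=\varphi$, and therefore
\begin{align*}
    \acl(v,w)^{\varphi}=\acl(v,w).
\end{align*}

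Now I would simply transport the given combination through $\varphi$. Assuming $\lambda v+\mu w\in\acl(v,w)$, its image under $\varphi$ lies in $\acl(v,w)^{\varphi}=\acl(v,w)$; and because $\varphi$ is linear,
\begin{align*}
    (\lambda v+\mu w)^{\varphi}=\lambda\, v^{\varphi}+\mu\, w^{\varphi}=\lambda x+\mu y.
\end{align*}
Hence $\lambda x+\mu y\in\acl(v,w)$, as desired.

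I do not expect a genuine obstacle here: once the linear independence of $x,y$ is secured by Lemma~\ref{acl_nicht_equiv}, the argument is a clean composition of ``find an automorphism mapping $(v,w)$ to $(x,y)$'', ``invoke the $\varphi$-invariance of $\acl(v,w)$ from Corollary~\ref{invariant_unter_g_falls_drinnen}'', and ``use linearity of $\varphi$''. The only point requiring a moment's care is verifying that the pair $(x,y)$ is linearly independent, since this is what both licenses the existence of $\varphi$ and ensures $\varphi$ can be taken inside $\Aut(\V)$; this is exactly why the hypothesis that $v,w$ are linearly independent is essential.
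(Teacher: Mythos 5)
Your proof is correct and follows exactly the paper's own argument: use Lemma~\ref{acl_nicht_equiv} to get linear independence of $x,y$, extend to an automorphism $\varphi$ with $v^\varphi=x$, $w^\varphi=y$, invoke Corollary~\ref{invariant_unter_g_falls_drinnen} for the invariance $\acl(v,w)^\varphi=\acl(v,w)$, and transport the linear combination. No differences worth noting.
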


\begin{proof}
Let $x, y\in\acl (v,w)$  be distinct and $\lambda,\mu\in \F{p}$ such that $\lambda v+ \mu w\in \acl (v,w)$ be given. The set $\{x,y\}$ is linearly independent.Therefore, there exists $\varphi\in \Aut (\V)$  such that $v^\varphi=x$ and $w^\varphi=y$. By Corollary~\ref{invariant_unter_g_falls_drinnen} the set $\acl (v,w)^\varphi$ is $\acl (v,w)$. Hence, $(\lambda v + \mu w)^\varphi=\lambda x +\mu y$ is an element of $\acl (v,w)$.
\end{proof}

\begin{lemma}\label{acl_is_aff}
Let $v,w$ be two vectors in $V$.  Then the algebraic closure  $\acl(v,w)$ is the affine line $\Aff(v,w)$.
\end{lemma}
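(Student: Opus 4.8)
The plan is to reduce to the linearly independent case and there identify $\acl(v,w)$ with $\Aff(v,w)$ by a counting argument after pinning down its coordinates. First I would dispose of the degenerate cases. If $v=\0$ or $w=\0$, say $w=\0$, then $\acl(v,\0)=\linClos{v}=\Aff(v,\0)$ by the identity $\acl(\0,v)=[v]_\sim\cup\{\0\}=\linClos{v}$ recorded after Lemma~\ref{GammaIsOneOfTwo}. If $v,w$ are distinct, nonzero and linearly dependent, then $\linClos{v,w}=\linClos{v}$ has $p$ elements, so $\acl(v,w)\subseteq\linClos{v}$ together with $|\acl(v,w)|=p$ from Corollary~\ref{agcl_size_p} forces $\acl(v,w)=\linClos{v}$; and since the scalar relating $v,w$ is different from $1$, the affine combinations of $v$ and $w$ also fill $\linClos{v}$, so $\Aff(v,w)=\linClos{v}$ as well. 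Hence from now on I assume $v,w$ linearly independent, and since $|\Aff(v,w)|=p=|\acl(v,w)|$ it suffices to prove $\Aff(v,w)\subseteq\acl(v,w)$, equivalently to compute $\acl(v,w)$ exactly.

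Identify $\linClos{v,w}$ with $\F{p}^2$ via the basis $(v,w)$, so that $v,w$ become $e_1=(1,0),e_2=(0,1)$, and set $L:=\{(\lambda,\mu)\in\F{p}^2:\lambda v+\mu w\in\acl(v,w)\}$. Since $(v,w)$ is a basis and $\acl(v,w)\subseteq\linClos{v,w}$, the map $(\lambda,\mu)\mapsto\lambda v+\mu w$ is a bijection between $L$ and $\acl(v,w)$; in particular $|L|=p$, $e_1,e_2\in L$, and $(0,0)\notin L$. By Lemma~\ref{acl_nicht_equiv} distinct elements of $\acl(v,w)$, hence distinct elements of $L$, are linearly independent in $\F{p}^2$. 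The crucial step is to translate Lemma~\ref{acl_linkomb}: for distinct $P_1,P_2\in L$ the matrix $M=[\,P_1\mid P_2\,]$ with columns $P_1,P_2$ is invertible (its columns are linearly independent) and satisfies $M\binom{\lambda}{\mu}\in L$ whenever $(\lambda,\mu)\in L$; as $L$ is finite this gives $M(L)=L$, i.e. $M$ permutes $L$. Applying $(\lambda,\mu)=(a,b)\in L$ to the pair $(w,v)$ also yields the symmetry $(a,b)\in L\implies(b,a)\in L$.

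Now let $\delta$ span the unique one-dimensional subspace of $\F{p}^2$ not met by $L$: there are $p+1$ such subspaces, and $L$, being pairwise linearly independent of size $p$, meets exactly $p$ of them. Each permuting matrix $[\,P_1\mid P_2\,]$ permutes the $p$ directions met by $L$ and therefore fixes $\F{p}\delta$, so $\delta$ is a common eigenvector. The swap symmetry makes the set of met directions invariant under $(x_1,x_2)\mapsto(x_2,x_1)$, so $\delta$ lies in $\F{p}(1,1)$ or in $\F{p}(1,-1)$. Testing the matrices $M=[\,e_1\mid P\,]$ and $M=[\,e_2\mid P\,]$ for $P=(a,b)\in L$ against the eigenvector condition gives, when $\delta=(1,-1)$, the single equation $a+b=1$ for every $(a,b)\in L$, whence $L\subseteq\{(1-t,t):t\in\F{p}\}=\Aff(v,w)$ and equality follows from $|L|=p$; when $\delta=(1,1)$ the same test forces both $b=a+1$ and $a=b+1$ for a point of $L\setminus\{e_1,e_2\}$ (which exists as $p\ge 3$), giving $2=0$, impossible since $p$ is odd. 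This yields $\acl(v,w)=\Aff(v,w)$.

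I expect the main obstacle to be exactly this last combinatorial identification: the cardinality bound and pairwise independence alone do not distinguish $\Aff(v,w)$ from other $p$-point pairwise-independent subsets of a plane, and the decisive leverage is the ``permutation by $2\times2$ matrices'' reformulation of Lemma~\ref{acl_linkomb}, which turns the missing direction into a common eigenvector and lets the eigenvalue bookkeeping force the affine relation $a+b=1$. The use of $p\neq 2$ is essential here, in accordance with the standing hypothesis that $p$ is odd.
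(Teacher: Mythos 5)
Your proof is correct, and while it shares the paper's overall skeleton---dispose of the degenerate cases via the count $|\acl(v,w)|=p$ from Corollary~\ref{agcl_size_p}, reduce to linearly independent $v,w$, use Lemma~\ref{acl_linkomb} to turn membership in $\acl(v,w)$ into a closure property of the set of coordinate pairs, and finish by comparing cardinalities with $\Aff(v,w)$---the decisive step is carried out by a genuinely different argument. The paper manipulates the coefficient set $I$ directly: from $(\lambda,\mu)\in I$ it generates further members such as $(\frac{1}{\lambda},-\frac{\mu}{\lambda})$ and $(1+\mu,-\mu)$ by explicit substitutions, and three claims then force $\lambda+\mu=1$, with oddness of $p$ entering through the element $2v$. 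You instead observe that every $2\times 2$ matrix with columns in $L$ maps $L$ into $L$, is invertible because distinct elements of $\acl(v,w)$ are linearly independent (Lemma~\ref{acl_nicht_equiv}), hence permutes the $p$ directions met by $L$ and fixes the unique unmet direction $\F{p}\delta$; the swap symmetry restricts $\delta$ to $(1,1)$ or $(1,-1)$, and the eigenvector condition for $[e_1\mid P]$ and $[e_2\mid P]$ yields $a+b=1$ in the first case and the contradiction $2=0$ in the second. Your route is more conceptual, making the role of the hypothesis $p\neq 2$ transparent (two distinct eigendirections of the swap, and $2\neq 0$), whereas the paper's is more elementary, requiring nothing beyond direct computation with linear combinations. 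Both are complete; the only points worth spelling out in your version are the invertibility of the matrices (which upgrades $M(L)\subseteq L$ to a permutation) and the requirement that the two columns be distinct elements of $L$ when invoking Lemma~\ref{acl_linkomb}, both of which you do address.
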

\begin{proof}
 If $\{v,w\}$ is linearly dependent, then since $|\acl (v,w)|=p$ and $\acl (v,w)\subseteq \linClos{v,w}=\linClos{v}$, we have $\acl (v,w)=\linClos{v}=\Aff (v,w)$.

We assume that $\{v,w\}$ is linearly independent, and we define a set $I$ as the set of all pairs $(\lambda,\mu)\in \F{p}^2$ such that there exist distinct $x,y\in \acl(v,w)$ for which we have: $\lambda x +\mu y \in \acl (v,w)$.

 Because of Lemma~\ref{acl_linkomb} for all $(\lambda, \mu)\in \F{p}^2$ the existence of distinct  elements $x,y\in \acl(v,w)$ such that $\lambda x+\mu y\in \acl(v,w)$ is equivalent to all distinct elements $x,y\in \acl(v,w)$ fulfilling $\lambda x + \mu y \in \acl(v,w)$.

If $(\lambda, \mu)\in I$, then $\frac{1}{\lambda}(\lambda v + \mu w)-\frac{\mu}{\lambda}w=v$ and since all three $(\lambda v + \mu w),v,w$ are elements of $\acl (v,w)$ we obtain
\begin{align}\label{acl_is_aff1}
   \forall (\lambda, \mu)\in I, \lambda\neq 0 \implies \left(\frac{1}{\lambda},-\frac{\mu}{\lambda}\right)\in I.
\end{align}

Furthermore for all $(\lambda, \mu)\in I$ such that $\lambda\neq 0$, the element  $\lambda ( \frac{1}{\lambda}v- \frac{\mu}{\lambda}w)+\mu v$ is contained in $\acl(v,w)$ and equal to $ v- \mu w + \mu v= (1+\mu) v -\mu w$. Thus 
\begin{align}\label{acl_is_aff2}
    \forall (\lambda, \mu)\in I, \lambda\neq 0 \implies (1+\mu,-\mu)\in I.
\end{align}
If $1+\mu\neq 0$ then by applying (\ref{acl_is_aff2}) again we obtain $(1-\mu,\mu)\in I$, and thus also $(\mu,1-\mu)\in I$.

\textbf{Claim:} {For all $\mu\in \F{p}$ if $(0,\mu)\in I$, then $\mu=1$.}

This is clear since $(0,1)\in I$ and by Lemma~\ref{acl_nicht_equiv} two linearly dependent elements cannot lie in $\acl (v,w)$.

\textbf{Claim:} For all $\lambda \in \F{p}$ if $(1,\lambda)\in I$, then $\lambda=0$.

We assume otherwise, i.e., $\lambda\neq 0$. By (\ref{acl_is_aff1}) we obtain 
\begin{align*}
    \left(\frac{1}{\lambda},-\frac{1}{\lambda}\right), \ (1,-\lambda)\in I.
\end{align*}

Therefore 
\begin{align*}
    \acl(v,w)\ \ni \ \frac{1}{\lambda}\underbrace{(v+\lambda w)}_{\in \ \acl(v,w)}-\frac{1}{\lambda}\underbrace{(v-\lambda w)}_{\in  \ \acl(v,w)}=2v .
\end{align*}
This is again impossible by Lemma~\ref{acl_nicht_equiv} since $2v\neq v$.
%Which cannot be since $\acl(v,w)$ contains only pairwise non-equivalent elements and $2v\sim v$.

\textbf{Claim:} For all $(\lambda,\mu)\in I$ if $\lambda \neq 0$, then $\lambda=1-\mu$.

If $\lambda\neq 0$, then we have seen that $(\mu,1-\mu)\in I$, that is  $\mu v+ (1-\mu) w\in \acl(v,w)$, and
\begin{align*}
     1\cdot (\mu v+ (1-\mu) w) + (\lambda - (1-\mu)) w =\mu v+\lambda w\in \acl (v,w) 
\end{align*}
Thus, $(1, \lambda-(1-\mu))\in I$, and then our second claim implies $\lambda=1-\mu$.

Our three claims show that $\acl(v,w)\subseteq \Aff(v,w)$. By Corollary~\ref{agcl_size_p} we have $|\acl(v,w)|=p$ which is also equal to $|\Aff(v,w)|$. Therefore $\acl(v,w)= \Aff(v,w)$.
\end{proof}

Lemma~\ref{agcl_size} (1) and Lemma~\ref{acl_is_aff} immediately show the following. 

\begin{corollary}\label{Cor_pres_aff_lines}
 The elements of $\G$ preserve affine lines, i.e., they map affine lines to affine lines. 
\end{corollary}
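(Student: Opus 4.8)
The plan is to reduce the statement to the two cited results by a direct chain of equalities. Since an affine line is by definition a set of the form $\Aff(v,w)$ for distinct vectors $v,w\in V$, I would first fix such a line $L=\Aff(v,w)$ together with an arbitrary $g\in G$, and observe via Lemma~\ref{acl_is_aff} that $L=\acl(v,w)$. Because $g$ is a permutation of $V$, its restriction is injective, so $v^g\neq w^g$; hence $\Aff(v^g,w^g)$ is again a genuine (one-dimensional) affine line rather than a single point.

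Next I would transport the line through $g$. Applying Lemma~\ref{agcl_size}~(1), which asserts $\acl(v,w)^g=\acl(v^g,w^g)$, yields $L^g=\acl(v^g,w^g)$. A second invocation of Lemma~\ref{acl_is_aff}, now for the pair $(v^g,w^g)$, identifies the right-hand side with $\Aff(v^g,w^g)$. Combining these equalities gives $L^g=\Aff(v^g,w^g)$, so $g$ maps the affine line $L$ onto an affine line, which is exactly the assertion.

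I do not expect any genuine obstacle here: the corollary is a purely formal consequence of Lemmas~\ref{agcl_size} and~\ref{acl_is_aff}, and the only point requiring a moment's thought is the trivial observation that a permutation sends distinct vectors to distinct vectors, guaranteeing that $\Aff(v^g,w^g)$ is indeed an affine line. All the substantive work has already been absorbed into the proof of Lemma~\ref{acl_is_aff}, where the algebraic closure of two vectors was shown to coincide with their affine line, so the remaining argument is a one-line composition of that identity with the $\acl$-equivariance of the group action.
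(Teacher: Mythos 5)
Your proposal is correct and is precisely the argument the paper intends: the corollary is stated there as an immediate consequence of Lemma~\ref{agcl_size}~(1) and Lemma~\ref{acl_is_aff}, and your chain $\Aff(v,w)^g=\acl(v,w)^g=\acl(v^g,w^g)=\Aff(v^g,w^g)$ is exactly that deduction, with the (correct) extra remark that $v^g\neq w^g$ keeps the image a genuine line.
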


\ignore{
\begin{theorem}\label{gtrsisauto}
{Let $g$ be an element of $\G$.} Then there exists $\varphi \in \Aut (\V)$ such that $g=\varphi\trs{g(\0)}$.
\end{theorem}

\begin{proof}
{For all  $u,v,w\in V$ we have that $\Aff(v,w)^{\trs{u}}=\Aff(v^{\trs{u}},w^{\trs{u}})$.} By Corollary~\ref{Cor_pres_aff_lines}   the same holds for $g\trs{-g(\0)}=:\tilde{g}$.  Since $\tilde{g}$ is an element of $\G_{\0}$ it acts on $\oneD{\V}$.

\textbf{Claim: } The action of $\tilde{g}$ on $\oneD{\V}$ \presprojlines.

Let three arbitrary vectors $u,v,w\in V$ be given. We want to show that (\ref{pres_plan}) holds for $L_0=\linClos{u},L_1=\linClos{v}$ and $L_2=\linClos{w}$. Assume that $\linClos{u}\subseteq \linClos{v}+\linClos{w}$. For any $x\in \linClos{u}$ there are $v_x\in \linClos{v}$ and $w_x\in \linClos{w}$ such that $x\in\Aff(v_x,w_x)$. We obtain $x^{\tilde{g}}\in \Aff(v_x^{\tilde{g}},w_x^{\tilde{g}})\subseteq \linClos{v^{\tilde{g}}}+\linClos{w^{\tilde{g}}}$. By repeating the same argument for $\tilde{g}^{-1}$ this shows (\ref{pres_plan}). 

By Theorem~\ref{fun_thm_geom} there exists $\varphi\in \Aut(\V)$ such that the actions of $\varphi$ and $\tilde{g}$ on $\oneD{\V}$ coincide. We set $h:={\tilde{g}\varphi^{-1}}$ which acts as the identity on $\oneD{\V}$. Our goal is to show that $h=c\cdot \id_V$ for some $c\in \F{p}^\times$.

Since for any $\lambda\in \F{p}^\times$ the function $x\mapsto \lambda x^\varphi$ is again an automorphism of $\V$ and has the same action on $\oneD{\V}$ as $\varphi$ we may assume without loss of generality that there exists at least one $v\in V$ such that $h(v)=v$.

\textbf{Claim:} For all $u,w\in V$ the function $h$ maps the  affine line $w+\linClos{u}=\Aff(u+w,w)$ to the affine line $w^h +\linClos{u}$ parallel to $\linClos{u}$.

Since $h$ {\presprojlines} we have that $h(\linClos{u,w})=\linClos{u,w}$. Additionally we have $\Aff(u+w,w)^h=\Aff((u+w)^h,w^h)$, therefore $h(w+\linClos{u})=w^h +\linClos{u +\lambda w}$ for some $\lambda\in \F{p}$. The function $h$ is bijective and $w +\linClos{v}\cap \linClos{u}=\emptyset$, hence $w^h +\linClos{u+\lambda w}\cap \linClos{u}$ is empty too. Since $w^h\in \linClos{w}$ it follows that $\lambda=0$, showing our claim.

We are now in position to finish our proof. Let $w\in V\setminus \linClos{v}$ be given. Then $\{v+w\}=v +\linClos{w} \cap w + \linClos{v}$, thus
\begin{align*}
    \{h(v+w)\}=h(v+\linClos{w}) \cap h(w+\linClos{v}).
\end{align*}
Since $h(v+w)\in \linClos{v+w}$ we obtain $h(v+w)=\lambda v + \lambda w$ for some $\lambda \in \F{p}$. By our last claim $h(v+\linClos{w})=v+\linClos{w}$ and $h(w+\linClos{v})=w^h+\linClos{v}$, thus $\lambda=1$ and in further consequence $w^h=w$. 

\begin{center}
\begin{tikzpicture}
\draw[shorten <=-8pt, shorten >=-8pt] (0,0)--(1,2);
\draw[fill] (0,0)  circle [radius=0.05];
\node[above left ] at (0,0) {$0$};
\draw[shorten <=-8pt, shorten >=-8pt] (1,2)--({1+sqrt(5)},2);
\draw[fill] (1,2)  circle [radius=0.05];
\node[above left] at (1,2) {$v$};
\draw[shorten <=-8pt, shorten >=-8pt] (0,0)--({sqrt(5)},0);
\draw[fill] ({sqrt(5)},0)  circle [radius=0.05];
\node[below right] at ({sqrt(5)},0) {$w$};
\draw[shorten <=-8pt, shorten >=-8pt] ({sqrt(5)},0)--({1+sqrt(5)},2);
\draw[fill] ({1+sqrt(5)},2) circle [radius=0.05];
\node[above right] at ({1+sqrt(5)},2) {$\ v+w$};

\node[below] at ({sqrt(1.25)},0) {\tiny{ $\linClos{w}$}};
\node[left] at (0.5,1) {\tiny$\linClos{v}$};
\node[above] at ({1+sqrt(1.25)},2) {\tiny$v+\linClos{w}$};
\node[right] at ({0.5+sqrt(5)},1) {\tiny $w+\linClos{v}$};

\end{tikzpicture}
\end{center}

This implies that $h=\id_V$. Therefore $g\tau_{-g(\0)}=\varphi$.

\end{proof}

}

It thus remains to apply the following formulation of   Fundamental Theorem of Affine Geometry. See for example~\cite[p.52]{Berger} for a proof in the finite-dimensional case; alternatively, it can easily be derived from Theorem~\ref{fun_thm_geom}.

\begin{theorem}\label{thm:fundthmaffine} Let $g\in\Sym(V)$ preserve affine lines. Then it preserves affine combinations, and is the composite of a translation and an automorphism of $\V$.
\end{theorem}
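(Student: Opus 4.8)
The plan is to reduce to the case $g(\0)=\0$ and then invoke the Fundamental Theorem of Projective Geometry, Theorem~\ref{fun_thm_geom}. Since translations preserve affine lines, the map $g':=g\tau_{-g(\0)}$ also preserves affine lines and additionally fixes $\0$. As translations and vector space automorphisms preserve affine combinations, it suffices to show that $g'$ is an automorphism of $\V$: then $g=g'\tau_{g(\0)}$ is the composite of an automorphism and a translation, and both asserted conclusions follow at once. Thus I may assume from now on that $g$ fixes $\0$.

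First I would observe that over the prime field a one-dimensional subspace is exactly an affine line through the origin, namely $\Aff(\0,v)=\linClos{v}$ for every $v\neq\0$. Since $g$ fixes $\0$ and maps affine lines to affine lines, it sends $\linClos{v}=\Aff(\0,v)$ to $\Aff(\0,v^g)=\linClos{v^g}$, so $g$ induces a bijection of $\oneD{\V}$. Next I would verify that this bijection preserves projective lines in the sense of Definition~\ref{planes_preserv_D}. Given $L_0\subseteq L_1+L_2$ with $L_1\neq L_2$, any $x\in L_0$ can be written as a proper affine combination $x=\lambda a+(1-\lambda)b$ of points $a\in L_1$ and $b\in L_2$ upon choosing $\lambda\in\F{p}\setminus\{0,1\}$ — which exists precisely because $p$ is odd. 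Applying $g$, the point $x^g$ lies on $\Aff(a^g,b^g)$ and hence in $L_1^g+L_2^g$; running the same argument for $g^{-1}$ gives the converse implication.

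Now Theorem~\ref{fun_thm_geom} provides a bijective linear map $\varphi\colon V\to V$ — an element of $\Aut(\V)$, as $\F{p}$ has no nontrivial field automorphisms — whose action on $\oneD{\V}$ coincides with that of $g$. The composite $h:=g\varphi^{-1}$ then fixes $\0$, preserves affine lines, and fixes every one-dimensional subspace setwise. Rescaling $\varphi$ by a scalar (which leaves its action on $\oneD{\V}$ unchanged) I may assume $h$ fixes some nonzero vector $v$. It remains to prove the rigidity statement $h=\id_V$, which is the heart of the argument. The key auxiliary claim is that $h$ maps each coset $w+\linClos{u}$ with $w\notin\linClos{u}$ to the parallel coset $w^h+\linClos{u}$: indeed $h$ fixes the two-dimensional subspace $\linClos{u,w}$ setwise, so it maps $w+\linClos{u}$ to an affine line inside $\linClos{u,w}$ disjoint from $h(\linClos{u})=\linClos{u}$, and in a plane a line disjoint from $\linClos{u}$ must be parallel to it.

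Using this claim for $w\notin\linClos{v}$, the image $h(v+w)$ lies both in $h(v+\linClos{w})=v+\linClos{w}$ (since $h(v)=v$) and in $\linClos{v+w}$; these meet in the single point $v+w$, so $h$ fixes $v+w$. As $w$ ranges over $V\setminus\linClos{v}$ this shows $h$ fixes all of $V\setminus\linClos{v}$, and a midpoint argument extends this to $\linClos{v}$: for $u\in\linClos{v}$ and $w\notin\linClos{v}$ the fixed points $w$ and $u-w$ span an affine line meeting $\linClos{v}$ exactly in $\tfrac12 u$, which $h$ must therefore fix, where $\tfrac12\in\F{p}$ again because $p$ is odd. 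Hence $h=\id_V$ and $g=\varphi\in\Aut(\V)$, which completes the proof. The main obstacle is this concluding rigidity argument; throughout, both the choice of $\lambda\in\F{p}\setminus\{0,1\}$ and the existence of $\tfrac12$ rely essentially on $p$ being odd.
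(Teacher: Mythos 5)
Your proof is correct and follows exactly the route the paper itself indicates (the paper only cites Berger and remarks that the statement ``can easily be derived from Theorem~\ref{fun_thm_geom}''): reduce to the case $g(\0)=\0$ by composing with a translation, check that preservation of affine lines through $\0$ yields preservation of projective lines, invoke Theorem~\ref{fun_thm_geom}, and finish with the parallel-coset rigidity argument showing $g\varphi^{-1}=\id_V$. Your additional midpoint argument handling the points of $\linClos{v}$ is a welcome piece of care that the paper's sketch leaves implicit.
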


Theorem~~\ref{thm:fundthmaffine} shows that $\G\leq \AGL(\V)$. The other inclusion is easy.

\begin{lemma}
{ $\G$ is equal to the group of all  affine bijective mappings  from $V$ to $V$.}
\end{lemma}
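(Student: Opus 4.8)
The plan is to establish the reverse inclusion $\AGL(\V)\leq\G$, since Theorem~\ref{thm:fundthmaffine} already yields $\G\leq\AGL(\V)$. I would begin by recalling that every bijective affine map of $\V$ decomposes as $\varphi\tau_u$ for some $\varphi\in\Aut(\V)$ and some translation $\tau_u\colon x\mapsto x+u$; hence $\AGL(\V)$ is generated by $\Aut(\V)$ together with the group of all translations $\{\tau_u : u\in V\}$. As $\Aut(\V)\leq\G$ holds by our standing hypothesis, it therefore suffices to show that $\G$ contains every translation.

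To produce a single nontrivial translation I would invoke the assumption that $\G$ moves $\0$: fix $g\in G$ with $u:=\0^g\neq\0$. By the inclusion $\G\leq\AGL(\V)$ just obtained from Theorem~\ref{thm:fundthmaffine}, the element $g$ is affine, so $g=\varphi\tau_{u}$ for some $\varphi\in\Aut(\V)$, whose linear part fixes $\0$ and hence forces the translation part to be exactly $\tau_{\0^g}=\tau_u$. Consequently $\tau_{u}=\varphi^{-1}g$ lies in $\G$, being a product of two elements of $\G$, and it is nontrivial precisely because $\0$ was moved.

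I would then propagate this one translation to all of them by conjugation. A direct computation in the paper's composition convention $x^{fg}=(x^f)^g$ gives, for every $\psi\in\Aut(\V)$,
\begin{align*}
x^{\psi^{-1}\tau_{u}\psi}=\bigl((x^{\psi^{-1}})^{\tau_u}\bigr)^{\psi}=\bigl(x^{\psi^{-1}}+u\bigr)^{\psi}=x+u^{\psi},
\end{align*}
so that $\psi^{-1}\tau_{u}\psi=\tau_{u^{\psi}}$. Since $\Aut(\V)$ acts transitively on $V\setminus\{\0\}$, for any $w\neq\0$ there is $\psi\in\Aut(\V)$ with $u^{\psi}=w$, whence $\tau_w=\psi^{-1}\tau_u\psi\in\G$; together with $\tau_\0=\id_V$ this shows $\{\tau_w:w\in V\}\subseteq\G$. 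Combining this with $\Aut(\V)\leq\G$ gives $\AGL(\V)\leq\G$, and with the inclusion $\G\leq\AGL(\V)$ we conclude $\G=\AGL(\V)$.

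I expect no genuine obstacle here, as the author already flags that ``the other inclusion is easy''; the only points requiring care are the bookkeeping of the conjugation identity $\psi^{-1}\tau_u\psi=\tau_{u^\psi}$ under the left-action convention, and the observation that the translation extracted in the second step is nonzero exactly because $\G$ moves $\0$. Both are routine once the standing facts $\Aut(\V)\leq\G\leq\AGL(\V)$ are in place.
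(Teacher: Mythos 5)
Your proof is correct and follows essentially the same route as the paper: both first obtain $\G\leq\AGL(\V)$ from Corollary~\ref{Cor_pres_aff_lines} and Theorem~\ref{thm:fundthmaffine}, then extract a nontrivial translation from an element moving $\0$ using $\Aut(\V)\leq\G$, and obtain all translations (the paper's ``for the same reason'' is exactly your conjugation argument via transitivity of $\Aut(\V)$ on $V\setminus\{\0\}$). You merely make explicit the bookkeeping that the paper leaves implicit.
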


\begin{proof}
By Corollary~\ref{Cor_pres_aff_lines} and Theorem~\ref{thm:fundthmaffine} we have $\G\leq \AGL(\V)$. Since $\G$ does not fix $\0$, it contains element which is the composition of a vector space automorphism and a translation by a non-zero vector. Since $\Aut(\V)\leq \G$, we have that $\G$ contains a translation by a non-zero vector, and for the same reason it contains all translations. Again by Theorem~\ref{thm:fundthmaffine}, we have that $\G=\AGL(\V)$.
\end{proof}

We define a relation $R\subseteq V^4$ by
\begin{align*}
    (a,b,c,d)\in R :\iff a+b=c+d.
\end{align*}
{The structure $(V, R)$ is clearly a reduct of $\V$. Every translation and every automorphism of $\V$ preserves $R$, hence 
by what we have shown, $\Aut(V,R)=\AGL(\V)$ or $\Aut(V,R)=\Sym(V)$; the latter is clearly absurd.
\begin{proposition}\label{fin_red_not_fix_0}
{Assume that a closed permutation group $\G$ on $V$ containing $\Aut(\V)$ does not fix $\0$. }Then precisely one of the following holds.
\begin{enumerate}
    \item  $\G=\Sym(V)$;
    \item  $\G=\AGL(\V)$.
\end{enumerate}
In particular there are, up to interdefinability, two first-order reducts of $\V$ which do not first-order define $\0$:
\begin{itemize}
    \item The pure set with domain $V$ and no operations or relations. 
    \item The structure $(V, R)$. \end{itemize}
\end{proposition}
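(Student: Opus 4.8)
The plan is to assemble the results of this section into the stated dichotomy and then read off the reduct classification, so almost all the work has already been done. First I would split on the subgroup $\Gamma\leq\F{p}^\times$ attached to the stabiliser $\G_\0$, which is available because $\G_\0$ satisfies the hypotheses of Section~\ref{sect:fix0}. Either $\Gamma=\{1\}$ or $\Gamma\neq\{1\}$, and Lemma~\ref{GammaIsOneOfTwo} tells us that the latter forces $\Gamma=\F{p}^\times$; hence these two cases are exhaustive. If $\Gamma=\{1\}$, then Lemma~\ref{lem:mainres_G_is_sym} immediately gives $\G=\Sym(V)$. If $\Gamma=\F{p}^\times$, then the chain of algebraic-closure lemmas culminating in Corollary~\ref{Cor_pres_aff_lines} shows that every element of $\G$ preserves affine lines, so Theorem~\ref{thm:fundthmaffine} together with the last lemma of the section identifies $\G=\AGL(\V)$. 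The two outcomes are genuinely different, since $\AGL(\V)$ preserves affine lines whereas $\Sym(V)$ does not (the dimension being infinite, there are plenty of non-affine permutations); therefore exactly one of the alternatives $(1)$, $(2)$ holds.

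Next I would translate this into the language of reducts. By the $\omega$-categoricity correspondence recalled in Section~\ref{sect:prelims}, the first-order reducts of $\V$ up to interdefinability are in bijection with the closed supergroups of $\Aut(\V)$ via $\mathcal{A}\mapsto\Aut(\mathcal{A})$, and two reducts are interdefinable precisely when their automorphism groups coincide. The pure set on $V$ has automorphism group $\Sym(V)$, which realises alternative $(1)$. For alternative $(2)$ I would check that $\Aut(V,R)=\AGL(\V)$: every translation and every automorphism of $\V$ preserves $R$, so $\Aut(V,R)$ is a closed group containing $\Aut(\V)$ that moves $\0$, since a nonzero translation does. Applying the dichotomy just proved to this particular group yields $\Aut(V,R)\in\{\Sym(V),\AGL(\V)\}$, and because $R$ is visibly not preserved by all permutations of $V$ the first option is absurd; hence $\Aut(V,R)=\AGL(\V)$. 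As $\Sym(V)\neq\AGL(\V)$, the pure set and $(V,R)$ are not interdefinable, and by the dichotomy they exhaust, up to interdefinability, all reducts that do not first-order define $\0$.

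At this stage there is essentially no obstacle remaining, as the substantive content lies entirely in the earlier lemmas. The only point demanding a moment's care is the harmless self-reference in the computation of $\Aut(V,R)$: one feeds the concrete group $\Aut(V,R)$ back into the already-established dichotomy, rather than arguing circularly, which is legitimate because the dichotomy has been proved for \emph{every} closed supergroup of $\Aut(\V)$ moving $\0$.
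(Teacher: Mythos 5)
Your proposal is correct and follows essentially the same route as the paper: the case split on $\Gamma$ via Lemma~\ref{lem:mainres_G_is_sym} and Lemma~\ref{GammaIsOneOfTwo}, the passage through Corollary~\ref{Cor_pres_aff_lines} and Theorem~\ref{thm:fundthmaffine} to get $\G=\AGL(\V)$, and the identification $\Aut(V,R)=\AGL(\V)$ by feeding that group back into the dichotomy are exactly the paper's argument. Nothing is missing.
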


Together with Proposition~~\ref{fin_red_fix_0}, Proposition~~\ref{fin_red_not_fix_0} shows that every countably infinite vector space over a finite prime field of odd order has, up to interdefinability, only finitely many first-order reducts. } This proves Theorem~\ref{thm:main}. Our proof is summarised in Figure~\ref{fig:summary}.

\newpage

\begin{figure}
\begin{small}
\begin{tikzpicture}
	\matrix[row sep=0.9cm, column sep=-0.8cm, nodes={align=center, rectangle, draw, rounded corners, thick}] {
& & & &\node (red) {The reducts of $\V \longleftrightarrow$  \\ Closed permutation groups \\ on $V$ containing $\Aut(\V)$}; & & &  \\
& & & &	\node (Gclos) {$\G$ closed, such that \\ $\Aut(\V)\leq \G\leq \Sym(V)$};  & & & \\
& & \node (GnfZ) {$\G$ does  not \\ fix $\0$}; & & & & \node (GdfZ) {$\ \ \G \ $ does \ \ \\ fix $\0$};& \\
& \node (GisE) {$\Gamma=\{1\}$}; & \node[draw=none] (P) {}; & \node (GisF) {$\Gamma=\F{p}^\times$};  & &\node (GIsF) {$\Gamma=\F{p}^\times$}; & & \node (GsmF) {$\Gamma \lneq \F{p}^\times$};\\
& & & & \node (FTPG) {Fundamental \\ Theorem of \\ Projective Geometry}; & & & \\
& & &  & & \node (GactA) {$\G$ acts on $V/_\sim$ \\as $\Aut(\V)$}; & & \node (GactS) {$\G$ acts on $V/_\sim$ \\ as $\Sym(V/_\sim)_{\{\0\}}$};\\
& & & \node [draw=none] (help1) {}; & \node (FTAG) {Fundamental \\ Theorem  of\\ Affine Geometry}; & & & \\
& \node (GisSy) {$\G=\Sym(V)$};& &\node (GisAG) {$\G=\AGL(\V)$}; & & \node (GcirA) {$G=$  \\ $G^\ast\Aut(\V)$};  & & \node (GcirS) {$G=$ \\ $G^\ast\Sym((V\setminus\{\0\})/_\sim)^f$};\\
& & & & & \node (GisSGN) {$\Gast=\SG{\Nr}{\Hc},$ \\  $\Nr\lhd \Hc \leq \Sym(\Gamma)$}; & & \node (GisSG) {$\Gast=\SG{\Nr}{\Hc},$ \\  $\Nr\lhd \Hc \leq \Sym(\Gamma)$}; \\
	};
\graph [ edges={shorten >=2pt,thick}]{
	(red) -> (Gclos) ->[to path={-| (\tikztotarget)}] 
	{ (GnfZ) ->[to path={-| (\tikztotarget)}]  
		{(GisE) -> (GisSy);
		 (GisF) -> (GisAG)};
	  (GdfZ) ->[to path={-| (\tikztotarget)}] 
	  	{(GsmF) -> (GactS) -> (GcirS) -> (GisSG);
	  	 (GIsF) -> (GactA) -> (GcirA) -> (GisSGN)};	
  	 };
    (GIsF) -> [to path={(\tikztostart) ++(0.2,-9pt)-- ++(0,-20pt) -| ($(\tikztotarget)+(-0.2,16.7pt)$)}] (GactS);
	(GactS) -- [to path={(\tikztostart) ++(-0.2,-16.5pt)-- ++(0,-7pt) -| ($(\tikztotarget)+(0,1.8pt)$)}] (P);
	(P) -> [to path={($(\tikztostart)+(-0.2,0)$)--(\tikztotarget)}](GisF);
	(P) -> (GisE);
   (FTPG) -> (FTAG);
   (FTAG)  -> [to path={-| ($(\tikztotarget) +(0.2,9.5pt)$)}] (GisAG);
   (FTPG) ->[to path={-| ($(\tikztotarget) +(-0.2,16pt)$)}] (GactA);
};
\end{tikzpicture}
\end{small}
\caption{Summary of the results.}
\label{fig:summary}
\end{figure}
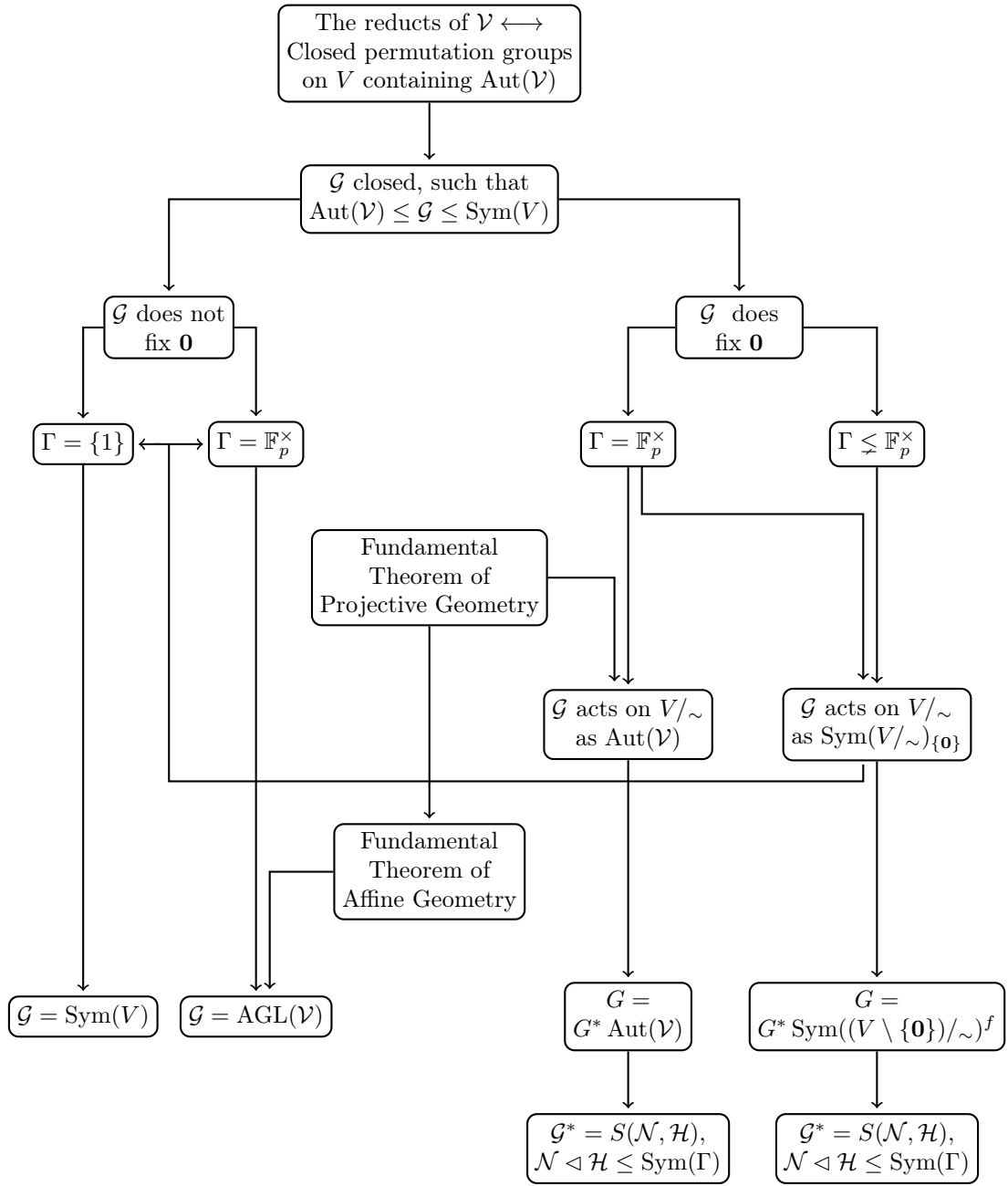

\bibliographystyle{plain}
\bibliography{global.bib,literaturverzeichnis.bib}

\end{document}